\pgfplotsset{compat=1.18} 
\numberwithin{equation}{section}
\newcommand{\A}{\mathcal{A}}
\renewcommand{\L}{\mathcal{L}}
\newcommand{\C}{\mathcal{C}}
\newcommand{\res}{\text{res}}
\newcommand{\co}{\text{core}}
\newcommand{\K}{\mathcal{K}}
\newcommand{\N}{\mathcal{N}}
\newcommand{\T}{\mathbb{T}}
\newcommand{\conj}[1]{\overline{#1}}
\newcommand{\D}{\mathbb{D}}
\newcommand{\R}{\mathbb{R}}
\renewcommand{\H}{\mathbb{H}}
\newcommand{\Po}{\mathcal{P}}
\newcommand{\cD}{\conj{\mathbb{D}}}
\newcommand{\ip}[2]{\big\langle #1, #2 \big\rangle}
\newcommand{\dist}[2]{\text{dist}( #1, #2 ) }
\newcommand{\m}{\textit{m}}
\newcommand{\Hb}{\mathcal{H}(b)}
\newcommand{\hb}{\mathcal{H}(b)}
\newcommand{\h}{\mathcal{H}}
\renewcommand\Re{\operatorname{Re}}
\renewcommand\Im{\operatorname{Im}}
\newtheorem{mainthm}{Theorem}
\newtheorem{thm}{Theorem}[section]
\newtheorem*{thm*}{Theorem}
\newtheorem{lem}[thm]{Lemma}
\newtheorem{cor}[thm]{Corollary}
\newtheorem*{cor*}{Corollary}
\newtheorem{prop}[thm]{Proposition}
\theoremstyle{definition}
\newtheorem{example}[thm]{Example}
\theoremstyle{definition}
\newtheorem{remark}[thm]{Remark}
\newtheorem{defn}[thm]{Definition}
\newtheorem{question}{Question}
\newtheorem{conjecture}{Conjecture}
\title[Shift operators, Cauchy integrals and approximations]{Shift operators, Cauchy integrals and approximations}
\author{Bartosz Malman}
\address{Division of Mathematics and Physics, 
        Mälardalen University,
		Västerås, Sweden}
\email{bartosz.malman@mdu.se} 
\begin{document}

\begin{abstract} This article consists of two connected parts. In the first part, we study the shift invariant subspaces in certain $\Po^2(\mu)$-spaces, which are the closures of analytic polynomials in the Lebesgue spaces $\L^2(\mu)$ defined by a class of measures $\mu$ living on the closed unit disk $\cD$. The measures $\mu$ which occur in our study have a part on the open disk $\D$ which is radial and decreases at least exponentially fast near the boundary. Our focus is on those shift invariant subspaces which are generated by a bounded function in $H^\infty$. In this context, our results are definitive. We give a characterization of the cyclic singular inner functions by an explicit and readily verifiable condition, and we establish certain permanence properties of non-cyclic ones which are important in the applications. The applications take up the second part of the article. We prove that if a function $g \in \L^1(\T)$ on the unit circle $\T$ has a Cauchy transform with Taylor coefficients of order $\mathcal{O}\big(\exp(-c \sqrt{n})\big)$ for some $c > 0$, then the set $U = \{x \in \T : |g(x)| > 0 \}$ is essentially open and $\log |g|$ is locally integrable on $U$. We establish also a simple characterization of analytic functions $b: \D \to \D$ with the property that the de Branges-Rovnyak space $\hb$ contains a dense subset of functions which, in a sense, just barely fail to have an analytic continuation to a disk of radius larger than 1. We indicate how close our results are to being optimal and pose a few questions.
\end{abstract}

\maketitle

\section{Introduction and main results}
\label{introsec}

\subsection{Some background} We will study spaces of analytic functions corresponding to Borel measures of the form \begin{equation}
    \label{MuGeneralStructureEq}
    d\mu(z) = G(1-|z|)\, dA(z) + w(z)d\m(z),
\end{equation} where $dA$ and $dm$ are the area and arc-length measures on, respectively, the unit disk $\D := \{ z \in \mathbb{C} : |z| < 1\}$ and its boundary circle $\T := \{ z \in \mathbb{C} : |z| = 1 \}$. The radial weight $G(1-|z|)$ living on $\D$ is defined in terms of a continuous, increasing and positive function $G$, and the weight $w$ living on $\T$ is a general Borel measurable non-negative integrable function. Given such a measure, we may construct first the Lebesgue space $\L^2(\mu)$ of (equivalence classes of) Borel measurable functions living on the carrier of $\mu$, and next consider its subspace $\Po^2(\mu)$, by which we denote the smallest closed subspace of $\L^2(\mu)$ which contains the set $\Po$ of analytic polynomials. The space $\Po^2(\mu)$ will be the setting for the first part of our study.

The \textit{shift} operator $M_z: \Po^2(\mu) \to \Po^2(\mu)$, which takes a function $f(z)$ to $zf(z)$, is a \textit{subnormal} operator, in the sense that it is the restriction of a normal operator, namely $M_z: \L^2(\mu) \to \L^2(\mu)$, to an invariant subspace. From the point of view of an operator theorist, the significance of the pair $(\Po^2(\mu), M_z)$ lies in the fact the study of subnormal operators can essentially be reduced to the study of the operator $M_z: \Po^2(\mu) \to \Po^2(\mu)$ for some measure $\mu$ which is compactly supported in the plane. The monograph \cite{conway1991theory} by Conway is an excellent source of information on this topic.

For measures such as \eqref{MuGeneralStructureEq}, the space $\Po^2(\mu)$ is, like $\L^2(\mu)$, a space of Borel measurable functions on the closed disk $\cD = \D \cup \T$. In certain cases it is even a space of \textit{analytic} functions on $\D$. In such a case, each element $f \in \Po^2(\mu)$, a priori interpreted as a function on $\cD$, has a unique restriction $f_\D$ to the disk $\D$. The restriction $f_\D$ must be an analytic function by the virtue of it being a locally uniform limit of analytic polynomials. We will below use the term \textit{irreducible} for such a space which is in this sense "analytic". It is a difficult problem (and in general open) to determine which weight pairs $(G,w)$ as in \eqref{MuGeneralStructureEq} produce an irreducible space. Khrushchev in the article \cite{khrushchev1978problem} solved certain special cases of the problem. For instance, his results apply to $G(t) = t^n$ for some $n > 0$, and $w = 1_E$ being a characteristic function of a set $E \subset \T$ in a certain class (defined in terms of Beurling-Carleson conditions). Already these results have fascinating applications to function theory, of which there are plenty in \cite{khrushchev1978problem}. The article \cite{malman2022thomson} builds on Khrushchev's work, explains the structure of $\Po^2(\mu)$ when $w = 1_E$ and $E$ is a general subset of $\T$, and showcases further applications to the theory of the Cauchy integral operator and de Branges-Rovnyak spaces.

\subsection{Irreducible $\Po^2(\mu)$-spaces} Recently, the author found in \cite{malman2023revisiting} an exact condition for irreducibility of $\Po^2(\mu)$ in the case when $G(t)$ decays at least exponentially as $t \to 0^+$, thus confirming a conjecture by Kriete and MacCluer from \cite{kriete1990mean}. Roughly speaking, if $G(t)$ is smaller than the weight $\exp(-c t^{-1})$ for some $c > 0$, or more precisely if 
\begin{equation} \label{ExpDecTag}
\liminf_{x \to 0^+} \, x \log 1/G(x) > 0, \tag{ExpDec}    
\end{equation} but large enough to satisfy 
\begin{equation} \label{LogLogIntTag}
\int_0^d \log \log (1/G(x)) \, dx < \infty, \tag{LogLogInt} \end{equation} for some $d > 0$, then the space $\Po^2(\mu)$ is irreducible if and only if the carrier set of the measure $d\mu_\T = w \,d\m$ on $\T$ can be covered by intervals $I$ satisfying the condition \begin{equation}
    \label{logwIntIntegr}
    \int_I \log w \, d\m > -\infty.
\end{equation}
In order to properly state the result we will need to define the following concept of core sets. For our purposes this concept is critical, and it will appear frequently throughout the article. 

\begin{defn}{\textbf{(Core sets of weights)}} \thlabel{CoreDef} Let $w$ be a non-negative integrable function on $\T$. We define $\co (w)$ to be the union of all open intervals $I$ for which \eqref{logwIntIntegr} holds. In other words, 
\begin{equation}
    \co (w) = \{ x \in \T : \text{ there exists open } I \text{ containing }
x \text{ for which } \eqref{logwIntIntegr} \text{ holds } \}
\end{equation}
\end{defn}
The set $\co (w)$ is open, and it does not depend on the particular representative of $w$ in the space $\L^1(\T)$ of equivalence classes of functions which are Lebesgue integrable on $\T$ with respect to $d\m$.

\begin{defn}{\textbf{(Carrier sets)}} \label{CarrierDef}
Let $\eta$ be a non-negative Borel measure on $\T$. A Borel subset $E$ of $\T$ is a \textit{carrier} for $\eta$ if \[\eta( \T \setminus E) = 0. \]
If $w$ is a Borel measurable function on $\T$, then we say that a set $E$ is a carrier for $w$ if it is a carrier for the Borel measure $w\, d\m$. 
\end{defn}
Carriers are obviously not unique. The set
\begin{equation}
    \label{CarrierWEq} \{ x \in \T : w(x) > 0 \}
\end{equation} is a carrier for $w$. If $w$ is only defined up to a set of $m$-measure zero, then we may take as a carrier for $w$ any set differing from \eqref{CarrierWEq} by a set of $m$-measure zero. Since $\log 0 = -\infty$, it is obvious from \eqref{logwIntIntegr} that $\co (w)$ is essentially contained in any carrier of $w$. 

Irreducibility of $\Po^2(\mu)$-spaces of the form \eqref{MuGeneralStructureEq} with $G$ satisfying \eqref{ExpDecTag} and \eqref{LogLogIntTag} can be characterized in terms of core sets. The next theorem, fundamental to our study, follows from \cite[Theorem A]{malman2023revisiting}, with the non-trivial part being the equivalence of the third condition and the other two.

\begin{thm} \thlabel{IrrDef} For a space $\Po^2(\mu)$ defined by a measure $\mu$ of the form \eqref{MuGeneralStructureEq}, with $G$ satisfying \eqref{ExpDecTag} and \eqref{LogLogIntTag}, the following three conditions are equivalent:

\begin{enumerate}[(i)]
    \item the space $\Po^2(\mu)$ contains no non-trivial characteristic function of a measurable subset of $\cD$: if $A$ is a Borel subset of $\cD$ and $1_A \in \Po^2(\mu)$ is not the zero element, then $1_A = 1_{\cD}$.
    \item the space $\Po^2(\mu)$ is a space of analytic functions on $\D$ in which the analytic polynomials are dense,
    \item the set $\co (w)$ is a carrier for $w$, or in other words it coincides with \eqref{CarrierWEq}, up to a set of $m$-measure zero.
\end{enumerate}
\end{thm}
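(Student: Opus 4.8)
The plan is to read off (ii)$\,\Leftrightarrow\,$(iii) from \cite[Theorem A]{malman2023revisiting} after a purely formal manipulation of the definition of $\co(w)$, and to obtain (i)$\,\Leftrightarrow\,$(ii) from the general decomposition theory of $\Po^2(\mu)$-spaces. All of the genuine analysis sits inside \cite[Theorem A]{malman2023revisiting}, which I would treat as a black box.

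\emph{The equivalence (ii)$\,\Leftrightarrow\,$(iii).} Under \eqref{ExpDecTag} and \eqref{LogLogIntTag}, \cite[Theorem A]{malman2023revisiting} states that $\Po^2(\mu)$ is irreducible if and only if some carrier of $w$ can be covered by open intervals $I$ satisfying \eqref{logwIntIntegr}; and, since the analytic polynomials are dense in $\Po^2(\mu)$ by construction, ``irreducible'' is here synonymous with condition (ii). By \thref{CoreDef} the union of all intervals $I$ with $\int_I \log w\, dm > -\infty$ is exactly $\co(w)$, so the criterion says precisely that $\co(w)$ contains a carrier of $w$. On the other hand, as already noted in the discussion preceding the theorem, $\log 0 = -\infty$ forces $\co(w)$ to be essentially contained in every carrier of $w$, in particular in the set \eqref{CarrierWEq}. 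Combining the two inclusions, ``$\co(w)$ contains a carrier of $w$'' is equivalent to ``$\co(w)$ and \eqref{CarrierWEq} agree up to a set of $m$-measure zero'', which is (iii); I would spell this bookkeeping out, but it is elementary.

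\emph{The equivalence (i)$\,\Leftrightarrow\,$(ii).} Assume (ii). For each $\lambda\in\D$ the evaluation $p\mapsto p(\lambda)$ is bounded on $\Po$ in the $\Po^2(\mu)$-norm — by the sub-mean-value inequality together with the continuity and strict positivity of $G$, which make $G(1-|z|)$ bounded below on a small disk about $\lambda$ — so it extends to $\Po^2(\mu)$, and thus every $f\in\Po^2(\mu)$ is determined by its analytic restriction $f_\D$ to $\D$. If now $1_A\in\Po^2(\mu)$ for a Borel set $A\subseteq\cD$, then $(1_A)_\D$ is an analytic function on $\D$ with values in $\{0,1\}$, hence constant, equal to $0$ or to $1$; by the injectivity of $f\mapsto f_\D$ just established, $1_A=0$ or $1_A=1_{\cD}$, which is (i). For the converse I would argue contrapositively using the Thomson-type structure theorem for $\Po^2(\mu)$-spaces of the form \eqref{MuGeneralStructureEq} (see \cite{conway1991theory} and, for the present setting, \cite{malman2022thomson, malman2023revisiting}): there is a Borel partition $\Delta_0,\Delta_1,\Delta_2,\dots$ of $\cD$ (modulo $\mu$) with $\Po^2(\mu)=\L^2(\mu|_{\Delta_0})\oplus\bigoplus_{n\ge1}\Po^2(\mu|_{\Delta_n})$, each summand with $n\ge1$ irreducible; when this decomposition is trivial — a single irreducible summand and $\mu(\Delta_0)=0$ — the strict positivity of $G$ on $(0,1)$ makes the analytic set of that summand all of $\D$, so $\Po^2(\mu)$ is a space of analytic functions on $\D$, i.e. (ii) holds. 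Hence if (ii) fails the decomposition is non-trivial, and splitting the index set into two non-empty parts $S$ and its complement and setting $A=\bigcup_{n\in S}\Delta_n$ produces a Borel set with $1_A\in\Po^2(\mu)$ (the constant $1$ lies in every summand) and $0\ne 1_A\ne 1_{\cD}$, so (i) fails.

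\emph{Where the difficulty lies.} The hard part is not in this argument but inside \cite[Theorem A]{malman2023revisiting}: the sharp dichotomy deciding, among radial weights $G$ of at least exponential decay, exactly when the boundary weight $w$ forces a reducing subspace. Within the present proof the point needing the most care is the contrapositive half of (i)$\,\Leftrightarrow\,$(ii): one must confirm that for measures \eqref{MuGeneralStructureEq} the analytic set of an irreducible Thomson summand is genuinely $\D$ (so that ``irreducible'' is not weaker than (ii)) and that every non-trivial orthogonal splitting of $\Po^2(\mu)$ really does yield a non-trivial characteristic function in the space.
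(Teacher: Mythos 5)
Your overall strategy mirrors the paper's: the deep content (the equivalence of (iii) with the other two) is black-boxed to \cite[Theorem A]{malman2023revisiting}, and the rest is elementary. The translation of that theorem's irreducibility criterion into statement (iii) via the two inclusions ``$\co(w)$ contains a carrier'' and ``$\co(w)$ is contained in every carrier'' is correct bookkeeping, and the Thomson-decomposition route for $(i)\Leftrightarrow(ii)$ is a reasonable way to spell out what the paper leaves implicit.

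However, there is a genuine logical gap in your $(ii)\Rightarrow(i)$ argument. You claim that boundedness of the point evaluations $p\mapsto p(\lambda)$ on $\Po^2(\mu)$ yields, via the extension to $\Po^2(\mu)$, that ``every $f\in\Po^2(\mu)$ is determined by its analytic restriction $f_\D$.'' That inference is false: boundedness of the evaluations only shows that the map $f\mapsto f_\D$ is \emph{well-defined}, not that it is \emph{injective}. Indeed, your sub-mean-value estimate uses only $\mu_\D$ and the positivity and continuity of $G$, so the evaluations are bounded for \emph{every} measure of the form \eqref{MuGeneralStructureEq}, including those for which $\Po^2(\mu)$ decomposes and contains a nontrivial $\L^2$-summand on $\T$ (cf.\ \thref{ResSetMainLemma}); for $f$ in that summand one gets $f_\D\equiv 0$ while $f\neq 0$ in $\L^2(\mu)$. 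Thus, if your ``thus'' were valid, condition $(ii)$ would hold unconditionally, and the theorem would be vacuous.

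The fix is small. Injectivity of $f\mapsto f_\D$ is precisely the content of the phrase ``$\Po^2(\mu)$ is a space of analytic functions on $\D$'' in condition $(ii)$; it is not a consequence of bounded point evaluations, it is the hypothesis. So the argument should read: assume $(ii)$, i.e., the natural map $f\mapsto f_\D$ into $\mathrm{Hol}(\D)$ is injective; if $1_A\in\Po^2(\mu)$ then $(1_A)_\D$ is an analytic function equal to $1_A$ $\mu_\D$-a.e., hence takes values in $\{0,1\}$ $dA$-a.e. (since $\mu_\D\sim dA$), so is constant; by injectivity, $1_A=0$ or $1_A=1_{\cD}$. The rest of your $(i)\Leftrightarrow(ii)$ argument, and the point you flag about verifying that the analytic set of the single irreducible Thomson summand is actually all of $\D$, are handled correctly.
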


\begin{defn}\thlabel{IrrDef2}{\textbf{(Irreducible spaces)}} A space $\Po^2(\mu)$ is \textit{irreducible} if it satisfies the three equivalent conditions stated in \thref{IrrDef}.    
\end{defn}

In particular, the following measures $\mu$ correspond to irreducible $\Po^2(\mu)$:

\begin{equation}
    \label{T1} \tag{$T1$}
    d\mu(z) =  \exp\Big(- \frac{c}{(1-|z|)^\beta}\Big) dA(z) + w(z) d\m(z), \quad c > 0, \, \beta \geq 1
\end{equation}
and 
\begin{equation}
    \label{T2} \tag{$T2$}
    d\mu(z) = \exp\Bigg(- c \exp\Bigg( \frac{1}{(1-|z|)^{\alpha}} \Bigg) \Bigg) dA(z) + w(z) d\m(z), \quad c > 0, \, \alpha \in (0,1).
\end{equation}

If the $\co (w)$ is not a carrier of $w$, then the space $\Po^2(\mu)$ will contain a full Lebesgue space $\L^2(w_r d\m)$, members of which live only on $\T$. Here $w_r$ denotes a certain \textit{residual} weight. The residuals play no role in the statements of our main results, but will be important in the proofs. Their definition is postponed to coming sections.

The reader might wonder what happens in the case $\beta < 1$ in \eqref{T1}. Then \eqref{ExpDecTag} is violated, and condition $(iii)$ in \thref{IrrDef} implies $(ii)$, but the converse is false. This can be inferred from work of Khruschev in \cite{khrushchev1978problem}, and this idea is further elaborated on in \cite{malman2022thomson}. Also one might ask what happens if $\alpha \geq 1$ in \eqref{T2}, which means that \eqref{LogIntTag} is violated. This case is less interesting: Volberg's theorem in \cite{vol1987summability} implies that $\Po^2(\mu)$ is then either a close cousin of the Hardy space $H^2$ (this happens when $\int_\T \log w \,d\m > -\infty$) or it is not a space of analytic functions at all (if $\int_\T \log w \, d\m = -\infty$). See also the introductory section to \cite{malman2023revisiting} for a more detailed account.

\subsection{Invariant subspaces generated by singular inner functions}
Having established fairly sharp conditions for irreducibility, a way opens to an operator and function theoretic study of this class of spaces. Motivated by certain applications which will soon be detailed, in the first part of the article we study the structure of $M_z$-invariant subspaces of $\Po^2(\mu)$ generated by functions in $H^\infty$, the algebra of bounded analytic functions in $\D$. This question readily reduces to the study of invariant subspaces generated by \textit{singular inner functions}
\begin{equation}
    \label{SingInnerDef}
    S_\nu(z) = \exp \Big( -\int_\T \frac{x + z}{x - z} d\nu(x)\Big), \quad z \in \D,
\end{equation} where $\nu$ is a finite positive singular Borel measure on $\T$. For $h \in H^\infty$, we will denote by $[h]$ the smallest $M_z$-invariant subspace containing $h$. It is well-known that any singular inner function generates a non-trivial invariant subspace in the classical Hardy space $H^2$ of square-summable Taylor series, and it is almost as well-known that in order for $S_\nu$ to generate a non-trivial invariant subspace in the standard weighted Bergman spaces (which are $\Po^2(\mu)$-spaces of the kind \eqref{MuGeneralStructureEq} themselves, with $G(t) = t^n$ for some $n > -1$, and $w \equiv 0$) we must have $\nu(A) > 0$ for some Beurling-Carleson set $A$ (see \cite{korenblum1975extension}, \cite{korenblum1977beurling}, \cite{roberts1985cyclic}). 

Our first main result characterizes the cyclic singular inner functions in the considered class of  $\Po^2(\mu)$-spaces. By cyclicity we mean that $[S_\nu] = \Po^2(\mu)$. It is not hard to see that the minimal considered rate of decay \eqref{ExpDecTag} of the part of $\mu$ living on $\D$ makes every non-vanishing bounded function be cyclic in $\Po^2(\mu)$ in the case that $w = 0$. Thus only properties of $w$ can stop $S_\nu$ from being cyclic.

\begin{mainthm} \thlabel{CyclicityMainTheorem}
Let $\Po^2(\mu)$ be an irreducible space defined by a measure $\mu$ of the form \eqref{MuGeneralStructureEq}. The following two statements are equivalent.
\begin{enumerate}[(i)]
    \item The singular inner function $S_\nu$ is cyclic in $\Po^2(\mu)$.
    \item The measure $\nu$ assigns no mass to the core of the weight $w$: \[ \nu\big(\co(w) \big) = 0.\]
\end{enumerate}
\end{mainthm}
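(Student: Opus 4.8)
The plan is to handle the two implications separately, using the core set technology from \thref{IrrDef} together with standard inner-outer factorization. For the direction $(i) \Rightarrow (ii)$, I would argue by contraposition: suppose $\nu(\co(w)) > 0$. Since $\co(w)$ is open, it is a countable union of its component arcs, so there is a single open arc $I$ with $\int_I \log w \, dm > -\infty$ and $\nu(I) > 0$. On such an arc the weight $w$ behaves like a Szegő weight, so one expects $\Po^2(\mu)$ restricted to the part living over $I$ to contain (a copy of) a Hardy-type space on which the shift is unitarily equivalent to multiplication by $z$ on $H^2$ of a Szegő weight; in such a space the singular inner function built from $\nu|_I$ is \emph{not} cyclic, because inner factors genuinely obstruct cyclicity in Hardy spaces. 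The technical content is to produce a nonzero bounded linear functional on $\Po^2(\mu)$ that annihilates $[S_\nu]$: concretely, one takes an appropriate reproducing-kernel-type or Cauchy-transform functional supported on the part of $\mu$ over $I$, uses $\int_I \log w \, dm > -\infty$ to build an $H^2(w\,dm|_I)$ function orthogonal to $S_\nu H^2$, and checks it pairs trivially with $zf$ for every polynomial $f$ multiplied into $S_\nu$. This shows $[S_\nu] \neq \Po^2(\mu)$.

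For the harder direction $(ii) \Rightarrow (i)$, assume $\nu(\co(w)) = 0$ and show $[S_\nu] = \Po^2(\mu)$. The strategy is: (a) decompose $\nu = \nu_1 + \nu_2$ where $\nu_1$ is carried by $\co(w)$ (hence $\nu_1 = 0$ by hypothesis, so in fact $\nu$ lives entirely off $\co(w)$), and exploit that by \thref{IrrDef}(iii) the carrier of $w$ is essentially $\co(w)$; therefore $\nu$ and $w\,dm$ are mutually singular, and moreover $\nu$ is concentrated on the "bad" set where $w$ vanishes. (b) The key point is that on this bad set the weight $w$ contributes nothing, so the only obstruction to cyclicity could come from the radial part $G(1-|z|)\,dA$ on $\D$; but the hypothesis \eqref{ExpDecTag} forces $G$ to decay so fast that \emph{every} bounded non-vanishing function — in particular every singular inner function — is cyclic for the weighted-area piece alone. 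I would make this precise by a "peeling" or "resolvent"/Korenblum-Roberts-type construction: approximate $1 \in \Po^2(\mu)$ by $S_\nu g_n$ with $g_n \in H^\infty$, building $g_n$ so that $S_\nu g_n \to 1$ in $\L^2(G(1-|z|)\,dA)$ using the exponential decay (this is where one invokes that $\log |S_\nu(z)| \geq -C/(1-|z|)$ near $\T$ is dominated by $\log 1/G$), while simultaneously $S_\nu g_n \to 1$ on $\T$ in $\L^2(w\,dm)$ because $\nu$ puts no mass on the carrier of $w$, so $S_\nu$ has unimodular boundary values $m$-a.e.\ on that carrier and can be "divided out" in the limit.

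The main obstacle I anticipate is coordinating the approximation simultaneously in both parts of $\mu$ — the area part on $\D$ and the boundary part on $\T$ — since the constructions that work for each piece in isolation (Roberts-type decompositions for the fast-decaying area weight; outer-function / Szegő approximation for the Hardy-type boundary piece) must be reconciled into a single sequence $g_n \in H^\infty$. I expect the resolution to go through the permanence properties of non-cyclic inner functions alluded to in the abstract (which presumably say that the non-cyclic part of $S_\nu$ is exactly the part charging $\co(w)$, stable under the relevant operations), combined with a duality/annihilator argument: show that any functional annihilating $[S_\nu]$ must, when restricted to the boundary piece, be an $H^2(w\,dm)$-function divisible by $S_\nu$, and, when restricted to the disk piece, vanish by the fast decay; since $\nu$ avoids $\co(w)$ the only such function is $0$, giving $[S_\nu] = \Po^2(\mu)$. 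Getting the function-theoretic bookkeeping of this duality exactly right — particularly identifying the annihilator of a general $M_z$-invariant subspace in these $\Po^2(\mu)$-spaces — is the crux.
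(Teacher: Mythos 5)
Your proposal correctly identifies the two directions and where the hard work lies, but it has a genuine gap in the direction $(ii) \Rightarrow (i)$, which you yourself acknowledge: you describe the need to "coordinate the approximation simultaneously in both parts of $\mu$" and say you expect the resolution to go through permanence properties plus a duality/annihilator argument, but you never supply that resolution. The paper's route is quite different from what you sketch and hinges on a specific device you have not found. Instead of a duality argument, the paper proves \emph{membership}: it shows directly that $1/S_\nu \in \Po^2(\mu)$, from which cyclicity follows trivially by multiplying an approximating polynomial sequence by the bounded multiplier $M_{S_\nu}$. The key technical ingredient is a weak-star approximation of $\nu$ (actually of $2\nu$) by measures $f_n\,d\m$ subject to an \emph{obstacle constraint} $0 \leq f_n \leq \log^+(1/w)$ pointwise, with $\int_\T f_n\,d\m = 2\nu(\T)$; this is exactly what the hypothesis $\nu(\co(w)) = 0$ makes possible (\thref{WeakStarSeqObstacleLemma}). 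From $f_n$ one builds the outer functions $h_n = \exp(H_{f_n}/2)$, and the two parts of the constraint control the two parts of the norm simultaneously: the small total mass (obtained by first splitting $S_\nu = (S_{\nu/N})^N$ and invoking \thref{ProdOfCyclicLemma}) controls the growth $|h_n(z)| \leq \exp\big(2\nu(\T)/(1-|z|)\big)$ against the area weight $G$, while the pointwise bound $f_n \leq \log^+(1/w)$ gives $|h_n|^2 w \leq \max(1,w) \in \L^1(\T)$ on the boundary. Uniform boundedness plus pointwise convergence $h_n \to 1/S_\nu$ then yield $1/S_\nu \in \Po^2(\mu)$. Your proposed "peeling/Korenblum--Roberts" route and the annihilator analysis are not what is needed; the obstacle constraint is the missing idea.

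For the easier direction $(i) \Rightarrow (ii)$, your plan of constructing an annihilating functional over an arc is plausible but reinvents work the paper already has: the paper simply cites \thref{PermanenceMainTheorem}, observing that if $\nu(\co(w)) > 0$ then a factor of $S_\nu$ has the permanence property and hence $S_\nu$ cannot be cyclic. Your approach would require carrying out, essentially from scratch, a construction comparable in difficulty to the proof of the permanence theorem. Also, a small inaccuracy: you say mutual singularity of $\nu$ and $w\,d\m$ follows from $\nu(\co(w)) = 0$, but $\nu$ is singular by assumption, so this is automatic and not the substantive consequence; the substantive point is the obstacle bound above.
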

Note that $\co (w)$ is open, and hence Borel measurable, so $\nu\big( \co (w) \big)$ makes perfect sense. 

\begin{example} \thlabel{PointMassExample} Let $\delta_a$ be a point mass at $a \in \T$, and 
\begin{equation}
    \label{Examplewdef}
    w(x) = \exp \Bigg(-\frac{1}{|x-1|} \Bigg), \quad x \in \T.
\end{equation} Then it is easy to check that \[ \co (w) = \T \setminus \{1 \}.\] Consequently, the singular inner function \[S_{\delta_a}(z) = \exp \Bigg(- \frac{a+z}{a-z} \Bigg), \quad z \in \D\] is cyclic, in the considered class of $\Po^2(\mu)$ constructed from $w$ appearing in \eqref{Examplewdef}, if and only if $a = 1$. 
\end{example}

Having settled the cyclicity question, we turn our attention to the invariant subspace $[S_\nu]$ generated by a singular inner function corresponding to a measure $\nu$ which places all its mass on the core: $\nu(\T) = \nu \big( \co (w) \big)$. In other words, $\co (w)$ is a carrier for $\nu$. A problem which arises in the theory of normalized Cauchy integrals and de Branges-Rovnyak spaces $\hb$ (to be discussed below) is to determine which functions are contained in the intersection $H^2 \cap [S_\nu]$, or sometimes in $\N^+ \cap [S_\nu]$, where $\N^+$ is the \textit{Smirnov class} of the disk $\D$ (see \cite{garnett} for precise definitions):
\[ \N^+ = \{ u/v : u, v \in H^\infty, \, v \text{ outer} \} \] 
In this context, we have the following result.

\begin{mainthm} \thlabel{PermanenceMainTheorem}
    Let $S_\nu$ be a singular inner function corresponding to a measure $\nu$ which satisfies \[ \nu(\T) = \nu \big( \co (w) \big).\] In an irreducible $\Po^2(\mu)$-space defined by a measure $\mu$ of the form \eqref{MuGeneralStructureEq}, the invariant subspace $[S_\nu]$ satisfies \[ [S_\nu] \cap \N^+ \subset S_\nu \N^+.\]
\end{mainthm}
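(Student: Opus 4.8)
The plan is to reduce the statement, in a few soft steps, to a concrete assertion about the model space $K_{S_\nu}=H^2\ominus S_\nu H^2$, and then to obtain that assertion by a limiting argument whose only non-trivial ingredient is a boundedness estimate that uses the hypothesis $\nu(\T)=\nu\big(\co(w)\big)$ through the local integrability of $\log w$ on the core.

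\emph{Reductions.} Let $f\in[S_\nu]\cap\N^+$ with $f\not\equiv 0$, and let $\phi$ be the outer function with $|\phi|=\min(1,|f|^{-1})$ $\m$-a.e.\ on $\T$. Then $g:=\phi f\in H^\infty$, it has the same inner part as $f$, and $g\in[S_\nu]$: indeed $[S_\nu]$ is invariant under multiplication by $H^\infty$, which one sees by approximating $\phi$ by its Ces\`aro means (bounded in $H^\infty$ and convergent to $\phi$ pointwise on $\D$ and $\m$-a.e.\ on $\T$) and applying dominated convergence in $\L^2(\mu)$. Since $f/S_\nu\in\N^+$ is equivalent to $S_\nu$ dividing $g$ in $H^\infty$, it suffices to prove: if $g\in[S_\nu]\cap H^\infty$ then $S_\nu\mid g$. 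Next, for any compact $K\subset\co(w)$ with $\nu(\T\setminus K)$ small, writing $S_\nu=S_{\nu|_K}S_{\nu|_{\T\setminus K}}$ gives $S_\nu\in S_{\nu|_K}H^\infty\subset[S_{\nu|_K}]$, so $[S_\nu]\subset[S_{\nu|_K}]$ and $g\in[S_{\nu|_K}]\cap H^\infty$; proving $S_{\nu|_K}\mid g$ for all such $K$ and letting $K$ increase to exhaust $\co(w)$ recovers $S_\nu\mid g$. Thus we may assume $\nu$ is carried by a compact $K\subset\co(w)$. Covering $K$ by finitely many open arcs on which \eqref{logwIntIntegr} holds, fix an open set $\Omega$ with finitely many components, $K\subset\Omega$, $\dist{K}{\T\setminus\Omega}>0$, and $\int_\Omega\log w\,d\m>-\infty$. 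Then $S_\nu$ extends analytically and unimodularly across $\T\setminus\Omega$.

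\emph{Reformulation.} For $g\in[S_\nu]\cap H^\infty\subset H^2$, the conclusion $S_\nu\mid g$ is equivalent to $g\perp K_{S_\nu}$ in $H^2$, hence to $\langle g,k^\nu_\lambda\rangle_{H^2}=0$ for every $\lambda\in\D$, where $k^\nu_\lambda(z)=\dfrac{1-\overline{S_\nu(\lambda)}S_\nu(z)}{1-\overline\lambda z}$ is the reproducing kernel of $K_{S_\nu}$; note $k^\nu_\lambda$ is bounded, so $k^\nu_\lambda\in\Po^2(\mu)$. Choose analytic polynomials $p_n$ with $S_\nu p_n\to g$ in $\Po^2(\mu)$. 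Since $S_\nu p_n\in S_\nu H^2$, we have $\langle S_\nu p_n,k^\nu_\lambda\rangle_{H^2}=0$ for all $n$. Therefore $\langle g,k^\nu_\lambda\rangle_{H^2}=0$ will follow once we show that the functional $p\mapsto\langle p,k^\nu_\lambda\rangle_{H^2}=\big(P_{K_{S_\nu}}p\big)(\lambda)$, initially defined on analytic polynomials, is continuous for the $\Po^2(\mu)$-norm: it then extends to $\Po^2(\mu)$, a Ces\`aro-mean approximation identifies the extension with $h\mapsto\langle h,k^\nu_\lambda\rangle_{H^2}$ on all of $H^\infty$, so it annihilates $S_\nu H^\infty$ and hence its closure, which contains $g$. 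Writing $P_{K_{S_\nu}}p=p-S_\nu v$ with $v=P_+(\overline{S_\nu}p)\in H^2$, one must dominate $|p(\lambda)|+|v(\lambda)|$ by a $\lambda$-dependent multiple of $\|p\|_{\Po^2(\mu)}$; the term $|p(\lambda)|$ is controlled by the boundedness of point evaluations in the irreducible space $\Po^2(\mu)$, so everything reduces to the estimate for $v(\lambda)$.

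\emph{The crux, and the main obstacle.} Here $v(\lambda)=\frac{1}{2\pi i}\int_\T\frac{p(x)}{S_\nu(x)\,(x-\lambda)}\,dx$, and a direct Cauchy--Schwarz bound against the boundary part $w\,d\m$ of $\mu$ is hopeless — it would require $1/w$ to be integrable near $K$, whereas we only know $\log w\in L^1(\Omega)$. The plan is to split the integral by a fixed smooth partition of unity subordinate to $\{\Omega,\ \T\setminus K\}$. The part supported off $K$ is treated by deforming the contour slightly into $\D$, which is legitimate since there $1/S_\nu$ is the restriction of a bounded analytic function, and then bounding the integral over a compact curve in $\D$ by $\|p\|_{\Po^2(\mu)}$ using uniformly bounded point evaluations together with the control of $\int_\D|p|^2G(1-|z|)\,dA$. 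For the part supported in $\Omega$ one introduces the bounded outer function $W$ with $|W|=\min(w^{1/2},1)$ on $\Omega$ and $|W|=1$ off $\Omega$, so that $|W|^2\le w$ on $\Omega$; since $W^{-1}\in\N^+$ and $S_\nu$ continues across $\partial\Omega$, one aims to rewrite this contribution so that $p$ enters only through $Wp$, whose $L^2(\T)$-norm on $\Omega$ is at most $\|p\|_{L^2(w\,d\m)}\le\|p\|_{\Po^2(\mu)}$. I expect this last point to be the genuine difficulty: it is precisely where the hypothesis $\nu(\co(w))=\nu(\T)$ must be used in full strength, and making rigorous the idea that convergence $S_\nu p_n\to g$ in $\Po^2(\mu)$ entails no loss of boundary mass on the core — so that the model-space kernels extend to bounded functionals — will, I anticipate, require the finer structural results on the invariant subspace $[S_\nu]$, on the residual weights, and on the Cauchy-type duality of $\Po^2(\mu)$ developed earlier in the paper, rather than soft function theory alone. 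Once the functional $p\mapsto\langle p,k^\nu_\lambda\rangle_{H^2}$ is known to be $\Po^2(\mu)$-continuous, the argument above yields $\langle g,k^\nu_\lambda\rangle_{H^2}=0$ for every $\lambda\in\D$, hence $g\in S_\nu H^2$, i.e.\ $S_\nu\mid g$, which completes the proof.
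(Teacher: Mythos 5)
Your reductions are sound and run parallel to the paper's: multiplying by a bounded outer function to land in $H^\infty$, and exhausting $\co(w)$ by compacts $K$ so that one may assume $\nu$ is carried by a compact subset of the core. The reformulation in terms of $\Po^2(\mu)$-continuity of the model-space evaluation functional $p \mapsto \langle p, k^\nu_\lambda\rangle_{H^2} = \bigl(P_{K_{S_\nu}}p\bigr)(\lambda)$ is attractive, but the crux --- boundedness of this functional on $\Po^2(\mu)$ --- is left open, and you say so yourself. That estimate is essentially the whole theorem: once it holds, $g = \lim S_\nu p_n$ is automatically orthogonal to every $k^\nu_\lambda$, hence $g \in S_\nu H^2$. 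Your sketch handles the contribution to $v(\lambda) = P_+(\conj{S_\nu}p)(\lambda)$ away from $K$, where $1/S_\nu$ is bounded analytic and the contour may be pushed into $\D$; but near $K$ the $W$-trick does not close: after writing the integrand as $(Wp)/(WS_\nu)$, the factor $1/(WS_\nu)$ is neither bounded nor in $\L^2$ near $K$, and local integrability of $\log w$ by itself furnishes no $\L^2$ control of it. So there is a genuine, acknowledged gap at the central step, and the intermediate steps (partition of unity, contour deformation, outer normalization) do not reduce its difficulty.

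The paper fills that gap with a fundamentally different device. Rather than dualizing against model-space kernels, it restricts the convergent sequence $S_\nu p_n$ to a ``wizard hat'' subdomain $W \subset \D$ whose base is an arc $I \subset \co(w)$ and whose sides are cuspidal. On $\partial W \cap \D$ the sequence is controlled by a majorant $F$ coming from the point-evaluation bound of \thref{PointEvaluationBoundRegMajorant}; on the base $I$, the measures $\log^+|p_n|\,d\m$ are uniformly absolutely continuous because $\log w \in \L^1(I)$ (\thref{UniAbsContLemma}). The Beurling--Ahlfors harmonic-measure estimate (\thref{BtHarmEst}) is then used in \thref{WizardHatMainProposition} to construct a positive harmonic function on $W$ that dominates $F$ on $\partial W \cap \D$ yet has finite boundary integral, and a two-sided Phragm\'en--Lindel\"of step (\thref{TdomLemma}) propagates the boundary bounds into $W$. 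Passing to the limit and comparing singular parts of boundary measures on a sub-arc (\thref{IntervalWeakStarConvLemma}) forces $\nu_h - \nu \geq 0$, i.e.\ $S_\nu$ divides $h$. That harmonic-majorant mechanism is exactly what is needed to make your functional bounded, and, as you correctly anticipate, it cannot be produced by soft function theory alone.
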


In other words, if $f \in \N^+$ can be approximated by polynomial multiples of $S_\nu$ in the norm of $\Po^2(\mu)$, and $\nu$ places all of its mass on $\co (w)$, then $S_\nu$ appears in the inner-outer factorization of $f$. Under the additional assumption that $w$ is bounded, a simple argument will show that in fact $[S_\nu] \cap H^2 = S_\nu H^2$. In \cite{limani_malman_2023} and \cite{limani2023problem}, the feature of $S_\nu$ appearing in \thref{PermanenceMainTheorem} is called its \textit{permanence property}. It is obvious that a singular inner function satisfying the permanence property cannot be cyclic.

For the considered class of spaces, \thref{CyclicityMainTheorem} and \thref{PermanenceMainTheorem} completely determine the structure of $M_z$-invariant subspaces generated by bounded analytic functions. Indeed, it follows that if $h = BS_\nu U \in H^\infty$ is the inner-outer factorization of $h$ into a Blaschke product $B$, singular inner function $S_\nu$ and outer function $U$, then \[ [h] = [S_{\nu_w}],\] where $\nu_w$ is the restriction of the singular measure $\nu$ to the set $\co (w)$.

\subsection{Functions of rapid spectral decay and Cauchy integrals}

Irreducible spaces find applications in the theory of Cauchy integrals.

\begin{defn}{\textbf{(Functions of rapid spectral decay)}} \label{ESDpropDef} Let $f(z) = \sum_{n \geq 0} f_nz^n$ be an analytic function in $\D$. If the Taylor coefficients $\{f_n\}_{n \geq 0}$ decay so fast that for some $c > 0$ we have
\begin{equation}
    \label{ESDpropEq} \tag{RSD}
    \sup_{n \geq 0} \, |f_n|\exp\big(c \sqrt{n}) < \infty,
\end{equation} then we say that $f$ is a function of \textit{rapid spectral decay}. \end{defn} 

Trivial examples of functions $f$ satisfying \eqref{ESDpropEq} are the analytic polynomials, and functions which extend analytically to a larger disk $r\D = \{ z \in \mathbb{C} : |z| < r\}$, $r > 1$. In those cases, the limit in \eqref{ESDpropEq} is zero even when the term $\exp\big(c\sqrt{n}\big)$ in \eqref{ESDpropEq} is replaced by $\exp\big(cn^{\alpha}\big)$ for $\alpha < 1$. Conversely, if  $f$ has an analytic extension to a disk around the origin of radius larger than 1, then $|f_n| = \mathcal{O}\big(\exp(-cn)\big)$ for some $c > 0$. 

Let us assume that $\nu$ is a finite Borel measure for which the Cauchy integral \begin{equation}
    \label{CauchyIntUncertThm} \C_\nu(z) := \int_\T \frac{1}{1-\conj{x}z} d\nu(x), \quad z \in \D
\end{equation} is a function satisfying \eqref{ESDpropEq}. Can we say something about the nature of the measure $\nu$? The Cauchy integral $\C_\nu$ has a representation of the form \[ \C_\nu(z) = \sum_{n \geq 0} \nu_n z^n, \quad z \in \D\] where $\{\nu_n\}_{n \geq 0}$ is the sequence of Fourier coefficients of $\nu$ indexed by non-negative integers. The rest of the coefficients are annihilated under $\C$, and the condition \eqref{ESDpropEq} gives us no information about $\nu_n$ for $n < 0$. However, the following statement is a consequence of the irreducibility of spaces corresponding to measures of the form \eqref{T1}.

\begin{mainthm} \thlabel{UncertThmRSD}
Let $\nu$ be a finite Borel measure on $\T$, and assume that the Cauchy integral $\C_\nu$, given by \eqref{CauchyIntUncertThm}, satisfies \eqref{ESDpropEq}. Then the measure $\nu$ is absolutely continuous with respect to the Lebesgue measure $d\m$: \[d \nu = g \,  d\m, \quad g \in \L^1(\T),\] and there exists an open set $U$ which differs from
\begin{equation} \label{gCarrierThmC}\{x \in \T : |g(x)| > 0 \} \end{equation} only by a set of $\m$-measure zero, with the property that to each $x \in U$ there corresponds an interval $I \subset U$ containing $x$ for which we have \[ \int_I \log |g(x)| \, d\m(x) > -\infty.\]
\end{mainthm}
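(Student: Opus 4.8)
The plan is to realize $\C_\nu$ as an element of an irreducible $\Po^2(\mu)$-space with $\mu$ of type \eqref{T1}, and then extract the desired conclusions from \thref{IrrDef} together with \thref{CyclicityMainTheorem}. First I would observe that if $\C_\nu$ satisfies \eqref{ESDpropEq} with constant $c$, then the function $\C_\nu$ lies in $\Po^2(\mu)$ whenever $G(t) = \exp(-c't^{-1})$ with $c'$ small relative to $c$; indeed the monomial $z^n$ has $\L^2(G(1-|z|)\,dA)$-norm roughly $\exp(-c''\sqrt{n})$ up to polynomial factors (a Laplace-type estimate on $\int_0^1 r^{2n} G(1-r)\,dr$), so the Taylor series of $\C_\nu$ converges absolutely in $\Po^2(\mu)$ for any choice of the boundary weight $w$. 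The freedom in choosing $w$ is the crux of the argument: we will choose $w$ adapted to $\nu$ so that the machinery forces $\nu$ to be absolutely continuous.

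The first substantive step is absolute continuity of $\nu$. Write $\nu = g\,d\m + \nu_s$ with $\nu_s$ singular. Suppose toward a contradiction that $\nu_s \neq 0$. I would pick the boundary weight $w$ so that $\co(w) = \T$ (for instance $w \equiv 1$, giving $\mu$ a Hardy-type boundary component) so that the resulting space is irreducible by \thref{IrrDef}(iii). In this space, $\C_\nu$ is a genuine analytic function on $\D$, and one checks that its boundary behavior, read off from the Lebesgue space $\L^2(\mu)$, must be compatible with $\C_\nu \in H^2$-like control on $\T$. The point is that the singular part $\nu_s$ would manifest as a singular inner factor in, or a singular measure associated to, the boundary values of $\C_\nu$; but membership in an irreducible $\Po^2(\mu)$ with a $d\m$-dominated boundary weight, combined with the rapid decay \eqref{ESDpropEq} (which in particular puts $\C_\nu$ in every Gevrey-type class and rules out wild boundary singularities), is incompatible with a nonzero singular part. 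Concretely I expect to invoke that $\C_\nu$ and its conjugate-type partner generate the whole space, so the measure $\mu$ cannot "see" a singular set carrying $\nu_s$; alternatively, one uses that the de Branges–Rovnyak/Cauchy-transform correspondence identifies the singular part of $\nu$ with a singular inner function $S_{\nu_s}$ dividing a naturally associated Smirnov function, and \thref{PermanenceMainTheorem} then yields a contradiction with the density/cyclicity forced by \eqref{ESDpropEq}. This reduction is the step I expect to be the main obstacle, since it requires carefully matching the abstract $\Po^2(\mu)$ structure theory to the concrete Cauchy-transform object.

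Once $d\nu = g\,d\m$ is established, the local log-integrability of $g$ follows cleanly. I would now choose the boundary weight to be $w = |g|$ itself (or a suitable normalization of it), so that the carrier $\{x : |g(x)| > 0\}$ of $w$ equals \eqref{gCarrierThmC}. The function $\C_\nu = \C_{g\,d\m}$ belongs to $\Po^2(\mu)$ for this $\mu$ by the Laplace estimate above (the choice of $w$ does not affect convergence of the Taylor series in the $\D$-part of the norm). Moreover $\C_{g\,d\m}$ is, up to the standard Cauchy-transform identities, a cyclic-type vector: its polynomial multiples recover the boundary data $g$, so $[\,\C_\nu\,]$ is not contained in any proper invariant subspace arising from a singular inner function, and in fact one sees directly that $1_A \notin [\C_\nu]$ for any nontrivial $A$. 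Hence the space $\Po^2(\mu)$ built from $w = |g|$ cannot have a nontrivial characteristic function sitting inside the closure of polynomials; by \thref{IrrDef} the equivalence (i)$\iff$(iii) forces $\co(|g|)$ to be a carrier for $|g|$, i.e. $\co(w)$ coincides with \eqref{gCarrierThmC} up to $\m$-null sets. Unwinding the definition of $\co(w)$ (\thref{CoreDef}) gives exactly the open set $U$ and the local intervals $I$ with $\int_I \log|g|\,d\m > -\infty$ demanded in the statement. The remaining details — verifying the Laplace asymptotics for the monomial norms, checking that $\C_\nu \in \Po^2(\mu)$ implies the claimed cyclicity, and confirming the identification $\co(w) = U$ is $\m$-a.e. — are routine given the results quoted above.
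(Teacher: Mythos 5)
Your proposal has two genuine gaps, one in each step.

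For absolute continuity of $\nu$, the paper's argument is far simpler than what you sketch, and your sketch as written does not close. Since $\C_\nu$ satisfies \eqref{ESDpropEq}, the analytic function $f(z)=\sum_{n\geq 0}\nu_n z^n$ extends smoothly to $\T$, and the Cauchy transform of $d\nu - f\,d\m$ vanishes on $\D$; the classical F.~and~M.~Riesz theorem then forces $d\nu - f\,d\m$, and hence $d\nu$, to be absolutely continuous. Your proposed route via irreducibility of $\Po^2(\mu)$ with $w\equiv 1$ is not forced by anything: membership of $\C_\nu$ in some irreducible $\Po^2(\mu)$ does not by itself produce a contradiction with $\nu_s\neq 0$, and the phrases ``incompatible with a nonzero singular part'' and the appeal to a putative singular inner divisor of a ``naturally associated Smirnov function'' are not substantiated. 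In fact nothing in \thref{PermanenceMainTheorem} or the de Branges--Rovnyak machinery is needed here at all.

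For the local log-integrability of $g$, the logical structure of your argument is inverted. You set $w=|g|$, observe $\C_\nu\in\Po^2(\mu)$, and then claim that because $1_A\notin[\C_\nu]$ for nontrivial $A$, the whole space $\Po^2(\mu)$ contains no nontrivial characteristic function. But $[\C_\nu]\subset\Po^2(\mu)$, so $1_A\notin[\C_\nu]$ gives no information about whether $1_A\in\Po^2(\mu)$; the asserted irreducibility simply does not follow. The mechanism the paper actually uses is \thref{ResVanishingLemma}: if $P_+(wg)$ satisfies \eqref{ESDpropEq} for $g\in\L^2(w\,d\m)$, then $wg$ vanishes on $\res(w)$. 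This lemma is what converts the rapid spectral decay hypothesis into the conclusion that $\co(w)$ is a carrier for $w$. Applying it is not immediate because $g$ is a priori only in $\L^1(\T)$, not $\L^2$; the paper handles this by introducing the outer function $h$ with $|h|=\min(1,1/|g|)$, noting $T_{\conj h}\C_\nu=\C_{\conj h g}$ still satisfies \eqref{ESDpropEq} by \thref{ToeplitzInvRSD}, and rewriting $\conj h g = uw$ with $u$ unimodular on $\{|g|>0\}$ and $w=\min(1,|g|)$ bounded, thereby reducing to a case where \thref{ResVanishingLemma} applies. Your proposal omits both this integrability fix and the key lemma, so the second step does not go through as written.
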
 

The function $\log |g|$ is, in general, not integrable on the entire open set $U$ appearing in \thref{UncertThmRSD}.

In a way, \thref{UncertThmRSD} is similar to the classical theorem of brothers Riesz on structure of measures $\nu$ on $\T$ with vanishing positive Fourier coefficients. In our setting, the vanishing of the coefficients is replaced by a weaker condition of their rapid decay forced by the condition \eqref{ESDpropEq}. It should be noted that if we were to replace in \eqref{ESDpropEq} the term $\exp\big( c \sqrt{n} \big)$ by $\exp \big( c n^{\alpha} \big)$ for any $\alpha < 1/2$, and thus consider the weaker unilateral spectral decay condition \[ \sup_{n \geq 0} \, |\nu_n| \exp \big( c n^{\alpha} \big) < \infty, \] then a structural result for $\nu$ as in \thref{UncertThmRSD} does not hold: $d\nu = g \, d\m$ will still be absolutely continuous, but examples show that $g$ can be chosen so that the set in \eqref{gCarrierThmC} is closed and contains no interval. This follows from a related work of Khrushchev in \cite{khrushchev1978problem}. There should be room for a slight improvement of the result (see the discussion in Section \ref{conjecturesSec} below). We ought to mention also that Volberg in \cite{vol1987summability} found spectral decay conditions making the set in \eqref{gCarrierThmC} fill up the whole circle $\T$. We will return to both these works below.

\subsection{Condition \eqref{ESDpropEq} in de Branges-Rovnyak spaces}

In most classical Hilbert spaces of analytic functions in the unit disk, the family of functions which extend analytically to a larger disk forms a dense subset of the space. This is not the case in Hilbert spaces of normalized Cauchy integrals. These are the so-called \textit{model spaces} $K_\theta$, where $\theta$ is an inner function, and more generally the \textit{de Branges-Rovnyak spaces} $\hb$, where the symbol $b$ is any analytic self-map of the unit disk. There are several ways to define the space $\hb$, the easiest perhaps being by stating that it is the Hilbert space of analytic functions on $\D$ with a reproducing kernel of the form \[ k_b(\lambda, z) = \frac{1-\conj{b(\lambda)}b(z)}{1-\conj{\lambda} z}, \quad \lambda, z \in \D.\] Alternatively, we may realize it as the space of \textit{normalized} Cauchy integrals of functions $ \tilde{f} \in \L^2(\nu_b)$, given in the special case $b(0) = 0$ by the formula 

\begin{equation}
    \label{ClarkRepHb}
    f(z) = \big(1 - b(z) \big) \int_\T \frac{\tilde{f}(x)}{1-\conj{x}z} d\nu_b(x), \quad z \in \D.
\end{equation} Here $\nu_b$ is the \textit{Aleksandrov-Clark measure} of $b$, these two objects being related by the formula
\begin{equation}\label{ClarkFormulaForb}
    \Re \Bigg( \frac{1 + b(z)}{1-b(z)}\Bigg) = \int_\T \frac{1-|z|^2}{|x-z|^2} d\nu_b(x), \quad z \in \D.
\end{equation}
The \textit{normalization} refers to multiplication of the Cauchy integral in \eqref{ClarkRepHb} by the factor $1-b(z)$, which ensures that the product lands in $H^2$. It is well-known that model spaces $K_\theta = \h(\theta)$ correspond to the purely singular measures $\nu_\theta$ in \eqref{ClarkFormulaForb}. In fact, every positive finite Borel measure $\nu$ on $\T$ corresponds to a function $b = b_\nu$ through the formula \eqref{ClarkFormulaForb}. See \cite{cauchytransform} for more details. 

If $\theta$ is a singular inner function, then $\K_\theta$ will contain no functions which extend analytically across $\T$. Moreover, it is a consequence of deep results on cyclicity of singular inner functions of Beurling from \cite{beurling1964critical}, and also of more recent results of El-Fallah, Kellay and Seip from \cite{el2012cyclicity}, that in fact if $\theta$ is singular, then for any non-zero function $f(z) = \sum_{n \geq 0} f_nz^n \in \K_\theta$ and for any $c > 0$ it holds that \[ \sup_{n \to \infty} |f_n| \exp\big(c \sqrt{n}) = \infty. \] This fact is not as deep as the two results cited above which imply it, but it is needed in the proof of one of our main results. For this reason, we give an elementary proof in Section \ref{ExtremalDecModelSpaceSec}. We mention also that a characterization of density in $\K_\theta$ of functions in $\A^\infty$, the algebra of functions analytic in $\D$ with all derivatives extending continuously to $\cD$, has been established \cite{smoothdensektheta}.

The situation is more interesting, and much more difficult to handle, in the general class of $\hb$-spaces. It was proved long ago by Sarason that the set $\Po$ of analytic polynomials is contained and dense in $\hb$ if and only if $b$ is a non-extreme point of the unit ball of $H^\infty$, a condition characterized by 
\begin{equation}
    \label{bNonExtremeIntegral}
    \int_\T \log (\Delta_b)\, d\m > -\infty,
\end{equation} where \[\Delta_b := \sqrt{1-|b|^2}.\] In terms of core sets, this result can be stated as follows, and a proof can be found in \cite{sarasonbook}.

\begin{thm*}[\textbf{Sarason}] Let $b:\D \to \D$ be an analytic function. The following three statements are equivalent.

\begin{enumerate}[(i)]
    \item The analytic polynomials are dense in $\hb$.
    \item The function $b$ is a non-extreme point of the unit ball of $H^\infty$.
    \item We have the set equality $\co (\Delta_b) = \T$.
\end{enumerate}
\end{thm*}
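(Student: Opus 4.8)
The plan is to take (i)$\iff$(ii) as the classical theorem of Sarason and to obtain (ii)$\iff$(iii) from the known characterization \eqref{bNonExtremeIntegral} of non-extreme points together with a compactness argument on $\T$; I would carry out (ii)$\iff$(iii) first, since it is elementary. As $b(\D)\subseteq\D$, the boundary values satisfy $|b|\le1$ a.e.\ on $\T$, so $0\le\Delta_b\le1$ and $\log\Delta_b\le0$ there; thus for $w=\Delta_b$ the requirement \eqref{logwIntIntegr} on an arc $I$ just says $\log\Delta_b\in\L^1(I)$. If $\co(\Delta_b)=\T$, then by \thref{CoreDef} every point of $\T$ lies in an open arc on which $\log\Delta_b$ is integrable, and by compactness finitely many such arcs $I_1,\dots,I_N$ cover $\T$; the negative part of $\log\Delta_b$ on $\T$ is then at most the sum of the negative parts of its restrictions to $I_1,\dots,I_N$, hence is integrable on $\T$, i.e.\ $\int_\T\log\Delta_b\,d\m>-\infty$. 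Conversely, if $\int_\T\log\Delta_b\,d\m>-\infty$ then, $\log\Delta_b$ being nonpositive, $\int_I\log\Delta_b\,d\m\ge\int_\T\log\Delta_b\,d\m>-\infty$ for every arc $I$, so every point of $\T$ is covered and $\co(\Delta_b)=\T$. Hence (iii) is equivalent to $\int_\T\log\Delta_b\,d\m>-\infty$, which is exactly condition \eqref{bNonExtremeIntegral}, i.e.\ (ii).

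For (i)$\Rightarrow$(ii) I would use the standard variational description of the $\hb$-norm (see \cite{sarasonbook}): $\|f\|_{\hb}^2=\sup\{\|f+bh\|_{\L^2(\T)}^2-\|h\|_{\L^2(\T)}^2 : h\in H^2\}$, the space $\hb$ consisting of those $f$ for which this supremum is finite. Expanding the square and using $\|bh\|_{\L^2(\T)}^2-\|h\|_{\L^2(\T)}^2=-\int_\T\Delta_b^2|h|^2\,d\m$, one finds that $\|z^k\|_{\hb}^2$ is the supremum over $h\in H^2$ of $1+\ell_k(h)-\int_\T\Delta_b^2|h|^2\,d\m$, where $\ell_k$ is a real-linear functional of the first $k+1$ Taylor coefficients of $h$ whose coefficients are determined by the first $k+1$ Taylor coefficients of $b$. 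It follows that $\Po\subseteq\hb$ precisely when the evaluation functional $h\mapsto h(0)$ is bounded on $H^2$ with respect to the seminorm $h\mapsto\big(\int_\T\Delta_b^2|h|^2\,d\m\big)^{1/2}$; and since Szeg\H{o}'s theorem gives $\inf\{\int_\T\Delta_b^2|h|^2\,d\m : h\in H^2,\ h(0)=1\}=\exp\big(2\int_\T\log\Delta_b\,d\m\big)$, this boundedness holds if and only if $\int_\T\log\Delta_b\,d\m>-\infty$. As density of $\Po$ in $\hb$ presupposes $\Po\subseteq\hb$, this yields (i)$\Rightarrow$(ii); it also shows $\Po\subseteq\hb$ whenever (ii) holds.

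The remaining implication (ii)$\Rightarrow$(i) --- that $\Po$ is genuinely \emph{dense} in $\hb$ when $b$ is non-extreme --- is Sarason's theorem, for which I would invoke \cite{sarasonbook}. Its proof brings in the outer Pythagorean mate $a$ of $b$ (the outer function with $a(0)>0$ and $|a|^2+|b|^2=1$ a.e.\ on $\T$, which exists precisely because \eqref{bNonExtremeIntegral} holds), realizes $\hb$ through the Toeplitz identity $T_{\conj b}f=T_{\conj a}f^{+}$ (with $f^{+}\in H^2$ uniquely determined, as $a$ is outer), the norm being $\|f\|_{\hb}^2=\|f\|_{\L^2(\T)}^2+\|f^{+}\|_{\L^2(\T)}^2$, observes that every polynomial lies in $\hb$ --- the finite system determining $p^{+}$ is triangular in the monomial basis with diagonal entries $a(0)\ne0$, so $p^{+}$ is again a polynomial of the same degree --- and then proves the density of $\Po$ in $\hb$. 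I expect this last density assertion to be the main obstacle, and it is the only step for which I would rely on \cite{sarasonbook} rather than argue directly.
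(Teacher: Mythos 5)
The paper gives no proof of this theorem: it is stated as background, attributed to Sarason, with the reader referred to \cite{sarasonbook}. The only content added by the paper's formulation is the translation of Sarason's non-extremeness criterion \eqref{bNonExtremeIntegral} into the language of core sets, i.e.\ the equivalence (ii)$\iff$(iii), which the paper treats as self-evident. Your proof of (ii)$\iff$(iii) supplies precisely the easy observation needed: since $0\le\Delta_b\le 1$ a.e.\ on $\T$, the function $-\log\Delta_b$ is nonnegative, so $\co(\Delta_b)=\T$ is, by compactness of $\T$, equivalent to the single global condition $\int_\T\log\Delta_b\,d\m>-\infty$, which is \eqref{bNonExtremeIntegral}. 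That argument is correct.

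Your sketch of (i)$\Rightarrow$(ii) via the variational formula $\|f\|_{\hb}^2=\sup_{h\in H^2}\big(\|f+bh\|_{\L^2}^2-\|h\|_{\L^2}^2\big)$ together with Szeg\H{o}'s theorem is a sound direct route, but one step should be made precise: the linear part $\ell_k(h)=2\Re\,\overline{(bh)_k}$ depends on $h_0,\dots,h_k$ jointly, and the reduction to boundedness of $h\mapsto h(0)$ alone against the seminorm $h\mapsto\big(\int_\T\Delta_b^2|h|^2\,d\m\big)^{1/2}$ is cleanest if one takes $k$ equal to the order of vanishing of $b$ at the origin, so that $\ell_k$ collapses to a nonzero multiple of $\Re h(0)$; then finiteness of $\|z^k\|_{\hb}$ forces $\int_\T\log\Delta_b\,d\m>-\infty$ by Szeg\H{o}. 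For the density direction (ii)$\Rightarrow$(i) you invoke \cite{sarasonbook}, which is exactly what the paper itself does, so there is nothing there to compare.
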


Since these conditions are very restrictive, it is tempting to make an effort to capture a larger class of symbols $b$ for which $\hb$ contains a dense subset of functions in some nice regularity class which is strictly larger than $\Po$. The article \cite{limani2023problem} connects the approximation problem in $\hb$ with the structure of $M_z$-invariant subspaces of $\Po^2(\mu)$, and \cite{limani_malman_2023} refines the method to prove the density of $\A^\infty \cap \hb$ for a large class of symbols $b$. The method from \cite{limani2023problem} is very general and applies to a wide range of approximation problems in $\hb$. In particular, it applies to approximations by functions in the class \eqref{ESDpropEq}. Since our structural results in \thref{CyclicityMainTheorem} and \thref{PermanenceMainTheorem} are definitive, we can prove also a definitive result on existence and density of functions $f \in \hb$ which satisfy \eqref{ESDpropEq}. In fact, we will prove a much stronger (and optimal) result.

In order to state our result, we will need to quantify the spectral decay of a function $f$ by a condition of the type \eqref{ESDpropEq} but with $\exp\big(c\sqrt{n}\big)$ replaced by faster increasing sequences. To this end, we define below in \thref{AdmissibleSequenceDef} the \textit{admissible sequences} $M = \{M_n\}_{n \geq 0}$. These sequences are logarithmically convex (at least eventually, for large $n$) and are decreasing to zero at least as fast as $\exp\big(-c \sqrt{n}\big)$, but satisfy a condition of the form \[ \sum_{n \geq 0} \frac{\log 1/M_n}{1+n^2} < \infty \] which prohibits, for instance, their exponentially fast decay. 

\begin{example}The sequence defined by 
\begin{equation}
    \label{ndivlognpSeq}
    M_n = \exp \Bigg( - c \frac{n}{(\log(n) + 1)^p}\Bigg), \quad n \geq 1 
\end{equation}is admissible for every $p > 1$ and $c > 0$, but it is not admissible for $p = 1$ and any $c > 0$.
\end{example}

\begin{mainthm} \thlabel{MainTheoremHbExistenceESD} Let $b:\D \to \D$ be an analytic function. The following three statements are equivalent.
\begin{enumerate}[(i)]
    \item The space $\hb$ contains a non-zero function $f$ which satisfies \eqref{ESDpropEq}.
    \item For any admissible sequence $\{M_n\}_{n \geq 0}$, the space $\hb$ contains a non-zero function $f(z) = \sum_{n \geq 0} f_nz^n$ which satisfies \begin{equation}
    \label{RapidDecayEq}
    \sup_{n \geq 0} \, \frac{|f_n|}{M_n} < \infty \end{equation}
    \item The function $b$ vanishes at some point $\lambda \in \D$, or there exists an arc $I \subset \T$ of positive length for which \[\int_I \log \Delta_b \, d\m > -\infty.\]
\end{enumerate}
\end{mainthm}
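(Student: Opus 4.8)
The plan is to prove the cyclic chain of implications $(ii)\Rightarrow(i)\Rightarrow(iii)\Rightarrow(ii)$. The first of these is immediate: by the defining properties of admissible sequences (\thref{AdmissibleSequenceDef}) one has $M_n\le C\exp(-c\sqrt n)$ for some $c,C>0$, so a non-zero $f=\sum_{n\ge0}f_nz^n$ with $\sup_n|f_n|/M_n<\infty$ automatically satisfies \eqref{ESDpropEq}. The substance of the theorem is thus the equivalence $(i)\Leftrightarrow(iii)$, which I would obtain by feeding the definitive structural information of \thref{CyclicityMainTheorem} and \thref{PermanenceMainTheorem} into the correspondence, developed in \cite{limani2023problem} and \cite{limani_malman_2023}, between approximation problems in $\hb$ and $M_z$-invariant subspaces of spaces $\Po^2(\mu)$ of the form \eqref{MuGeneralStructureEq}.

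For $(iii)\Rightarrow(ii)$, fix an admissible sequence $\{M_n\}$. If $b(\lambda)=0$ for some $\lambda\in\D$, then the reproducing kernel $k_b(\lambda,\cdot)=(1-\conj{\lambda}z)^{-1}$ is a non-zero member of $\hb$ whose Taylor coefficients are $\conj{\lambda}^n$; since $\{M_n\}$ is non-increasing and $\sum_n\log(1/M_n)/(1+n^2)<\infty$, a short argument gives $\log(1/M_n)=o(n)$, hence $|\lambda|^n\le CM_n$ for all $n$, so $k_b(\lambda,\cdot)$ verifies \eqref{RapidDecayEq}. Suppose instead that there is an arc $I$ with $\int_I\log\Delta_b\,d\m>-\infty$, equivalently $\co(\Delta_b)\ne\emptyset$. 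Here I would form a measure $\mu$ as in \eqref{MuGeneralStructureEq} with $w=\Delta_b^2$ and with the radial weight $G=G_M$ chosen so that $\|z^n\|_{\L^2(G_M\,dA)}^2\asymp M_n^2$; the admissibility constraints on $\{M_n\}$ are exactly what is needed to make $G_M$ satisfy \eqref{ExpDecTag} and \eqref{LogLogIntTag}, so that \thref{IrrDef}, \thref{CyclicityMainTheorem} and \thref{PermanenceMainTheorem} become available for $\mu$. Since $\log\Delta_b^2=2\log\Delta_b$ we have $\co(w)=\co(\Delta_b)\ne\emptyset$, so a point mass $\delta_{x_0}$ with $x_0\in\co(w)$ has $\delta_{x_0}(\co(w))>0$, and by \thref{CyclicityMainTheorem} the singular inner function $S_{\delta_{x_0}}$ is non-cyclic. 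Transporting the properness of $[S_{\delta_{x_0}}]$ back through the correspondence — more precisely, transporting the functional annihilating it — yields a non-zero $f\in\hb$ satisfying \eqref{RapidDecayEq}.

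For $(i)\Rightarrow(iii)$ I would argue contrapositively: assuming $b$ has no zero in $\D$ and $\co(\Delta_b)=\emptyset$, I must show that every non-zero $f\in\hb$ fails \eqref{ESDpropEq}. When $b$ is a singular inner function this is precisely the model-space statement recorded in the introduction and proved in Section~\ref{ExtremalDecModelSpaceSec}: a non-zero element of $\K_\theta$ cannot have coefficients of order $\mathcal{O}(\exp(-c\sqrt n))$. The work is to reduce the general case to this one. Running the same $\hb\leftrightarrow\Po^2(\mu)$ correspondence with $w=\Delta_b^2$, so that now $\co(w)=\emptyset$, \thref{CyclicityMainTheorem} shows that \emph{every} singular inner function is cyclic in $\Po^2(\mu)$; the image in $\Po^2(\mu)$ of a hypothetical non-zero $f\in\hb$ obeying \eqref{ESDpropEq} is then forced into a portion of the space incompatible with rapid spectral decay, exactly as in the model-space case, which is the contradiction sought.

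The steps I expect to be genuinely difficult are two. First, the construction and analysis of $G_M$: one must realise prescribed monomial asymptotics $\|z^n\|^2\asymp M_n^2$ by a continuous increasing function $G_M$ and then verify \eqref{ExpDecTag}--\eqref{LogLogIntTag}, a Legendre-transform computation translating the summability of $\log(1/M_n)/(1+n^2)$ into \eqref{LogLogIntTag}. Second, and more seriously, making the $\hb\leftrightarrow\Po^2(\mu)$ correspondence precise enough to transport \emph{existence} (rather than density) statements, and in particular to handle the case when $b$ is an extreme point of the ball of $H^\infty$: then $\co(\Delta_b)$ need not be a carrier for $\Delta_b$, so $\Po^2(\mu)$ with $w=\Delta_b^2$ is not irreducible and one must work with the residual weight $w_r$ and with versions of \thref{CyclicityMainTheorem}--\thref{PermanenceMainTheorem} adapted to the non-irreducible setting. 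A naive direct attempt — taking $f=(1-b)\C_\phi$ for $\phi$ a smooth bump supported on $I$ with rapidly decaying Fourier coefficients — fails precisely because the normalization factor $1-b$ destroys the coefficient decay, and this is what forces the detour through $\Po^2(\mu)$ and the invariant-subspace machinery.
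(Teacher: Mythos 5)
Your high-level plan --- cycle $(ii)\Rightarrow(i)\Rightarrow(iii)\Rightarrow(ii)$, realise admissible sequences as moment sequences of radial weights $G$ via the Legendre-transform calculus of Section~\ref{MomentEstSection}, and feed the resulting $\Po^2(\mu)$-machinery through the $J$-embedding of \thref{normformula} --- matches the paper's proof. The easy implication $(ii)\Rightarrow(i)$ and the trivial $b(\lambda)=0$ subcase (via the reproducing kernel with geometric Taylor coefficients dominated by $M_n$) are handled correctly. However, there is a genuine gap in both nontrivial implications, and it is exactly the difficulty you flag at the end but do not resolve.

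You propose to run both $(iii)\Rightarrow(ii)$ and $(i)\Rightarrow(iii)$ through cyclicity, i.e.\ through \thref{CyclicityMainTheorem}. That theorem is stated (and proved) only for \emph{irreducible} $\Po^2(\mu)$, and the measure $\mu$ with $w=\Delta_b^2$ over the whole circle is in general \emph{not} irreducible in the relevant cases: in $(iii)\Rightarrow(ii)$ one is merely given that $\co(\Delta_b)$ is nonempty, not that it carries $\Delta_b$; and in the contrapositive of $(i)\Rightarrow(iii)$ one has $\co(\Delta_b)=\varnothing$ with $\Delta_b\not\equiv 0$, which fails condition $(iii)$ of \thref{IrrDef} outright. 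Your claim in the second case that ``\thref{CyclicityMainTheorem} shows that every singular inner function is cyclic in $\Po^2(\mu)$'' is therefore not available. (It is available for the pure Bergman-type case $w\equiv 0$, which is what the paper uses to prove the model-space \thref{AlphaKthetaBreakpointProp}, but that requires $\Delta_b\equiv 0$, i.e.\ $b$ inner --- the general extreme $b$ is precisely the case you need and cannot reduce to.)

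The paper sidesteps both problems by not going through cyclicity at all in this theorem. For $(iii)\Rightarrow(ii)$ the trick is to \emph{restrict} the boundary weight to the given arc: setting $w=\Delta_b^2|_I$ makes $\co(w)=I$ a carrier of $w$, so \thref{IrrDef} applies and $\Po^2(\mu)$ \emph{is} irreducible; irreducibility alone (no cyclicity needed) forces $\L^2(w\,d\m)\not\subset\Po^2(\mu)$, hence a nonzero $F-g\perp\Po$ in $\L^2(\mu)$ with $g\neq 0$, and then $P_+wg=P_+\Delta_b(\Delta_b g)\in\hb$ is the desired function with $|(P_+wg)_n|\lesssim M_n$ coming from $\widetilde M_n\le M_n^2$. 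For $(i)\Rightarrow(iii)$ the paper invokes the residual-set containment \thref{ResSetMainLemma}, which holds with no irreducibility hypothesis: it applies $T_{\conj b}$, uses \thref{AlphaKthetaBreakpointProp} for the singular inner factor of $b$ to ensure $T_{\conj b}f\neq 0$, reads off from \thref{normformula} a nonzero $F-g\perp\Po$ in $\L^2(\mu)$ with $g$ living on $\T$, and observes that $\co(\Delta_b)=\varnothing$ would force $\L^2(\Delta_b\,d\m)\subset\Po^2(\mu)$ by \thref{ResSetMainLemma}, a contradiction. Your sentence that the hypothetical $f$ is ``forced into a portion of the space incompatible with rapid spectral decay'' gestures at this but is not an argument. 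In short: you correctly anticipated that the extreme case is the obstacle, but the resolution is not an ``adapted version of \thref{CyclicityMainTheorem}'' --- it is restriction of the weight in one direction and \thref{ResSetMainLemma} in the other, neither of which appears in your sketch.
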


In $(iii)$, the condition of vanishing of $b$ at some $\lambda \in \D$ is the uninteresting case, sice then $\hb$ contains a rational function with no poles on $\cD$. For such a function $(ii)$ holds trivially.

To reach \thref{MainTheoremHbExistenceESD} we only really need the characterization of irreducibility of $\Po^2(\mu)$. Proof of the next theorem requires the full strength of the invariant subspace results developed in the first part of this article.

\begin{mainthm} \thlabel{MainTheoremHbDensityESD} Let $b: \D \to \D$ be an analytic function, and $b = BS_\nu U$ be the inner-outer factorization of $b$. The following three statements are equivalent.
\begin{enumerate}[(i)]
    \item The set of functions $f$ in $\hb$ which satisfy \eqref{ESDpropEq} is dense in $\hb$.
    \item For any admissible sequence $\{M_n\}_{n \geq 0}$, the set of functions $f$ in $\hb$ which satisfy \begin{equation}
    \sup_{n \geq 0} \, \frac{|f_n|}{M_n} < \infty \end{equation} is dense in $\hb$.
    \item The set $\co (\Delta_b)$ is a carrier for $\Delta_b$ and for the singular measure $\nu$.
\end{enumerate}
\end{mainthm}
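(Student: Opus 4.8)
The implication $(ii) \Rightarrow (i)$ is immediate, since by \thref{AdmissibleSequenceDef} an admissible sequence satisfies $M_n \leq C\exp(-c\sqrt{n})$ for suitable $c,C>0$, so the class of functions obeying \eqref{RapidDecayEq} is contained in the class obeying \eqref{ESDpropEq}; density of the former forces density of the latter. The real content is the equivalence of $(iii)$ with $(i)$ and $(ii)$, and the plan is to transfer both remaining directions to a statement about the $M_z$-invariant subspace $[b]$ of a suitable $\Po^2(\mu)$-space, along the lines of \cite{limani2023problem}. Fix an admissible sequence $M$ and let $G_M$ be the radial weight matched to $M$ through the duality identifying the decay rate $\{M_n\}$ with the moment sequence of $G_M(1-|z|)\,dA$; admissibility of $M$ is precisely the statement that $G_M$ obeys \eqref{ExpDecTag} and \eqref{LogLogIntTag}. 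Put $w=\Delta_b^2$, note $\co(w)=\co(\Delta_b)$ since squaring a weight does not affect \eqref{logwIntIntegr}, and set $d\mu_M=G_M(1-|z|)\,dA+w\,d\m$. The transference principle reduces the density in $\hb$ of the class \eqref{RapidDecayEq} to two properties of $\Po^2(\mu_M)$: that it be irreducible, so that in particular every polynomial — which trivially satisfies \eqref{RapidDecayEq} — is dense in it, and that the singular inner factor $S_\nu$ of $b$ possess the permanence property $[S_\nu]\cap\N^+\subseteq S_\nu\N^+$ in $\Po^2(\mu_M)$, which is what lets approximants constructed inside $[b]$ be pulled back to $\hb$.

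For $(iii)\Rightarrow(ii)$: if $\co(\Delta_b)$ is a carrier for $\Delta_b$ then it is a carrier for $w=\Delta_b^2$, so $\Po^2(\mu_M)$ is irreducible by \thref{IrrDef}; and if $\co(\Delta_b)$ is a carrier for $\nu$ then $\nu(\T)=\nu(\co(w))$, so \thref{PermanenceMainTheorem} gives the permanence of $S_\nu$. Using also that $[b]=[S_\nu]$ in this irreducible space (the discussion following \thref{PermanenceMainTheorem}, with $\nu_w=\nu$), the transference principle yields density of \eqref{RapidDecayEq} in $\hb$; since $M$ was arbitrary, $(ii)$ follows. In the case $\nu=0$, $\co(\Delta_b)=\T$ this specialises to Sarason's theorem together with the remark that polynomials satisfy \eqref{RapidDecayEq}.

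For $(i)\Rightarrow(iii)$ we argue contrapositively. If $\co(\Delta_b)$ is not a carrier for $\Delta_b$, then $\Po^2(\mu_M)$ fails to be irreducible by \thref{IrrDef}: it carries a nontrivial residual part living on $\T$, on which \eqref{ESDpropEq} cannot be dense, and transference transplants this obstruction to $\hb$. If $\co(\Delta_b)$ is a carrier for $\Delta_b$ but $\nu(\T\setminus\co(\Delta_b))>0$, write $\nu=\nu_w+\nu'$ with $\nu'=\nu|_{\T\setminus\co(\Delta_b)}\neq 0$. Then $\nu'(\co(w))=0$, so $S_{\nu'}$ is cyclic in $\Po^2(\mu_M)$ by \thref{CyclicityMainTheorem}, whence $[b]=[S_\nu]=[S_{\nu_w}]$; were $S_\nu$ to enjoy permanence, $S_{\nu_w}\in[S_\nu]\cap\N^+$ would force $S_{\nu_w}=S_\nu h=S_{\nu_w}S_{\nu'}h$ for some $h\in\N^+$, i.e. $1/S_{\nu'}\in\N^+$, which is false. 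So permanence fails, density fails by transference, and $(i)$ is contradicted. (The conceptual reason is that a genuine singular inner factor $S_{\nu'}$ of $b$ places a copy of the model space $K_{S_{\nu'}}$ inside $\hb$, and such a model space contains no nonzero function obeying \eqref{ESDpropEq}, as shown in Section~\ref{ExtremalDecModelSpaceSec}.)

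The principal difficulty is the transference principle in the regularity class \eqref{ESDpropEq}: one must produce, inside the invariant subspace $[b]=[S_\nu]$ of $\Po^2(\mu_M)$, a dense family of functions whose Taylor coefficients obey \eqref{RapidDecayEq}, and then return them to $\hb$ with norm control. Here the permanence property of \thref{PermanenceMainTheorem} is exactly what confines the approximants to $S_\nu\N^+$ and makes the return possible, the vanishing of $\nu$ off $\co(\Delta_b)$ (via \thref{CyclicityMainTheorem}) is what removes the competing model-space obstruction, and the elementary fact about model spaces from Section~\ref{ExtremalDecModelSpaceSec} enters the construction of the approximants; what remains is the routine but careful bookkeeping of the Blaschke and outer factors of $b$, which leave $[b]$ unchanged but must be checked not to disturb boundary behaviour.
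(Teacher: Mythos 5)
Your outline correctly assembles the ingredients — Theorems A, B, the irreducibility theorem, the auxiliary spaces $H_2(M)$, $H_2^*(M)$, and the observation that $\co(\Delta_b^2)=\co(\Delta_b)$ — but the central step, which you call the ``transference principle,'' is not a citable black box and is where the actual proof lives. For $(iii)\Rightarrow(ii)$, passing between density of a regularity class in $\hb$ and the structure of $[b]$ in $\Po^2(\mu)$ requires the isometric embedding $J:\hb\to H^2\oplus\L^2(E)$ of \thref{normformula}, the identification of its orthogonal complement $\{(bh,\Delta_b h):h\in H^2\}$, the invariance facts \thref{CauchyTransformsinHb} and \thref{TconjbInvariance}, and then a reflexivity argument (expressing $J(H_2^*(M^2)\cap\hb)$ as $\bigcap_h\ker l_h$ under the Cauchy duality \eqref{CauchyDualityHM} and upgrading weak-star closure to norm closure) to produce a sequence $(bh_k,\Delta_b h_k)\to Jf$ in $H_2(M^2)\oplus\L^2(E)$. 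Only at that point can \thref{PermanenceMainTheorem} be applied to conclude $f/I_b\in H^2$, and one still has to push through the Smirnov maximum principle to force $f=0$. None of this appears in your proposal, and the care needed to choose $M^2$ in place of $M$ (via \thref{PowerAdmSeq} and \thref{H1starH2starEmbeddingLemma}) is not ``routine bookkeeping of the Blaschke and outer factors'' — the inner/outer structure of $b$ is the easy part; the functional-analytic duality chain is the hard part.

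There is a second, independent gap in the necessity direction. You argue that when $\nu'\neq 0$ the permanence of $S_\nu$ fails (correctly, since otherwise $1/S_{\nu'}\in\N^+$), and then infer that density fails ``by transference.'' But ``failure of permanence implies failure of density'' is not a theorem you can invoke; the implication that needs proving is the converse of the sufficiency direction, and it doesn't follow formally from negating the hypotheses used there. The paper establishes $(i)\Rightarrow(iii)$ by exhibiting \emph{explicit} obstructions: \thref{Prop1TheoremD} shows that if $\res(\Delta_b)$ has positive measure then the nonzero functions $P_+\Delta_b s$ with $s$ supported on $\res(\Delta_b)$ lie outside the closure (via \thref{ResVanishingLemma}), and \thref{Prop2TheoremD} shows that if $\nu$ charges $\T\setminus\co(\Delta_b)$ then every function in $(b/S_{\nu_r})\K_{S_{\nu_r}}\subset\hb$ is orthogonal in $\hb$ to every function satisfying \eqref{ESDpropEq}. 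Your parenthetical remark about a copy of $\K_{S_{\nu'}}$ sitting inside $\hb$ points in the right direction, but the subspace in question is $(b/S_{\nu_r})\K_{S_{\nu_r}}$ rather than $\K_{S_{\nu'}}$ itself, and the orthogonality computation (which again runs through $J$ and the duality pairing, plus cyclicity of $S_{\nu_r}$ in $\Po^2(\mu)$) needs to be carried out, not gestured at.
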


\begin{example}
    For instance, by applying our theorem to the admissible sequence \eqref{ndivlognpSeq} for any $p > 1$, we get that the density in $\hb$ of functions $f(z) = \sum_{n \geq 0} f_nz^n$ satisfying
\[ \lim_{n \geq 0} \, |f_n|\exp(c n^\alpha) = 0\] simultaneously for any $c > 0$ and any $\alpha \in (0,1)$, is equivalent to condition $(iii)$ in \thref{MainTheoremHbDensityESD}. Roughly speaking, functions satisfying such decay a condition just barely fail to have an analytic continuation to a disk larger than $\D$.
\end{example}

\begin{example} \thlabel{HbEsetExample}
    Generalizing the setting of \thref{PointMassExample}, we may replace a point by a general closed subset $E$ of $\T$, and define the outer function $b_0: \D \to \D$ by specifying its modulus $|b_0(x)|$, $x \in \T$, to satisfy the equation \[ \sqrt{1-|b_0(x)|^2} = \Delta_{b_0}(x) := \frac{1}{2}\exp \Bigg(-\frac{1}{\dist{x}{E}}\Bigg)\] for $x \in \T \setminus E$, where $\dist{x}{E}$ is the Euclidean distance from the point $x$ to the closed set $E$, and $|b_0(x)| = 1$ for $x \in E$. We can easily check that \[\co (\Delta_{b_0}) = \T \setminus E.\] If $B$ is a Blaschke product and $S_\nu$ is a singuler inner function, then functions of rapid spectral decay will be dense in the space $\hb$, with $b := BS_\nu b_0$, if and only if $\nu(E) = 0$.
\end{example}

Our proof of \thref{MainTheoremHbDensityESD} depends crucially on \thref{CyclicityMainTheorem} and \thref{PermanenceMainTheorem}, but is otherwise similar to the proofs in \cite{limani_malman_2023} and \cite{limani2023problem}. However, in the present work we obtain new information on which functions in $\hb$ fail to be approximable by classes appearing in \thref{MainTheoremHbDensityESD}. These results are presented in Section \ref{DensityESDHbSec}. 

We mentioned earlier that our result is optimal. This is morally true, in the following sense. Assume that $M = \{M_n\}_{n \geq 0}$ is a logarithmically convex sequence which is not admissible according to \thref{AdmissibleSequenceDef}, because we have \begin{equation}
    \label{notAdmissibleSeqEq} \sum_{n \geq 0} \frac{\log M_n}{1+n^2} = -\infty.
\end{equation} For instance, $M$ could be defined by \eqref{ndivlognpSeq} for $p = 1$. If Volberg or Kriete and MacCluer were interested in approximations in $\hb$-spaces, they would have proved the following theorem by a use of their techniques in \cite{kriete1990mean} and \cite{vol1987summability}.

\begin{thm*}[\textbf{Volberg, Kriete-MacCluer}]
Let $M = \{M_n\}_{n \geq 0}$ be a logarithmically convex sequence satisfying the property \eqref{notAdmissibleSeqEq}. The following two statements are equivalent.

\begin{enumerate}[(i)]
    \item The space $\hb$ contains a non-zero function $f$ which satisfies \begin{equation*}
\sup_{n \geq 0} \, \frac{|f_n|}{M_n} < \infty. \end{equation*}
    \item The function $b$ vanishes at some point $\lambda \in \D$, or $b$ is non-extreme.  
\end{enumerate}  
\end{thm*}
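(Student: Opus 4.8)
The plan is to prove the two implications separately. The direction $(ii)\Rightarrow(i)$ is elementary. If $b$ is non-extreme, then by Sarason's theorem (see \cite{sarasonbook}) the analytic polynomials belong to $\hb$; in particular the constant function $f\equiv 1$ lies in $\hb$, is non-zero, and has $\sup_{n\geq 0}|f_n|/M_n = 1/M_0<\infty$ since $M_0>0$. If instead $b(\lambda)=0$ for some $\lambda\in\D$, the reproducing kernel $k_b(\lambda,\cdot)=(1-\conj{\lambda}z)^{-1}$ belongs to $\hb$; it is a rational function whose only pole lies outside $\cD$, so it extends analytically past $\T$ and has geometrically decaying Taylor coefficients, which for the sequences $M$ relevant here — for instance \eqref{ndivlognpSeq} with $p=1$ — is more than enough to give \eqref{RapidDecayEq}. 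This is precisely the uninteresting alternative already noted after \thref{MainTheoremHbExistenceESD}.

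For $(i)\Rightarrow(ii)$ I would argue by contraposition: assume that $b$ is an extreme point of the unit ball of $H^\infty$ \emph{and} that $b$ has no zero in $\D$, and show that any $f\in\hb$ with $\sup_n|f_n|/M_n<\infty$ vanishes identically. Since $b$ is zero-free we may write $b=S_\nu U$ with $S_\nu$ a singular inner function and $U$ outer, and we put $w:=\Delta_b^2=1-|U|^2\in\L^1(\T)$; extremality of $b$ says exactly that $\int_\T\log w\,d\m=-\infty$. Following the transference method of \cite{limani2023problem} and \cite{limani_malman_2023}, I would pair the decay class $\{M_n\}$ with a radial weight: there is a continuous, increasing, positive function $G=G_M$ whose weighted moments $\int_\D |z|^{2n}G_M(1-|z|)\,dA(z)$ are comparable to $M_n^2$, obtained from the standard Borel/Laplace-transform correspondence between logarithmically convex sequences and radial weights. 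Under this pairing, log-convexity of $M$ makes $G_M$ a legitimate radial weight, the automatic bound $M_n\leq\exp(-c\sqrt n)$ (forced by \eqref{notAdmissibleSeqEq} together with log-convexity) yields \eqref{ExpDecTag} for $G_M$, and — the decisive point — the failure \eqref{notAdmissibleSeqEq} of admissibility of $M$ is \emph{equivalent} to the failure of \eqref{LogLogIntTag} for $G_M$, i.e.\ $\int_0^d\log\log(1/G_M(x))\,dx=\infty$. We then set $d\mu=G_M(1-|z|)\,dA(z)+w\,d\m$.

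With $\mu$ in hand, the duality argument of \cite{limani2023problem} converts a non-zero $f\in\hb$ satisfying \eqref{RapidDecayEq} into a non-zero element of the invariant subspace $[S_\nu]\subset\Po^2(\mu)$ lying in the analytic part of $\Po^2(\mu)$, and conversely. Now apply the summability theorem of Volberg (\cite{vol1987summability}) and of Kriete--MacCluer (\cite{kriete1990mean}) to $\Po^2(\mu)$: since $G_M$ violates \eqref{LogLogIntTag} while $\int_\T\log w\,d\m=-\infty$, the space $\Po^2(\mu)$ is \emph{not} a space of analytic functions — it contains a non-trivial characteristic function and splits off a full Lebesgue space carried by $\T\setminus\co(w)$ — and tracking the transference exactly as in the admissible regime, but with \thref{IrrDef} replaced by this collapse, shows that the only analytic element of $\Po^2(\mu)$ arising from a zero-free symbol $b$ is $0$. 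Hence $f=0$, completing the contrapositive. (If instead $\int_\T\log w\,d\m>-\infty$, i.e.\ $b$ is non-extreme, the same theorem identifies $\Po^2(\mu)$ with a Hardy-type space in which the analytic polynomials are dense, recovering the first half of $(ii)\Rightarrow(i)$.)

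The main obstacle is the quantitative dictionary of the second paragraph: one must construct $G_M$ sharply enough that \eqref{notAdmissibleSeqEq} is genuinely \emph{equivalent} to the failure of \eqref{LogLogIntTag}, and one must run the transference so that it respects the inner--outer factorization of $b$ — in particular so that, once $b$ is extreme, the absence of a Blaschke factor really does force the transferred analytic element to vanish. This is exactly the place where the present (coarser) regime uses the collapse phenomenon of Volberg and Kriete--MacCluer in the role played, in the admissible regime, by \thref{IrrDef} together with \thref{CyclicityMainTheorem} and \thref{PermanenceMainTheorem}.
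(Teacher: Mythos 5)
The paper does not actually give a proof of this statement; it explicitly declines to, saying only that the argument should be ``completely analogous'' to that of \thref{MainTheoremHbExistenceESD}, with the theorems of Volberg and Kriete--MacCluer substituting for the main theorem of \cite{malman2023revisiting}. Your overall scheme (construct $G_M$ via the Legendre correspondence so that \eqref{notAdmissibleSeqEq} matches the failure of \eqref{LogLogIntTag}, form $\mu$ with $w=\Delta_b^2$, transfer through the $J$-isometry and the Cauchy duality, then invoke the Volberg/Kriete--MacCluer dichotomy) is the right route and matches what the paper suggests. One attribution slip: \thref{MainTheoremHbExistenceESD} needs only the irreducibility characterization, not \thref{CyclicityMainTheorem} or \thref{PermanenceMainTheorem}; those are reserved for the density statement \thref{MainTheoremHbDensityESD}, so it is only the irreducibility theorem that the Volberg collapse is replacing.

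There is a genuine gap in your zero case of $(ii)\Rightarrow(i)$. You dispatch it with the reproducing kernel $k_b(\lambda,\cdot)=(1-\conj{\lambda}z)^{-1}$, whose coefficients decay geometrically, and assert that this suffices. That was only automatic in the admissible regime: condition $(iii)$ of \thref{AdmissibleSequenceDef} together with log-convexity forces $\log(1/M_n)/n\to 0$, hence $|\lambda|^n/M_n\to 0$ for every $|\lambda|<1$. Under \eqref{notAdmissibleSeqEq} the sequence $M_n$ may decay geometrically or faster (for instance $M_n=2^{-n}$ is log-convex and satisfies \eqref{notAdmissibleSeqEq}), and then for a single Blaschke factor $b(z)=(z-\lambda)(1-\conj{\lambda}z)^{-1}$ with $|\lambda|$ close to $1$ the one-dimensional space $\hb=\K_b$ is spanned by $k_b(\lambda,\cdot)$, yet $\sup_n |f_n|/M_n=\infty$ for every non-zero $f\in\hb$. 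So either the statement must carry a restriction keeping $M_n$ above some geometric floor (as would happen if $M$ were required to arise as a moment sequence compatible with \eqref{ExpDecTag}), or the zero case requires an argument other than the kernel; the ``for the sequences relevant here'' hedge signals the problem without resolving it. Less serious but worth stating explicitly: in $(i)\Rightarrow(ii)$ you need the analogue of \thref{AlphaKthetaBreakpointProp}, namely that $\K_{I_b}$ (with $I_b$ purely singular) contains no non-zero $f$ with $|f_n|=\mathcal{O}(M_n)$, in order to force $T_{\conj{b}}f\neq 0$ and hence a non-trivial element orthogonal to $\Po^2(\mu)$. This does hold, since $\log(1/M_n)\to\infty$ upgrades \eqref{notAdmissibleSeqEq} to $\sum(\log 1/M_n)^2/(1+n^2)=\infty$, which is the El-Fallah--Kellay--Seip cyclicity condition on $\Po^2(\mu_\D)$, but it is precisely where non-admissibility is consumed on the model-space side and should appear in the proof.
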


We do not prove the above theorem in the present article. Its proof is completely analogous to the proof of \thref{MainTheoremHbExistenceESD}. The difference consists merely of a use of theorems and observations of Volberg and Kriete-MacCluer from the above mentioned papers, instead of main theorem of \cite{malman2023revisiting} as we do here in the proof of \thref{MainTheoremHbExistenceESD}. 

It follows that the investigation of existence and approximability properties in $\hb$ of functions with spectral decay satisfying at least \eqref{ESDpropEq} is essentially completed in \thref{MainTheoremHbExistenceESD} and \thref{MainTheoremHbDensityESD}.

\subsection{Additional comments, questions and conjectures}
\subsubsection{Work of McCarthy and Davis} The class of functions satisfying \eqref{ESDpropEq} has already appeared in the theory of de Branges-Rovnyak spaces. In \cite{davis1991multipliers}, McCarthy and Davis showed that a function $h$ satisfies \eqref{ESDpropEq} if and only if the multiplication operator $M_h$ acts boundedly on $\hb$ for \textit{all} non-extreme symbols $b$. In particular, this means that every space $\hb$ defined by a non-extreme symbol $b$ contains all functions satisfying \eqref{ESDpropEq}. Our \thref{MainTheoremHbExistenceESD} then establishes a converse statement: a characterization of $b$ for which $\hb$ contains no non-zero such functions. 

\subsubsection{Relation to Khrushchev's results} 
Khrushchev in \cite{khrushchev1978problem} studied a problem similar to one appearing in \thref{UncertThmRSD}. If $1_E$ is the characteristic function of a set $E$ contained in $\T$, and there exists a function $g$ living only on $E$ such that $\C_{g}$ in \eqref{CauchyIntUncertThm} has some regularity properties, then what can be said about $E$? Khrushchev used the phrase \textit{removal of singularities of Cauchy integrals} in the context of his study of nowhere dense $E \subset \T$ which support a function $g$ with a smooth Cauchy integral $\C_g$. Thus \textit{"removing"} the singularities of the irregular set $E$. His solution is given in terms of Beurling-Carleson sets. The weighted version of the problem replaces $1_E$ by a general weight $w$. 

In turn, \thref{MainTheoremHbDensityESD} can be seen as a solution to the problem of removal of singularities of normalized Cauchy integrals in context of the class \eqref{ESDpropEq}, where the possible existence of a singular part of the measure $\nu$ forces the normalization. Indeed, given a positive finite Borel measure $\nu$ on $\T$, we may ask if the space $L^2(\nu)$ contains a dense subset of functions $\tilde{f}$ for which the normalized Cauchy integral in \eqref{ClarkRepHb} (with $\nu_b$ replaced by $\nu$) satisfies \eqref{ESDpropEq}. The condition, in terms of the associated function $b = b_\nu$ given by \eqref{ClarkFormulaForb}, is given in $(iii)$ of \thref{MainTheoremHbDensityESD}. In this context, it would be of interest to characterize intrinsically the measures $\nu$ which correspond to $b$ satisfying condition $(iii)$ of \thref{MainTheoremHbDensityESD}.

\begin{question} Let $b: \D \to \D$ be an analytic functions which satisfies the condition $(iii)$ in \thref{MainTheoremHbDensityESD}. Can we describe the structure of the corresponding Aleksandrov-Clark measure $\nu_b$ of $b$ appearing in formula \eqref{ClarkFormulaForb} ?
\end{question}

\subsubsection{Logarithmic convexity of admissible sequences} In spite of some efforts, the author has not been able to remove the assumption of logarithmic convexity in \thref{AdmissibleSequenceDef}. Surely the most interesting admissible sequences, such as \eqref{ndivlognpSeq}, do satisfy such a conditon, but ideally one would like to remove this assumption. Logarithmic convexity of $\{M_n\}_{n \geq 0}$ plays its part in the proof of \thref{AdmissibleSequenceLemma}. In relation to that, we would like to answer the following question. 
\begin{question} If $c(x)$, $x > 0$, is an increasing, positive and continuous function which satisfies 
\begin{equation}
    \label{cIntEq} \int_1^\infty \frac{c(x)}{x^2} \, dx < \infty,
\end{equation} then under what additional conditons on $c$ may we replace $c(x)$ in \eqref{cIntEq} by its least concave majorant?
\end{question} 
Any interesting condition on $c$ which guarantees the above integrability property of its concave majorant will lead to slighly improved versions of our theorems.

\subsubsection{Non-integrability of $\log G$ as a sharp condition} \label{conjecturesSec}
Consider the condition 
\begin{equation} \label{LogIntTag}
\int_0^d \log (1/G(x)) \, dx  < \infty \tag{LogInt}\end{equation} for some $d > 0$. The condition \eqref{ExpDecTag} implies that our considered functions $G(x)$ will always fail to satisfy \eqref{LogIntTag}. In fact, it is (at least in the mind of the author) reasonable to conjecture that several of the results of this article should have sharp improvements in which the requirement for $G$ to satisfy \eqref{ExpDecTag} is replaced by the requirement for $G$ \textit{not} to satisfy \eqref{LogIntTag}. 
This condition is, in turn, equivalent to the statement that \[ \sum_{n \geq 0} \frac{\big( \log 1/M_n(G)\big)^2}{1+n^2} = \infty,\] where $\{M_n(G)\}$ is defined in \eqref{GMomentDef} and is the sequence of moments of the function $G$. This equivalence can be deduced using techniques appearing in Section \ref{MomentEstSection} below. The above condition appears in \cite{el2012cyclicity} as a necessary and sufficient condition for all singular inner functions to be cyclic in a space $\Po^2(\mu_\D)$ with $d\mu(z) = G(1-|z|)dA(z)$, and so $w = 0$ in contrast to the situation dealt with in the present article. 

For instance, a sharp version of the irreducibility of $\Po^2(\mu)$ with $\mu$ of the form \eqref{MuGeneralStructureEq} would follow if we could prove the following statement.

\begin{conjecture} \thlabel{conj1} Assume that $G$ fails to satisfy \eqref{LogIntTag} and $w \in \L^1(\T)$ is a non-negative weight on $\T$. If $\Po^2(\mu)$ of the form \eqref{MuGeneralStructureEq} is a space of analytic functions on $\D$, then the set $\co (w)$ of the weight $w \in \L^1(\T)$ is a carrier for $w$. 
\end{conjecture}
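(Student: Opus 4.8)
The plan is to prove the contrapositive of \thref{conj1}: assuming $\co(w)$ is \emph{not} a carrier for $w$, I will exhibit a non-zero element of $\Po^2(\mu)$ whose restriction to $\D$ vanishes $dA$-almost everywhere, contradicting the hypothesis. Since $G$ is continuous and positive, point evaluation at $0$ is automatically bounded on $\Po^2(\mu)$, so each element has a well-defined analytic restriction to $\D$ obtained as a locally uniform limit of polynomials, and "$\Po^2(\mu)$ is a space of analytic functions" means exactly that this restriction map is injective. Its failure amounts to the existence of analytic polynomials $p_n$ with $\int_\D |p_n|^2 G\, dA \to 0$ and $p_n|_\T \to h$ in $\L^2(w\, d\m)$ for some $h \neq 0$. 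Since $\co(w)$ is not a carrier, fix a Borel set $F \subset \T$ of positive length on which $w > 0$ a.e. and with $|F \cap \co(w)| = 0$; then every arc $I$ meeting $F$ in positive measure satisfies $\int_I \log w\, d\m = -\infty$, and I fix a point of Lebesgue density $\zeta_0 \in F$.

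\emph{Case 1: $G$ satisfies} \eqref{ExpDecTag}. If $G$ also satisfies \eqref{LogLogIntTag}, the conclusion follows at once from \thref{IrrDef}. Otherwise, pick $c > 0$ with $\log 1/G(x) \ge c/x$ for all small $x$, and put $\tilde G(x) := \max\big(G(x), \exp(-c/x)\big)$: a continuous, positive, increasing function with $\tilde G \ge G$ that coincides with $\exp(-c/x)$ near $0$, hence satisfies \eqref{ExpDecTag} and \eqref{LogLogIntTag} and still fails \eqref{LogIntTag}. Applying \thref{IrrDef} to $\tilde\mu := \tilde G(1-|z|)\, dA(z) + w(z)\, d\m(z)$ — whose condition $(iii)$, being about $w$ alone, is violated — shows $\Po^2(\tilde\mu)$ is not a space of analytic functions, so it contains a non-zero $h$ with $h|_\D \equiv 0$. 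Since $G \le \tilde G$ and $\mu,\tilde\mu$ are mutually absolutely continuous with $\|\cdot\|_{\L^2(\mu)} \le \|\cdot\|_{\L^2(\tilde\mu)}$, we get $\Po^2(\tilde\mu) \subseteq \Po^2(\mu)$, and $h$ remains a non-zero element of $\Po^2(\mu)$ with $h|_\D \equiv 0$. This is the desired contradiction. (The comparison works only because one \emph{enlarges} $G$; shrinking it toward a weight satisfying \eqref{LogIntTag} is what one cannot afford.)

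\emph{Case 2: $G$ fails} \eqref{ExpDecTag}. Now $\liminf_{x\to0^+} x\log 1/G(x) = 0$, $G$ is majorized by no weight $\exp(-c/x)$, and the comparison of Case 1 is unavailable; the polynomials $p_n$ must be produced directly, following the scheme of the first part of this article and of Khrushchev and Kriete-MacCluer. One would pass to the moment picture: with $\gamma_k := \int_\D |z|^{2k}G(1-|z|)\, dA(z)$, radial symmetry gives $\int_\D |p|^2 G\, dA = \sum_{k\ge0} |\widehat p_k|^2 \gamma_k$ for the Taylor coefficients $\widehat p_k$ of $p$, and the estimates of Section \ref{MomentEstSection} identify the failure of \eqref{LogIntTag} with the divergence of $\sum_{k\ge0}(\log 1/\gamma_k)^2/(1+k^2)$. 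One then chooses a non-zero target $h$ carried by $F$ — e.g.\ $h = 1_A$ for a subset $A \subseteq F$ of positive length on which $w$ is bounded, so that $h \in \L^2(w\, d\m)$ — and must construct analytic polynomials $p_n \to h$ in $\L^2(w\,d\m)$ whose Taylor coefficient sequences tend to $0$ in the $\{\gamma_k\}$-weighted $\ell^2$ norm. This construction is delicate: it is not enough to take powers of a cleverly chosen outer function (their arguments oscillate), and one must solve an extremal problem attached to the weight $w$ near $\zeta_0$, where the hypothesis $\int_I \log w\, d\m = -\infty$ supplies the room to push the $\L^2(w\,d\m)$-error to zero, while the coefficients of $p_n$ have to be concentrated at the (relatively large) values of $\gamma_k$ that the failure of \eqref{ExpDecTag} makes available.

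The step I expect to be the main obstacle is precisely this moment-level approximation estimate in Case 2: a sharp bound — a quasi-analyticity estimate calibrated to the logarithmic integral $\int_0^d \log 1/G(x)\, dx$ — to the effect that an analytic polynomial small in the $\{\gamma_k\}$-weighted $\ell^2$ norm cannot be too regular on $\T$, valid \emph{right at the threshold} where $G$ merely fails \eqref{LogIntTag}. Under \eqref{ExpDecTag} the $\gamma_k$ decay at least like $\exp(-c\sqrt k)$, the estimate localizes to thin annuli near $\T$, and that localization drives the proof of \thref{IrrDef}; when \eqref{ExpDecTag} fails, $\gamma_k$ can decay slightly more slowly than $\exp(-c\sqrt k)$ along the governing indices, the localization is lost, and one is forced to first regularize $\{\gamma_k\}$ — replacing $\log 1/\gamma_k$ by a logarithmically convex minorant with the same \eqref{LogIntTag}-defect, equivalently replacing the radial weight by one with concave logarithm. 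This is exactly the type of statement isolated in the paper's open question about passing to the least concave majorant of a positive increasing $c$ subject to $\int_1^\infty c(x)/x^2\, dx < \infty$. A satisfactory answer there, combined with the invariant-subspace results of this article and the Volberg-Kriete-MacCluer circle of techniques, should close the gap; without it, the argument above still settles \thref{conj1} whenever $G$ satisfies \eqref{ExpDecTag} and reduces the remaining case to a moment-regularization problem.
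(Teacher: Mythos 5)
This statement is labeled a \emph{conjecture} in the paper: the author offers it as a proposed strengthening of \thref{IrrDef} and gives no proof. There is therefore nothing to compare your attempt against, and the question is whether you close the problem. You do not, and to your credit you say so explicitly. What you do prove is correct but can be shortened considerably: in your Case~1 (where $G$ satisfies \eqref{ExpDecTag}) the subcase split and the auxiliary weight $\tilde G = \max\big(G(x),\exp(-c/x)\big)$ are unnecessary, because \thref{ResSetMainLemma} is stated under \eqref{ExpDecTag} alone --- without \eqref{LogLogIntTag} --- and yields $\L^2(w_r\,d\m) \subset \Po^2(\mu)$ directly whenever $\m(\res(w)) > 0$, which is exactly the condition that $\co(w)$ fails to be a carrier for $w$. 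Every nonzero element of $\L^2(w_r\,d\m)$ vanishes identically on $\D$, so nonanalyticity of $\Po^2(\mu)$ follows at once. Your $\tilde G$-comparison is a valid but roundabout re-derivation of that containment.

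The genuine gap is your Case~2: $G$ fails \eqref{ExpDecTag} yet still fails \eqref{LogIntTag}, for instance $G(x) = \exp\big(-1/(x\log(1/x))\big)$. That regime is precisely the content of \thref{conj1} and the reason it is stated as a conjecture rather than a theorem. None of \thref{IrrDef}, \thref{ResSetMainLemma}, or the Volberg and Kriete--MacCluer theorems (which concern the complementary failure of \eqref{LogLogIntTag}) applies there. Your diagnosis of the obstacle --- that one needs a quasi-analyticity bound calibrated exactly to the divergence of $\int_0^d \log(1/G)$, and that after passing to the moment picture the missing step is the replacement of $\{\log 1/\gamma_k\}$ by a logarithmically convex minorant with the same \eqref{LogIntTag}-defect --- is accurate, and in fact it reproduces the paper's own open question about least concave majorants in Section~\ref{conjecturesSec}. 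But a diagnosis is not a proof. In sum: a correct partial result for the \eqref{ExpDecTag} case (obtainable more quickly from \thref{ResSetMainLemma}), an honest and well-informed reduction of the remaining case, and no resolution of the conjecture.
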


Given this result, one could attempt to combine our techniques appearing in Section \ref{CyclicSection} and those of El-Fallah, Kellay and Seip from \cite{el2012cyclicity} to prove the following strong version of both their result and our \thref{CyclicityMainTheorem}.

\begin{conjecture}
    In the setting of \thref{conj1}, a singular inner function $S_\nu$ is cyclic in the space of analytic functions $\Po^2(\mu)$ if and only if $\nu\big( \co (w) \big) = 0$.
\end{conjecture}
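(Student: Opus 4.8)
We outline how such a proof might be organized, granting \thref{conj1}, so that the hypothesis that $\Po^2(\mu)$ is a space of analytic functions already guarantees that $\co(w)$ is a carrier for $w$. The two implications are of quite different character.

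\emph{Necessity.} Suppose $\nu\big(\co(w)\big)>0$; we must show $S_\nu$ is not cyclic, i.e.\ that $1\notin[S_\nu]$ in $\Po^2(\mu)$. When $G$ satisfies \eqref{LogLogIntTag} in addition to failing \eqref{LogIntTag}, this follows from \thref{CyclicityMainTheorem} by a comparison argument. Fix $c>0$, put $G'(t):=\min\big(G(t),\exp(-c/t)\big)$, and note that $G'$ is continuous, positive and increasing, that $G'\le G$, and that, since $\log(1/G'(t))=\max\big(\log(1/G(t)),c/t\big)$, the weight $G'$ satisfies \eqref{ExpDecTag} and \eqref{LogLogIntTag}. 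Let $d\mu'(z)=G'(1-|z|)\,dA(z)+w(z)\,d\m(z)$. The irreducibility criterion of \thref{IrrDef} depends only on $w$, so $\Po^2(\mu')$ is irreducible by \thref{conj1}, and \thref{CyclicityMainTheorem} yields that $S_\nu$ is not cyclic in $\Po^2(\mu')$. On the other hand $\mu$ and $\mu'$ are mutually absolutely continuous and $\|g\|_{\L^2(\mu')}\le\|g\|_{\L^2(\mu)}$ for every $g\in\L^2(\mu)$, because $G'\le G$ and the boundary parts agree; hence any sequence of the form $p_jS_\nu$ converging to $1$ in $\L^2(\mu)$ also converges to $1$ in $\L^2(\mu')$, so cyclicity of $S_\nu$ in $\Po^2(\mu)$ would entail cyclicity in $\Po^2(\mu')$, a contradiction. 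In the remaining, more rapidly decaying case where $G$ violates \eqref{LogLogIntTag}, analyticity of $\Po^2(\mu)$ forces $\int_\T\log w\,d\m>-\infty$ by Volberg's theorem, so that $\co(w)=\T$ and $\Po^2(\mu)$ is, up to the usual cousinship, a weighted Hardy space $H^2(W\,d\m)$ with $\int_\T\log W\,d\m>-\infty$; there the equivalence reduces to the classical Beurling-type description of the invariant subspaces.

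\emph{Sufficiency} is the substantive part and the expected obstacle. Assume $\nu\big(\co(w)\big)=0$; we must prove $1\in[S_\nu]$. When $w\equiv0$ this is exactly the theorem of El-Fallah, Kellay and Seip \cite{el2012cyclicity}: then $\co(w)=\emptyset$, and the failure of \eqref{LogIntTag} is equivalent to the divergence of $\sum_{n\ge0}\big(\log 1/M_n(G)\big)^2/(1+n^2)$, which is their necessary and sufficient condition for cyclicity of every singular inner function in $\Po^2(G(1-|z|)\,dA)$. For $w\not\equiv0$ the plan is to re-run the cyclicity scheme of Section~\ref{CyclicSection}, which treats the regime \eqref{ExpDecTag} where the part of $\mu$ on $\D$ is so small that the ``interior'' contribution to the approximation is essentially free, and to substitute for that free estimate the quantitative resolvent/Bernstein-type bounds of El-Fallah--Kellay--Seip, valid as soon as $G$ fails \eqref{LogIntTag}. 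Concretely: (a) by \thref{conj1}, $w\,d\m$ is carried by $\co(w)$, which one covers by countably many open arcs $I_k$ with $\int_{I_k}\log w\,d\m>-\infty$, while $\nu$ is carried by $\T\setminus\co(w)$; (b) over $\D$, the El-Fallah--Kellay--Seip machinery produces polynomials $p_j$ with $p_jS_\nu\to1$ in the norm of $\L^2\big(G(1-|z|)\,dA\big)$; (c) one must promote this to convergence in the full norm of $\Po^2(\mu)$, i.e.\ additionally in $\L^2(w\,d\m)$, equivalently over $\co(w)$. Here $\nu\big(\co(w)\big)=0$ enters decisively: $S_\nu$ carries no singular mass over $\co(w)$, so near each $I_k$ it is regular up to the boundary, and the only obstruction on $I_k$ to approximating $1$ by multiples of $S_\nu$ is the one already removed by the interior estimate (b). The two steps would be glued by cyclic outer multipliers $O_{k,\varepsilon}$ adapted to the arcs $I_k$ -- of modulus comparable to $1$ on $I_k$ and rapidly decreasing off it -- in a successive-approximation scheme which replaces $1$ by $O_{k,\varepsilon}$, applies (b), and lets $k\to\infty$ and $\varepsilon\to0$.

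\emph{The main obstacle} is step (c). The El-Fallah--Kellay--Seip construction is designed for a pure area measure on $\D$ and gives no control of the approximating polynomials $p_j$ on $\T$, where they may well be unbounded; reconciling their delicate disk estimates with simultaneous control against $w$ over $\co(w)$ -- uniformly in both limiting parameters and in the arc decomposition -- is precisely the amalgamation of the two methods that has not yet been carried out. A secondary difficulty is the behaviour at the relative boundary of $\co(w)$, where the arcs $I_k$ accumulate and $\nu$ may deposit mass; the cleanest remedy is likely to first absorb a neighbourhood of $\T\setminus\co(w)$ by a single cyclic outer multiplier whose modulus is concentrated on $\co(w)$, and only afterwards to invoke the interior argument on what remains.
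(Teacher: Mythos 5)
The statement you were given is an open conjecture in the paper, not a proved result: the author supplies no proof and presents it explicitly as something ``one could attempt to'' establish by combining the cyclicity arguments of Section~\ref{CyclicSection} with the El-Fallah--Kellay--Seip machinery from \cite{el2012cyclicity}. There is therefore no proof in the paper to compare against, and your proposal — which is candidly a programme rather than a proof — accurately reflects that state of affairs.

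Your necessity argument (non-cyclicity when $\nu(\co(w))>0$) is, however, a correct and essentially complete reduction to results the paper \emph{does} prove, granting \thref{conj1}. The comparison weight $G'(t)=\min\big(G(t),\exp(-c/t)\big)$ is increasing, continuous, dominated by $G$, and satisfies \eqref{ExpDecTag}; since $\log\log(1/G')=\max\big(\log\log(1/G),\log(c/t)\big)$ and $\log(c/t)$ is integrable near $0$, it inherits \eqref{LogLogIntTag} whenever $G$ has it. Irreducibility of $\Po^2(\mu')$ then follows by applying \thref{conj1} to the original $\mu$ (so $\co(w)$ carries $w$) and \thref{IrrDef} to $\mu'$, and \thref{CyclicityMainTheorem} gives non-cyclicity of $S_\nu$ there; the norm monotonicity $\|\cdot\|_{\mu'}\le\|\cdot\|_{\mu}$ transfers this back to $\Po^2(\mu)$. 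The residual case where $G$ also fails \eqref{LogLogIntTag} is correctly disposed of via Volberg's dichotomy, which forces $\co(w)=\T$ and reduces to Beurling's theorem in a weighted $H^2$. This half of the conjecture is thus closed by your argument (conditional on \thref{conj1}), which is a useful observation the paper does not record.

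The sufficiency half is where the conjecture genuinely remains open, and you say so. Your diagnosis of the obstruction — step (c), namely promoting the El-Fallah--Kellay--Seip convergence in $\L^2\big(G(1-|z|)\,dA\big)$ to convergence in the full $\Po^2(\mu)$-norm, i.e.\ with simultaneous control against $w\,d\m$ over $\co(w)$, uniformly through the arc decomposition and the limiting parameters — matches the author's own framing of what a proof would require. So the gap you leave is exactly the gap the paper leaves; your proposal is not wrong, merely incomplete, and it is honestly labelled as such.
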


In relation to \thref{UncertThmRSD}, we expect the following improvement.
\begin{conjecture}
    The conclusion of \thref{UncertThmRSD} can be reached if \[\C_\nu(z) = \sum_{n \geq 0} \nu_n z^n\] merely satisfies \[ \sup_{n \geq 0} \,\frac{|\nu_n|}{M_n} < \infty\] for some (say, logarithmically convex) sequence $\{M_n\}_{n \geq 0}$ satisfying \[ \sum_{n \geq 0} \frac{\big( \log 1/M_n\big)^2}{1+n^2} = \infty.\]
\end{conjecture}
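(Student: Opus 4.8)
The plan is to mimic the proof of \thref{UncertThmRSD}, replacing the single decay rate $\exp(-c\sqrt n)$ by the general logarithmically convex sequence $\{M_n\}$. First I would isolate the genuinely new case: if $\log(1/M_n)\ge\epsilon\sqrt n$ along a subsequence of full density, then $|\nu_n|\le CM_n$ already yields $\sup_n|\nu_n|\exp(\epsilon'\sqrt n)<\infty$ for some $\epsilon'>0$, and \thref{UncertThmRSD} applies unchanged; so I may assume $\log(1/M_n)=o(\sqrt n)$ while the standing hypothesis $\sum_n(\log 1/M_n)^2/(1+n^2)=\infty$ persists and, together with log-convexity, still forces $\sum_n|\nu_n|<\infty$, so that $\C_\nu$ has a continuous boundary function $\C_\nu^*$ on $\T$. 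The brothers-Riesz step is then literally the one from \thref{UncertThmRSD}: for every analytic polynomial $p$ one has $\int_\T p\,d\conj\nu=\int_\T p\,\conj{\C_\nu^*}\,d\m$, so the measure $\conj\nu-\conj{\C_\nu^*}\,d\m$ has vanishing non-negative Fourier coefficients, hence is absolutely continuous; thus $\nu=g\,d\m$ with $g\in\L^1(\T)$, the analytic projection $P_+g$ equal to $\C_\nu^*$, and $g-\C_\nu^*\in\conj{H^1_0}$. In particular $\widehat g(n)=\nu_n$ with $|\widehat g(n)|\le CM_n$ for $n\ge 0$, and we may write $\conj{\C_\nu^*}=\conj g+k$ with $k\in H^1_0$.

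Next I would translate the decay of $\{M_n\}$ into a choice of radial weight. Using the techniques of Section \ref{MomentEstSection} and the log-convexity of $\{M_n\}$, pick a continuous, increasing, positive $G$ whose moments $M_n(G)=\int_\D|z|^{2n}G(1-|z|)\,dA(z)$ are comparable to $M_n^2/\epsilon_n^2$ for a summable sequence $\epsilon_n\downarrow 0$ — for instance $\epsilon_n\asymp 1/(n\log^2 n)$ — chosen so that $\{M_n(G)\}$ remains a legitimate (log-convex, decreasing) Bergman moment sequence. Because $\log(1/M_n)=o(\sqrt n)$ this $G$ decays strictly more slowly than $\exp(-c/x)$ for every $c>0$, so $G$ fails \eqref{ExpDecTag}; but the divergence hypothesis, via the equivalence recorded in Section \ref{conjecturesSec}, gives that $G$ fails \eqref{LogIntTag}. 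The point of the comparability $\sqrt{M_n(G)}\asymp M_n/\epsilon_n$ is to dominate the series appearing below. Set $\mu:=G(1-|z|)\,dA(z)+|g(z)|\,d\m(z)$.

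The core of the argument is to show that $\Po^2(\mu)$ contains no nontrivial characteristic function — this is where the structure $P_+g=\C_\nu^*$ is spent. Assume $1_A\in\Po^2(\mu)$ with $A\ne\cD$. Since $\Po^2(G\,dA)$ is a reproducing-kernel space of analytic functions on $\D$ (valid for any continuous positive increasing $G$), the $\D$-restriction of $1_A$ is $0$ or $1$, so one reduces to $A=E_0\subseteq\T$. Pick polynomials $p_j\to 1_{E_0}$ in $\L^2(\mu)$; then $\|p_j\|^2_{\L^2(G\,dA)}=\sum_n|\widehat{p_j}(n)|^2M_n(G)\to 0$, so $p_j\to 0$ locally uniformly on $\D$ (hence $\widehat{p_j}(n)\to 0$ for each fixed $n$) and $|\widehat{p_j}(n)|\le C_0/\sqrt{M_n(G)}\asymp C_0\epsilon_n/M_n$ uniformly in $j$. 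Fixing $m\ge 0$, I compute $\int_\T\conj{x}^m p_j\conj{\C_\nu^*}\,d\m$ two ways. Expanding $\conj{\C_\nu^*}$ in its rapidly decaying Taylor coefficients gives $\sum_{l\ge 0}\widehat{p_j}(m+l)\conj{\nu_l}$, which tends to $0$ by dominated convergence: the dominating sequence $l\mapsto(C_0\epsilon_{m+l}/M_{m+l})\cdot CM_l$ is summable because concavity of $n\mapsto\log(1/M_n)$ bounds $M_l/M_{m+l}$ by a constant depending only on $m$. Writing instead $\conj{\C_\nu^*}=\conj g+k$, the same integral equals $\langle\conj{x}^m p_j,g/|g|\rangle_{\L^2(|g|\,d\m)}+\widehat{(p_jk)}(m)$; the second term is a finite sum of products $\widehat{p_j}(\cdot)\widehat k(\cdot)\to 0$, and the first converges to $\int_{E_0}\conj{x}^m\conj g\,d\m$ by the $\L^2(|g|\,d\m)$-convergence of $p_j$. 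Hence $\int_{E_0}\conj{x}^m\conj g\,d\m=0$ for all $m\ge 0$, so $1_{E_0}g\in H^1_0$; vanishing on the positive-measure set $\T\setminus E_0$ forces $g=0$ a.e.\ on $E_0$, and then $1_{E_0}=0$ in $\L^2(\mu)$, contradicting $1_{E_0}\ne 0$. (The case $A=\cD\cup E_0$ reduces to this one; the boundary case $E_0=\T$ would force $g\in H^1$, a case in which the conclusion of \thref{UncertThmRSD} is trivial.) Once $\Po^2(\mu)$ is known to carry no nontrivial characteristic function, and since $G$ fails \eqref{LogIntTag}, \thref{conj1} applied to $w=|g|$ yields that $\co(|g|)$ is a carrier for $|g|$, hence for $g$ and for $\nu$ — which is exactly the conclusion of \thref{UncertThmRSD}.

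The main obstacle is the very last step: it invokes \thref{conj1}, which is open, so this plan proves the conjecture only modulo (a suitable form of) \thref{conj1} — and that dependence is, of course, why the statement is posed as a conjecture rather than proved. A secondary, smaller difficulty is the moment realization of the second step: one must produce a genuine radial weight whose moments are comparable to a prescribed log-convex sequence with the correct concavity-versus-summability balance, precisely the kind of question raised around \eqref{cIntEq}, which the methods of Section \ref{MomentEstSection} settle only in favorable cases. Everything else — the reduction to $\log(1/M_n)=o(\sqrt n)$, the brothers-Riesz step, and the dominated-convergence sandwich in the third step — is a routine transcription of the proof of \thref{UncertThmRSD}.
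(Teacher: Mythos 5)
The statement you are asked to prove is labelled as a \emph{conjecture} in the paper; it is posed as an open problem in Section \ref{conjecturesSec}, and no proof of it appears anywhere in the text. There is therefore no ``paper's own proof'' to compare against, and the honest reading of your submission is that you have written a \emph{conditional reduction}, not a proof. You say so yourself in the final paragraph, so I will record that your self-assessment is correct and then point at the specific places where the argument is incomplete.

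The decisive gap is the last step: after establishing (modulo the points below) that $\Po^2(\mu)$ contains no nontrivial characteristic function, you invoke \thref{conj1} to conclude that $\co(|g|)$ carries $|g|$. \thref{conj1} is itself open; it is precisely the missing ingredient that would upgrade condition (i) of \thref{IrrDef} to condition (iii) when $G$ merely fails \eqref{LogIntTag} rather than satisfying \eqref{ExpDecTag}. Since your constructed $G$ decays strictly slower than $\exp(-c/x)$ for every $c>0$ (this is forced by the reduction to $\log(1/M_n)=o(\sqrt n)$), \thref{IrrDef} as proved in \cite{malman2023revisiting} does not apply and the conclusion genuinely requires the open conjecture. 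This is exactly why the statement is a conjecture rather than a theorem, and the author's own remarks around \thref{conj1} indicate that your pathway --- prove irreducibility from the function-theoretic hypothesis, then appeal to a sharp version of \thref{IrrDef} --- is the intended route.

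Two secondary issues are also unresolved. First, the moment realization: you need a continuous increasing $G$ with moments $M_n(G)\asymp M_n^2/\epsilon_n^2$ for a suitably summable $\epsilon_n$, with $\{M_n(G)\}$ still log-convex, and with the divergence $\sum(\log 1/M_n(G))^2/(1+n^2)=\infty$ preserved. The machinery of Section \ref{MomentEstSection} is designed for the \emph{admissible} regime (condition (iii) of \thref{AdmissibleSequenceDef} is a \emph{summability}, not divergence, hypothesis), and whether the Legendre-envelope argument produces a legitimate $G$ in the divergence regime is tied to the open question the paper raises around \eqref{cIntEq}. Second, a small technical point in your initial dichotomy: ``$\liminf \log(1/M_n)/\sqrt n>0$'' and ``$\log(1/M_n)=o(\sqrt n)$'' are not exhaustive alternatives even for a concave sequence, since the ratio $\log(1/M_n)/\sqrt n$ need not converge. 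This can be patched (if $\liminf>0$ you invoke \thref{UncertThmRSD} directly; if $\liminf=0$ you can still run the rest of the argument since you only use $\log(1/M_n)$ being sub-$\sqrt n$ along the lower envelope), but as written the case split is not airtight. None of these secondary points, however, is the real obstruction; the real obstruction is the appeal to \thref{conj1}.

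In short: the proposal is a coherent and well-motivated plan, the brothers Riesz step and the two-way computation of $\int_\T\conj x^{\,m}p_j\,\conj{\C_\nu^*}\,d\m$ are sound transcriptions of the techniques already in Sections \ref{MomentEstSection} and \ref{ThmCProofSec}, and the reduction to \thref{conj1} is almost certainly what the author had in mind. But since \thref{conj1} is open, you have not proved the conjecture, and the paper provides no proof to compare against.
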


One can show, by considerations of examples, that all of the above conjectures imply sharp results.

\subsection{Outline of the rest of the article}

Section \ref{WizardHatSection} deals with construction of special domains which look like wizard hats and which support very large positive harmonic functions. We prove \thref{PermanenceMainTheorem} and \thref{CyclicityMainTheorem} in Sections \ref{PermanenceSection} and \ref{CyclicSection}, respectively. Proof of \thref{PermanenceMainTheorem} relies heavily on results of Section \ref{WizardHatSection}. The second part of the article starts in Section \ref{MomentEstSection}. There we deal with some preparatory estimates on moments sequences which are needed later. \thref{MainTheoremHbExistenceESD} and \thref{MainTheoremHbDensityESD} are proved in Sections \ref{ExistenceESDHbSec} and \ref{DensityESDHbSec}. The techniques used in these sections come from \cite{limani2023problem}, but we refine some of the methods and prove auxilliary results of hopefully independent interest. Finally, in Section \ref{ThmCProofSec}, we prove \thref{UncertThmRSD}. 

\subsection{Some notation}

For a measure $\mu$ on $\cD$ we will denote by $\mu_\D$ and $\mu_\T$ its restriction to $\D$ and $\T$, respectively. In some contexts we will also use the same notations $\mu_\D$ and $\mu_\T$ to emphasize that the considered measure lives only on $\D$ or $\T$. The area measure $dA$ will always be normalized by the condition $A(\D) = 1$, and a similar convention will be used also for the arc-length measure on the circle: $m(\T) = 1$. We let $\log^+(x) = \max \big( 0, \log x \big)$.

The symbol $\|\cdot \|_\mu$ always denotes the usual $\L^2(\mu)$-norm corresponding to the finite positive Borel measure $\mu$. For a set $E \subset \T$, we sometimes use the shorter notation $\L^2(E)$ to denote the space of functions on $\T$ which vanish outside of $E$ and are square-integrable with respect to the Lebesgue measure $\m$. The notation $\ip{\cdot}{\cdot}$ denotes different kinds of duality pairings between spaces. By $\ip{\cdot}{\cdot}_{\L^2}$ we will denote the standard inner product in $L^2(\T)$.

The operator $P_+: \L^2(\T) \to H^2$ is the orthogonal projection onto the Hardy space $H^2$. For a bounded analytic function $h$, the notation $T_{\conj{h}}: H^2 \to H^2$ stands for the co-analytic Toeplitz operator with symbol $h$, this operator being defined by the formula $T_{\conj{h}}f = P_+ \conj{h} f$.

\section{Wizard hats and their harmonic measures}
\label{WizardHatSection}

The proof of \thref{PermanenceMainTheorem} relies on a technique of restriction of a convergent sequence of analytic functions to a certain subdomain of $\D$. It is easier to construct the corresponding domain in the setting of a half-plane, and later use a conformal mapping argument. We will work in the upper half-plane $\H$. There, our domain looks like a wizard's hat (see Figure \ref{fig:wizardHat}).

Harmonic measures will play an important role in our discussion, so we start by recalling some basic related notions, and set some further notations. Let $\Omega$ be a Jordan domain in the plane. The domains which will appear in our context have a boundary consisting of a finite union of smooth curves. Let $\omega(z, E, \Omega)$ denote the harmonic measure of a segment $E$ of the boundary $\partial \Omega$, based at the point $z \in \Omega$.  Then \[ z \mapsto \omega(z, E, \Omega), \quad z \in \Omega \] is a positive harmonic function in $\Omega$ which extends continuously to the boundary $\partial \Omega$ except at the endpoints of $E$. It attains the boundary value $1$ at the relative interior of $E$, and boundary value $0$ on $\partial \Omega \setminus E$. Let $\mathcal{B}(\partial \Omega)$ denote the Borel $\sigma$-algebra on $\partial \Omega$. For each fixed $z_0 \in \Omega$, the mapping \[ A \mapsto \omega(z_0, A, \Omega), \quad A \in \mathcal{B}(\partial \Omega)\] defines a positive Borel probability measure on $\partial \Omega$. The reader can consult the excellent books by Garnett and Marshall \cite{garnett2005harmonic} and by Ransford \cite{ransford1995potential} for more background and other basic facts about harmonic measures which are used in this section.

Let \[\H = \{ z = x+iy \in \mathbb{C} : y > 0 \}\] denote the upper half-plane of $\mathbb{C}$. The main efforts of this section will go into estimation of the harmonic measure on a \textit{wizard hat domain} $W$. The domain is constructed from an interval $I \subset \R$ and a \textit{profile function} $p(x)$, $x \geq 0$, which by our definition is increasing, positive and continuous, smooth (say, continuously differentiable) for $x > 0$, and which satisfies $p(0) = 0$. Given a profile function $p$ and an interval $I = (a,b)$, we define the wizard hat $W$ to be the simply connected Jordan domain 
\begin{equation}
    \label{wizardHatEq} 
    W = W(p,I) := \Big\{ z = x+iy \in \H : x \in I, y < \min \big[ p(x-a), p(b-x) \big] \Big\}.
\end{equation} 
The boundary $\partial W$ is a piecewise smooth curve, with three smooth parts divided by three cusps. An example of a domain $W$, constructed from a profile function $p(x) = x^q$ for some $q > 1$, is marked by the shaded area in Figure \ref{fig:wizardHat}.
Our goal is to prove a result regarding existence of harmonic functions which grow rapidly along $\partial W \cap \H = \partial W \setminus \R$. 

\begin{defn}{\textbf{(Majorants)}}
\thlabel{RegMajorantDef} Let $d > 0$ be some positive number. A positive function $F: (0,d) \to [0, \infty)$ will be called a \textit{majorant} if it satisfies the following two properties:
\begin{enumerate}[(i)]
    \item $F(t)$ is a decreasing function of $t > 0$, and $\lim_{t \to 0^+} F(t) = +\infty$,
    \item $\int_0^d \log F(t) \, dt < \infty$.
\end{enumerate}
\end{defn}

The properties of $F$ appearing in \thref{RegMajorantDef} are related to growth estimates on functions in the investigated class of $\Po^2(\mu)$-spaces. See \thref{PointEvaluationBoundRegMajorant} below.

\begin{prop}
\thlabel{WizardHatMainProposition} Let $I \subset \R$ be a finite interval and $F$ be a majorant in the sense of \thref{RegMajorantDef}. There exists a profile function $p$, a wizard hat $W = W(p, I)$, and a positive harmonic function $u$ on $W$ which extends continuously to the boundary $\partial W$ except at the two cusps of $\partial W$ on $\R$, satisfies $u(x) = 0$ for $x$ in the interior of $I$, and $u(z) = F( \Im z)$ for $z \in \partial W \cap \H$.
\end{prop}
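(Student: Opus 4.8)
The plan is to construct $W$ and $u$ "from the boundary inwards": first fix the shape of the profile $p$ so that the cusps at the endpoints of $I$ are sharp enough, then build the harmonic function $u$ with boundary data $F(\Im z)$ on $\partial W \cap \H$ by a Perron/harmonic-measure representation, and finally verify that finiteness of $u$ at interior points of $W$ is exactly the content of hypothesis (ii), $\int_0^d \log F(t)\,dt < \infty$. Concretely, once $p$ and hence $W$ are chosen, define
\[
u(z) = \int_{\partial W \cap \H} F(\Im \zeta)\, d\omega(z, \zeta, W), \qquad z \in W,
\]
where $\omega(\cdot,\cdot,W)$ is harmonic measure on $W$. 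Since $F$ is continuous and bounded on the part of $\partial W \cap \H$ at height $\geq \epsilon$ for each $\epsilon > 0$, and vanishes (is finite) only near the cusps, this is a positive harmonic function on $W$ which attains the right boundary values at every boundary point except possibly the two cusps on $\R$; assigning $u = 0$ on the interior of $I$ is consistent because $\omega(z,\cdot,W)$ puts all its mass on $\partial W \cap \H$ as $z \to I$. The real content is to show $u \not\equiv +\infty$, i.e. that for some (hence every) $z_0 \in W$ the integral converges.

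The key estimate is therefore an upper bound on the harmonic measure $\omega(z_0, \{\Im \zeta \approx t\}, W)$ of the portion of the upper boundary of $W$ lying near height $t$, and one wants this to decay fast enough that $\int_0^d \log F(t)\,dt < \infty$ implies $\int F(t)\, d\omega < \infty$. This is where the freedom in choosing the profile function $p$ is used: a sharp cusp (say $p(x) = x^q$ with $q$ large, or something growing even more slowly at $0$) forces the harmonic measure of the "height $\leq t$" part of $\partial W$ to be exponentially small in a power of $1/t$, by a standard Carleman-type / extremal-length estimate for harmonic measure in a cusp (compare the Ahlfors distortion theorem, or the Beurling projection theorem, as in Garnett–Marshall). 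I would first reduce to a single cusp (the domain is symmetric-ish about the two endpoints; near the midpoint $F$ is bounded so contributes nothing), conformally straighten the cusp region, and estimate the harmonic measure of a crosscut at height $t$ in terms of $\exp\big(-\pi \int_t^{d} \frac{dx}{\theta(x)}\big)$ where $\theta(x)$ is the width (opening angle, or linear width) of $W$ at height $x$. Choosing $p$ so that the width shrinks appropriately makes this beat $\log F(t)$: one needs $\int_0^d \log F(t)\, dt < \infty$ to be exactly the matching condition, which is why (ii) is the natural hypothesis. A clean way to organize the bookkeeping: pick $p$ so that $\omega(z_0,\{t/2 < \Im\zeta < t\},W) \leq C \exp(-1/t)$ or similar, then $\int F \, d\omega \lesssim \sum_k F(2^{-k}) e^{-2^k}$, and since $\log F$ is integrable, $\log F(2^{-k}) = o(2^k)$ along a subsequence — actually one wants the dyadic sum $\sum 2^{-k}\log F(2^{-k}) < \infty$, which follows from integrability of $\log F$ together with monotonicity of $F$, so the terms $e^{-2^k} F(2^{-k})$ are summable.

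The main obstacle I anticipate is making the harmonic measure upper bound quantitative and uniform enough: one must choose the profile $p$ \emph{after} being handed $F$, so the construction has to feed the growth rate of $F$ into the sharpness of the cusp in a controlled way, and then prove the harmonic-measure decay rigorously (Carleman's method gives the estimate in terms of an integral of $1/(\text{width})$, and one has to verify the width function coming from $p$ makes that integral dominate $\int \log F$). A secondary technical point is the behavior near the \emph{cusps of $\partial W$ in $\H$} themselves (the two points where the graph of $p(x-a)$ or $p(b-x)$ meets height $0$, and the top cusp where the two graphs cross): there $F(\Im\zeta)$ may blow up, so I must check that harmonic measure is still small enough there — but this is the same cusp estimate applied again, and the top crossing point sits at a fixed positive height where $F$ is bounded, so it is harmless. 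Continuity of $u$ up to $\partial W$ away from the two lower cusps is then a routine consequence of continuity of the boundary data and regularity of the (piecewise smooth, Dini-smooth away from cusps) boundary.
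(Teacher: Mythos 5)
Your overall strategy---build $u$ as the harmonic-measure integral of the boundary data $F(\Im\zeta)$, bound the harmonic measure near the cusp via a Carleman/Beurling--Ahlfors estimate, discretize over dyadic height scales, and invoke integrability of $\log F$ to make the resulting sum converge---is exactly the paper's, and your preparatory observations (reduction to a single cusp; the top crossing sits at a fixed height where $F$ is bounded; conformal transfer to the disk for continuity up to $\partial W$ away from the two cusps on $\R$) are also as in the paper. Likewise, your conversion of $\int_0^d \log F\,dt < \infty$ into $\sum_k 2^{-k}\log F(2^{-k}) < \infty$ is precisely the content of \thref{GammaSeqLemma}. However, the one concrete bookkeeping step you propose---choosing $p$ so that $\omega(z_0,\{t/2 < \Im\zeta < t\},W) \leq C\exp(-1/t)$---is not attainable, and this is the crux of the argument.

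Indeed, suppose the harmonic measure of the portion of $\partial W$ below height $t$ were $\leq C\exp(-\epsilon/t)$ for some fixed $\epsilon > 0$. At the horizontal coordinate $s = p^{-1}(t)$ the Beurling--Ahlfors estimate then demands $2\pi\int_s^c dx/p(x) \geq \epsilon/p(s)$ for all small $s$. Set $V(s) = \int_s^c dx/p(x)$, so $V' = -1/p$; the inequality gives $p(s)V(s) \geq \epsilon/2\pi$, hence $|(\log V)'(s)| = 1/(p(s)V(s)) \leq 2\pi/\epsilon$, so $\log V$ has bounded derivative and $V(0^+) < \infty$. But then $p(s) \geq \epsilon/(2\pi V(0^+)) > 0$ for all small $s$, contradicting $p(0)=0$. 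So no profile function with a cusp can make the harmonic measure decay like $\exp(-\epsilon/t)$ in terms of the \emph{height} $t$: the Carleman integral and the reciprocal width are coupled through $p$, and the coupling is exactly what forbids it. The paper sidesteps this by decoupling the height from the horizontal parametrization. It fixes the heights $p(t_n) = 2^{-n-n_0}$ to decrease geometrically and then uses the remaining freedom in the spacings $\Delta t_n = t_n - t_{n+1}$---subject only to $\sum \Delta t_n$ equaling the half-interval length, which is where $\sum 2^{-n}\log F(2^{-n}) < \infty$ enters---to make $2\pi\Delta t_{n-1}/p(t_{n-1})$ dominate $\log F(p(t_{n+1}))$ term by term. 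This decoupling (tuning the horizontal steps as a separate, $F$-dependent degree of freedom rather than demanding a height-pointwise decay rate) is a genuine additional idea; without it or some equivalent device, the plan as you have written it does not close.
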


In order to prove \thref{WizardHatMainProposition}, we will need to estimate the harmonic measure $\omega(z_0, B_t, W)$ of the following piece $B_t$ of the boundary of $W$:

\begin{equation}
    \label{BtEq}
    B_t = \Big\{ z = x+iy\in \partial W : 0 < y, \, a < x < t \Big \}.
\end{equation}
See Figure \ref{fig:wizardHat}, where $B_t$ is marked. A result of Beurling and Ahlfors (see \cite[Theorem 6.1 of Chapter IV]{garnett2005harmonic}) can be applied to the union of $W$, $I$ and the reflected domain $\conj{W} = \{ \conj{z} : z \in W \}$ to obtain a good estimate for the harmonic measure of $B_t$.

\begin{figure}
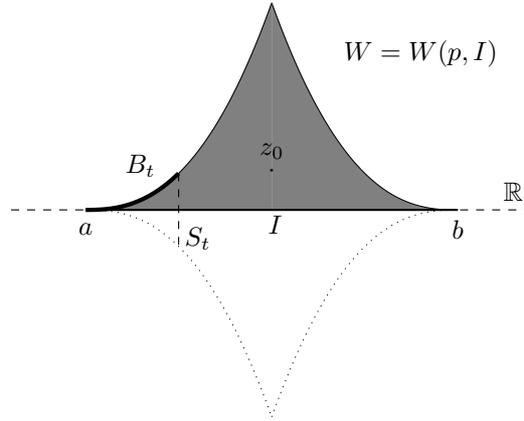

    \centering
    \includestandalone[scale=1]{WizardHat}
     \caption{The wizard hat $W$ and a piece $B_t$ of its boundary.}
    \label{fig:wizardHat}
\end{figure}

\begin{prop}\textbf{(Beurling-Ahlfors estimate)} Let $\theta$ be a positive continuous function defined on an interval $(a,b) \subset \R$, and let $\Omega$ be the domain \[ \Omega = \{ z = x+iy : |y| < \theta(x), \, a < x < b \Big\}. \] If $z_0 \in \Omega$ and $S_a = \{ z \in \partial \Omega : \Re z = a \}$ is the left vertical part of the boundary of $\Omega$, then \[ \omega(z_0, S_a, \Omega) \leq \frac{8}{\pi} \exp \Bigg( -2 \pi \int_a^{\Re z_0} \frac{dx}{\theta(x)}\Bigg). \]
\end{prop}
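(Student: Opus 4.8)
The statement to prove is the Beurling–Ahlfors bound on the harmonic measure $\omega(z_0, S_a, \Omega)$ of the left vertical edge of a symmetric "strip" domain $\Omega = \{x+iy : |y| < \theta(x),\ a < x < b\}$, namely
\[
\omega(z_0, S_a, \Omega) \le \frac{8}{\pi}\exp\Bigl(-2\pi \int_a^{\Re z_0} \frac{dx}{\theta(x)}\Bigr).
\]
This is a classical result; the excerpt itself attributes it to Beurling and Ahlfors and points to \cite[Theorem 6.1 of Chapter IV]{garnett2005harmonic}. Since the paper treats it as a citable black box, I do not expect a self-contained proof to be demanded — but let me sketch how the argument goes, in case one wants to record it.

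The approach I would take is the standard extremal-length / length-area method. First I would let $u(z) = \omega(z, S_a, \Omega)$ and, for $\sigma \in (0,1)$, consider the level curve $\gamma_\sigma = \{z \in \Omega : u(z) = \sigma\}$; by symmetry of $\Omega$ under $y \mapsto -y$ and the maximum principle, each $\gamma_\sigma$ is a crosscut of $\Omega$ separating $S_a$ from $z_0$ (once $\sigma$ is below $u(z_0)$), and the family $\{\gamma_\sigma\}$ foliates the subregion between $S_a$ and the crosscut through $z_0$. Writing $u = \sigma$ and integrating $|\nabla u|$ along these curves and across them, the length–area inequality (Cauchy–Schwarz applied to $\int |\nabla u|\,|dz|$ versus $\int |\nabla u|^2\, dA$, combined with the fact that the Dirichlet integral of $u$ over the slab $\{a < \Re z < \Re z_0\}$ equals the total variation of $u$ in the $\sigma$-direction) yields a lower bound on the "extremal distance" from $S_a$ to the crosscut through $z_0$ of the form $\int_a^{\Re z_0} \frac{dx}{2\theta(x)}$ — the $2\theta(x)$ being the vertical height of $\Omega$ over the point $x$. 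The harmonic measure is then controlled by this extremal distance via the standard estimate $\omega(z_0, S_a, \Omega) \le \frac{8}{\pi} e^{-\pi \lambda}$, where $\lambda$ is the extremal distance (module of the corresponding curve family); plugging in $\lambda \ge \int_a^{\Re z_0} \frac{dx}{2\theta(x)}$ and noting the factor $2$ combines with $\pi$ to give $2\pi \int_a^{\Re z_0}\frac{dx}{\theta(x)}$, and the constant $\frac{8}{\pi}$ comes from comparing $\Omega$ with a conformally mapped rectangle (or, equivalently, from the explicit harmonic measure of an edge of a rectangle, where one uses $\sum_{k\ge 0}\frac{4}{\pi(2k+1)}q^{2k+1} \le \frac{8}{\pi}q$ for the relevant nome $q = e^{-\pi\lambda}$).

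The main obstacle, and the only genuinely delicate point, is the length–area step: one must justify that the Dirichlet energy of $u$ over each vertical fiber $\{x\} \times (-\theta(x), \theta(x))$ interacts with the arclength of the level curves in exactly the way that produces $\theta(x)$ in the denominator — this is where the symmetry of $\Omega$ and the fact that $u$ is constant ($=0$) on the top and bottom boundary arcs are used, so that the level curves meet those arcs orthogonally and the fiberwise integration is clean. In the present paper, however, since the result is invoked verbatim from Garnett–Marshall, I would simply cite it and move on; no reproof is needed for the purposes of Section~\ref{WizardHatSection}, where it is applied to the reflected-and-doubled wizard hat $W \cup I \cup \conj{W}$ with $\theta$ taken to be the relevant profile, to get the exponential decay of $\omega(z_0, B_t, W)$ that feeds into the proof of \thref{WizardHatMainProposition}.
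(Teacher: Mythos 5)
Your reading is accurate: the paper invokes this estimate as a black box from \cite[Theorem 6.1, Ch.~IV]{garnett2005harmonic} with no proof, and your extremal-length sketch correctly identifies the two standard ingredients --- the Ahlfors-type lower bound $\lambda \geq \int_a^{\Re z_0}\frac{dx}{2\theta(x)}$ on the extremal distance from $S_a$ to the crosscut through $z_0$ (with $2\theta(x)$ being the full vertical width of $\Omega$ at abscissa $x$), and the comparison $\omega(z_0,S_a,\Omega)\leq\frac{8}{\pi}e^{-\pi\lambda}$. However, the final arithmetic is off: you assert that ``the factor $2$ combines with $\pi$ to give $2\pi\int_a^{\Re z_0}\frac{dx}{\theta(x)}$,'' whereas in fact $\pi\lambda\geq\frac{\pi}{2}\int_a^{\Re z_0}\frac{dx}{\theta(x)}$, so the exponent that actually follows from your two ingredients is $-\frac{\pi}{2}\int_a^{\Re z_0}\frac{dx}{\theta(x)}$, weaker by a factor of $4$ than what you (and the Proposition) state.

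The discrepancy is not a flaw in your sketch but in the Proposition itself: the constant $2\pi$ is too large. Take $\theta\equiv 1$, $(a,b)=(0,L)$ and $z_0=L/2$; separation of variables in the rectangle $(0,L)\times(-1,1)$ gives
\[
\omega\bigl(L/2,\,S_0,\,\Omega\bigr)=\frac{2}{\pi}\sum_{n\geq 0}\frac{(-1)^n}{(2n+1)\cosh\bigl((2n+1)\pi L/4\bigr)}\sim\frac{4}{\pi}e^{-\pi L/4}
\]
as $L\to\infty$, which for large $L$ exceeds the claimed bound $\frac{8}{\pi}e^{-\pi L}$ but is dominated by the corrected bound $\frac{8}{\pi}e^{-\pi L/4}$. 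The fix is harmless for the paper: replace $2\pi$ by $\frac{\pi}{2}$ in the Proposition, in \thref{BtHarmEst}, and in \eqref{omegaBtnEst}--\eqref{tildeUmainEst}; the coefficient $\frac{\pi}{2}$ of $\Delta t_{n-1}$ in the last line of \eqref{tildeUmainEst} then becomes $\frac{\pi}{8}$, so one should take $\Delta t_{n-1}=\frac{A}{n^2}+\frac{8}{\pi}\gamma_{n+1}$ in \eqref{DeltaTdef}. Since \thref{GammaSeqLemma} permits making $\sum_n\gamma_n$ smaller than $\pi/8$ in place of $1/2$ in \eqref{GammaSumHalf}, the constraint \eqref{deltaTnReq} is still attainable, and \thref{WizardHatMainProposition} together with everything built on it is unaffected.
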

In Figure \ref{fig:wizardHat}, the symmetrized domain $\widetilde{W} := W \cup I \cup \conj{W}$ is bounded by the top part of the boundary of $W$ and the dotted reflection below the line $\R$. Let $\widetilde{W}_t$ be the domain obtained by cutting $\widetilde{W}$ along the cross-section $S_t = \{ z \in \widetilde{W} : \Re z = t \}$ and keeping the right part of the two resulting pieces. Define $W_t$ similarly (so that $W_t$ is the intersection of $\widetilde{W}_t$ and $\H$). The Beurling-Ahlfors estimate immediately implies that
\[ \omega(z_0, S_t, \widetilde{W}_t) \leq \frac{8}{\pi} \exp \Bigg( -2 \pi \int_t^{\Re z_0} \frac{1}{p(x-a)}\, dx\Bigg),\] where $z_0 \in W_t$ is as in Figure \ref{fig:wizardHat}. By a comparison of the values on $\partial W_t$ of the two harmonic functions $\omega(z, B_t, W)$ and $\omega(z, S_t, \widetilde{W}_t)$, and the maximum principle for harmonic functions, we get the inequality
\begin{equation}
    \omega(z, B_t, W) \leq \omega(z, S_t, \widetilde{W}_t), \quad z \in W_t.
\end{equation} In particular, this holds at $z_0$. We have obtained the following harmonic measure estimation.

\begin{prop} \thlabel{BtHarmEst} Let $W = W(p, I)$ be the wizard hat given by \eqref{wizardHatEq}, $B_t$ the piece of its boundary given by \eqref{BtEq} and $z_0 \in W$. Then \[ \omega(z_0, B_t, W) \leq \frac{8}{\pi} \exp \Bigg( -2 \pi \int_t^{\Re z_0} \frac{1}{p(x-a)}\, dx\Bigg)\] whenever $a < t < \Re z_0$.
\end{prop}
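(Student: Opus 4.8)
The plan is to make precise the reflection argument sketched just above the statement. First I would symmetrize: since the lower boundary of $W$ is exactly the open interval $I = (a,b) \subset \R$, reflecting $W$ across $\R$ produces the domain $\widetilde{W} = W \cup I \cup \conj{W} = \{ x+iy : a < x < b, \ |y| < \theta(x)\}$ with $\theta(x) := \min\big[ p(x-a), p(b-x)\big]$, a positive continuous function on $(a,b)$. This is exactly the type of domain to which the Beurling--Ahlfors estimate applies, and under the reflection the curve $B_t$ becomes a subset of the upper boundary of $\widetilde{W}$ lying to the left of the vertical line $\Re z = t$.

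Next I would cut $\widetilde{W}$ along $S_t = \{ t + iy : |y| < \theta(t)\}$ and keep the right piece $\widetilde{W}_t = \{ x+iy : t < x < b, \ |y| < \theta(x)\}$, together with its upper half $W_t := \widetilde{W}_t \cap \H$. Applying the Beurling--Ahlfors estimate to $\widetilde{W}_t$, whose left vertical boundary segment is now $S_t$, gives
\[ \omega(z_0, S_t, \widetilde{W}_t) \leq \frac{8}{\pi} \exp\Bigg( -2\pi \int_t^{\Re z_0} \frac{dx}{\theta(x)} \Bigg). \]
Since $\theta(x) \leq p(x-a)$ for every $x \in (a,b)$, one has $\int_t^{\Re z_0} \frac{dx}{\theta(x)} \geq \int_t^{\Re z_0} \frac{dx}{p(x-a)}$, so the right-hand side above is dominated by $\frac{8}{\pi}\exp\big(-2\pi\int_t^{\Re z_0} p(x-a)^{-1}\, dx\big)$, which is precisely the bound claimed in the proposition.

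It then remains to transfer this estimate from $\widetilde{W}_t$ back to $W$. I would compare the two bounded harmonic functions $z \mapsto \omega(z, B_t, W)$ and $z \mapsto \omega(z, S_t, \widetilde{W}_t)$ on the common subdomain $W_t$ (note $W_t \subset W$ and $W_t \subset \widetilde{W}_t$). Checking boundary values on $\partial W_t$ piece by piece: on the part of $S_t$ contained in $\partial W_t$ one has $\omega(z, B_t, W) \leq 1 = \omega(z, S_t, \widetilde{W}_t)$; on the portion of $\partial W$ with $\Re z > t$ (a piece of the profile curve together with $I \cap \{\Re z > t\}$) the point avoids $B_t$, hence $\omega(z, B_t, W) = 0 \leq \omega(z, S_t, \widetilde{W}_t)$. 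The finitely many cusps of $\partial W$ and corners of $\partial W_t$ are exceptional points at which both functions remain bounded, so the maximum principle for bounded harmonic functions gives $\omega(z, B_t, W) \leq \omega(z, S_t, \widetilde{W}_t)$ throughout $W_t$. Since $a < t < \Re z_0$ places $z_0$ in $W_t$, evaluating at $z_0$ and chaining the displayed inequalities proves the proposition.

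The main obstacle, as with any such comparison, is the maximum-principle step: one must verify that the boundary inequality holds at \emph{every} point of $\partial W_t$ — which is exactly why the cut is placed strictly to the right of $B_t$, so that no portion of $B_t$ contributes to $\partial W_t$ — and one must confirm that the finitely many cusps and corners do not obstruct the maximum principle, which they do not since all functions involved are bounded. The Beurling--Ahlfors input and the elementary monotonicity $\theta(x) \leq p(x-a)$ are immediate.
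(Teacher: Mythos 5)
Your proposal is correct and follows essentially the same route as the paper: reflect to form the symmetrized tube $\widetilde{W}$, cut at $S_t$, apply Beurling--Ahlfors to $\widetilde{W}_t$, and transfer the bound back to $\omega(\cdot,B_t,W)$ by a comparison of boundary values on $\partial W_t$ together with the maximum principle. You also spell out the monotonicity step $\theta(x)\le p(x-a)$ that the paper uses tacitly when it writes $p(x-a)$ in place of $\theta(x)$ inside the Beurling--Ahlfors integral, and you explicitly dispatch the exceptional corner points via boundedness, both of which are fine.
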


Given a majorant $F$ as in \thref{RegMajorantDef}, we will now show how to construct a profile function $p$ and harmonic function $u$ which satisfies the properties stated in \thref{WizardHatMainProposition}. Without loss of generality, we may assume that $I = (0,2)$. For some large integer $n_0 > 0$, let \begin{equation}
    \label{alphanDef}
    \alpha_n := 2^{-n-n_0}, \quad n \geq 1.
\end{equation} We define also the sequence \begin{equation}
    \label{gammaNdef}
    \gamma_n := \alpha_n \log F(\alpha_n), \quad n \geq 1
\end{equation} This sequence is positive if the integer $n_0$ in \eqref{alphanDef} is chosen large enough. Next, we make the following simple observation.

\begin{lem}
\thlabel{GammaSeqLemma}
For any $\epsilon > 0$, there exists an integer $n_0 > 0$ such that, with $\{\alpha_n\}_{n \geq 1}$ defined by \eqref{alphanDef} and $\{\gamma_n\}_{n \geq 1}$ defined by \eqref{gammaNdef}, we have \[ \sum_{n=1}^\infty \gamma_n < \epsilon.\]
\end{lem}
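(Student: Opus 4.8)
The statement to prove is \thref{GammaSeqLemma}: given $\epsilon > 0$, we can choose $n_0$ large so that $\sum_{n \geq 1} \gamma_n < \epsilon$, where $\gamma_n = \alpha_n \log F(\alpha_n)$ and $\alpha_n = 2^{-n-n_0}$. The key observation is that $\sum_{n \geq 1} \gamma_n$ is, up to a constant factor, a dyadic Riemann sum for the integral $\int_0^{2^{-n_0}} \log F(t)\, \frac{dt}{t} \cdot (\text{something})$ — more precisely, since the intervals $(\alpha_{n+1}, \alpha_n)$ have length $\alpha_{n+1} = \alpha_n/2$ and $\log F$ is decreasing, on each such interval $\log F(t) \geq \log F(\alpha_n)$, so
\[
\int_{\alpha_{n+1}}^{\alpha_n} \log F(t)\, dt \geq (\alpha_n - \alpha_{n+1}) \log F(\alpha_n) = \tfrac12 \alpha_n \log F(\alpha_n) = \tfrac12 \gamma_n,
\]
provided $\log F(\alpha_n) \geq 0$, which holds for $n_0$ large since $F(t) \to \infty$ as $t \to 0^+$. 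Summing over $n \geq 1$ telescopes the intervals to $(0, \alpha_1) = (0, 2^{-n_0-1})$, giving
\[
\sum_{n=1}^\infty \gamma_n \leq 2 \int_0^{2^{-n_0-1}} \log F(t)\, dt.
\]

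First I would fix $n_0$ large enough that $2^{-n_0-1} < d$ (so the majorant is defined there) and that $F(2^{-n_0-1}) > 1$; by monotonicity of $F$ this guarantees $\log F(\alpha_n) > 0$ for all $n \geq 1$, hence all $\gamma_n > 0$ and the termwise bound above is legitimate. Then, by property (ii) of a majorant (\thref{RegMajorantDef}), the function $\log F$ is integrable on $(0,d)$, so the tail integral $\int_0^{\delta} \log F(t)\, dt \to 0$ as $\delta \to 0^+$. Thus for the given $\epsilon > 0$ there is $\delta_0 > 0$ with $2\int_0^{\delta_0} \log F(t)\, dt < \epsilon$; enlarging $n_0$ if necessary so that $2^{-n_0-1} < \delta_0$, we obtain $\sum_{n=1}^\infty \gamma_n \leq 2\int_0^{2^{-n_0-1}} \log F(t)\, dt < \epsilon$, as desired.

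There is no real obstacle here — the one point requiring a little care is the sign issue: the Riemann-sum comparison $\int_{\alpha_{n+1}}^{\alpha_n}\log F(t)\,dt \geq \tfrac12\gamma_n$ is only valid once $\log F(\alpha_n) \geq 0$, and one should also note that $\log F$ being decreasing and integrable near $0$ forces $\int_0^\delta \log F\,dt \geq 0$ for small $\delta$, so no cancellation spoils the final inequality. I would also remark that if $\log F$ happens to be negative on part of $(0,d)$ far from the origin this is irrelevant, since we only ever integrate over $(0, 2^{-n_0-1})$ with $n_0$ chosen large. The whole argument is a couple of lines once the dyadic-sum-versus-integral comparison is written down.
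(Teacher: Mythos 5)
Your proof is correct and follows essentially the same route as the paper: the dyadic comparison $\int_{\alpha_{n+1}}^{\alpha_n}\log F\,dt \geq \tfrac12\gamma_n$ (using that $F$ is decreasing and $\alpha_n-\alpha_{n+1}=\alpha_n/2$), followed by summing and invoking the integrability of $\log F$ near $0$ from property (ii) of a majorant. The sign issue you flag is indeed the one point to watch; the paper disposes of it just before the lemma by noting that $n_0$ large makes $\gamma_n > 0$, which is what your argument also does.
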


\begin{proof}
    Since $F(t)$ is a majorant, by part $(i)$ of \thref{RegMajorantDef} we have 
    \begin{align*}
    \int_0^{\alpha_1} \log F(t) \, dt & = \sum_{n=1}^\infty \int_{\alpha_{n+1}}^{\alpha_n} \log F(t) \, dt \\ 
    &\geq \sum_{n=1}^\infty \log F(\alpha_n) (\alpha_n - \alpha_{n+1}) \\ 
    & = \sum_{n=1}^\infty \log F(\alpha_n)\alpha_{n+1}\\
    & = \frac{1}{2} \sum_{n=1}^\infty \gamma_n,
    \end{align*} where we used that $\alpha_n - \alpha_{n+1} = \alpha_{n+1} = \alpha_n/2$. Now, by property $(ii)$ in \thref{RegMajorantDef} we have \[ \lim_{\alpha_1 \to 0^+} \int_0^{\alpha_1} \log F(t) \, dt = 0\] and so our claim follows.
\end{proof}


We set $n_0$ to some value which ensures that 
\begin{equation}
    \label{GammaSumHalf}
    \sum_{n=1}^\infty \gamma_n < 1/2,
\end{equation} or in other words, the sum $\sum_{n=1}^\infty \gamma_n$ is less than one quarter of the length of the interval $I = (0,2)$. Further, we let $\{t_n\}_{n \geq 1}$ be a sequence of positive numbers starting with \[ t_1 = 1,\] which tends monotonically to $0$. We shall soon define $\{t_n\}_{n \geq 1}$ by specifying the sequence of differences $\{ \Delta t_n\}_{n \geq 1}$, where \[ \Delta t_n :=  t_n - t_{n+1}, \quad n \geq 1.\] The differences $\Delta t_n$ are positive numbers, and $t_2, t_3, \ldots$ will be recursively defined in terms of those differences by the relations \[t_2 = t_1 - \Delta t_1, \,  t_3 = t_2 - \Delta t_2, \] and so on. In order for so defined sequence $\{t_n\}_{n \geq 1}$ to converge to zero it is necessary and sufficient that
\begin{equation}
    \label{deltaTnReq}
    \sum_{n = 1}^\infty \Delta t_n = 1,
\end{equation} a requirement which we will later ensure. Given any $\{t_n\}_{n \geq 1}$ as above, a profile function $p$ may be readily constructed which satisfies
\begin{equation}
    \label{profileDef}
    p(t_n) = \alpha_n, \quad n \geq 1.
\end{equation}
Indeed, since the sequence $\{t_n\}_{n \geq 1}$ is assumed to be monotonically decreasing to zero, the function $p$ can be chosen to be smooth, increasing and positive for $t > 0$, and satisfy $p(0) = 0$. A proper choice of $\{t_n\}_{n \geq 1}$ will produce a wizard hat with our desired properties. Assume that $\{t_n\}_{n \geq 1}$ has been given, let $W = W(p, I)$ be the corresponding wizard hat, and $\omega(\cdot) = \omega(z_0, \cdot, \partial W)$ be the harmonic measure at some point $z_0 = 1 + y_0 i \in W$ which lies on the symmetry line of $W$. Let $\tilde{u}(z)$ be defined on $\partial W$ by \begin{equation}
\label{tildeudef}
\tilde{u}(z) = 
    \begin{cases}
        F(\Im z), &  z \in \partial W \cap \H,\\
        0, & z \in \partial W \cap \R. 
    \end{cases}
\end{equation}
We will ensure that $\tilde{u} \in \L^1(\omega)$. Since $F$ is decreasing, the definition of $W$ shows that for any $n \geq 1$ the values of the function $\tilde{u}$ on the arc $B_{t_n} \setminus B_{t_{n+1}}$ are dominated by its value at the point $z \in B_{t_n} \setminus B_{t_{n+1}}$ which lies closest to the real line $\R$, i.e., at the point $z = t_{n+1} + i p (t_{n+1})$. In other words, we have 
\begin{equation}
    \label{tildeuEstBt} \sup_{z \in B_{t_n} \setminus B_{t_{n+1}}} \tilde{u}(z) = F\big(p(t_{n+1})\big).
\end{equation} 
Moreover, from positivity and monotonicity of $p$, and from \thref{BtHarmEst}, we deduce the estimate 
\begin{align} \label{omegaBtnEst} \omega \big( B_{t_n} \setminus B_{t_{n+1}}\big) \leq \omega (B_{t_n})  & \leq \frac{8}{\pi} \exp \Bigg( - 2\pi \int_{t_n}^{t_{n-1}} \frac{1}{p(x)} dx \Bigg) \nonumber \\ & \leq \frac{8}{\pi} \exp \Bigg( - 2\pi \frac{\Delta t_{n-1}}{p(t_{n-1})} \Bigg)
\end{align} which holds for $n \geq 2$.

In order to have $\tilde{u} \in \L^1(\omega)$ it is sufficient to ensure that $\int_{B_{t_2}} \tilde{u}(z) d\omega(z) < \infty$. To this end, we use \eqref{tildeuEstBt} and \eqref{omegaBtnEst} to estimate  

\begin{align} \label{tildeUmainEst}
    \int_{B_{t_2}} \tilde{u}(z) d\omega(z) & = \sum_{n = 2}^\infty \int_{B_{t_n} \setminus B_{t_{n+1}}} \tilde{u}(z) d\omega(z) \nonumber \\
    &\leq \frac{8}{\pi}\sum_{n=2}^\infty F\big(p(t_{n+1})\big) \exp \Bigg( - 2\pi \frac{\Delta t_{n-1}}{p(t_{n-1})} \Bigg) \nonumber \\
    & = \frac{8}{\pi} \sum_{n=2}^\infty \exp \Bigg( \log F\big( p(t_{n+1}) \big) - 2\pi \frac{\Delta t_{n-1}}{p(t_{n-1})} \Bigg) \nonumber\\
    & = \frac{8}{\pi} \sum_{n=2}^\infty \exp \Bigg( \frac{1}{p(t_{n+1})}\Big( \log F\big( p(t_{n+1}) \big)p(t_{n+1}) - 2\pi \Delta t_{n-1} \frac{p(t_{n+1})}{p(t_{n-1})} \Big) \Bigg) \nonumber \\
    & = \frac{8}{\pi} \sum_{n=2}^\infty \exp \Bigg( 2^{n+1+n_0}\Big( \gamma_{n+1} - \frac{\pi \Delta t_{n-1} }{2} \Big) \Bigg).
\end{align}
In the last step we used \eqref{alphanDef}, \eqref{gammaNdef} and \eqref{profileDef}. We may now specify the values of $\{t_n\}_{n \geq 1}$ by setting the values of the differences: 
\begin{equation}
    \label{DeltaTdef}
    \Delta t_{n-1} = \frac{A}{n^2} + \frac{2 }{\pi }\gamma_{n+1}, \quad n \geq 2.
\end{equation} for an appropriate constant $A > 0$ which ensures the necessary summation condition \eqref{deltaTnReq}. This can be done, since \[\sum_{n=2} \frac{2 }{\pi}\gamma_{n+1} < 1/\pi < 1\] by \eqref{GammaSumHalf}. We obtain from \eqref{tildeUmainEst} that \[ \int_{B_{t_2}} \tilde{u}(z) d\omega(z) \leq \frac{8}{\pi} \sum_{n=2}^\infty \exp \Big( - \frac{A \pi}{2}\frac{ 2^{n+1+n_0}}{n^2} \Big) < \infty.\] Consequently, with this definition of $\{t_n\}_{n \geq 1}$ and a corresponding profile function $p$, we have that $\tilde{u} \in \L^1(\omega)$.

We may now complete the proof of \thref{WizardHatMainProposition}. Let $W$ and $p$ be chosen as above, and $\phi: \D \to W$ be a conformal mapping which maps $0 \in \D$ to $z_0 \in W$. Since $W$ is a Jordan domain, $\phi$ extends to a homeomorphism between $\D \cup \T$ and $W \cup \partial W$. If $v: \T \to [0,\infty)$ is defined by $v = \tilde{u} \circ \phi$, then a change of variables shows that \[ \int_\T v \, d\m = \int_{\partial W} \tilde{u} \, d\omega < \infty.\] Since $\tilde{u}$ is continuous on all of $\partial W$ except at the two cusps of $\partial W$ on $\R$, the function $v$ is continuous on $\T$ except at the two points which map under $\phi$ to the cusps. We verified above that $v \in \L^1(\T)$, so we may extend $v$ to $\D$ by means of its Poisson integral. This extension is continuous in $\D \cup \T$ except at the two points corresponding to the cusps. If we define $u$ in $W$ by $u = v \circ \phi^{-1}$, then $u$ is the harmonic function sought in \thref{WizardHatMainProposition}.

\section{Proper invariant subspaces generated by singular inner functions}
\label{PermanenceSection}

The goal of this section is to prove \thref{PermanenceMainTheorem}.

\subsection{Technical lemmas}

Similarly to Section \ref{WizardHatSection}, we prove the next lemma in the upper half-plane $\H = \{ z \in \mathbb{C} : \Im z > 0 \}$. This is done, again, only for convenience. An elementary conformal mapping argument will carry the result over to the intended domain $\D$. 

In this section, the Lebesgue measure (length measure) on $\R$ will be denoted by $dt$, and the $dt$-measure of a set $A$ will be denoted by $|A|$, similar to lengths of sets on the circle $\T$ (this should not cause confusion). The algebra of bounded analytic functions in $\H$ will be denoted by $H^\infty(\H)$. In the proofs below we shall use some basic facts regarding $H^\infty(\H)$, and in particular some factorization results. An exposition of the relevant background can be found in \cite[Chapter 11]{duren1970theory}, \cite[Chapter II]{garnett} or \cite[Chapter VI]{koosis}. 

Every function $h \in H^\infty(\H)$ admits an \textit{inner-outer factorization} into \begin{equation}
    \label{InnerOuterFactHalfplane}
    h(z) = c e^{iaz} B(z) S_\nu(z) U(z), \quad z \in \H.
\end{equation} Here $c$ is some unimodular constant, $a \geq 0$, $B$ is a Blaschke product given by 
\[ B(z) = \Big(\frac{z-i}{z+i}\Big)^m \prod_{\substack{h(\alpha) = 0, \\ \alpha \neq i}} \frac{i - \conj{\alpha}}{i - \alpha}\cdot \frac{z - \alpha}{z - \overline{\alpha}}, \quad z \in \H\] where $m$ is a non-negative integer, $S_{\nu_h}$ is a singular inner function given by \[ S_{\nu_h}(z) = \exp \Bigg( -\frac{1}{i \pi}\int_{\R} \frac{(1+tz)}{(t-z)} \frac{d\nu_h(t)}{(1+t^2)} \Bigg), \quad z \in \H\] where $\nu_h$ is a singular positive Borel measure on $\R$, and $U$ is the outer function given by \[ U(z) = \exp \Bigg( \frac{1}{i \pi} \int_\R \frac{(1 + tz)\log |h(t)|}{(t-z)(1+t^2)} \, dt \Bigg), \quad z \in \H.\]
The measures $(1+t^2)^{-1}d\nu_h(t)$ and $(1+t^2)^{-1}\log|h(t)|\,dt$ appearing in the integrals above are both finite. It follows from this factorization that we have  \begin{align} \label{loghFormula2}
        \log |h(z)| & = - \alpha y + \log |B(z)| \\
        &- \frac{1}{\pi} \int_\R \frac{y}{(x-t)^2 + y^2} d\nu_h(t) \nonumber \\
        & + \frac{1}{\pi} \int_\R \frac{y}{(x-t)^2 + y^2} \log|h(t)|dt, \quad z = x+iy \in \H \nonumber 
\end{align} 
The last two terms in \eqref{loghFormula2} represent the Poisson integrals $\Po_{\nu_h}$ and $\Po_{\log|h|}$ of the measure $\nu_h$ and of the function $\log |h|$, respectively. 

\begin{lem}
    \thlabel{IntervalWeakStarConvLemma}
    Let $J$ be a finite open interval of $\R$. With notation as above, as $y \to 0^+$, the restrictions to $J$ of the measures $\log |h(t+iy)| dt$ converge weak-star to the restriction to $J$ of the measure $\log|h(t)| dt - d\nu_h(t)$.
\end{lem}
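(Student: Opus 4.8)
The plan is to use the Poisson-integral representation \eqref{loghFormula2} and analyze each of the four terms on the right-hand side as $y \to 0^+$, testing against a fixed $\varphi \in C_c(J)$. Write $\log|h(t+iy)| = -\alpha y + \log|B(t+iy)| - \Po_{\nu_h}(t+iy) + \Po_{\log|h|}(t+iy)$. The term $-\alpha y$ contributes $-\alpha y \int_J \varphi \, dt \to 0$ trivially. For the Blaschke term, $\log|B(t+iy)| \to 0$ in $L^1_{\text{loc}}(\R)$ as $y \to 0^+$ (its boundary values are $0$ a.e., and one has a dominating bound from the standard estimate $\big|\log|B(t+iy)|\big| \leq \sum_k \log\big|\tfrac{t+iy-\overline{\alpha_k}}{t+iy-\alpha_k}\big|$ together with the convergence of $\sum_k \Im \alpha_k/(1+|\alpha_k|^2)$), so this term also contributes $0$ in the limit. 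Thus the content of the lemma lies entirely in the two Poisson integrals.

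For the Poisson integral of $\log|h|$: since $(1+t^2)^{-1}\log|h(t)|\,dt$ is a finite (signed) measure and $\varphi$ has compact support in $J$, a standard approximate-identity argument shows $\int_J \Po_{\log|h|}(t+iy)\,\varphi(t)\,dt \to \int_{\R} \log|h(t)|\,\varphi(t)\,dt$ — here one splits the kernel contribution from $t$ near $\supp{\varphi}$ (where $\log|h| \in L^1_{\text{loc}}$ and the Poisson kernel is an approximate identity) from the tail (which is uniformly small because the Poisson kernel $P_y(x-t) = \tfrac{1}{\pi}\tfrac{y}{(x-t)^2+y^2}$ decays and $(1+t^2)^{-1}\log|h(t)|\,dt$ is finite). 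The delicate point is that $\log|h|$ need not be locally integrable a priori as a function on all of $\R$, but on a neighborhood of $\supp{\varphi} \Subset J$ it is bounded above by a constant (since $|h| \leq \|h\|_\infty$) and its negative part is controlled by finiteness of the representing measure, so it is locally integrable there, which is all that is needed. Similarly, $\int_J \Po_{\nu_h}(t+iy)\,\varphi(t)\,dt \to \int_{\R}\varphi \, d\nu_h$ by the classical fact that Poisson integrals of a finite measure converge weak-star to that measure; restricted to $J$ this gives the $-d\nu_h|_J$ contribution.

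Combining the four limits gives $\int_J \log|h(t+iy)|\,\varphi(t)\,dt \to \int_J \varphi\,\big(\log|h(t)|\,dt - d\nu_h(t)\big)$ for every $\varphi \in C_c(J)$, which is exactly weak-star convergence on $J$. The main obstacle, and the step requiring the most care, is the justification that $\log|h|$ is locally integrable near $\supp{\varphi}$ and that the Poisson-integral convergence holds there despite $\log|h|$ possibly failing to be globally integrable against $dt$; this is handled by exploiting the one-sided boundedness $\log|h| \leq \log\|h\|_\infty$ together with the finiteness of $(1+t^2)^{-1}\log|h(t)|\,dt$, after which the tail estimate and the approximate-identity estimate are routine. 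One should also note at the outset that the asserted limit measure $\log|h(t)|\,dt - d\nu_h(t)$ makes sense as a (signed, locally finite) measure on $J$ precisely because of this local integrability, so the statement is well-posed.
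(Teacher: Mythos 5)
Your decomposition is exactly the paper's: expand $\log|h(t+iy)|$ via \eqref{loghFormula2} and pass to the limit term by term against $\varphi \in C_c(J)$. For the two Poisson integrals, the paper uses a slicker piece of bookkeeping than your near/far split. It writes
\[
\int_\R \varphi(t)\,\Po_{\nu_h}(t+iy)\,dt = \int_\R \Po_\varphi(t+iy)\,d\nu_h(t),
\]
and then uses the uniform convergence $\Po_\varphi(\cdot+iy)\to\varphi$ together with the decay $|\Po_\varphi(t+iy)| = \mathcal{O}(1/t^2)$ uniformly for $y\in(0,1)$, which dispatches both Poisson terms at once using only the finiteness of $(1+t^2)^{-1}d\nu_h$ and $(1+t^2)^{-1}\log|h|\,dt$. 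Your version works too, but do note that $\nu_h$ itself need not be finite (only $(1+t^2)^{-1}d\nu_h$ is), so ``the classical fact that Poisson integrals of a finite measure converge weak-star'' is not directly citable: the tail control you spell out for $\log|h|$ is the thing you actually need for $\nu_h$ as well.

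The genuine divergence, and the one real gap, is the Blaschke term. The inequality you write, $\big|\log|B(t+iy)|\big| \leq \sum_k \log\big|\tfrac{t+iy-\overline{\alpha_k}}{t+iy-\alpha_k}\big|$, is in fact an equality (every factor of an upper-half-plane Blaschke product contributes a nonpositive summand to $\log|B|$), so it provides no domination by itself. To get a $y$-uniform $L^1_{\mathrm{loc}}$ majorant you would need to maximize each summand over $y$ --- the maximum is at $y=b_k:=\Im\alpha_k$, yielding $\tfrac12\log\big(1+4b_k^2/(t-a_k)^2\big)$ --- and then verify, using the Blaschke condition, that the resulting sum is locally integrable. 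That computation is plausible but nontrivial and you do not carry it out. The paper avoids the domination problem entirely via Jensen's inequality for $\H$: it gets $\log|B(i)| \leq \int_\R \tfrac{\log|B(t+iy)|}{\pi(1+t^2)}\,dt \leq 0$, compares with finite partial products of $B$ to squeeze the $\liminf$ to $0$, and thus obtains $\int_\R \tfrac{\log|B(t+iy)|}{\pi(1+t^2)}\,dt \to 0$. Since $\log|B|\leq 0$, that is convergence to $0$ in $L^1(J,dt)$ --- stronger than weak-star convergence, with no dominating function required. You should either adopt the Jensen argument or supply the missing integrability estimate for your majorant.
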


The lemma follows easily from results presented in de Branges' book \cite[Theorem 3 and Problem 26]{de1968hilbert}. We sketch an argument for the reader's convenience.

\begin{proof}[Proof of \thref{IntervalWeakStarConvLemma}] If $\phi$ is any smooth function which is supported on a compact subset of $J$, then by the symmetry of the Poisson kernel, we have \begin{equation} \label{poissonEqIntSym}
    \int_\R \phi(t) \Po_{\nu_h}(t+iy) dt = \int_\R \Po_\phi(t+iy) d\nu_h(t),
\end{equation} where $\Po_\phi$ is the Poisson integral of $\phi$. Since $\phi$ is uniformly continuous on $\R$, $\Po_\phi(t+iy) \to \phi(t)$ uniformly in $t$ as $y \to 0^+$. Moreover, the compact support of $\phi$ implies that $|\Po_\phi(t+iy)| = \mathcal{O}(1/t^2)$ as $|t| \to \infty$, uniformly in, say, $y \in (0,1)$. Thus expression \eqref{poissonEqIntSym} and the finiteness of the measure $(1+t^2)^{-1}d\nu_h(t)$ implies now that \[ \lim_{y \to 0^+} \int_\R \phi(t) \Po_{\nu_h}(t+iy) dt = \int_\R \phi(t) d\nu_h(t),\] and we have shown that $\Po_{\nu_h}(t+iy)dt$ converges weak-star to $\nu_h$ on the interval $J$. By the same argument $\Po_{\log|h|}(t+iy)dt$ converges weak-star to $\log|h(t)| dt$ on $J$. 

We consider now the measures $\log |B(t+iy)| dt$. Jensen's inequality for the upper-half plane (see, for instance, \cite[p. 35]{havinbook}) implies that \[ \log |B(i+iy)| \leq \int_\R \frac{\log |B(t+iy)|}{\pi(1+t^2)} dt.\] Thus by letting $y$ tend to $0^+$, we obtain \[ \log |B(i)| \leq \liminf_{y \to 0^+} \int_\R \frac{\log |B(t+iy)|}{\pi(1+t^2)} dt \leq 0.\] The last inequality is trivial, since $\log |B|$ is negative in $\H$. For a finite Blaschke product $B_0$, the limit between the inequalities above is certainly equal to $0$. Thus we obtain \[ \log |B(i)| - \log |B_0(i)| \leq \liminf_{y \to 0^+} \int_\R \frac{\log |B(t+iy)|}{\pi(1+t^2)} dt \leq 0.\] Now let $B_0$ tend to $B$ through a sequence of finite partial products of $B$ to obtain \[ \lim_{y \to 0^+} \int_\R \frac{\log |B(t+iy)|}{\pi(1+t^2)} dt = 0.\] This says that the restriction to $J$ of $\log |B(t+iy)|dt$ converges to $0$ even in variation norm. 

The expression \eqref{loghFormula2} now implies the weak-star convergence result we are seeking.
    
\end{proof}

\begin{defn}{\textbf{(Uniform absolute continuity)}} \thlabel{uniAbsContDef}
    If $\{f_n \, dt\}_{n \geq 1}$ is a sequence of non-negative absolutely continuous Borel measures on $\R$ and $I \subset \R$ is an interval, then we will say that the sequence $\{f_n \, dt \}_{n \geq 1}$ is \textit{uniformly absolutely continuous on} $I$ if to each $\epsilon > 0$ there corresponds a $\delta > 0$, independent of $n$, such that for Borel sets $A$ we have \[ A \subset I, \, |A| < \delta \implies \int_A f_n \,dt < \epsilon.\]
\end{defn}

Recall that the notion of a majorant has been introduced in \thref{RegMajorantDef}.

\begin{lem} \thlabel{permanenceLemma} Let $I$ be a finite interval of the real line $\R$, $\theta = S_\nu$ a singular inner function in $\H$ defined by a singular measure $\nu$ supported in the interior of $I$, and $\{h_n\}_{n \geq 1}$ a sequence of functions in $H^\infty(\H)$ such that \[ \lim_{n \to \infty}  \, \theta(z) h_n(z) = h(z), \quad z \in \H,\] where $h \in H^\infty(\H)$ is a non-zero function. Assume that \begin{enumerate}[(i)]
    \item there exists a majorant $F$ for which we have \[ \sup_{z = x+iy \in R} \, |\theta(z)h_n(z)|\exp\big(- F(y) \big) < C \] for some constant $C > 0$ independent of $n$, and where $R$ is some rectangle in $\H$ with base $I$: \[ R = R(I,d) := \{ z = x+iy \in \H : x \in I, y < d \},\]
    \item the sequence of positive Borel measures $\{\log^+|h_n|dt\}_{n \geq 1}$ is uniformly absolutely continuous on an interval larger than $I$.
\end{enumerate}
Then $h/\theta \in H^\infty(\H)$.
\end{lem}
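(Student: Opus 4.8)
The plan is to show that $\theta$ divides $h$ in $H^\infty(\H)$, which is what the conclusion $h/\theta \in H^\infty(\H)$ amounts to. Since $\theta = S_\nu$ is zero-free in $\H$, the hypothesis $\theta h_n \to h$ gives $h_n \to h/\theta$ pointwise in $\H$; the limit $h/\theta$ is analytic and lies in the Nevanlinna class $N(\H)$, being a quotient of two bounded analytic functions, and it has boundary values satisfying $|h/\theta| = |h| \le \|h\|_\infty$ a.e. on $\R$ (as $|\theta| = 1$ a.e.). Hence it suffices to prove that $h/\theta$ belongs to the Smirnov class $N^+(\H)$: once this is known, the boundedness of its boundary values forces $h/\theta \in H^\infty(\H)$ by the maximum principle for $N^+$. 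By the standard characterization of $N^+(\H)$ (uniform integrability of $\{\log^+|f(\cdot+iy)|\}$ relative to $\pi^{-1}(1+t^2)^{-1}dt$ as $y\to0^+$, together with the a.e.\ convergence to boundary values automatic for functions in $N(\H)$), and since $|1/\theta| = \exp \Po_\nu$ is bounded at positive distance from $\supp{\nu}$, the whole problem reduces to showing that $\{\log^+|(h/\theta)(\cdot + iy)|\}_{0 < y < y_1}$ is uniformly integrable on a fixed open interval containing $\supp{\nu}$.

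First I would fix closed intervals $I_1, I_0$ with $\supp{\nu}\subset\operatorname{int}(I_1)\subset I_1\subset\operatorname{int}(I_0)\subset I_0\subset\operatorname{int}(I)$, and apply \thref{WizardHatMainProposition} to the majorant $F$ and the interval $I_0$; choosing the construction's integer parameter large enough makes the resulting wizard hat $W = W(p,I_0)$ lie inside $R$, so that $\partial W \cap \H \subset R$, and it carries a positive harmonic function $u$ with $u \equiv 0$ on $\operatorname{int}(I_0)$, continuous on $\overline{W}$ off the two cusps on $\R$, and $u(z) = F(\Im z)$ on $\partial W\cap\H$. Two geometric remarks are used repeatedly: (a) because the cusps of $W$ on $\R$ are the endpoints of $I_0$ and $\supp{\nu}$ sits compactly inside $I_0$, the set $\partial W\cap\H$ stays at a positive distance from $\supp{\nu}$, so $\Po_\nu$ is bounded on $\partial W\cap\H$ by some $M_\nu<\infty$; and (b) the ``column'' $\{t+iy : t\in I_1,\ 0<y<y_1\}$ lies in $W$ for $y_1$ small, and $u$ is bounded there, say by $M_u$, being continuous on a compact subset of $\overline W$ away from the cusps.

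The core estimate comes next. For each $n$, $\log|h_n| - u$ is subharmonic on $W$ and bounded above, and I would bound its boundary envelope: on $\partial W\cap\H$ one has, by hypothesis (i) and remark (a), $|h_n| = |\theta h_n|\,|\theta|^{-1} \le C\,e^{F(\Im z)}e^{\Po_\nu} \le C\,e^{u}e^{M_\nu}$, whence $\log|h_n|-u\le \log^+ C+M_\nu$ there; and on $\operatorname{int}(I_0) = \partial W\cap\R$ one has $u\equiv0$ and $|h_n| = |\theta h_n|$ a.e., so the envelope is $\le\log^+|h_n(t)|$. Integrating against harmonic measure $\omega_z := \omega(z,\cdot,W)$ and using $\log^+|h_n|\in L^1(I_0,dt)$ gives, for $z \in W$,
\[
\log^+|h_n(z)| \ \le\ u(z) + \log^+ C + M_\nu + \int_{I_0}\log^+|h_n(t)|\,d\omega_z(t).
\]
Note that the term $\Po_\nu$ has disappeared — this is precisely the point of the wizard hat, which forces the factor $1/\theta$ to be felt only along $\partial W\cap\H$, where it is harmless by (a), while along the flat side $I_0$ the factor $\theta$ is unimodular. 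Now I would integrate this inequality over a level $\{t+iy : t\in A\}$, $A \subset I_1$, $0<y<y_1$: by domain monotonicity against the half-plane, the measure $A\ni E\mapsto\int_A\omega_{t+iy}(E)\,dt$ on $I_0$ has density $\le 1$ on a $\delta$-neighbourhood of $A$ and $\le O(y)$ off it, and one has $\sup_n\int_{I_0}\log^+|h_n|\,dt<\infty$ by covering $I_0$ with finitely many short intervals and invoking hypothesis (ii). Combining this with (b) and with the uniform absolute continuity in (ii), a routine bookkeeping of the quantifiers shows
\[
\sup_{n\ge1}\ \sup_{0<y<y_1}\ \int_A\log^+|h_n(t+iy)|\,dt\ \longrightarrow\ 0\qquad\text{as }|A|\to0 ,
\]
i.e.\ the family $\{\log^+|h_n(\cdot+iy)|\}_{n,\,y}$ is uniformly integrable on $I_1$. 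Letting $n\to\infty$ and applying Fatou's lemma transfers this to $\{\log^+|(h/\theta)(\cdot+iy)|\}_{0<y<y_1}$; together with the uniform boundedness of these functions off $I_1$ (where $1/\theta$ is bounded near $\R$) this yields $h/\theta\in N^+(\H)$, and hence $h/\theta\in H^\infty(\H)$ as explained above.

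I expect the main obstacle to be the middle part: extracting from (i) and (ii) a bound on $\log^+|h_n|$ near $I$ robust enough to both survive the passage $n\to\infty$ and certify membership in $N^+$ rather than merely in $N$. A naive estimate of $\log|h/\theta|$ reintroduces $\Po_\nu$, which blows up on $\supp{\nu}$, so the crucial device is to route the unbounded factor $1/\theta$ through the arc $\partial W\cap\H$, where remark (a) tames it, and then to upgrade the resulting interior/pointwise bound to a genuine uniform-integrability statement at the boundary — this last step, which is exactly what excludes a singular inner denominator of $h/\theta$ supported on $\supp{\nu}$, is where the harmonic-measure density estimates and the uniform absolute continuity hypothesis do the real work.
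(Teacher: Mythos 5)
Your argument reaches the same conclusion but by a genuinely different route. Both proofs hinge on the wizard hat from \thref{WizardHatMainProposition} together with hypothesis $(i)$ to move the singular behaviour of $1/\theta$ onto $\partial W \cap \H$, where $\Po_\nu$ is tame, and hypothesis $(ii)$ to control the bottom side $I$. After that the strategies diverge. The paper passes to a single weak-star limit: it bounds $\log|h_n|$ in $W$ by $u + u_n$ (the Poisson integral of $1_I\log^+|h_n|\,dt$), lets $n\to\infty$ to get $\log|h/\theta| \le u + u_I$, and then reads off the signed measure inequality on the boundary via \thref{IntervalWeakStarConvLemma}; the singular part of that inequality gives $\nu_h - \nu \ge 0$ directly, and the conclusion follows by subtracting Poisson representations. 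You instead integrate the wizard-hat bound over level sets $\{t+iy : t\in A\}$, using harmonic-measure domination by the half-plane Poisson kernel, to obtain \emph{uniform integrability} of $\{\log^+|h_n(\cdot+iy)|\}_{n,y}$ on a neighbourhood of $\operatorname{supp}\nu$; Fatou transfers this to $\log^+|(h/\theta)(\cdot+iy)|$, and you then invoke a Smirnov-class criterion to conclude $h/\theta\in N^+(\H)\cap L^\infty(\R)=H^\infty(\H)$. Your route avoids \thref{IntervalWeakStarConvLemma} and a separate analysis of the Blaschke factor, at the cost of a more delicate $\epsilon$--$\delta$ bookkeeping in the uniform-integrability step; the paper's route is cleaner once the weak-star lemma is available.

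There is, however, a genuine gap in your closing step. The criterion you invoke — that for $f\in N(\H)$, uniform integrability of $\{\log^+|f(\cdot+iy)|\}_{y\to 0^+}$ with respect to $(1+t^2)^{-1}\,dt$ together with a.e.\ boundary convergence implies $f\in N^+(\H)$ — is false as stated: the function $f(z)=e^{-iaz}$, $a>0$, satisfies $\log^+|f(t+iy)|=ay$, which is uniformly integrable (even uniformly small) as $y\to 0^+$ and has boundary values $0$, yet $e^{-iaz}\notin N^+(\H)$. The obstruction is the singular atom at $\infty$, which the $y\to 0^+$ integral test does not see. In the situation at hand this is repaired by observing that $h\in H^\infty(\H)$ contributes $e^{iaz}$ with $a\ge 0$ and $\theta=S_\nu$ (with $\nu$ compactly supported) contributes nothing at $\infty$, so $h/\theta$ is bounded on $\{y>y_0\}$; with this extra input the uniform integrability does exclude a nonzero singular inner denominator on $\R$ and the conclusion holds. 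You should also state explicitly the maximum-principle step that justifies the harmonic-measure integral representation on $W$ (the paper isolates this as \thref{TdomLemma}); as written, you pass from the boundary envelope of the subharmonic function $\log|h_n|-u$ to the inequality $\log|h_n(z)|\le u(z)+\log^+ C+M_\nu+\int_{I_0}\log^+|h_n|\,d\omega_z$ without verifying that the nontangential boundary values from inside $W$ on the flat side $I_0$ agree with those from $\H$, which is the content of that auxiliary lemma.
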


\begin{proof}
The assumption $(ii)$ implies that \[ \sup_{n} \int_I \log^+ |h_n| \, dt < \infty. \] So, denoting by $1_I$ the characteristic function of the interval $I$ and by passing to a subsequence, we can assume that the measures $1_I \log^+|h_n|dt$ converge weak-star to a non-negative measure $\nu_0$ supported on $I$. The measure $\nu_0$ must be absolutely continuous with respect to $dt$: any set $N \subset I$ of $dt$-measure zero can be covered by an open set $U$ of total length arbitrarily small, and then we can use $(ii)$ to conclude that \[ \nu_0(N) \leq \nu_0(U) \leq \liminf_{n \to \infty} \int_U \log^+|h_n| dt < \epsilon\] for any $ \epsilon > 0$. Consequently $d\nu_0 = w\, dt$ for some non-negative $w \in \L^1(I)$. We denote by $u_I$ the harmonic function in $\H$ which is the Poisson extension of the measure $d\nu_0 = w\,dt$ to $\H$: \[u_I(z) = \frac{1}{\pi}\int_I \frac{y}{(x-t)^2 + y^2} w(t) dt, \quad z = x+iy \in \H. \] Let also $u_n$ denote the Poisson extension of the measure $1_I \log^+|h_n|dt$: \[ u_n(z) = \frac{1}{\pi} \int_I \frac{y}{(x-t)^2 + y^2} \log^+|h_n(t)| \, dt, \quad z = x+iy \in \H.\]
The assumption $(i)$ implies that \[ \log|\theta(z)| + \log |h_n(z)| \leq c + F(y), \quad z = x+iy \in R\] for some positive constant $c > 0$. By \thref{WizardHatMainProposition}, there exists a wizard hat domain $W = W(p,I)$ and a corresponding positive harmonic function $u$ defined on $W$ which satisfies \[ \log|\theta(z)| + \log |h_n(z)| \leq u(z), \quad z \in \partial W \cap R.\] By the assumption that the singular measure $\nu$ defining $\theta$ is supported in the interior of $I$, it follows that $\theta$ is analytic and non-zero in a neighbourhood of $\partial W \cap R$, and so $\log |\theta(z)|$ is bounded on $\partial W \cap R$. Therefore, by possibly replacing $u$ by a positive scalar multiple of itself, in fact we have that \begin{equation}
    \label{h_nuIneqTopPart} \log |h_n(z)| \leq u(z), \quad z \in \partial W \cap R.
\end{equation}
For the bottom side $I$ of the wizard hat, we have the non-tangential boundary value inequality 
\begin{equation} \label{h_nuIneqBottomPart}
\log |h_n(x)| \leq u_n(x)    
\end{equation} for $dt$-almost every $x \in I$. This follows immediately from elementary boundary behaviour properties of Poisson integrals. We would like to conclude from the two inequalities \eqref{h_nuIneqTopPart} and \eqref{h_nuIneqBottomPart} that 
\begin{equation}
\label{AllOfTIneq}
\log |h_n(z)| \leq u(z) + u_n(z), \quad z \in W.
\end{equation} Indeed such a generalization of the maximum principle holds, and we will carefully verify this claim in \thref{TdomLemma} below. Assuming the claim, we recall that \[h_n(z) \to h(z)/\theta(z), \quad n \to +\infty\] in all of $\H$, and so by letting $n \to +\infty$ we obtain, from \eqref{AllOfTIneq} and the earlier mentioned weak-star convergence of measures (which guarantees that $u_n(z) \to u_I(z)$ for $z \in \H$), that 
\begin{equation}
        \label{FundamentalTIneq}
        \log |h(z)| - \log |\theta(z)| \leq u(z) + u_I(z), \quad z \in W.
\end{equation}
Let $J$ be some interval containing the support of $\nu$, and which is strictly contained in $I$. By \thref{IntervalWeakStarConvLemma}, as $y \to 0^+$, the restrictions to $J$ of the real-valued measures $\log |h(t + iy)| dt$ converge weak-star to the restriction to $J$ of the measure $\log |h(t)| dt - d \nu_h(t)$. Similar claims hold for the Poisson integrals \begin{equation}
\label{logThetaFormula}
-\Po_\nu(z) = \log |\theta(z)| = -\frac{1}{\pi} \int_\R \frac{y}{(x-t)^2 + y^2} d\nu(t), \quad z = x+iy \in \H
\end{equation} and $\Po_w = u_I$. For sufficiently small $y > 0$ we have that $J + iy := \{ x + iy : x \in J \} \subset W$. Thus from the weak-star convergence of measures discussed above, the inequality \eqref{FundamentalTIneq}, and the fact that $u \equiv 0$ on $J$, we obtain the real-valued measure inequality \[ \log |h(t)| dt - d\nu_h(t) + d\nu(t) \leq w(t) dt \quad \text{ on } J.\] This measure inequality is to be interpreted in the following way: \[w(t) dt - \log |h(t)| dt + d\nu_h(t) - d\nu(t)\] is a non-negative measure on $J$. The $dt$-singular part of this measure is $d\nu_h - d\nu$, which is thus non-negative on $J$. Since $d\nu$ is supported inside $J$, in fact $d\nu_h - d\nu$ is non-negative in all of $\R$. Now subtracting \eqref{logThetaFormula} from \eqref{loghFormula2}, using the inequality $-\alpha y + \log |B(z)| \leq 0$ and the non-negativity of $d\nu_h - d\nu$ and of the Poisson kernel, we get for $ z = x+iy \in \H$ that \begin{align*} \log |h(z)| - \log |\theta(z)| \leq  & \frac{1}{\pi} \int_\R \frac{y}{(x-t)^2 + y^2} \log|h(t)|dt \\ \leq  & \frac{1}{\pi} \int_\R \frac{y}{(x-t)^2 + y^2} \log (\|h\|_\infty) dt \\ = & \log (\|h\|_\infty).\end{align*}
By exponentiating, we finally obtain \[ |h(z)/\theta(z)| \leq \|h\|_{H^\infty(\H)}, \quad z \in \H.\]       
\end{proof}

We need to verify the claim made in the course of the proof of \thref{permanenceLemma} which lead to the fundamental inequality \eqref{AllOfTIneq}.

\begin{lem} \thlabel{TdomLemma} Let the wizard hat $W = W(p,I)$ be as in the proof of \thref{permanenceLemma}, $f$ be a bounded analytic function in $W$ and $u$ be a positive harmonic function in $W$. Assume that both $f$ and $u$ extend continuously to $\partial W \cap \H$ and also that both have non-tangential limits almost everywhere on $I$. If we have that $\log |f(z)| \leq u(z)$ for $z \in \partial W \cap \H$, and moreover that the non-tangential limits of $\log |f|$ on $I$ are $dt$-almost everywhere dominated by the non-tangential limits of $u$ on $I$, then $\log |f(z)| \leq u(z)$ for all $z \in W$.
\end{lem}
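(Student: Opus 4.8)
The plan is to reduce the claim to the ordinary maximum principle on the Jordan domain $W$ by subtracting off a harmonic function which absorbs the (possibly non-integrable) singular behaviour of $u$ near the two cusps on $\R$, and by handling the subharmonicity of $\log|f|$ carefully. First I would recall that $W$ is a Jordan domain whose boundary $\partial W$ is piecewise smooth with three cusps; let $\phi : \D \to W$ be a conformal map, extending to a homeomorphism of the closures (Carathéodory). Pulling everything back via $\phi$, set $v := u \circ \phi$, a positive harmonic function on $\D$, and $g := f \circ \phi$, a bounded analytic function on $\D$. The hypotheses translate to: $\log|g| \leq v$ on the open arc $\Gamma \subset \T$ corresponding to $\partial W \cap \H$ (by continuity up to that arc), and $\log|g| \leq v$ a.e.\ on the arc $E \subset \T$ corresponding to $I$ (by the non-tangential-limit hypothesis). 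The two cusp-preimages are the only points of $\T$ not covered. We want $\log|g| \leq v$ throughout $\D$.

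Since $v > 0$ is harmonic on $\D$, it has a positive harmonic conjugate only locally, but $e^{-v-i\tilde v}$ is a bona fide analytic function of modulus $e^{-v} \in (0,1)$ on $\D$ once we fix a branch; alternatively, and more cleanly, I would work directly with the subharmonic function $s := \log|g| - v$ on $\D$. It is subharmonic (as $\log|g|$ is subharmonic and $-v$ is harmonic), and bounded above near $\T$ away from the two exceptional points, since there $\limsup_{z \to \zeta} s(z) \leq 0$ for $\zeta \in \Gamma$, while on $E$ we only control the radial limit. The key point is therefore a Phragmén–Lindelöf / boundary-uniqueness argument: $s$ is subharmonic on $\D$, bounded above (because $\log|g|$ is bounded above, $g$ being bounded, and $-v < 0$), has nonpositive boundary values on $\Gamma$, has nonpositive radial limits a.e.\ on $E$, and the remaining boundary set $\{\zeta_1, \zeta_2\}$ has zero harmonic measure. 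By the classical boundary-uniqueness theorem for bounded-above subharmonic functions (a two-point exceptional set is removable — see e.g.\ \cite{garnett2005harmonic} or \cite{ransford1995potential}), we conclude $s \leq 0$ on $\D$, i.e.\ $\log|f| \leq u$ on $W$.

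The main obstacle is justifying the boundary-value control rigorously at the level of measure: specifically, that $\log|g|$ has radial limits a.e.\ on $E$ equal (a.e.) to the pullback of the non-tangential limits of $\log|f|$ on $I$, and that these are a.e.\ dominated by the radial limits of $v$ on $E$. This requires: (a) noting $g \in H^\infty(\D)$ so $\log|g|$ has radial limits a.e.\ on all of $\T$; (b) a change-of-variables/conformal-invariance argument relating non-tangential approach in $W$ at points of $I$ to radial (equivalently non-tangential) approach in $\D$ at points of $E$ — this uses that $\phi$ is conformal and that $\phi'$ has non-tangential limits a.e., so angular regions are mapped to angular regions a.e.\ (a theorem going back to the boundary behaviour of conformal maps, valid since $\partial W$ is rectifiable off the cusps); and (c) the absolute continuity of harmonic measure on $E$ with respect to arc length there, so that "$dt$-a.e.\ on $I$" transfers to "$dm$-a.e.\ on $E$". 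With these standard facts in place the maximum-principle step is immediate; all the work is in the conformal bookkeeping, which I would state as a lemma and prove by citing the boundary correspondence theorems for conformal maps of Jordan domains with piecewise-smooth boundary.
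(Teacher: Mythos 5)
Your proof is correct and follows the same overall strategy as the paper — conformal transfer to $\D$ via a map $\phi$ that extends to a homeomorphism of the closures and is conformal at a.e.\ boundary point (so non-tangential approach in $W$ at points of $I$ corresponds to non-tangential approach in $\D$, and $dt$-null sets transfer to $dm$-null sets), followed by a boundary maximum principle in the disk. Where you and the paper part ways is the final step. The paper forms the auxiliary bounded analytic function $H=\exp(-v-i\tilde v)$ (single-valued since $\D$ is simply connected and $v>0$ makes $H\in H^\infty$), observes that the non-tangential boundary values of $(f\circ\phi)H\in H^\infty(\D)$ have modulus $\leq 1$ a.e., and invokes the elementary $H^\infty$ fact $|F(z)|\le\|F^*\|_\infty$. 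You instead work directly with the subharmonic function $s=\log|g|-v$. That route also works, but the theorem you cite — removability of a capacity-zero exceptional set under a pointwise $\limsup$ hypothesis — is not quite the one you need: on $E$ you control only radial/non-tangential limits a.e., not $\limsup$ at every point. What actually closes the argument is the Littlewood-type statement that a subharmonic function on $\D$ which is \emph{bounded above} and has radial limits $\le 0$ $m$-a.e.\ is $\le 0$; this follows from the Riesz decomposition $s=h-p$ with $p$ a Green potential and $h=P[\sigma]$ a bounded-above harmonic function whose boundary measure $\sigma$ has non-positive singular part, so that $\sigma_{\mathrm{ac}}\le 0$ forces $\sigma\le 0$. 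With that citation corrected, the two-point exceptional set is handled automatically (it is $m$-null, so it causes no trouble in an a.e.\ condition), and no removability theorem is needed. In short: same structure, different tool for the last step; the paper's $H^\infty$ trick is a slicker way to package the same boundedness-above information, while your subharmonic route is equally valid once the appropriate radial-limit maximum principle is invoked.
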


\begin{proof} Let $\phi: \D \to W$ be a conformal mapping. The local smoothness of the boundary of $W$ and basic conformal mapping theory ensure that $\phi$ is conformal at almost every point of $\T$ (see \cite[Chapter V.5]{garnett2005harmonic}). This implies that the functions $\log |f \circ \phi|$ and $u \circ \phi$, which are defined in $\D$, have non-tangential limits almost everywhere on $\T$. Let $\widetilde{u \circ \phi}$ be a harmonic conjugate of $u \circ \phi$ in $\D$ and consider the function $H(z) = \exp\Big(- u \circ \phi(z) - i\widetilde{u \circ \phi}(z) \Big)$, $z \in \D$. By positivity of $u$, the function $H$ is bounded in $\D$, and our assumptions leads to the conclusion that the non-tangential boundary values on $\T$ of the bounded function $(f \circ \phi)(z)H(z) \in H^\infty(\D)$ are not larger than 1 in modulus. Thus by basic function theory in $\D$, we obtain the inequality $|(f \circ \phi)(z)H(z)| \leq 1$ for all $z \in \D$. This easily translates into $\log |f(z)| \leq u(z)$ for $z \in W$.
\end{proof}

We will need \thref{permanenceLemma} in the disk $\D$. Here is the precise statement which we will use. The uniform absolute continuity of sequences of Borel measures on arcs $I$ of the circle $\T$ is defined analogously to how it was defined in \thref{uniAbsContDef} for intervals on the line.

\begin{cor}
\thlabel{PermanenceLemmaDisk} Let $I$ be an arc properly contained in the circle $\T$, $\theta = S_\nu$ be a singular inner function in $\D$ defined by a singular measure $\nu$ supported in the interior of $I$, and $\{h_n\}_{n \geq 1}$ be a sequence of functions in $H^\infty$ such that \[ \lim_{n \to +\infty}  \, \theta(z) h_n(z) = h(z), \quad z \in \D\] where $h \in H^\infty$. Assume that \begin{enumerate}[(i)]
    \item there exists a majorant $F$ for which we have \[ \sup_{z \in \D} \, |\theta(z)h_n(z)|\exp\Big(-F\big(1-|z|\big) \Big) < C \] for some positive constant $C > 0$ independent of $n$,
    \item the sequence of positive Borel measures $\{\log^+|h_n|d\m\}_{n \geq 1}$ is uniformly absolutely continuous on an arc larger than $I$.
\end{enumerate}
Then $h/\theta \in H^\infty$.
\end{cor}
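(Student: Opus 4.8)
The plan is to deduce \thref{PermanenceLemmaDisk} from \thref{permanenceLemma} by a conformal change of variables carrying the disk onto the half-plane. Since $I$ is properly contained in $\T$, I would fix a point $\zeta_0 \in \T \setminus \overline{I}$ and a Möbius transformation $\psi : \D \to \H$ with $\psi(\zeta_0) = \infty$ (after a rotation, $\psi$ is the Cayley transform). Then $\psi$ extends to a real-analytic diffeomorphism of $\T \setminus \{\zeta_0\}$ onto $\R$; in particular it maps the arc $I$ onto a finite interval $J := \psi(I)$ and the interior of $I$ onto the interior of $J$, and $\psi^{-1}$ is conformal, with non-vanishing derivative bounded above and below, on a neighbourhood in $\cD$ of any compact subset of $\T \setminus \{\zeta_0\}$, hence on a neighbourhood of $\overline{I}$.

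Next I would transport all the data to $\H$. The function $\theta \circ \psi^{-1}$ is a zero-free inner function in $\H$; since $\supp{\nu}$ is a compact subset of the interior of $I$ (so $\zeta_0 \notin \supp{\nu}$), the measure $\nu$ is carried by $\psi$ to a positive singular measure $\nu'$ on $\R$ supported in a compact subset of the interior of $J$, with no mass escaping to infinity, so $\theta \circ \psi^{-1} = c\, S_{\nu'}$ for some unimodular constant $c$. Put $g_n := c\,(h_n \circ \psi^{-1}) \in H^\infty(\H)$; then $S_{\nu'}(z) g_n(z) = \theta(\psi^{-1}(z)) h_n(\psi^{-1}(z)) \to h(\psi^{-1}(z)) =: \tilde h(z)$ for every $z \in \H$, with $\tilde h \in H^\infty(\H)$, non-zero if $h$ is.

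It then remains to verify hypotheses (i) and (ii) of \thref{permanenceLemma} for the interval $J$, the singular inner function $S_{\nu'}$, and the sequence $\{g_n\}$. For (i): by the bi-Lipschitz behaviour of $\psi^{-1}$ near $\overline{J}$ together with the comparability $1 - |w| \asymp \dist{w}{\T}$ on $\D$, there are $c_1 > 0$ and $d > 0$ with $1 - |\psi^{-1}(z)| \geq c_1 \Im z$ for every $z$ in the rectangle $R = R(J,d) = \{x+iy \in \H : x \in J,\ y < d\}$; since $F$ is decreasing, hypothesis (i) of \thref{PermanenceLemmaDisk} gives $|S_{\nu'}(z) g_n(z)| \leq C \exp\big(F(c_1 \Im z)\big)$ on $R$, and $F'(y) := F(c_1 y)$ is again a majorant in the sense of \thref{RegMajorantDef}, being decreasing, tending to $+\infty$ as $y \to 0^+$, and satisfying $\int_0^d \log F'(y)\,dy = c_1^{-1}\int_0^{c_1 d} \log F(t)\,dt < \infty$. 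For (ii): since $|c| = 1$ one has $\log^+|g_n| = \log^+|h_n \circ \psi^{-1}|$, and the push-forward under $\psi$ of the restriction of $\log^+|h_n|\,d\m$ to a neighbourhood of $I$ equals $\log^+|g_n(t)|\,|(\psi^{-1})'(t)|\,dt$; as $|(\psi^{-1})'|$ is bounded above and below on a compact neighbourhood of $\overline{J}$ and $\psi$ distorts length measure there by bounded factors, the uniform absolute continuity of $\{\log^+|h_n|\,d\m\}$ on an arc $I' \supsetneq I$ passes to uniform absolute continuity of $\{\log^+|g_n|\,dt\}$ on $J' := \psi(I') \supsetneq J$.

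Finally, \thref{permanenceLemma} yields $\tilde h / S_{\nu'} \in H^\infty(\H)$; composing with $\psi$ and using $|c| = 1$ gives $h/\theta = c^{-1}\,(\tilde h / S_{\nu'}) \circ \psi \in H^\infty(\D) = H^\infty$, which is the assertion. The only substantial input is \thref{permanenceLemma} itself (and through it \thref{WizardHatMainProposition}); everything else is a routine transplantation. The one point I expect to require a little care — and the only place where the geometry matters — is the uniform comparability $1 - |\psi^{-1}(z)| \asymp \Im z$ near $\overline{J}$ and the bounded distortion of Lebesgue measure under $\psi$ there, since these are exactly what makes the majorant condition in (i) and the uniform absolute continuity in (ii) survive the change of variables; both hold because $\psi$ is smooth and non-degenerate away from $\zeta_0$, and the rectangle $R$ and arc $I'$ stay in that good region.
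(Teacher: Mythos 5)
Your argument is correct and is essentially the same conformal-transplantation reduction to \thref{permanenceLemma} that the paper uses; the paper merely states the bi-Lipschitz/bounded-distortion facts near $\overline{I}$ more tersely (noting $F(t) \mapsto F(at)$ is again a majorant and that uniform absolute continuity is preserved under bounded length distortion), while you spell them out. No gap; the one-sided inequality $1-|\psi^{-1}(z)| \geq c_1 \Im z$ on the rectangle is indeed all that is needed for (i), as you observe, since $F$ is decreasing.
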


It is easy to see that \thref{permanenceLemma} implies \thref{PermanenceLemmaDisk}. Indeed, if $\phi: \H \to \D$ is a conformal map for which $\phi^{-1}(I)$ is a finite segment on $\R$, then the distortion of lengths and distances by the map $\phi$ is bounded above and below near $\phi^{-1}(I)$ and $I$, since $\phi$ is a bi-Lipschitz bijection between some open sets containing $\phi^{-1}(I)$ and $I$. For instance, the growth condition $(i)$ in \thref{PermanenceLemmaDisk} for the sequence $\{S_\nu h_n\}_{n \geq 1}$ is easily translated into a corresponding condition $(i)$ in \thref{permanenceLemma} for the sequence $\{S_{\tilde{\nu}} \tilde{h}_n\}_{n \geq 1}$, where $S_{\tilde{\nu}} := S_\nu \circ \phi$ and $\tilde{h}_n := h_n \circ \phi$, by replacing $F(t)$ with a new majorant of the form $\tilde{F}(t) := F(at)$ for some $a > 0$. Moreover, the mapping $\phi$ will preserve the uniform absolute continuity properties of the corresponding measures. Thus \thref{PermanenceLemmaDisk} can readily be deduced from \thref{permanenceLemma} and a change of variables argument.

\subsection{Proof of \thref{PermanenceMainTheorem}}

\thref{PermanenceMainTheorem} follows almost immediately from \thref{PermanenceLemmaDisk}, we just need to verify that a bounded sequence in the corresponding $\Po^2(\mu)$-space satisfies properties $(i)$ and $(ii)$ in \thref{PermanenceLemmaDisk}. This is done in the next two lemmas. 

\begin{lem} \thlabel{PointEvaluationBoundRegMajorant} Let \[ d\mu_\D(z) = G(1-|z|) dA(z),\] where $G$ satisfies the condition \eqref{LogLogIntTag} appearing in Section \ref{introsec}. For $z \in \D$, denote by \[ E_z := \sup_{ \substack{ f \in \mathcal{P}, \\\|f\|_{\mu_\D} = 1 }} |f(z)| \] the norm of the evaluation functional $z \mapsto f(z)$ on $\Po^2(\mu_\D)$. There exists a majorant $F$ such that \[ E_z \leq \exp \big( F(1-|z|) \big), \quad z \in \D.\]
\end{lem}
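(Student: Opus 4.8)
The goal is to produce a majorant $F$ in the sense of \thref{RegMajorantDef} which bounds the point evaluation norm $E_z$ on $\Po^2(\mu_\D)$. The starting point is the standard observation that for a radial weight the evaluation functional at $z$ can be compared with the evaluation functional at $|z|$, so it suffices to bound $E_z$ in terms of $1-|z|$ alone; let me write $E(t) := \sup_{1-|z| = t} E_z$ and aim to show $E(t) \le \exp(F(t))$ for a suitable majorant $F$. The quantity $E(t)$ is governed by how small $\|z^n\|_{\mu_\D}$ can be, i.e.\ by the moments
\[
M_n(G) = \int_0^1 r^{2n+1} G(1-r)\, dr \asymp \int_0^1 s \, r^{2n} G(s)\, ds,
\]
and by a Cauchy–Schwarz/duality argument one has the reproducing-kernel bound
\[
E_z^2 = \sum_{n \ge 0} \frac{|z|^{2n}}{M_n(G)} \quad\text{(up to normalization constants)},
\]
so everything reduces to estimating $\sum_{n\ge 0} t^{2n}/M_n(G)$ where I abusively write $t$ for $|z|$ now, i.e.\ to understanding the growth of $1/M_n(G)$ as $n \to \infty$.

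First I would record the elementary lower bound for the moments: restricting the integral defining $M_n(G)$ to an interval $s \in (a_n, 2a_n)$ where $a_n \sim 1/n$ and using monotonicity of $G$ gives $M_n(G) \gtrsim n^{-2} G(c/n)$ for a fixed constant $c$, hence $1/M_n(G) \lesssim n^2 / G(c/n)$. Plugging this into the series gives, for $|z| = 1-t$,
\[
E_z^2 \lesssim \sum_{n \ge 0} (1-t)^{2n} \frac{n^2}{G(c/n)}.
\]
The dominant terms are those with $n \asymp 1/t$, and a routine splitting of the sum (geometric tail for $n \gg 1/t$, and at most $O(1/t)$ terms of comparable size near $n \asymp 1/t$, each bounded by $t^{-2}/G(ct)$ roughly) yields a bound of the shape $E_z \le \exp\big( c_1 \log(1/t) + c_2 \log(1/G(c_3 t)) \big)$ for $t$ small. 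This is precisely where the hypothesis \eqref{LogLogIntTag} enters: define $F(t)$ to be (a constant multiple of, plus lower-order terms in $\log(1/t)$) the expression $\log\big(1/G(c_3 t)\big)$, suitably adjusted to be decreasing. Property $(i)$ of a majorant holds because $G$ is increasing and continuous with $G(0^+) $ small, so $\log(1/G(c_3 t)) \to \infty$ as $t \to 0^+$; property $(ii)$, the integrability $\int_0^d \log F(t)\, dt < \infty$, is exactly \eqref{LogLogIntTag} after the change of variables $t \mapsto c_3 t$ — note $\log F(t) \approx \log\log(1/G(c_3 t)) + O(\log\log(1/t))$, and $\int_0^d \log\log(1/t)\,dt < \infty$ trivially, while $\int_0^d \log\log(1/G(c_3 t))\,dt < \infty$ is the stated hypothesis.

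The main obstacle, and the step requiring the most care, is the passage from the crude moment lower bound to a genuine upper bound on the sum $\sum_n (1-t)^{2n}/M_n(G)$ that is clean enough to read off a majorant: one must make sure the polynomial factors $n^2$ and the number of "near-critical" terms only contribute an extra $O(\log(1/t))$ to $\log E_z$, and that no logarithmic convexity or regularity of $G$ beyond monotonicity and \eqref{LogLogIntTag} is secretly used. A secondary technical point is the reduction from $E_z$ to $E(1-|z|)$ and the identification of the reproducing kernel of $\Po^2(\mu_\D)$ with $\sum_n z^n \bar z^n / \|z^n\|^2$, which is valid because the monomials $\{z^n\}$ are orthogonal in $\L^2(\mu_\D)$ when $G$ is radial; this orthogonality is what makes the whole computation explicit. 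Once the estimate $E_z \lesssim \exp(F(1-|z|))$ with $F$ as above is in hand, absorbing the implied multiplicative constant and the $O(\log(1/t))$ terms into a slightly enlarged but still admissible $F$ (using that adding $C\log(1/t) + C$ to a majorant keeps it a majorant, since $\int_0^d \log(C \log(1/t) + F(t) + C)\,dt < \infty$ whenever $\int_0^d \log F\,dt < \infty$ and $F \to \infty$) completes the proof.
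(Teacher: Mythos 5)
Your overall strategy — orthogonality of the monomials in $\L^2(\mu_\D)$ gives a reproducing-kernel Hilbert space with $E_z^2 = \sum_{n \geq 0} |z|^{2n}/M_n(G)$, so bounding $E_z$ reduces to estimating this series — is sound in principle and genuinely different from the paper's argument, which never touches the moments. The paper instead uses a direct subharmonicity argument: with $\delta = (1-|z|)/2$, the submean value property of $|f|$ on the ball $B(z,\delta)$ combined with Cauchy--Schwarz and the monotonicity of $G$ gives $|f(z)| \lesssim (1-|z|)^{-k}G\big((1-|z|)/2\big)^{-1/2}\|f\|_{\mu_\D}$ in two lines, from which $F(t) := C\log(1/t) + \tfrac{1}{2}\log\big(1/G(t/2)\big)$ is read off and checked to be a majorant via \eqref{LogLogIntTag}.

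The gap in your argument is the moment lower bound. Restricting the integral $M_n(G) = 2\int_0^1 G(1-r)r^{2n+1}\,dr$ to $1-r \in (a_n, 2a_n)$ with $a_n \sim 1/n$ certainly gives $M_n(G) \gtrsim n^{-1}G(c/n)$, but this is far from sharp precisely in the regime the paper cares about. Under \eqref{ExpDecTag} (which is the situation in which this lemma is actually invoked — see the proof of Theorem B) we have $\log\big(1/G(x)\big) \gtrsim 1/x$ as $x \to 0^+$, so your bound reads $1/M_n(G) \lesssim n\exp(c'n)$ for some $c' > 0$. The series
\[
\sum_{n \geq 0} (1-t)^{2n}\,\frac{n^2}{G(c/n)}
\]
then has terms of order $n^2 \exp\big((c'-2t)n\big)$ and \emph{diverges} for every $t < c'/2$, so it gives no bound on $E_z$ at all for $z$ near the boundary. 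The ``geometric tail for $n \gg 1/t$'' you appeal to does not exist. The true moment asymptotics, as recorded in \thref{MomentGGrowthLogLogInt} and \thref{MomentEstProp}, are $M_n(G) \asymp \exp\big(-m_*(2n)\big)$ where $m_*$ is the lower Legendre envelope of $\log(1/G)$; this is \emph{sublinear} in $n$ (for instance $M_n \asymp \exp(-d\sqrt{n})$ when $G(x) = \exp(-c/x)$), not exponentially small, and that sublinearity is exactly what makes the reproducing kernel finite. The Laplace-type peak of the integrand $G(1-r)r^{2n+1}$ sits at $1-r \sim$ (Legendre point), not at $1-r \sim 1/n$, which is why the crude cutoff misses nearly all of $M_n$.

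You could salvage the moment route by substituting the sharp Legendre estimate from Section~\ref{MomentEstSection} for your crude one, and then the Laplace-method analysis of the resulting series would indeed deliver a majorant. But this imports essentially the whole of Section~\ref{MomentEstSection} into a lemma that the paper proves in Section~\ref{PermanenceSection} by a short self-contained subharmonicity argument, so the paper's route is both correct and considerably cheaper.
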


\begin{proof} Fix $z \in \D$, $\delta = (1-|z|)/2$ and let $B(z,\delta)$ denote the ball around $z$ of radius $\delta$. By subharmonicity of the function $z \mapsto |f(z)|$ and the Cauchy-Schwarz inequality, we have 
\begin{align*}
    |f(z)|  &\leq \frac{1}{\delta^2} \int_{B(z,\delta)} |f(z)| dA(z) \\ &\leq \frac{1}{\delta^2} \|f\|_{\mu_\D} \sqrt{\int_{B(z,\delta)} \frac{1}{G(1-|z|)} dA(z)}.
\end{align*}
Since $G$ is assumed to be an increasing function, we may estimate the integral inside the square root by 
\[ \int_{B(z,\delta)} \frac{1}{G(1-|z|)} dA(z) \leq \frac{1}{\delta^2} \frac{1}{G\big((1-|z|)/2\big)}.\] Putting this into the previous estimate, we obtain \[ |f(z)| \leq \frac{\|f\|_{\mu_\D}}{\delta ^3 } \sqrt{\frac{1}{G\big((1-|z|)/2 \big)}} = \frac{8\|f\|_{\mu_\D}}{(1-|z|)^3 } \sqrt{\frac{1}{G\big((1-|z|)/2 \big)}}\]
Now set \[ F(t) := \log \Big( \frac{8}{t^3 } \Big)  + \frac{1}{2}\log \Big( \frac{1}{G(t/2)} \Big), \quad t \in (0,1].\] By the above estimate, the norm $E_z$ of the evaluation functional is bounded by \[ E_z \leq \exp \big ( F(1-|z|) \big), \quad z \in \D.\] Moreover, $F$ is a decreasing function, and by virtue of $G$ satisfying \eqref{LogLogIntTag}, $F$ also certainly satisfies \[ \int_0^d \log F(t)\, dt < \infty\] if $d > 0$ is some small number. Thus $F$ is a majorant in the sense of \thref{RegMajorantDef}.
\end{proof}

\begin{lem} \thlabel{UniAbsContLemma} Assume that the weight $w$ satisfies \[ \int_I \log w \, d\m > -\infty\] for some arc $I \subset \T$. If $\{f_n\}_{n \geq 1}$ are positive functions such that \[ \int_I f_n^p w \, d\m < C, \quad n \geq 1\] for some constant $C > 0$ and some $p > 0$, then the sequence $\{\log^+ f_n \, d\m \}_{n \geq 1}$ is uniformly absolutely continuous on $I$.
\end{lem}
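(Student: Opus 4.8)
The plan is to reduce to the case $p=1$ and then split the mass of $\log^+ f_n$ according to the factorization $f_n = (f_n^p w)^{1/p}\cdot w^{-1/p}$ — or more conveniently, after passing to $g_n := f_n^p$, according to $g_n = (g_n w)\cdot w^{-1}$. Since $\log^+(x^p) = p\log^+ x$ for $x \ge 0$ and $p>0$, one has $\int_A \log^+ f_n\, d\m = \tfrac1p \int_A \log^+ g_n\, d\m$, so it suffices to prove uniform absolute continuity of $\{\log^+ g_n\, d\m\}_{n\ge1}$ on $I$ under the hypothesis $\sup_n \int_I g_n w\, d\m \le C$. The hypothesis $\int_I \log w\, d\m > -\infty$ forces $w>0$ $\m$-a.e.\ on $I$, so a.e.\ on $I$ we may write $g_n = (g_n w)\cdot w^{-1}$, and the subadditivity $\log^+(ab) \le \log^+ a + \log^+ b$ gives, for every Borel $A \subset I$,
\[
\int_A \log^+ g_n\, d\m \le \int_A \log^+(g_n w)\, d\m + \int_A \log^+(1/w)\, d\m = \int_A \log^+(g_n w)\, d\m + \int_A (\log w)^-\, d\m.
\]

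The second term does not depend on $n$ and is handled by the integrability of $\log w$ on $I$: since $w \in \L^1(\T)$ we have $\int_I \log^+ w\, d\m \le \int_I w\, d\m < \infty$, and combined with $\int_I \log w\, d\m > -\infty$ this gives $(\log w)^- \in \L^1(I)$; hence the finite measure $(\log w)^-\, d\m$ is absolutely continuous, i.e.\ $\int_A (\log w)^-\, d\m \to 0$ as $|A| \to 0$. For the first term I would use the crude inequality
\[
\log^+ x \le \log T + \frac{x}{T}, \qquad x \ge 0,\ T \ge 1,
\]
(proved by checking the cases $x \le 1$, $1 \le x \le T$, and $x \ge T$ separately), which yields
\[
\int_A \log^+(g_n w)\, d\m \le |A|\log T + \frac1T \int_I g_n w\, d\m \le |A|\log T + \frac{C}{T}.
\]
Given $\epsilon>0$, first choose $T \ge 1$ so large that $C/T < p\epsilon/4$, then choose $\delta>0$ so small that $\delta \log T < p\epsilon/4$ and also $\int_A (\log w)^-\, d\m < p\epsilon/2$ whenever $|A|<\delta$ (possible by the previous paragraph); for $A \subset I$ with $|A| < \delta$ the three pieces then sum to less than $p\epsilon$, and dividing by $p$ gives $\int_A \log^+ f_n\, d\m < \epsilon$, uniformly in $n$. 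This is exactly the conclusion of \thref{uniAbsContDef}.

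The only genuinely non-trivial point is the control of $\int_A \log^+(g_n w)\, d\m$: the $\L^1$-bound on $g_n w$ does \emph{not} by itself make this integral small as $|A| \to 0$, since the mass of $g_n w$ could concentrate on a tiny subinterval where $g_n w$ is enormous; it is precisely the sub-linear growth of the logarithm, exploited through the truncation at height $T$, that rescues the argument. Everything else is routine $\epsilon$/$\delta$ bookkeeping, and uniformity in $n$ is automatic because the only place $n$ enters the final estimate is the term $\tfrac1T\int_I g_n w\, d\m$, which is dominated by the $n$-independent constant $C$.
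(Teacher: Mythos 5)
Your argument is correct and rests on the same key decomposition as the paper's: $\log^+ f_n \le \tfrac1p\log^+(f_n^p w) + \tfrac1p\log^+(1/w)$, with the second piece handled by noting that $\log w \in \L^1(I)$. The only place you diverge is in controlling the first piece. The paper exploits the sublinearity of the logarithm through the bound $(\log^+ x)^2 \lesssim x$, which makes $\{\tfrac1p\log^+(f_n^p w)\}_{n\ge1}$ a bounded family in $\L^2(I)$, and then gets uniform absolute continuity for free from the Cauchy--Schwarz estimate $\int_A g_n\, d\m \le \sqrt{|A|}\,\|g_n\|_{\L^2(I)}$. You instead exploit the same sublinearity via the truncation $\log^+ x \le \log T + x/T$ and optimize over $T$, which is the classical De la Vall\'ee Poussin--type uniform integrability argument. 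Both routes are sound and equally elementary; the paper's is a line or two shorter, while yours makes it more transparent that what is really going on is a uniform-integrability criterion for a family with an $\L^1$ bound after a superlinear change of scale.
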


\begin{proof}
    Note that     
    \begin{align*}
        \log^+ f_n & \leq \log^+( f_n w^{1/p}) + \log^+(w^{-1/p}) \\
                & \leq \frac{1}{p}\log^+( f_n^p w) + \frac{1}{p} \log^+ (1/w) \\ & := g_n + g,
    \end{align*} where it follows from the assumption that $g_n$ are positive functions which form a bounded subset of (say) $\L^2(I)$, and $g \in \L^1(I)$. Clearly, if $A$ is a Borel subset of $I$, then by Cauchy-Schwarz inequality we obtain \[ \int_A g_n \, d\m \leq \sqrt{|A}| \cdot \|g_n\|_{\L^2(I)},\] so that the family $\{g_n d\m\}_{n \geq 1}$ is uniformly absolutely continuous on $I$. Then the above inequalities imply that $\{\log^+ f_n \, d\m \}_{n \geq 1}$ is a uniformly absolutely continuous sequence on $I$.
\end{proof}

\begin{proof}[Proof of \thref{PermanenceMainTheorem}]
Let $h \in [S_\nu] \cap \N^+$. Since $h \in [S_\nu]$, there exists a sequence of polynomials $\{p_n\}_{n \geq 1}$ such that $S_\nu  p_n$ converges to $h$ in the norm of $\Po^2(\mu)$. Multiplying $h$ by a suitable bounded outer function $u$ we can ensure that $hu \in H^\infty$, and that $S_\nu  p_n  u$ converges to $hu$ (see the discussion following \thref{ProdOfCyclicLemma} below). Let $\{K_j\}_{j}$ be an increasing sequence of compact sets which are finite unions of intervals and such that $\cup_{j} K_j = \co (w)$. By \thref{PermanenceLemmaDisk}, \thref{PointEvaluationBoundRegMajorant} and \thref{UniAbsContLemma}, whenever $\nu_j$ is the restriction of $\nu$ to the compact subset $K_j$, we have that $hu/S_{\nu_j} \in H^\infty$ with the bound $\|hu/S_{\nu_j}\|_\infty \leq \|hu\|_\infty$. The assumption that $\nu(\T) = \nu\big( \co (w) \big)$ means that the restrictions $\nu_j$ converge weak-star to the measure $\nu$. Thus \[ |h(z)u(z)/S_\nu(z)| = \lim_{j \to \infty} |h(z)u(z)/S_{\nu_j}(z)| \leq \|hu\|_\infty, \quad z \in \D \] In particular, since $u$ is outer, it follows that $S_\nu$ divides the inner factor of $h$. Thus $h/S_\nu \in \N^+$.
\end{proof}

\section{Cyclic singular inner functions}
\label{CyclicSection}

In this section we will study the cyclicity of singular inner functions, and prove \thref{CyclicityMainTheorem}.

\subsection{Weak-star approximation of singular measures, with obstacles} 

The cyclicity in $\Po^2(\mu)$ of the singular inner function $S_\nu$ will follow from the existence of a sequence of non-negative bounded functions $\{f_n\}_{n \geq 1}$ for which the measures $\{f_n \, d\m\}_{n \geq 1}$ converge weak-star to $\nu$. In our context the functions $f_n$ will have to satisfy a severe restriction on their size, namely \begin{equation}
    \label{ObstacleForfn}
    0 \leq f_n(x) \leq \log^+(1/w(x)), \quad x \in \T.
\end{equation} In case $w(x) = 0$, the right-hand side is to be interpreted as $+\infty$ (i.e., no size restriction on $f_n$ at the point $x$). Essentially, the obstacle \eqref{ObstacleForfn} prohibits the existence of an approximating sequence $\{f_n d\m\}_{n \geq 1}$ if some part of the mass of $\nu$ is located in "wrong" places on $\T$. However, if $\nu$ is carried outside of the core set of $w$, then such a sequence exists. This is the content of the next lemma.

\begin{lem} \thlabel{WeakStarSeqObstacleLemma}
    Let $\nu$ be a positive singular Borel measure on $\T$ which satisfies \[\nu\big( \co (w)\big) = 0.\] Then there exists a sequence of non-negative bounded functions $\{f_n\}_{n \geq 1}$ satisfying the following three properties:    
    \begin{enumerate}[(i)]
        \item the non-negative measures $\{f_n \, d\m\}_{n \geq 1}$ converge weak-star to $\nu$,
        \item $\int_\T f_n \, d\m = \nu(\T)$,
        \item the functions $f_n$ obey the bound \eqref{ObstacleForfn}.
    \end{enumerate}
\end{lem}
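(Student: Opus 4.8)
\textbf{Proof plan for \thref{WeakStarSeqObstacleLemma}.} The plan is to build the $f_n$ by regularizing $\nu$ at scale $1/n$ and then truncating against the obstacle $\log^+(1/w)$, and to check that the truncation removes only a vanishing amount of mass. Concretely, first I would let $\nu_n := \nu * \phi_n$ be a smoothed version of $\nu$, where $\phi_n$ is a standard approximate identity on $\T$ (e.g. the Fej\'er kernel, or a normalized bump of width $\sim 1/n$); write $d\nu_n = g_n \, d\m$ with $g_n$ continuous and non-negative, $\int_\T g_n \, d\m = \nu(\T)$, and $g_n \, d\m \to \nu$ weak-star. The candidate functions are then $f_n := \min\big(g_n, \log^+(1/w)\big)$, which automatically satisfy the size bound $(iii)$ and are bounded (since $g_n$ is bounded). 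Property $(ii)$ will not hold exactly after truncation, so I would instead renormalize: set $f_n := c_n \min\big(g_n, \log^+(1/w)\big)$ with $c_n := \nu(\T)/\int_\T \min(g_n,\log^+(1/w))\,d\m$, and the whole argument reduces to showing $c_n \to 1$, equivalently that the truncated-off mass
\[
\epsilon_n := \int_\T \big(g_n - \min(g_n,\log^+(1/w))\big)\, d\m = \int_{\{g_n > \log^+(1/w)\}} \big(g_n - \log^+(1/w)\big)\, d\m
\]
tends to $0$. Once $\epsilon_n \to 0$, property $(i)$ follows too: for continuous $\psi$ on $\T$,
\[
\Big| \int_\T \psi f_n \, d\m - \int_\T \psi \, d\nu \Big| \leq \|\psi\|_\infty\, |c_n - 1|\,\nu(\T) + \|\psi\|_\infty\, c_n \epsilon_n + \Big| \int_\T \psi g_n\, d\m - \int_\T \psi\, d\nu\Big|,
\]
and all three terms vanish as $n \to \infty$.

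The heart of the matter is therefore the estimate $\epsilon_n \to 0$, and this is where the hypothesis $\nu(\co(w)) = 0$ must be used. The key structural fact is that $\nu$ is a singular measure whose closed support meets $\T \setminus \co(w)$ in a set where $\log^+(1/w)$ is "large" in the relevant averaged sense — more precisely, $\co(w)$ being open, its complement $K := \T \setminus \co(w)$ is closed, and by definition of the core, no subinterval of $\T$ meeting $K$ can have $\int_I \log w\, d\m > -\infty$ unless... — wait, that is not quite the right phrasing; the correct statement is that for \emph{every} point $x \in K$ and every open interval $I \ni x$ we have $\int_I \log w\, d\m = -\infty$, equivalently $\int_I \log^+(1/w)\, d\m = +\infty$. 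I would exploit this as follows: fix $\delta > 0$ and a small $\rho > 0$; since $\nu(\co(w)) = 0$, the smoothed mass $g_n$ concentrates near $K$, i.e. $\int_{K_\rho^c} g_n \, d\m \to 0$ where $K_\rho$ is the $\rho$-neighbourhood of $K$ (this uses only weak-star convergence and that $\nu$ gives full mass to $K$). On $K_\rho^c$ the truncation might fail, but the contribution is $\leq \int_{K_\rho^c} g_n\,d\m \to 0$. On $K_\rho$, one covers $K$ by finitely many small arcs $I_1, \dots, I_m$ on each of which $\int_{I_j} \log^+(1/w)\, d\m$ is as large as we please; since $g_n$ has total mass $\nu(\T)$ spread (in the limit) over $K$ with a controlled modulus of continuity at scale $1/n$, for $n$ large the set $\{g_n > \log^+(1/w)\} \cap I_j$ must be small in measure, and a Chebyshev/layer-cake argument bounds $\int_{I_j}(g_n - \log^+(1/w))^+\,d\m$ by a quantity going to $0$. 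Summing over $j$ gives $\epsilon_n \to 0$.

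The main obstacle I anticipate is making the last step quantitative: controlling $|\{g_n > \log^+(1/w)\}|$ on each small arc uniformly, given only that $g_n$ is a fixed-mass approximate identity applied to $\nu$ and that $\log^+(1/w)$ is merely in $L^1$ of each such arc's complement but has divergent integral \emph{on} the arc. The delicate point is that $\log^+(1/w)$ can itself be large on a set of small measure (it is $+\infty$ on $\{w = 0\}$), so "$\int_I \log^+(1/w) = \infty$" does not by itself prevent $g_n$ from exceeding it on a non-negligible portion of $I$; one genuinely needs that the \emph{excess} $\int_I (g_n - \log^+(1/w))^+\,d\m$ is small, which should follow because $g_n$ is bounded (by $\|\phi_n\|_\infty \nu(\T) \lesssim n\,\nu(\T)$) while $\int_I \log^+(1/w)\,d\m$ can be made to exceed any prescribed multiple of $|I|$ — choosing the covering arcs and then $n$ in the right order (arcs depending on $\rho$ and the target bound, $n$ last) closes the loop. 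I expect the cleanest route is to first reduce, via an exhaustion of $\co(w)$ by compacts as in the proof of \thref{PermanenceMainTheorem}, to the case where $w$ is bounded below away from $K$, and then handle the concentration near $K$ by the covering argument above; alternatively, one may diagonalize, constructing $f_n$ directly as $\min(g_{k(n)}, \log^+(1/w))$ for a sufficiently sparse subsequence $k(n)$ chosen so that each required smallness holds.
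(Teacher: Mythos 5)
Your plan --- smooth $\nu$ by an approximate identity $\phi_n$, truncate against the obstacle $\log^+(1/w)$, then renormalize --- has a genuine gap at the crucial step $\epsilon_n \to 0$, and I believe that step is actually false. The difficulty is a spatial mismatch: knowing $\int_I \log^+(1/w)\,d\m = +\infty$ for every interval $I$ meeting $K := \T \setminus \co(w)$ says nothing about \emph{where} inside $I$ the function $\log^+(1/w)$ is large. Take $\nu = \delta_a$ with $a \in K$ and build $w$ as follows: let $\{E_k\}_{k \geq 1}$ be disjoint intervals accumulating at $a$ from one side with $|E_k| = 2^{-k^2}$, set $w \equiv 1$ off $\bigcup_k E_k$ and $w \equiv e^{-k/|E_k|}$ on $E_k$. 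Then $\log^+(1/w)$ vanishes off $\bigcup_k E_k$ and $\int_{E_k}\log^+(1/w)\,d\m = k$, so every interval around $a$ has divergent integral of $\log^+(1/w)$ (hence $a \notin \co(w)$) while every other point lies in $\co(w)$; thus $K = \{a\}$ and $\nu(\co(w)) = 0$. But $g_n = \phi_n(\cdot - a)$ has height $\sim n$ with its mass within distance $\sim 1/n$ of $a$, where the $E_k$ it can meet have total length on the order of $\sum_{k \geq n} 2^{-k^2}$, which is much smaller than $1/n$. Hence $\int_\T \min(g_n, \log^+(1/w))\,d\m \to 0$, i.e. $\epsilon_n \to \nu(\T)$, not $0$. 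The Chebyshev/layer-cake step you invoke runs the wrong way: it gives an upper bound on $|\{\log^+(1/w) > T\}|$, whereas you would need a lower bound on the measure of $\{\log^+(1/w) > T\}$ intersected with the effective support of $g_n$, and reordering quantifiers ($\rho$, covering arcs, then $n$) cannot repair this since the counterexample already involves a single point $a$. Your normalization $c_n$ then blows up, and the rescaled $f_n$ would violate (iii) rather than satisfy it; the diagonalization variant suffers from the same problem along every subsequence.

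The paper's construction is designed precisely to bypass this. For each $n$ it works on a dyadic grid $D_n$: on a cell $d_j$ with $\nu(d_j) > 0$, it picks a point of $d_j \cap K$, an interval $I \subset d_j$ around it, and then \emph{adaptively} chooses either a set $A \subset I$ of positive measure on which $w \equiv 0$, or (if no such $A$ exists) a constant $c > 0$ and a set $B \subset I \cap \{w > c\}$ with $\int_B \log^+(1/w)\,d\m = \nu(d_j)$ exactly; it then sets $f_n := \nu(d_j)|A|^{-1}1_A$ or $f_n := \log^+(1/w)1_B$ on $d_j$. The scale of $A$ or $B$ is decoupled from $1/n$, so the mass is deposited exactly where the obstacle allows it: (ii) holds with equality, (iii) holds by construction, and (i) follows from the dyadic nesting property plus uniform approximation of continuous functions by dyadic step functions. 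This adaptivity --- choosing the support of $f_n$ \emph{after} inspecting $w$ on $d_j$, rather than imposing a fixed scale via $\phi_n$ --- is exactly the ingredient your approach is missing, and trying to patch it by letting $\phi_n$ depend on the local geometry of $\{w = 0\}$ near $K$ would amount to reconstructing the paper's argument.
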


\begin{proof}
    Let us first suppose that $\nu$ assigns no mass to any singletons, so that $\nu( \{ x\}) = 0$ whenever $x \in \T$. For any positive integer $n$, we let $D_n$ be the family of $2^n$ disjoint open dyadic intervals, each of length $2\pi \cdot 2^{-n} $, such that their union covers the circle $\T$ up to finitely many points, and such that the system $\cup_{n \geq 1} D_n$ possesses the usual dyadic nesting property: each $d \in D_n$ is contained in a unique $d' \in D_{n-1}$. Fixing an integer $n \geq 1$, we will specify how to define $f_n$ on each of the intervals $d_j \in D_n$, $1 \leq j \leq 2^n$, in such a way that the above three properties hold. 

    If $\nu(d_j) = 0$, then we simply set $f_n \equiv 0$ on $d_j$. Conversely, if $\nu(d_j) > 0$, then since $\nu\big( \co (w) \big) = 0$, it must be that $\nu(d_j) = \nu(d_j \setminus \co (w) ) > 0$. It follows that $d_j \setminus \co (w) $ is non-empty. Pick some point $x \in d_j \setminus \co (w) $. For any open interval $I$ which contains $x$ in its interior we have $\int_I \log^+(1/w) \,d\m = +\infty$ (else $x$ would have been a member of $\co (w)$). Pick such an interval $I$ which is contained within $d_j$. If there exists a subset $A \subset I$ satisfying $m(A) = |A| > 0$ on which $w$ is identically zero, then we may set \[ f_n(x) = \nu(d_j)|A|^{-1}1_A(x), \quad x \in d_j\] where $1_A$ is the characteristic function of $A$. In case that such a set does not exist, then $w > 0$ almost everywhere on $I$, and we must have \[ +\infty = \int_I \log^+(1/w)\, d\m = \lim_{c \to 0^+} \int_{I \cap \{ w > c\}} \log^+(1/w)\, d\m \] so that \[ \nu(d_j) < \int_{I \cap \{ w > c\}} \log^+(1/w)\, d\m < +\infty \] for some small $c > 0$. By absolute continuity of the finite measure \[ \log^+(1/w) 1_{I \cap \{ w > c\}}d\m \] there must then exist a set $B \subset I \cap \{ w > c\}$ for which we have precisely \[ \nu(d_j) = \int_B \log^+(1/w)\, d\m\] We pick such $B$ and define \[ f_n(x) = \log^+(1/w(x)) 1_B(x), \quad x \in d_j.\]
    Note that $f_n(x) \leq \log^+(1/c)$ on $d_j$. For definiteness, we can set $f_n$ to be equal to $0$ on the finitely many points outside of $\cup_{j=1}^{2^n} d_j$. One way or the other, we have defined $f_n$ as a bounded function, and we have \[ \int_{d_j} f_n \, d\m = \nu(d_j).\] By summing over all the $2^n$ intervals $d_j$, we see that property $(ii)$ in the statement of the lemma is satisfied (since $\nu$ assigns no mass to the finitely many points outside the union of the open intervals $d_j$). Property $(iii)$ is satisfied by the construction. Property $(i)$ also holds. Indeed, if $g$ is the characteristic function of one of the dyadic intervals $d_j$ from some stage of our construction, then the nesting property of the dyadic system and the additivity of $\nu$ ensure that \[ \lim_{n \to \infty} \int_\T g f_n \, d\m = \nu(d_j) = \int_\T g\, d\nu.\] The above equalities hold also for functions $g$ which are finite linear combinations of characteristic functions of dyadic intervals. Since such linear combinations can be used to uniformly approximate any continuous function on $\T$, and since we have the uniform variation bound in $(ii)$, we conclude that the sequence \{$f_n \, d\m\}_n$ converges weak-star to $\nu$. The proof is complete in the case that $\nu$ assigns no mass to singletons.

    In the contrary case we have that \[\nu = \sum_{j} c_j \delta_{x_j}, \quad c_j > 0\] is a countable linear combination of unit masses $\delta_{x_j}$ at the sequence of points $\{x_j\}_j$ in $\T$. Our assumption implies that $x_j \not\in \co (w) $ for all $j$. Thus each $x_j$ is the midpoint of an interval $I$ which can be chosen to have arbitrarily small length and for which we have $\int_I \log^+(1/w) \, d\m = + \infty$. We can then proceed in an analogous way to the above, and produce at each stage $n$ of the construction a disjoint finite sequence of intervals $\{I_{n,j}\}_{j=1}^n$ each covering a different point $x_j$ for $j = 1, \ldots,n$. We then define a positive function $f_n$ which carries appropriate amount of mass on each of the intervals $I_{n,j}$ and satisfies the other needed properties. We skip laying out the straight-forward details of this adaptation of the previous argument.

    The general case follows by decomposing a measure $\nu$ into one measure which is a sum of point masses and one measure which assigns no mass to singletons.
\end{proof}

\subsection{Proof of \thref{CyclicityMainTheorem}} 

We will need one more elementary lemma. It appears in \cite{el2012cyclicity} and many other works.

\begin{lem} \thlabel{ProdOfCyclicLemma} Assume that $H$ is a Banach space of analytic functions in $\D$ which contains $H^\infty$ and with the property that for all functions $h$ in $H^\infty$ the operator $M_h f := h f$ is bounded on $H$. Then the product $uv$ of two cyclic bounded functions $u,v \in H^\infty$ is cyclic.
\end{lem}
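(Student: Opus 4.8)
The plan is to prove $\thref{ProdOfCyclicLemma}$ by a direct approximation argument exploiting the hypotheses: $H^\infty \subset H$ and the boundedness of every multiplication operator $M_h$, $h \in H^\infty$, on $H$. Recall that a bounded function $u \in H^\infty$ is \emph{cyclic} in $H$ precisely when $\overline{[u]} = H$, where $[u]$ denotes the smallest $M_z$-invariant (equivalently, $M_h$-invariant for $h \in H^\infty$, by the hypothesis) closed subspace of $H$ containing $u$; equivalently, the constant function $1$ lies in $\overline{[u]}$, since $1$ is cyclic in $H$ (the polynomials, hence $H^\infty$, are dense because $1 \in H^\infty$ generates everything under $M_z$). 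So the goal reduces to showing $1 \in \overline{[uv]}$.

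First I would record the elementary but crucial observation that the closed invariant subspace $[u]$ is invariant under $M_h$ for \emph{every} $h \in H^\infty$, not merely under $M_z$: indeed $M_z$-invariance gives invariance under $M_p$ for every polynomial $p$, and since $\|M_{p_n}f - M_h f\|_H = \|M_{p_n - h} f\|_H \leq \|M_{p_n-h}\|_{\mathcal{B}(H)}\|f\|_H$, one passes to the limit once $p_n \to h$ appropriately; more simply, one uses that $h f \in \overline{[f]}$ for every $f$, which follows because $hf = \lim_n p_n f$ in $H$ for a suitable sequence — here the cleanest route is to note $hf \in \overline{\{pf : p \text{ polynomial}\}} \subseteq \overline{[f]}$ using that $M_h$ is the strong limit of $M_{p_n}$ on the cyclic vector, or simply cite that this is standard. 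The upshot I want is: $\overline{[f]}$ is a closed $H^\infty$-submodule of $H$ for every $f \in H$.

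Now the main argument. Since $v$ is cyclic, $1 \in \overline{[v]} = \overline{\{hv : h \in H^\infty\}}$ (using the module property and density of $H^\infty$-multiples), so there is a sequence $h_n \in H^\infty$ with $h_n v \to 1$ in $H$. Apply the bounded operator $M_u$: then $h_n u v \to u$ in $H$, so $u \in \overline{[uv]}$. Consequently $\overline{[u]} \subseteq \overline{[uv]}$, because $\overline{[uv]}$ is a closed $H^\infty$-module containing $u$, hence contains $\overline{\{hu : h \in H^\infty\}}$, which by cyclicity of $u$ has closure all of $H$. Therefore $\overline{[uv]} = H$, i.e. $uv$ is cyclic. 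One small technical point to be careful about is justifying that $M_u$ is bounded (this is exactly the standing hypothesis on $H$, applied with $h = u \in H^\infty$) and that applying a bounded operator to a convergent sequence preserves convergence — routine.

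The only genuine obstacle is the bookkeeping around the $H^\infty$-module structure: one must be sure that ``smallest $M_z$-invariant subspace containing $f$'' really is closed under multiplication by all of $H^\infty$ and that its closure therefore absorbs $\overline{[g]}$ whenever it contains $g$. I expect this to be a two-line verification given the boundedness hypothesis on multipliers, and no harder than that; everything else is an immediate chase through $M_u(h_n v) \to M_u(1) = u$. No delicate function theory or measure theory enters — this lemma is purely Banach-module formalism, which is why it can be cited from $\cite{el2012cyclicity}$ and elsewhere without comment.
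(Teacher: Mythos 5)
Your argument is correct and rests on the same core idea as the paper's: first use cyclicity of $v$ to write $p_n v \to 1$, apply the bounded operator $M_u$ to get $p_n uv \to u$ (hence $u\in[uv]$), then invoke cyclicity of $u$ to land $1$ inside $[uv]$. The paper packages exactly this two-step approximation into a single explicit inequality $\|1-puv\|_H \leq \|1-qv\|_H + \|M_v\|\,\|q-pu\|_H$, whereas you phrase it through the intermediate subspace membership $u\in[uv]$; these are equivalent presentations of one argument, and neither is materially simpler than the other.

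Two small things are worth tightening. First, your opening aside claims that the polynomials are dense in $H$ ``because $1\in H^\infty$ generates everything under $M_z$'' --- this does not follow from the hypotheses (that $H^\infty\subset H$ and every $M_h$ is bounded) and is not needed: the paper's definition of cyclicity here is simply that some sequence of polynomial multiples of $u$ converges to $1$, i.e.\ $1\in[u]$, and your chain $1\in[u]\subseteq[uv]$ closes the argument without ever asserting $[u]=H$ or density of polynomials. Second, the excursion into showing that $[f]$ is a closed $H^\infty$-submodule is likewise superfluous: $M_z$-invariance (hence invariance under polynomial multipliers) and one application of the bounded operator $M_u$ are all that the proof uses, and the claim that $M_{p_n}\to M_h$ strongly is in fact not guaranteed by the stated hypotheses, so it is better omitted. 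With those trims your proof matches the paper's in both substance and length.
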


By cyclicity of $u$ we mean, of course, that there exists a sequence of analytic polynomials $\{p_n\}_{n \geq 1}$ such that $p_n u$ converges to $1 \in H$ in the norm of the space.

\begin{proof}
If $u$ and $v$ are two cyclic bounded functions, then for any polynomials $p,q$ we have the inequality \[ \|1 - puv\|_{H} \leq \|1 - qv\|_{H} + \|M_v\| \|q -  pu\|_{H},\] where $\|M_v\|$ denotes the operator norm of the multiplication operator $M_v$, and $\| \cdot \|_H$ denotes the norm in $H$. We use cyclicity of $v$ to choose the polynomial $q$ to make the first term on the right arbitrarily small, and next we use cyclicity of $u$ to choose $p$ to make the second term on the right arbitrarily small. It follows that the product $uv$ of two bounded cyclic functions is a cyclic function.    
\end{proof}

\thref{ProdOfCyclicLemma} applies to any irreducible space $\Po^2(\mu)$ of the form considered here, since indeed the multiplication by any function in $H^\infty$ induces a bounded operator on these spaces. We skip the straight-forward proof, which can for instance be based on simple analysis of the dilations $h_r(z) := h(rz)$, $ r \in (0,1)$, of the bounded function $h$. In particular, $H^\infty \subset \Po^2(\mu)$ whenever the latter is irreducible. For future reference, note that as a subspace of $\L^2(\mu)$ (with $\mu$ as in \eqref{MuGeneralStructureEq}), each function $h \in H^\infty \subset \Po^2(\mu)$ is defined also on $\mu_\T := w \, d\m$, the part of $\mu$ living on the circle $\T$. It is not hard to see that the values of $h$ with respect to $\mu_\T$ coincide with the non-tangential boundary function of $h$ on $\T$. If $w$ is bounded, then the same conclusions hold also for any $h \in H^2 \subset \Po^2(\mu)$.

\begin{proof}[Proof of \thref{CyclicityMainTheorem}]

Note first that $(i) \Rightarrow (ii)$ in \thref{CyclicityMainTheorem}, since the condition $\nu\big( \co (w) \big) > 0$ implies that a factor in $S_\nu$ satisfies the permanence property exhibited in \thref{PermanenceMainTheorem}, and so $S_\nu$ cannot by cyclic. Thus it suffices for us to show the implication $(ii) \Rightarrow (i)$. The norms induced by measures $\mu$ satisfying \eqref{ExpDecTag} are largest if the measure $\mu$ has the form in \eqref{T1} defined in Section \ref{introsec}, with $\beta = 1$. If $S_\nu$ is cyclic in $\Po^2(\mu)$ defined by any measure this form, then it is cyclic in any $\Po^2(\mu)$-space considered in this article. Thus it suffices to prove the theorem in the case of $\mu$ being of the form \eqref{T1} with $\beta = 1$, and any $c > 0$. 

Let us then assume that $\nu\big( \co (w) \big) = 0$. The formula \eqref{SingInnerDef} shows that \[S_{\nu}(z) =  \prod_{i=1}^N S_{\nu/N}(z), \quad z \in \D \] for any positive integer $N \geq 1$. Then by replacing $\nu$ by $\nu/N$ for $N$ sufficiently large, and by \thref{ProdOfCyclicLemma}, we may assume that $\nu(\T) < c/10$. Let $\{f_n\}_{n \geq 1}$ be a sequence of positive bounded functions given by \thref{WeakStarSeqObstacleLemma} for which the measures $\{f_n \, d\m\}_{n \geq 1}$ converge weak-star to $2 \nu$, which satisfy $\int_\T f_n \, d\m = 2\nu (\T)$, and for which the bound \eqref{ObstacleForfn} holds. Construct the outer functions \[ h_n(z) := \exp\Big( H_{f_n}(z)/2 \Big), \quad z \in \D,\] where \[ H_{f_n}(z) := \int_\T \frac{x + z}{x - z} f_n(x) \, d\m(x), \quad z \in \D\] is the usual Herglotz integral of $f_n$. Then, since $|H_{f_n}(z)| \leq \frac{4\nu(\T)}{1-|z|}$, we obtain \[ |h_n(z)| \leq \exp\Big( \frac{2\nu(\T)}{1-|z|}\Big) \leq \exp \Big( \frac{c}{5(1-|z|)}\Big), \quad z \in \D,\] and from property $(iii)$ in \thref{WeakStarSeqObstacleLemma} and basic properties of Herglotz integrals, we have the non-tangential boundary value estimate \[|h_n(x)| = \exp \Big( f_n(x)/2\Big) \leq \sqrt{\max[1, 1/w(x) ]}\] for almost every $x \in \T$ with respect to $m$. It follows from these inequalities and the definition of the norm in $\Po^2(\mu)$ that the family $\{h_n\}_{n \geq 1} \subset H^\infty$ forms a bounded subset of the Hilbert space $\Po^2(\mu)$. Moreover, by the weak-star convergence of $\{f_n\, d\m\}_{n \geq 1}$ to $2 \nu$ we have that \[ \lim_{n \to \infty} h_n(z) = \frac{1}{S_{\nu}(z)}, \quad z \in \D.\] But this means that $1/S_\nu$ is a member of $\Po^2(\mu)$, since we can identify it as a weak cluster point of some subsequence of $\{h_n\}_{n \geq 1}$. Thus there must also exist a sequence of polynomials $\{p_n\}_n$ tending to $1/S_\nu$ in the norm of $\Po^2(\mu)$. Consequently, since the multiplication operator $M_{S_\nu}$ is a bounded on our space, we have that $S_\nu p_n \to 1$ in the norm of $\Po^2(\mu)$. That is, $S_\nu$ is cyclic. 
\end{proof}

\section{Moment functions, admissible sequences and spaces of Taylor series}
\label{MomentEstSection} 

This section initiates the second part of the article. In this part, we will apply our previous results in $\Po^2(\mu)$-theory to Cauchy integrals, model spaces and the de Branges-Rovnyak spaces $\hb$. In order to do so, we will need to analyze the moments of the functions $G$ appearing in \eqref{MuGeneralStructureEq}. This entire section is concerned with this analysis.

\subsection{Admissible sequences and their properties}

If $G$ is a function satisfying \eqref{ExpDecTag} and \eqref{LogLogIntTag}, then the sequence of moments of $G$, defined below in \eqref{GMomentDef}, will be shown to satisfy the following basic properties.

\begin{defn}{\textbf{(Admissible sequences)}} \thlabel{AdmissibleSequenceDef} A decreasing sequence of positive numbers $\{M_n\}_{n \geq 0}$ with \[\lim_{n \to \infty} M_n = 0\]  will be called \textit{admissible} if it satisfies the following three conditions:
\begin{enumerate}[(i)]
    \item the sequence $\{\log M_n\}_{n \geq 0}$ is eventually convex, in the sense that \[ 2\log M_n \leq \log M_{n+1} + \log M_{n-1}\] for all sufficiently large $n \geq 0$,
    \item there exists $d > 0$ such that \[ M_n \leq \exp ( - d \sqrt{n})\] for all sufficiently large $n \geq 0$,
    \item the summability condition\[ \sum_{n \geq 0} \frac{\log (1/M_n)}{1+n^2} < \infty\] is satisfied. 
\end{enumerate}
\end{defn}

With later applications in mind, it will be useful to single out the following simple preservation property of admissible sequences under taking powers.

\begin{prop}
    \thlabel{PowerAdmSeq}
    If $M = \{M_n\}_{n \geq 0}$ is an admissible sequence, then so is \[M^p :=\{M_n^p\}_{n \geq 0},\] for any $p > 0$.
\end{prop}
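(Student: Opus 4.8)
The plan is to verify the three defining conditions of \thref{AdmissibleSequenceDef} for the sequence $M^p = \{M_n^p\}_{n \geq 0}$ one at a time, using that each condition behaves well under the monotone transformation $t \mapsto t^p$, equivalently the linear scaling $\log t \mapsto p \log t$ on the logarithmic scale. Since $M_n \to 0$ and $M_n > 0$, clearly $M_n^p \to 0$ and $M_n^p > 0$, and $M^p$ is decreasing because $t \mapsto t^p$ is increasing on $(0,\infty)$; so $M^p$ is a decreasing sequence of positive numbers tending to $0$, as required by the preamble of the definition.

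For condition $(i)$, eventual logarithmic convexity, I would simply multiply the inequality $2\log M_n \leq \log M_{n+1} + \log M_{n-1}$ (valid for all large $n$) by the positive constant $p$ to obtain $2\log M_n^p \leq \log M_{n+1}^p + \log M_{n-1}^p$ for the same range of $n$. For condition $(ii)$, from $M_n \leq \exp(-d\sqrt{n})$ for large $n$ we get $M_n^p \leq \exp(-pd\sqrt{n})$, and since $p d > 0$ we may take $d' := pd$ as the new constant. For condition $(iii)$, the summability condition, we have $\log(1/M_n^p) = p \log(1/M_n)$, so
\begin{equation}
\sum_{n \geq 0} \frac{\log(1/M_n^p)}{1+n^2} = p \sum_{n \geq 0} \frac{\log(1/M_n)}{1+n^2} < \infty,
\end{equation}
the finiteness being inherited directly from the admissibility of $M$.

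This proposition is essentially a routine observation, so I do not anticipate any genuine obstacle; the only minor point worth stating carefully is that all three conditions are phrased in terms of $\log M_n$ in a way that is stable under multiplication by the positive scalar $p$, and that the "for all sufficiently large $n$" qualifiers in $(i)$ and $(ii)$ are preserved verbatim since multiplying by $p$ does not change the range of validity. Hence $M^p$ is admissible for every $p > 0$.
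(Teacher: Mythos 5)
Your proof is correct and is exactly the routine verification the paper has in mind when it says the proposition "follows immediately from" the definition of admissible sequences. All three conditions are stable under multiplying $\log M_n$ by the positive constant $p$, as you note, and there is nothing more to check.
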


The proposition follows immediately from \thref{AdmissibleSequenceDef}

\subsection{Legendre envelopes}

Roughly speaking, admissible sequences $\{M_n\}_{n\geq 0}$ are in a correspondence with moments of functions $G$ satisfying \eqref{ExpDecTag} and \eqref{LogLogIntTag}, and we shall now proceed to make this statement more precise. In order to do so, we will need to recall some basic concepts from convex analysis. In particular we will use the notion of Legendre envelopes and their properties. In parts of our exposition we will follow Beurling in \cite{beurling1972analytic} and Havin-Jöricke in \cite{havinbook}, and we will refer to those works for most of the proofs of the following claims.

Let $m(x)$ be a positive and continuous function defined for $x > 0$, which is decreasing and satisfies \[\lim_{x \to 0^+} m(x) = +\infty.\] In our application, $m$ will be of the form $m(x) = \log 1/G(x)$ (for small $x$). The \textit{lower Legendre envelope} $m_*$ is defined as 
\begin{equation}
    \label{lowerLegendreEnvelopDef} m_*(x) := \inf_{y > 0} \, m(y) + xy, \quad x > 0.
\end{equation}
Being an infimum of concave (actually affine) and increasing functions, $m_*$ is itself concave and increasing, and it is easy to see that \[ \lim_{x \to +\infty} m_*(x) = +\infty.\] 

\begin{remark} \thlabel{remarkLowerLegEnvelope} Assume that we modify the function $m$ above for $x$ larger than $1$, so that we end up with a different function $\widetilde{m}$ which satisfies $\widetilde{m}(x) = m(x)$ for $x < 1$, but the values of the two functions might differ for $x \geq 1$. Then it is not hard to see from the definition in \eqref{lowerLegendreEnvelopDef} that $\widetilde{m}_*(x) = m_*(x)$ for all sufficiently large $x$. Indeed, if $y \geq 1$, then we have by positivity of $m$ that \[  m(y) + xy \geq x \geq m(1/2) + x/2,\] the second inequality holding if $x$ is sufficiently large. For such $x$, the candidate $y = 1/2$ is always better than any candidate $y \geq 1$ in the infimum in \eqref{lowerLegendreEnvelopDef}, and our claim follows.
\end{remark}

In \cite[Lemma 1]{beurling1972analytic}, Beurling proves the following statement which will be used below.
\begin{prop}\thlabel{LogIntLogSumEquivalenceProp}
Let $m(x)$ be a positive, continuous and decreasing function of $x > 0$ which satisfies $\lim_{x \to 0^+} m(x) = +\infty.$ The following two statements are equivalent:
\begin{enumerate}[(i)]
    \item there exists a $\delta > 0$ such that \[\int_0^\delta \log m(x) \, dx < \infty, \]
    \item we have \[ \int_{1}^\infty \frac{m_*(x)}{1+x^2} dx < \infty.\]
\end{enumerate}    
\end{prop}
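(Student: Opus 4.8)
The plan is to prove the equivalence by relating both integrals to the same dyadic sum over the values of $m$ at the points $x = 2^{-k}$. First I would record the elementary fact, already used in the spirit of \thref{GammaSeqLemma}, that for a positive decreasing function $m$ on $(0,\delta)$ we have
\[
\int_0^\delta \log m(x)\,dx < \infty \iff \sum_{k \geq k_0} 2^{-k}\log m(2^{-k}) < \infty,
\]
which follows by comparing $\int_{2^{-k-1}}^{2^{-k}} \log m(x)\,dx$ with $2^{-k-1}\log m(2^{-k})$ from above (monotonicity gives $\log m(x) \le \log m(2^{-k-1})$ so a shift of index is needed) and with $2^{-k-1}\log m(2^{-k-1})$ from below; here one should be slightly careful about whether $\log m$ is positive, but since $m(x) \to \infty$ we may assume $\log m > 1$ on a small interval, and the finitely many terms where this fails are harmless. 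So condition $(i)$ is equivalent to the convergence of $\sum_k 2^{-k}\log m(2^{-k})$.

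Next I would do the analogous reduction for condition $(ii)$. Since $m_*$ is concave and increasing, the function $x \mapsto m_*(x)/x^2$ is eventually comparable to its values on a dyadic net, and
\[
\int_1^\infty \frac{m_*(x)}{1+x^2}\,dx < \infty \iff \sum_{k \geq 0} \frac{m_*(2^k)}{2^k} < \infty,
\]
again by the standard dyadic comparison for monotone integrands (concavity of $m_*$ guarantees $m_*(2^{k+1}) \le 2 m_*(2^k)$, so the sum does not oscillate wildly). Thus it suffices to show that the two dyadic sums
\[
\Sigma_1 := \sum_k 2^{-k}\log m(2^{-k}) \quad\text{and}\quad \Sigma_2 := \sum_k 2^{-k} m_*(2^k)
\]
converge or diverge together.

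The heart of the matter is therefore the comparison between $\log m(2^{-k})$ and a suitable average of the sequence $\{2^{-j} m_*(2^j)\}_j$. By definition $m_*(2^j) = \inf_{y>0}\, m(y) + 2^j y \le m(2^{-k}) + 2^{j-k}$ for every $k$, which gives one direction of a pointwise bound; for the reverse one uses that the infimum defining $m_*(2^j)$ is nearly attained, and that because $m$ is decreasing one may restrict the infimum to $y$ in a dyadic range, so $m_*(2^j)$ is comparable to $\min_k\, m(2^{-k}) + 2^{j-k}$. Feeding this into $\Sigma_2$ and interchanging the order of summation/minimization converts $\Sigma_2$ into an expression governed by $\Sigma_1$; the weights $2^{-k}$ and $2^{-j}$ are arranged precisely so that the geometric tails $\sum_j 2^{-j}\cdot 2^{j-k} = \sum_j 2^{-k}$ reproduce, up to constants, the summand $2^{-k}\log m(2^{-k})$ — this is where the matching of the two normalizations $\int_0^\delta$ and $\int_1^\infty \frac{dx}{1+x^2}$ becomes visible. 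I expect the main obstacle to be exactly this bookkeeping: showing that the double sum obtained from $\Sigma_2$ is squeezed between constant multiples of $\Sigma_1$ (plus convergent error terms), which requires splitting the inner minimum according to whether the term $m(2^{-k})$ or the term $2^{j-k}$ dominates, and checking that the "wrong regime" contributes only a convergent geometric series. Since the cleanest route is to cite Beurling \cite[Lemma 1]{beurling1972analytic} directly, in the write-up I would either reproduce this dyadic argument in full or simply refer to Beurling, depending on how self-contained the section is meant to be; given that the excerpt explicitly attributes the statement to Beurling, a short proof sketch along the above lines with a reference for the details is the natural choice.
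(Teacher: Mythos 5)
The paper offers no proof of this proposition; it simply says ``We refer the reader to \cite{beurling1972analytic} for a proof,'' and your final recommendation---to give a short sketch and cite Beurling for the details---is exactly the choice the paper makes. Your dyadic sketch is a plausible blueprint, but the central step (that $\Sigma_1 = \sum_k 2^{-k}\log m(2^{-k})$ and $\Sigma_2 = \sum_j 2^{-j} m_*(2^j)$ converge together) is asserted rather than proved: the contribution to $\Sigma_2$ from the block $J_k$ of indices $j$ whose minimizer is $k$ is roughly $a_k\sum_{j\in J_k}2^{-j} + 2^{-k}|J_k|$, and matching this to $2^{-k}\log a_k$ requires an Abel-summation/telescoping argument using $|J_k|\approx \log_2\bigl((a_{k+1}-a_k)/(a_k-a_{k-1})\bigr)$; you correctly flag this bookkeeping as the main obstacle but leave it unresolved. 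Given that the paper itself defers entirely to Beurling, the cleanest course (which you ultimately take) is to do the same.
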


We refer the reader to \cite{beurling1972analytic} for a proof of \thref{LogIntLogSumEquivalenceProp}.

Let $k(x)$ be a positive concave function of $x > 0$ which is increasing and satisfies \[ \lim_{x \to +\infty} k(x) = +\infty .\] We will consider its \textit{upper Legendre envelope} defined as 
\begin{equation}
    \label{upperLegendreEnvelopDef} k^*(x) := \sup_{y > 0} \, k(y) - xy.
\end{equation}
Then it is easy to see that $k^*$ is a convex and decreasing function, and \[\lim_{x \to 0^+} k^*(x) = +\infty.\] We have the following inversion formula, which is well-known (see \cite[p. 224-225]{havinbook}).

\begin{lem} \thlabel{LegendreInversionFormula}
    Let $k$ be a positive concave function of $x > 0$ which is increasing and satisfies $\lim_{x \to \infty} k(x) = +\infty$. Then \[(k^*)_*(x) = k(x).\] 
\end{lem}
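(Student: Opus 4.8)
The claim is the Legendre inversion formula $(k^*)_*(x) = k(x)$ for a positive, increasing, concave function $k$ on $(0,\infty)$ with $\lim_{x \to \infty} k(x) = +\infty$. The strategy is the standard two-sided argument from convex duality, carried out by hand because $k$ is concave (rather than convex), so the roles of sup and inf are interchanged relative to the textbook Fenchel setup.

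First I would unwind the definitions. By \eqref{upperLegendreEnvelopDef}, $k^*(s) = \sup_{y > 0} \big( k(y) - sy \big)$, and by \eqref{lowerLegendreEnvelopDef} applied to $k^*$ in place of $m$,
\[
(k^*)_*(x) = \inf_{s > 0} \big( k^*(s) + xs \big) = \inf_{s > 0} \sup_{y > 0} \big( k(y) - sy + xs \big) = \inf_{s > 0} \sup_{y > 0} \big( k(y) + s(x - y) \big).
\]
The easy inequality is $(k^*)_*(x) \geq k(x)$: for every $s > 0$, the inner supremum is at least the value of the bracket at $y = x$, namely $k(x) + s(x-x) = k(x)$; taking the infimum over $s$ preserves this bound. (One should first check $k^*(s) < \infty$ for every $s > 0$, which follows since $k$ is concave and hence $k(y)/y \to 0$ as $y \to \infty$, so $k(y) - sy \to -\infty$; thus the sup is attained and finite, and the expressions above are not vacuous.)

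The reverse inequality $(k^*)_*(x) \leq k(x)$ is the substantive step, and it is where concavity of $k$ enters decisively. Fix $x > 0$. Because $k$ is concave on $(0,\infty)$, it has a supporting line at $x$: there is a slope $s_0 \geq 0$ (one may take $s_0$ to be a one-sided derivative of $k$ at $x$, which is nonnegative since $k$ is increasing, and positive — or at worst zero — with the degenerate case handled separately) such that $k(y) \leq k(x) + s_0(y - x)$ for all $y > 0$. Rearranging, $k(y) + s_0(x - y) \leq k(x)$ for all $y$, so $\sup_{y>0}\big(k(y) + s_0(x-y)\big) \leq k(x)$. If $s_0 > 0$ we may use $s = s_0$ in the outer infimum and conclude $(k^*)_*(x) \leq k(x)$. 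Combined with the easy direction, this gives equality. The main obstacle — really the only delicate point — is the boundary behaviour: ensuring a supporting line with strictly positive slope exists so that $s_0$ is an admissible competitor in the infimum over $s > 0$. Since $k$ is increasing and concave with $k \to \infty$, the right derivative $k'_+(x)$ is positive for all $x$ (if it vanished at some $x$, concavity would force $k$ bounded above, contradicting $\lim_{x\to\infty} k(x) = +\infty$); hence $s_0 := k'_+(x) > 0$ works, and no degenerate case actually arises. I would also note that $(k^*)_*$ inherits concavity and monotonicity automatically, so the identity is consistent with the ambient hypotheses. This completes the proof, and for full rigor one may simply cite \cite[p.~224--225]{havinbook} as the excerpt already does, the above being the quick self-contained justification.
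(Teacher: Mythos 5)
Your argument is correct in its essential structure and is the standard two-sided Fenchel duality argument: the easy inequality $(k^*)_*(x) \geq k(x)$ by specializing $y = x$ in the inner supremum, and the reverse inequality by choosing the outer variable $s$ to be the slope $s_0 = k'_+(x)$ of a supporting line at $x$. The paper itself supplies no proof here, only the citation to Havin--J\"oricke, so a self-contained argument along these lines is exactly the right thing to supply, and your observation that $s_0 > 0$ (because a vanishing right derivative at any point would, by monotonicity of $k'_+$, force $k$ to be eventually constant, contradicting $k \to \infty$) is the correct way to handle the only delicate boundary issue.

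One parenthetical remark is, however, wrong both in claim and in justification. You assert that $k^*(s) < \infty$ for \emph{every} $s > 0$ because concavity of $k$ forces $k(y)/y \to 0$. Neither part is true: the increasing concave function $k(y) = y$ (or $k(y) = 2y + \log(1+y)$, etc.) satisfies all the hypotheses yet has $k(y)/y \to 1$, and for it $k^*(s) = +\infty$ whenever $s < 1$. In general, concavity only gives that $k(y)/y$ decreases to some nonnegative limit, not that the limit is zero. This does not damage the proof --- the infimum $\inf_{s>0}\big(k^*(s)+xs\big)$ is perfectly well-defined when some values are $+\infty$, and the easy inequality $k^*(s) + xs \geq k(x)$ holds for each $s$ regardless; for the reverse direction you only need $k^*(s_0) < \infty$ for your particular $s_0$, and that is an automatic by-product of the supporting-line inequality $k(y) - s_0 y \leq k(x) - s_0 x$, which bounds the supremum defining $k^*(s_0)$. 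So the correct fix is simply to delete the parenthetical, or replace it with the remark that $k^*(s_0) \leq k(x) - s_0 x < \infty$ for the chosen slope.
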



\subsection{A characterization of admissible sequences} 
We will be interested in the sequence $\{M_n\}_{n \geq 0}$ of moments of the parts of our measures $\mu$ living on $\D$: 
\begin{align} \label{GMomentDef}
    M_n = M_n(G) &:= \int_\D G(1-|z|) |z|^{2n} dA(z) \nonumber \\
    &= 2 \int_0^1 G(1-r) r^{2n+1} dr.
\end{align}
We define the \textit{moment function} of $G$ by
\begin{equation}
    \label{PMomentFuncDef}
    P_G(x) := \int_0^1 G(1-r) r^x \, dx, \quad x > 0.
\end{equation}

The next lemma gives an estimate on $P_G$. We skip the proof, which is essentially the same as the one given in \cite[p. 229]{havinbook} (see also the proof of \cite[Lemma 4.3]{malman2023revisiting}).

\begin{lem}
    \thlabel{MomentGGrowthLogLogInt} Let $G(x)$, $x \in [0,1]$, be an increasing continuous function satisfying $G(0) = 0$, and put
    \begin{equation}
        m(x) := 
        \begin{cases}
            \log 1/G(x), &  x \in (0,1] \\
            \log 1/G(1), & x > 1. 
        \end{cases}
    \end{equation}
    Then, for sufficiently large $x > 0$, we have the estimates 
    \[ \frac{\exp \big( -m_*(2x) \big) }{4x} \leq P_G(x) \leq \exp \big(-m_*(x)\big),\] where $P_G$ is the moment  function of $G$ defined in \eqref{PMomentFuncDef}.   
\end{lem}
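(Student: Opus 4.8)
\textbf{Proof plan for \thref{MomentGGrowthLogLogInt}.} The plan is to obtain the two bounds separately by elementary estimates on the integral $P_G(x) = \int_0^1 G(1-r) r^x\, dr$, relating it to the infimum defining the lower Legendre envelope $m_*$ via the substitution $r = e^{-t}$. First I would change variables: writing $r = e^{-t}$, $dr = -e^{-t}\,dt$, we get
\[ P_G(x) = \int_0^\infty G(1 - e^{-t}) e^{-(x+1)t}\, dt. \]
Since $1 - e^{-t} \asymp t$ for small $t$ and $G$ is increasing, the integrand is controlled, up to harmless constants and a shift in $x$, by $G(t) e^{-xt}$ on the relevant range $t \in (0,1)$; for $t \geq 1$ the contribution is exponentially small in $x$ and will be absorbed into the error, which is exactly where the truncation of $m$ at $x > 1$ in the statement (and \thref{remarkLowerLegEnvelope}) comes in. So matters reduce to estimating $\int_0^1 G(t) e^{-xt}\, dt$, and using $G(t) = e^{-m(t)}$ this is $\int_0^1 e^{-m(t) - xt}\, dt$.

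For the upper bound $P_G(x) \leq \exp(-m_*(x))$: bound the integrand pointwise. For every $t > 0$ we have $m(t) + xt \geq m_*(x) = \inf_{y>0}\big(m(y) + xy\big)$, hence $e^{-m(t)-xt} \leq e^{-m_*(x)}$, and integrating over an interval of length $\le 1$ gives the claim (after checking that the $r=e^{-t}$ distortion and the $t \ge 1$ tail only improve the inequality for large $x$, again invoking \thref{remarkLowerLegEnvelope} to replace $m$ by its truncation without changing $m_*(x)$ for large $x$). This direction is essentially immediate.

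For the lower bound $P_G(x) \geq \frac{\exp(-m_*(2x))}{4x}$: here the idea is to restrict the integral to a well-chosen small subinterval. Let $y_0 = y_0(x) > 0$ be a point (approximately) attaining the infimum defining $m_*(2x)$, so that $m(y_0) + 2x y_0 \approx m_*(2x)$; one expects $y_0 \to 0$ as $x \to \infty$, so $y_0 < 1$ eventually. On the interval $t \in (0, y_0)$ we have, by monotonicity of $m$ (decreasing) and the trivial bound $xt \le x y_0$, that $m(t) + xt \le m(0^+)$... no — more carefully: on $(y_0/2, y_0)$ we have $m(t) \le m(y_0/2)$ is the wrong direction; instead use that $m$ decreasing gives $m(t) \le m(y_0)$ only for $t \ge y_0$. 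The cleaner route: on $(0,y_0)$, $xt < x y_0$, so $\int_0^{y_0} e^{-m(t) - xt}\,dt \ge e^{-xy_0}\int_0^{y_0} e^{-m(t)}\,dt \ge e^{-x y_0} \cdot y_0 \cdot e^{-m(y_0)} \cdot (\text{const})$, using $m$ decreasing so $e^{-m(t)} \ge e^{-m(y_0)}$ on that range; then $y_0 e^{-xy_0 - m(y_0)} \ge y_0 e^{-m_*(2x) + x y_0}$... one must track the factor $e^{+xy_0}$ versus $e^{-m_*(2x)}$. The standard trick handling this (as in Havin–Jöricke) is precisely why $2x$ appears: choosing $y_0$ optimal for $2x$ rather than $x$ leaves a surplus $x y_0$ that dominates $\log(1/y_0)$ and recovers the factor $1/(4x)$. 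I expect this lower-bound step — choosing $y_0$ and bookkeeping the factor $1/(4x)$ against the shift from $x$ to $2x$ — to be the only real obstacle; it is the point where the doubling of the argument and the polynomial loss are forced, and it must be done carefully, but it is routine convex-analysis juggling and is carried out in the cited references \cite[p. 229]{havinbook} and \cite[Lemma 4.3]{malman2023revisiting}, so I would simply cite it as the statement does.
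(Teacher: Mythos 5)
The paper gives no proof of this lemma---it simply cites Havin--J\"oricke \cite[p.\ 229]{havinbook} and \cite[Lemma 4.3]{malman2023revisiting}---so there is no authored argument to compare against. Your upper-bound sketch is correct and can even be made simpler: with $s = 1-r$ one has $P_G(x) = \int_0^1 G(s)(1-s)^x\,ds$, and the elementary inequality $\log(1-s) \leq -s$ gives the pointwise bound
\[
G(s)(1-s)^x = e^{-m(s) + x\log(1-s)} \leq e^{-(m(s)+xs)} \leq e^{-m_*(x)},
\]
so $P_G(x) \leq e^{-m_*(x)}$ directly, with no need for the $r=e^{-t}$ substitution and the attendant bookkeeping of the distortion $1-e^{-t}\asymp t$.

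The lower-bound sketch has a genuine sign error. You pick $y_0$ (nearly) minimizing $m(y) + 2xy$ and integrate over $(0,y_0)$, asserting ``$m$ decreasing so $e^{-m(t)} \geq e^{-m(y_0)}$ on that range.'' But for $t \in (0,y_0)$ and $m$ decreasing we have $m(t) \geq m(y_0)$, hence $e^{-m(t)} \leq e^{-m(y_0)}$: the opposite inequality. Working to the \emph{left} of the minimizer bounds the integrand from above, not below, and the proposed chain of inequalities does not produce a lower bound. The interval must sit to the \emph{right} of $y_0$, where $G(s) \geq G(y_0)$. For instance, with $s=1-r$,
\[
P_G(x) \geq G(y_0)\int_{y_0}^1 (1-s)^x\,ds = e^{-m(y_0)}\,\frac{(1-y_0)^{x+1}}{x+1} = e^{-m_*(2x)}\cdot\frac{e^{2xy_0}(1-y_0)^{x+1}}{x+1} \geq \frac{e^{-2y_0}}{x+1}\,e^{-m_*(2x)},
\]
the last step using $\log(1-y_0) \geq -2y_0$ for $y_0 \leq 1/2$. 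Since $y_0(x)\to 0$ as $x\to\infty$ (because $G(0)=0$ forces $m(y)\to\infty$ as $y\to 0^+$, which pushes the minimizer to $0$) and $x+1 \leq 2x$, the prefactor $e^{-2y_0}/(x+1)$ exceeds $1/(4x)$ once $x$ is large. So the doubling to $2x$ and the $1/x$ loss appear exactly as you anticipated, but the argument as you wrote it breaks; the test interval has to lie on the correct side of the minimizer.
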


\begin{lem} \thlabel{MomentEstProp} For $c > 0$ and $\beta > 0$, let $\{ M_n(\beta, c) \}_{n \geq 0}$ be the sequence of moments given by \begin{align}
    \label{MomentsT1Eq} M_n(\beta, c) &:= \int_\D \exp\Big(- c(1-|z|)^{-\beta}\Big)|z|^{2n} \, dA(z) \nonumber \\ & = 2 \int_0^1 \exp\Big(- c(1-r)^{-\beta}\Big) r^{2n+1} \, dr, \quad n \geq 0.
\end{align} Put \[ \tilde{\beta} := \frac{\beta}{\beta+1}.\] For sufficiently large positive $n$, we have the estimates 
\begin{equation}
    \label{d0d1MomentEq} \exp\Big(-2d n^{\tilde{\beta}}\Big) \leq M_n(\beta, c) \leq \exp \Big(-d n^{\tilde{\beta} }\Big)
\end{equation} where $d = d(\beta,c)$ is comparable to $c^{1/(\beta+1)}$ if $\beta$ remains fixed.
\end{lem}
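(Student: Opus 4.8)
The strategy is to reduce the estimate on $M_n(\beta,c)$ to \thref{MomentGGrowthLogLogInt} by computing the relevant lower Legendre envelope explicitly. First I would set $G(x) = \exp(-cx^{-\beta})$ for $x\in(0,1]$, so that $m(x) = \log 1/G(x) = cx^{-\beta}$ for $x\in(0,1]$, extended by the constant value $m(x) = c$ for $x>1$ in accordance with the hypothesis of \thref{MomentGGrowthLogLogInt}. By \thref{remarkLowerLegEnvelope}, the behavior of $m_*(x)$ for large $x$ does not depend on how we extend $m$ past $1$, so I may as well compute the lower Legendre envelope of the unmodified function $y\mapsto cy^{-\beta}$ on all of $(0,\infty)$. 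That is a one-variable calculus exercise: minimize $y\mapsto cy^{-\beta} + xy$ over $y>0$. The critical point is at $y_0 = (c\beta/x)^{1/(\beta+1)}$, and substituting back gives
\begin{equation*}
    m_*(x) = c y_0^{-\beta} + x y_0 = C(\beta)\, c^{1/(\beta+1)} x^{\beta/(\beta+1)} = C(\beta)\, c^{1/(\beta+1)} x^{\tilde\beta},
\end{equation*}
where $C(\beta) = (\beta)^{-\beta/(\beta+1)} + (\beta)^{1/(\beta+1)}$ is a constant depending only on $\beta$. Note that $m_*(2x)$ and $m_*(x)$ differ only by the multiplicative factor $2^{\tilde\beta}$, which is absorbed into the constant.

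Next I would feed this into \thref{MomentGGrowthLogLogInt}. That lemma gives, for large $x$,
\begin{equation*}
    \frac{\exp(-m_*(2x))}{4x} \leq P_G(x) \leq \exp(-m_*(x)),
\end{equation*}
and with the explicit $m_*$ above this reads, after absorbing the polynomial factor $1/(4x)$ into the exponential at the cost of enlarging the constant in the exponent (legitimate since $x^{\tilde\beta}$ grows faster than $\log x$),
\begin{equation*}
    \exp\big(-2d\, x^{\tilde\beta}\big) \leq P_G(x) \leq \exp\big(-d\, x^{\tilde\beta}\big)
\end{equation*}
for large $x$, where $d = d(\beta,c)$ is comparable to $c^{1/(\beta+1)}$ when $\beta$ is fixed. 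Finally, comparing \eqref{MomentsT1Eq} with \eqref{PMomentFuncDef}, we have $M_n(\beta,c) = 2P_G(2n+1)$, and since $(2n+1)^{\tilde\beta}$ is comparable to $n^{\tilde\beta}$ and the extra factor $2$ is harmless, the bound \eqref{d0d1MomentEq} follows (again adjusting $d$ by a fixed factor).

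The main obstacle, such as it is, is purely bookkeeping: one must be careful that all the multiplicative and polynomial correction factors ($1/(4x)$, the factor $2$, the $2^{\tilde\beta}$ gap between $m_*(x)$ and $m_*(2x)$, the shift from $x$ to $2n+1$) are genuinely absorbable into the two-sided exponential bound without destroying the claimed comparability of $d$ with $c^{1/(\beta+1)}$. Each of these is a factor of the form $\exp(O(\log n))$ or $\exp(O(1)\cdot n^{\tilde\beta})$, and since $0 < \tilde\beta < 1$ the polynomial factors are dominated and the $O(1)$ factors merely rescale $d$ within fixed bounds; but since the statement asserts a specific dependence on $c$, I would track the $c$-dependence through the computation of $C(\beta)c^{1/(\beta+1)}$ explicitly rather than hiding it in an unnamed constant. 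No genuinely hard analysis is required beyond what is already packaged in \thref{MomentGGrowthLogLogInt} and \thref{remarkLowerLegEnvelope}.
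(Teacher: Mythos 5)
Your proposal is correct and follows essentially the same route as the paper: compute the lower Legendre envelope of $m(x)=cx^{-\beta}$ by calculus to find $m_*(x)=C(\beta)c^{1/(\beta+1)}x^{\tilde\beta}$, then feed this into \thref{MomentGGrowthLogLogInt} together with the identity $M_n(\beta,c)=2P_G(2n+1)$ and absorb the polynomial and constant factors. Your explicit tracking of the $c$-dependence and your citation of \thref{remarkLowerLegEnvelope} to justify modifying $m$ past $x=1$ are both sound and, if anything, slightly more careful than the paper's "our result follows easily from this."
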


\begin{proof} In the notation of \thref{MomentGGrowthLogLogInt}, and with \[G(x) = \exp \Big(-\frac{c}{x^\beta}\Big), \quad x \in (0,1)\] we have \[ m(x) = \frac{c}{x^\beta}, \quad x \in (0,1)\] and we need to compute the corresponding Legendre envelope $m_*$ defined in \eqref{lowerLegendreEnvelopDef}. Having fixed some number $x > 0$, we use elementary calculus to show that \[\inf_{y > 0} \, \frac{c}{y^\beta} + xy\] is attained at the point \[y_* := \Bigg( \frac{c\beta}{x}\Bigg)^{\frac{1}{\beta + 1}}\] from which it follows that \begin{align*}
    m_*(x) &= cy_*^{-\beta} + xy_* \\
            &= c^{1/(\beta+1)}(\beta^{-\beta/(\beta+1)} + \beta^{1/(\beta+1)}) x^{\widehat{\beta}} \\
            & := d(\beta, c) x^{\widehat{\beta}}.
\end{align*} Since \[M_n(\beta,c) = 2 P_G(2n+1)\] we obtain from \thref{MomentGGrowthLogLogInt} the inequalities 
\begin{equation}
    2\exp\big (-d(\beta,c)(4n+2)^{\tilde{\beta}} - \log(8n+4)\big) \leq M_n(\beta,c) \leq 2\exp\big (-d(\beta,c)(2n+1)^{\tilde{\beta}} \big) 
\end{equation} which hold for all sufficiently large $n$. Our result follows easily from this.
\end{proof}

We can now prove the main result of the section, which connects our considered class of functions $G$ with the admissible sequences appearing in \thref{AdmissibleSequenceDef}.

\begin{prop} \thlabel{AdmissibleSequenceLemma} 
If $G$ satisfies \eqref{ExpDecTag} and \eqref{LogLogIntTag}, then $\{M_n\}_{n \geq 0}$ defined by \begin{equation}
    \label{AdmSeqGMomPropEq}
M_n := 2 \int_0^1 G(1-r) r^{2n+1} \, dr
\end{equation} is an admissible sequence. 

Conversely, if $\{M_n\}_{n \geq 0}$ is an admissible sequence, then there exists a continuous and increasing function $G$ satisfying \eqref{ExpDecTag}, \eqref{LogLogIntTag}, $G(0) = 0$, and for which the inequality\[P_G(2n+1) \leq M_n\] holds for all sufficiently large $n \geq 0$.
\end{prop}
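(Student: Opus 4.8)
The plan is to establish the two directions separately, using the Legendre-envelope dictionary recorded in \thref{MomentGGrowthLogLogInt}, \thref{LogIntLogSumEquivalenceProp} and \thref{LegendreInversionFormula}.

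\emph{Forward direction.} Suppose $G$ satisfies \eqref{ExpDecTag} and \eqref{LogLogIntTag}, and set $m(x) := \log 1/G(x)$ for small $x$, extended by a constant for $x \geq 1$ as in \thref{MomentGGrowthLogLogInt}. First I would invoke \thref{MomentGGrowthLogLogInt} to get, for large $n$,
\[
\frac{\exp\big(-m_*(2(2n+1))\big)}{4(2n+1)} \leq P_G(2n+1) \leq \exp\big(-m_*(2n+1)\big),
\]
and note $M_n = 2 P_G(2n+1)$; so the behaviour of $\log 1/M_n$ is, up to the harmless logarithmic factor $\log(8n+4)$, that of $m_*(cn)$ for $c \in \{2,4\}$. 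Now check the three defining properties of \thref{AdmissibleSequenceDef}. Property $(iii)$, the summability $\sum_n \frac{\log 1/M_n}{1+n^2} < \infty$, follows directly from \thref{LogIntLogSumEquivalenceProp}: \eqref{LogLogIntTag} gives $\int_0^\delta \log m(x)\,dx < \infty$, hence $\int_1^\infty \frac{m_*(x)}{1+x^2}\,dx < \infty$, and the $\log n$ correction is absorbed since $\sum_n \frac{\log n}{1+n^2} < \infty$. Property $(ii)$, the lower bound $M_n \leq \exp(-d\sqrt{n})$, comes from \eqref{ExpDecTag}: if $x\log(1/G(x)) \geq c_0 > 0$ near $0$, i.e. $m(x) \geq c_0/x$, then $m_*(x) = \inf_{y>0} m(y) + xy \geq \inf_{y>0} c_0/y + xy = 2\sqrt{c_0 x}$, so $m_*(cn)$ grows at least like $\sqrt{n}$; the upper estimate from \thref{MomentGGrowthLogLogInt} then gives $M_n \leq \exp(-d\sqrt n)$ for suitable $d$. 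Property $(i)$, eventual logarithmic convexity of $\{M_n\}$, is the one point where one should not merely quote the envelope estimates (which only pin down $\log 1/M_n$ up to constants). Here I would argue directly: $M_n = \int_\D G(1-|z|)|z|^{2n}\,dA(z)$ is a moment sequence of the positive measure $G(1-|z|)\,dA(z)$ against $|z|^{2n}$, i.e. $M_n = \int_0^1 \rho^{2n}\,d\sigma(\rho)$ for a positive measure $d\sigma$ on $[0,1)$; by the Cauchy–Schwarz inequality $M_n^2 = \big(\int \rho^{n-1}\rho^{n+1}d\sigma\big)^2 \leq M_{n-1}M_{n+1}$, which is exactly $2\log M_n \leq \log M_{n-1} + \log M_{n+1}$ for \emph{all} $n \geq 1$, not just eventually.

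\emph{Converse direction.} Suppose $\{M_n\}_{n \geq 0}$ is admissible. The idea is to interpolate the points $(n, \log 1/M_n)$ by a concave increasing function and then invert the Legendre transform. Concretely, by property $(i)$ the sequence $\log 1/M_n$ is eventually convex; discarding or adjusting finitely many initial terms (which affects neither \eqref{ExpDecTag}, \eqref{LogLogIntTag} nor the asymptotic inequality $P_G(2n+1) \leq M_n$, by \thref{remarkLowerLegEnvelope} such finite modifications are invisible to $m_*$ at infinity), I may assume $n \mapsto \log 1/M_n$ is convex and increasing on the integers; extend it to a piecewise-linear convex increasing function $k^*(x)$ on $(0,\infty)$ with $k^*(n) = \log 1/M_n$, and (using property $(ii)$) arrange $k^*(x) \geq d\sqrt{x}$ for large $x$, so $k^*$ is decreasing-to-zero after a reflection — more precisely I want $k^*$ to play the role of a \emph{convex decreasing} function, so instead I build the concave increasing function $k(x)$ whose upper Legendre envelope $k^*$ matches $\log 1/M_\cdot$: set $k := (\text{the convex minorant data})^{*}$ appropriately, or more cleanly, let $k(x)$ be the least concave majorant construction applied to the sequence so that $\sup_{y>0} k(y) - xy$ reproduces $\log 1/M_n$ at integers. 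Then define $m(x) := k^*(1/x)$ for $x \in (0,1]$ (and constant for $x \geq 1$), and finally $G(x) := \exp(-m(x))$; continuity and monotonicity of $G$, and $G(0)=0$, follow from the corresponding properties of $k^*$. Property $(ii)$ of admissibility, via the $2\sqrt{c_0 x}$ computation above run in reverse, yields $m(x) \geq c_0/x$ near $0$, which is \eqref{ExpDecTag}. Property $(iii)$, via \thref{LogIntLogSumEquivalenceProp} in the form $\sum \frac{m_*(n)}{1+n^2} < \infty \Rightarrow \int_0^\delta \log m < \infty$ (here $m_* = (k^*)_* = k$ by \thref{LegendreInversionFormula}, and $k(n)$ is comparable to $\log 1/M_n$ up to the usual factors), gives \eqref{LogLogIntTag}. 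The estimate $P_G(2n+1) \leq M_n$ for large $n$ then comes from the upper bound $P_G(x) \leq \exp(-m_*(x)) = \exp(-k(x))$ of \thref{MomentGGrowthLogLogInt}, since by construction $k(2n+1) \geq \log 1/M_n$ for large $n$ (choosing the interpolation so that $k$ dominates the interpolant of $\log 1/M_\cdot$ at the shifted points $2n+1$ — a routine adjustment of constants, using convexity to compare values at $n$ and at $2n+1$).

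\textbf{Main obstacle.} The delicate point is the bookkeeping in the converse: translating the three discrete conditions on $\{M_n\}$ into the three analytic conditions on $G$ requires passing between the sequence, its piecewise-linear interpolant, the Legendre pair $(k, k^*)$, and finally $G(x) = \exp(-k^*(1/x))$, all while keeping track of the loss of constants (factors like $2$ in the argument, additive $\log n$ terms) inherent in \thref{MomentGGrowthLogLogInt}. One must verify that none of these bounded distortions destroys \eqref{ExpDecTag} (a lower bound on $m$, stable under $x \mapsto 2x$ since $c_0/(2x)$ is still of the form $c_0'/x$), \eqref{LogLogIntTag} (an integrability condition, stable under $\log n$ corrections since $\sum \log n/(1+n^2) < \infty$ and stable under the comparison in \thref{LogIntLogSumEquivalenceProp}), or the final inequality $P_G(2n+1) \leq M_n$ (which only needs to hold for \emph{large} $n$ and with the \emph{one-sided} envelope bound, so there is room to absorb the constants by building $k$ slightly larger than strictly necessary). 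Making this web of comparisons precise, rather than any single estimate, is where the real work lies; the logarithmic convexity hypothesis in \thref{AdmissibleSequenceDef}$(i)$ is what makes the interpolation step legitimate and is exactly the property flagged as not-yet-removable in the introductory discussion.
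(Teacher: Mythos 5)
Your forward direction is correct and coincides with the paper's argument step for step: Cauchy--Schwarz applied to the moment integral gives the log-convexity $(i)$ for all $n \geq 1$ (not merely eventually), the Legendre-envelope lower bound $m_*(x) \geq 2\sqrt{c_0 x}$ (equivalently, the upper estimate of \thref{MomentEstProp} with $\beta = 1$) gives the decay bound $(ii)$, and \thref{LogIntLogSumEquivalenceProp} together with the lower estimate in \thref{MomentGGrowthLogLogInt} gives the summability $(iii)$. Nothing to flag there.

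The converse direction has a genuine mix-up in the Legendre bookkeeping. You propose to build a concave increasing $k$ whose \emph{upper} envelope $k^*$ ``reproduces $\log 1/M_n$ at integers'', and then set $m(x) := k^*(1/x)$ and $G := \exp(-m)$. Neither half of this can work. For a positive concave increasing $k$, the upper envelope $k^*(x) = \sup_{y>0}\, k(y) - xy$ is convex, decreasing, and \emph{bounded} as $x \to \infty$ (for a piecewise-linear $k$ it is eventually constant, equal to $k(0^+)$), so it cannot agree at the integers with the unbounded increasing sequence $\log 1/M_n$. It is the other member of the dual pair, $k$ itself, that must interpolate the data: the paper takes $k$ to be the concave piecewise-linear interpolant of $(2n+1, \log 1/M_n)$, which is legitimate precisely because property $(i)$ forces the slopes to decrease. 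Moreover, once you insert the reciprocal, $x \mapsto k^*(1/x)$ is no longer a Legendre dual of $k$ in the sense of \thref{LegendreInversionFormula}: one checks that $(k^*(1/\cdot))_* \neq k$, so the inversion formula and the envelope bounds from \thref{MomentGGrowthLogLogInt} no longer deliver what you cite them for. The repair is to set $m := k^*$ and $G(x) := \exp(-k^*(x))$ directly, with no reciprocal. Then \thref{LegendreInversionFormula} gives $m_* = (k^*)_* = k$, \thref{MomentGGrowthLogLogInt} gives $P_G(x) \leq \exp(-k(x))$, and evaluating at $x = 2n+1$ gives $P_G(2n+1) \leq M_n$; property $(ii)$ gives $k(x) \geq d\sqrt{x}$ and hence $k^*(x) \geq d^2/(4x)$, which is \eqref{ExpDecTag}, while \thref{LogIntLogSumEquivalenceProp} converts $\int_1^\infty k(x)/(1+x^2)\,dx < \infty$ from property $(iii)$ into \eqref{LogLogIntTag}. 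Interpolating at $n$ rather than $2n+1$, as you initially suggest, is harmless since monotonicity gives $k(2n+1) \geq k(n) = \log 1/M_n$, but the role-reversal of $k$ and $k^*$ and the spurious $1/x$ are not loose constants --- they are errors that, as written, make the converse argument collapse.
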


\begin{proof} We start by proving that the sequence in \eqref{AdmSeqGMomPropEq} is admissible by verifying the three conditions in \thref{AdmissibleSequenceDef}. By the Cauchy-Schwarz inequality, we have \begin{align*}
    M_n & = 2 \int_0^1 G(1-r) r^{(n-1) + 1/2}r^{(n+1) + 1/2} \, dr 
    \\ & \leq \sqrt{M_{n-1}}\sqrt{M_{n+1}}
\end{align*} Thus $\{ \log M_n \}_{n \geq 0}$ is a convex sequence. The inequality $M_n \leq \exp(-c \sqrt{n})$ for some $c > 0$ and all sufficiently large $n \geq 0$ follows readily from \eqref{ExpDecTag} and an application of the upper estimate \thref{MomentEstProp} with $\beta = 1$ (and consequently $\tilde{\beta} = 1/2$). Let $m$ be as in \thref{MomentGGrowthLogLogInt}. By the lower estimate in that lemma, we have \begin{align*}
    \sum_{n \geq 0} \frac{\log 1/M_n}{1+n^2} &= \sum_{n \geq 0} \frac{-\log 2 - \log P_G(2n+1)}{1+n^2} \\
    & \leq \sum_{n \geq 0} \frac{-\log 2 + m_*(4n+2) + \log (8n+4)}{1+n^2}.
\end{align*} The assumption that $G$ satisfies \eqref{LogLogIntTag} implies that $\int_0^1 \log m(x) \, dx < \infty$, and so from \thref{LogIntLogSumEquivalenceProp} we deduce that the last sum above is convergent. Consequently, $\{M_n\}_{n \geq 0}$ is an admissible sequence.

Conversely, assume that $\{M_n\}_{n \geq 0}$ is an admissible sequence. Since the sequence tends to zero, we may without loss of generality assume that $M_0 < 1$. From property $(i)$ in \thref{AdmissibleSequenceDef} we obtain the inequality \[ \log 1/M_{n+1} - \log 1/M_n \leq \log 1/M_n - \log 1/M_{n-1}, \quad n \geq 0.\] This means that the slopes of the line segments between each consecutive pair of points in the sequence
\begin{equation}
    \label{kData} (2n+1, \log 1/M_n), \quad n \geq 0
\end{equation} are decreasing, which means that if we define the function $k(x)$, $x > 0$, as the piecewise linear interpolant of the data \eqref{kData}, then $k$ is concave, continuous, positive and increasing, and satisfies \[k(2n+1) = \log 1/M_n, \quad n \geq 0.\] It also satisfies $\lim_{x \to \infty} k(x) = +\infty$, and property $(iii)$ in \thref{AdmissibleSequenceDef} easily implies that \begin{equation}
    \label{kIntEq}
    \int_1^\infty \frac{k(x)}{1+x^2} \, dx < \infty.
\end{equation}
Let $k^*$ be the upper Legendre envelope of $k$ defined in \eqref{upperLegendreEnvelopDef}, set 

\begin{equation}
    \label{GdefSeqLemmaProof} G(x) := \exp \big(- k^*(x) \big), \quad  x \in (0,1],
\end{equation} and $G(0) = 0$. Then $G$ is a continuous and increasing function. Define $P_G$ as in \eqref{PMomentFuncDef}. By \thref{remarkLowerLegEnvelope}, \eqref{GdefSeqLemmaProof}, inversion formula in \thref{LegendreInversionFormula} and \thref{MomentGGrowthLogLogInt}, we have the estimate \[P_G(x) \leq \exp\big(-k(x)\big)\] for all sufficiently large $x$. Consequently, \[ P_G(2n+1) \leq M_n\] if $n$ is large, since $k$ interpolates the data \eqref{kData}. \thref{LogIntLogSumEquivalenceProp}, \thref{LegendreInversionFormula} and \eqref{kIntEq} imply that \[ \int_0^1 \log \log 1/G(x) \, dx = \int_0^1 \log k^*(x) \, dx < \infty.\] Thus $G$ satisfies \eqref{LogLogIntTag}. It remains to check that $G$ also satisfies \eqref{ExpDecTag}. Note that property $(ii)$ in \thref{AdmissibleSequenceDef} of the admissible sequence $\{M_n\}_{n \geq 0}$ implies easily that $k$ satisfies a lower bound of the form \[ k(x) \geq d \sqrt{x}, \quad x \geq 0,\] for some constant $d > 0$. But then, by \eqref{upperLegendreEnvelopDef}, we have 
\begin{align*}
    k^*(x) &= \sup_{ y \geq 0} \, k(y) -xy \\ & \geq \sup_{y > 0} \, d \sqrt{y} - xy \\ &= \frac{d^2}{4x}.
\end{align*} The last equality can be derived by elementary calculus techniques. Consequently \[ \liminf_{x \to 0^+} x \log 1/G(x) \geq \frac{d^2}{4} > 0,\] and so $G$ satisfies \eqref{ExpDecTag}. The proof is complete.
\end{proof}

\subsection{Some auxiliary spaces of Taylor series} \label{AuxHilbSec}

If $f: \D \to \mathbb{C}$ is an analytic function and 
\begin{equation}
    \label{muRestricDiskT1}
    d\mu_\D(z) =  G(1-|z|) dA(z)
\end{equation} then we have the norm equality

\begin{equation} \label{MuDIsomorphicNorm}
\|f\|^2_{\mu_\D} = \int_\D |f(z)|^2 d\mu_\D(z) = \sum_{n \geq 0} M_{n}(G)|f_n|^2 
\end{equation} where $\{ f_n\}_{n \geq 0}$ is the sequence of Taylor coefficients of $f$, and $M = \{M_n(G)\}_{n \geq 0}$ is given by \eqref{GMomentDef}. The above equality gives us an isometric isomorphism between $\Po^2(\mu_\D)$ and a space of Taylor series.

For a decreasing sequence $M = \{ M_n \}_{n \geq 0}$ of positive numbers we define $H_2(M)$ to be the Hilbert space of analytic functions in $\D$ consisting of $f(z) = \sum_{n \geq 0} f_n z^n$ which satisfy \begin{equation}
     \label{HbetacDef} \|f\|^2_{H_2(M)} := \sum_{n \geq 0} M_n |f_n|^2 < \infty.
\end{equation} 
In our development, the sequences $M$ will be the admissible sequences studied in Section \ref{MomentEstSection}. Such sequences have the property that \[ \lim_{n \to \infty} M_n^{1/n} = 1,\] a condition which ensures that the spaces $H_2(M)$, and their duals, are genuine spaces of analytic functions on $\D$. The dual space $H_2^*(M)$ is to consist of power series which satisfy \begin{equation}
     \label{HbetacDefDual} \|f\|^2_{H_2^*(M)} := \sum_{n \geq 0} \frac{|f_n|^2}{M_n} < \infty.
\end{equation} 
Since $M = \{M_n\}_{n \geq 0}$ is assumed to be decreasing, the space $H_2^*(M)$ is contained in the Hardy space $H^2$. In fact, if $M$ is an admissible sequence, then $H^*_2(M)$ consists of functions satisfying the condition \eqref{ESDpropEq} of Section \ref{introsec}. The duality between $H_2(M)$ and $H_2^*(M)$ is realized by the usual Cauchy pairing 
\begin{equation}
    \label{CauchyDualityHM}
\ip{f}{g} := \lim_{r \to 1^-} \sum_{n \geq 0} r^{2n}f_n \conj{g_n} = \int_\T f\conj{g}\,  d\m = \ip{f}{g}_{\L^2} \end{equation} where the sequential definition above makes sense whenever $f \in H_2(M)$, $g \in H_2^*(M)$, and the integral definition holds only in special cases, for instance when $f,g \in H^2$. An application of the Cauchy-Schwarz inequality to the limit in \eqref{CauchyDualityHM} shows that \[ |\ip{f}{g}| \leq \|f\|_{H_2(M)}\|g\|_{H^*_2(M)}.\] 
We introduce also the space $H_1^*(M)$ which consists of power series $f(z) = \sum_{n \geq 0} f_nz^n$ satisfying \begin{equation}
     \label{Hstar1Def} \|f\|_{H^*_1(M)} := \sup_{n \geq 0} \, \frac{|f_n|}{M_n} < \infty. 
\end{equation} 
Recall from \thref{PowerAdmSeq} that the family of admissible sequences introduced in \thref{AdmissibleSequenceDef} is invariant under taking powers. For this reason, the spaces $H^*_2(M)$ and $H^*_1(M)$ which appear in our study are very similar. 

\begin{lem} \thlabel{H1starH2starEmbeddingLemma}
    Let $M = \{M_n\}_{n \geq 0}$ be an admissible sequence, and consider the sequences \[ M^p := \{M^p_n\}_{n \geq 0}.\] For $p > 1/2$, we have the continuous embeddings \[ H^*_1(M^p) \subset H^*_2(M) \subset H^*_1(M^{1/2}).\]
\end{lem}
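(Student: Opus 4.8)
The plan is to verify the two inclusions separately. The right-hand embedding $H^*_2(M) \subset H^*_1(M^{1/2})$ is immediate: if $f(z) = \sum_{n \geq 0} f_n z^n$ lies in $H^*_2(M)$, then for each fixed $n$ we have $|f_n|^2/M_n \leq \sum_{k \geq 0} |f_k|^2/M_k = \|f\|^2_{H^*_2(M)}$, so that $|f_n|/M_n^{1/2} \leq \|f\|_{H^*_2(M)}$; taking the supremum over $n$ yields $\|f\|_{H^*_1(M^{1/2})} \leq \|f\|_{H^*_2(M)}$.

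For the left-hand embedding $H^*_1(M^p) \subset H^*_2(M)$ I would first record the elementary fact that $\sum_{n \geq 0} M_n^{\epsilon} < \infty$ for every $\epsilon > 0$. This is where the definition of admissibility actually enters: by property $(ii)$ of \thref{AdmissibleSequenceDef} there is $d > 0$ with $M_n \leq \exp(-d\sqrt{n})$ for all large $n$, hence $M_n^\epsilon \leq \exp(-\epsilon d \sqrt{n}) \leq n^{-2}$ eventually (since $\epsilon d \sqrt{n} \geq 2\log n$ for large $n$), and the finitely many initial terms contribute only a finite amount. Given $f(z) = \sum_{n \geq 0} f_n z^n \in H^*_1(M^p)$ we have $|f_n| \leq \|f\|_{H^*_1(M^p)}\, M_n^p$, so
\[ \|f\|^2_{H^*_2(M)} = \sum_{n \geq 0} \frac{|f_n|^2}{M_n} \leq \|f\|^2_{H^*_1(M^p)} \sum_{n \geq 0} \frac{M_n^{2p}}{M_n} = \|f\|^2_{H^*_1(M^p)} \sum_{n \geq 0} M_n^{2p-1}. \]
Since $p > 1/2$ we have $2p - 1 > 0$, so the last series converges, say to $C = C(p,M) < \infty$, and therefore $\|f\|_{H^*_2(M)} \leq \sqrt{C}\,\|f\|_{H^*_1(M^p)}$, which is the asserted continuous embedding.

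There is no real obstacle in this lemma; the only slightly non-formal ingredient is the convergence of $\sum_n M_n^\epsilon$, which rests exactly on the $\exp(-d\sqrt{n})$ decay built into property $(ii)$ of admissibility. Neither the eventual logarithmic convexity (property $(i)$) nor the summability condition (property $(iii)$) is used, and beyond property $(ii)$ only the fact that $M_n$ is decreasing plays any role.
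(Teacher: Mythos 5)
Your proof is correct and follows exactly the same route as the paper: the right embedding by dropping terms from the $\ell^2$ sum, and the left embedding by pulling out the $H^*_1(M^p)$-norm and using property $(ii)$ of admissibility to make $\sum_n M_n^{2p-1}$ converge. The additional remark that only property $(ii)$ and monotonicity are needed is accurate but doesn't change the argument.
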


\begin{proof}
    If $f \in H^*_2(M)$, then for any $n \geq 0$ we have that \[ \frac{|f_n|^2}{M_n} \leq \|f\|^2_{H^*_2(M)},\] so clearly $f \in H^*_1(M^{1/2})$. If $f \in H^*_1(M^p)$ for some $p > 1/2$, then we may use that $M$ satisfies property $(ii)$ of admissible sequences in \thref{AdmissibleSequenceDef} to obtain \begin{align*}
        \sum_{n \geq 0} \frac{|f_n|^2}{M_n} &\leq \|f\|^2_{H^*_1(M^p)} \sum_{n \geq 0} M_n^{2p-1} \\
        &\leq \sum_{n \geq 0} \exp \big( -d(2p-1) \sqrt{n}\big) \\ &< \infty.
    \end{align*}
    Thus $f \in H^*_2(M)$.
\end{proof}

The following corollary will be used several times below. 

\begin{cor} \thlabel{MhstarContainmentFromRSD} If an analytic function $f$ in $\D$ satisfies the condition \eqref{ESDpropEq}, then there exists $c' > 0$ and a measure \[ d\mu_\D(z) =  \exp\Big(- \frac{c'}{(1-|z|)}\Big) dA(z) = G(1-|z|) dA(z) \] with sequence of moments $M = \{M_n(G)\}_{n\geq 0}$ such that $f \in H^*_2(M)$.
\end{cor}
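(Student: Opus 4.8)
The plan is to take $G(x) = \exp(-c'/x)$ for a constant $c' > 0$ chosen small relative to the decay rate $c$ appearing in \eqref{ESDpropEq}, and then to verify the defining summability condition \eqref{HbetacDefDual} of $H^*_2(M)$ by a direct term-by-term comparison, using the moment estimates of \thref{MomentEstProp}.

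First I would record that $f$ satisfying \eqref{ESDpropEq} means $|f_n|^2 \le C_f^2 \exp(-2c\sqrt{n})$ for all $n \ge 0$, for some constant $C_f > 0$. Next, for $c' > 0$ to be fixed below, set $G(x) := \exp(-c'/x)$ on $(0,1]$ and $G(0) := 0$; this $G$ is continuous and increasing with $G(0) = 0$, and its moment sequence $M = \{M_n(G)\}_{n \ge 0}$ given by \eqref{GMomentDef} is precisely $\{M_n(1,c')\}_{n \ge 0}$ in the notation of \thref{MomentEstProp}, i.e.\ the case $\beta = 1$ (so $\tilde\beta = 1/2$). Invoking the lower bound in \eqref{d0d1MomentEq} of that proposition --- in which, as its proof makes explicit, $d = d(1,c')$ is a fixed multiple of $\sqrt{c'}$, while the extra polynomial and logarithmic factors in the quantitative inequalities are $o(\sqrt n)$ --- one obtains an absolute constant $C_0 > 0$ and an integer $N_0 = N_0(c')$ with
\[
M_n(G) \ \ge\ \exp\!\big(-C_0 \sqrt{c'}\,\sqrt n\,\big), \qquad n \ge N_0 .
\]

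Finally I would write $\sum_{n \ge 0} |f_n|^2 / M_n(G)$ as the sum of a finite initial part $\sum_{n < N_0} |f_n|^2/M_n(G)$, which is a finite sum of finite terms since $M_n(G) > 0$ for each $n$, and the tail $\sum_{n \ge N_0} |f_n|^2 / M_n(G)$. Combining the two displayed bounds, the tail is dominated by $C_f^2 \sum_{n \ge N_0} \exp\!\big(-(2c - C_0\sqrt{c'})\sqrt n\,\big)$, which converges as soon as $c'$ is small enough that $C_0 \sqrt{c'} < 2c$, i.e.\ $c' < (2c/C_0)^2$. For such a $c'$ this shows $f \in H^*_2(M)$, as claimed. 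The one point that needs care is precisely this comparison of constants: one must confirm that the exponent constant in the lower estimate for $M_n(G)$ genuinely scales like $\sqrt{c'}$ (with only an $o(\sqrt n)$ remainder), so that it can be driven strictly below $2c$ by shrinking $c'$ --- which is exactly the content of the proof of \thref{MomentEstProp}; everything else is routine.
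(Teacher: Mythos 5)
Your argument is correct and follows essentially the same route as the paper: both proofs reduce to the lower bound of \thref{MomentEstProp} with $\beta = 1$, exploiting that the constant $d(1,c')$ in $M_n(G) \ge \exp(-2d\sqrt{n})$ scales like $\sqrt{c'}$ and can thus be driven below $2c$. The only difference is cosmetic — the paper routes the summability estimate through \thref{H1starH2starEmbeddingLemma}, whereas you carry out that same geometric-series comparison inline — so the content is identical.
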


\begin{proof}
The condition \eqref{ESDpropEq} and \thref{H1starH2starEmbeddingLemma} imply that $f \in H^*_2(\widetilde{M})$, where $\widetilde{M}_n = \exp(-c_0 \sqrt{n})$ for some positive constant $c_0$. Now \thref{MomentEstProp}, with $\beta = 1$, shows that $c' > 0$ can be chosen so that $M_n(G) = M(\beta, c') \geq \widetilde{M}_n$ for sufficiently large $n$. Then $f \in H^*_2(M)$.
\end{proof}
 
We end the section with a few words about operators acting on the introduced class of spaces. From their definition, and in particular from the assumption on $M$ being decreasing, it is not hard to see that the spaces $H_2(M)$ are invariant under the multiplication operator $M_z$, and that this operator is a contraction. Then Von Neumann's inequality (\cite[p. 159]{mccarthypick}) or the Sz.-Nagy Foias $H^\infty$-functional calculus (\cite[Chapter 3]{nagyfoiasharmop}) shows that in fact every function $h \in H^\infty$ defines a bounded multiplication operator $M_h: H_2(M) \to H_2(M)$. The adjoint operator $M_h^*: H_2^*(M) \to H_2^*(M)$ is easily indentified with the usual Toeplitz operator $T_{\conj{h}}$ with the co-analytic symbol $\conj{h}$, i.e., $T_{\conj{h}}f$ is the orthogonal projection to the Hardy space $H^2$ of the function $\conj{h}f \in \L^2(\T)$.

For later reference, we record these observations in a proposition.

\begin{prop} \thlabel{HToeplitzInvariance} Let $M = \{M_n\}_{n \geq 0}$ be an admissible sequence.
\begin{enumerate}[(i)]
    \item The space $H_2(M)$ is invariant for the analytic multiplication operators \[M_hf = h(z)f(z)hf, \quad f \in H_2(M),\] with symbols $h \in H^\infty$.
    \item The space $H^*_2(M)$ is invariant for the co-analytic Toeplitz operators \[T_{\conj{h}}f = P_+\conj{h}f, \quad f \in H^*_2(M)\] with symbols $h \in H^\infty$.
\end{enumerate}
\end{prop}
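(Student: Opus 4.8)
The plan is to establish $(i)$ by an $H^\infty$-functional calculus argument and then deduce $(ii)$ by duality. For $(i)$, I would begin by checking that the shift $M_z$ is a contraction on $H_2(M)$: if $f(z) = \sum_{n \geq 0} f_n z^n \in H_2(M)$, then $zf(z) = \sum_{n \geq 0} f_n z^{n+1}$, so by the definition \eqref{HbetacDef} and the hypothesis that $\{M_n\}_{n \geq 0}$ is decreasing,
\[
\|zf\|_{H_2(M)}^2 = \sum_{n \geq 0} M_{n+1}|f_n|^2 \leq \sum_{n \geq 0} M_n |f_n|^2 = \|f\|_{H_2(M)}^2 .
\]
Von Neumann's inequality (\cite[p.~159]{mccarthypick}), or equivalently the Sz.-Nagy--Foias $H^\infty$-functional calculus for the Hilbert-space contraction $M_z$ (\cite[Chapter~3]{nagyfoiasharmop}), then produces a bounded operator $h(M_z)$ with $\|h(M_z)\| \leq \|h\|_{H^\infty}$ for every $h \in H^\infty$. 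Since $h(M_z)$ coincides with multiplication by $h$ on the analytic polynomials, and polynomials are dense in $H_2(M)$, one identifies $h(M_z) = M_h$; alternatively, apply the polynomial case to the dilations $h_r(z) := h(rz)$ and pass to the limit $r \to 1^-$ coefficientwise, using lower semicontinuity of the weighted norm \eqref{HbetacDef} to conclude $\|hf\|_{H_2(M)} \leq \|h\|_{H^\infty}\|f\|_{H_2(M)}$.

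For $(ii)$, recall from \eqref{CauchyDualityHM} that the Cauchy pairing $\ip{\cdot}{\cdot}$ identifies $H_2^*(M)$ with the dual of $H_2(M)$, with $|\ip{f}{g}| \leq \|f\|_{H_2(M)}\|g\|_{H_2^*(M)}$. The transpose $M_h' : H_2^*(M) \to H_2^*(M)$ of the bounded operator $M_h$ from $(i)$ is automatically bounded, with $\|M_h'\| = \|M_h\| \leq \|h\|_{H^\infty}$, so it is enough to check that $M_h'$ coincides with the co-analytic Toeplitz operator $T_{\conj h}$. I would verify this on the dense subset of analytic polynomials $\Po \subset H_2^*(M)$: for $f \in \Po$ and $g \in \Po$, both $hf$ and $g$ lie in $H^2$ and $g$ has finitely many nonzero coefficients, so the pairing may be computed as an honest integral,
\[
\ip{M_h f}{g} = \int_\T hf\,\conj{g}\, d\m = \int_\T f\, \conj{\,\conj h g\,}\, d\m = \int_\T f\, \conj{P_+(\conj h g)}\, d\m = \ip{f}{T_{\conj h}g},
\]
where the third equality uses $f \in H^2$ so that only the analytic part of $\conj h g$ contributes. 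Since $g$ is a polynomial, $T_{\conj h}g = P_+(\conj h g)$ is again a polynomial — its $k$-th Taylor coefficient equals $\sum_{j \geq 0} \conj{h_j}\, g_{k+j}$, which vanishes once $k$ exceeds $\deg g$ — and so lies in $H_2^*(M)$. Thus $M_h' g = T_{\conj h} g$ for every $g \in \Po$; since $\Po$ is dense in $H_2^*(M)$ and both $M_h'$ and $T_{\conj h}$ are continuous as maps $H_2^*(M) \to H^2$ (for $M_h'$, compose with the bounded inclusion $H_2^*(M) \hookrightarrow H^2$; $T_{\conj h}$ is bounded on $H^2$), they agree on all of $H_2^*(M)$. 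Hence $T_{\conj h}$ maps $H_2^*(M)$ into $H_2^*(M)$ and equals $M_h'$ there, which gives both invariance and the bound $\|T_{\conj h}\| \leq \|h\|_{H^\infty}$.

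I do not expect a genuine obstacle; the statement is essentially bookkeeping. The one point requiring care is that $H_2(M)$ is strictly larger than $H^2$ (because $M_n \to 0$), so operator theory on $H^2$ cannot be transplanted verbatim and one must argue either through the abstract functional calculus or purely at the level of Taylor coefficients. A second, minor point is the density of $\Po$ in both $H_2(M)$ and $H_2^*(M)$, which is immediate from the definitions \eqref{HbetacDef} and \eqref{HbetacDefDual}, since truncating a Taylor series kills the tail of the defining sum.
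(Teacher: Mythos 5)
Your proof is correct and follows exactly the paper's approach: show $M_z$ is a contraction on $H_2(M)$ because $\{M_n\}$ is decreasing, invoke Von Neumann's inequality or the Sz.-Nagy--Foias $H^\infty$-calculus to get boundedness of $M_h$, and then pass to the dual via the Cauchy pairing to identify the adjoint with $T_{\conj h}$. The paper leaves the identification of the adjoint with the Toeplitz operator as an ``easily identified'' remark; you supply exactly the polynomial-level verification and density argument that fill in that gap.
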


\begin{cor}\thlabel{ToeplitzInvRSD} If an analytic function $f: \D \to \mathbb{C}$ satisfies the condition \eqref{ESDpropEq}, then so does $T_{\conj{h}}f$ for any $h \in H^\infty$.
\end{cor}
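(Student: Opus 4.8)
The plan is to chain together the structural facts about the spaces $H_2(M)$, $H^*_2(M)$ and $H^*_1(M)$ established earlier in this section, so that no new analytic input is required. First I would feed the hypothesis into \thref{MhstarContainmentFromRSD}: since $f$ satisfies \eqref{ESDpropEq}, there is a constant $c' > 0$ and a weight $G(1-|z|) = \exp\big(-c'/(1-|z|)\big)$ whose moment sequence $M = \{M_n(G)\}_{n \geq 0}$ is admissible (by \thref{AdmissibleSequenceLemma}) and satisfies $f \in H^*_2(M)$.

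Next I would apply part $(ii)$ of \thref{HToeplitzInvariance}, which says precisely that $H^*_2(M)$ is invariant under the co-analytic Toeplitz operator $T_{\conj{h}}$ for every $h \in H^\infty$. Hence $g := T_{\conj{h}}f \in H^*_2(M)$, i.e.\ its Taylor coefficients $\{g_n\}_{n \geq 0}$ obey $\sum_{n \geq 0} |g_n|^2 / M_n < \infty$. In particular $|g_n|^2 / M_n$ is bounded, so $|g_n| \leq C\, M_n^{1/2}$ for all $n$ and some $C > 0$; equivalently, $g \in H^*_1(M^{1/2})$, which is exactly the embedding $H^*_2(M) \subset H^*_1(M^{1/2})$ recorded in \thref{H1starH2starEmbeddingLemma} (with $\mathcal{P}_G$, i.e.\ $M$, admissible so that $M^{1/2}$ is admissible by \thref{PowerAdmSeq}).

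Finally I would use property $(ii)$ of admissible sequences in \thref{AdmissibleSequenceDef} — equivalently the upper estimate of \thref{MomentEstProp} applied with $\beta = 1$, $\tilde\beta = 1/2$ — to get $M_n \leq \exp(-d\sqrt{n})$ for some $d > 0$ and all sufficiently large $n$. Combined with $|g_n| \leq C\, M_n^{1/2}$ this gives $|g_n| \leq C \exp\big(-(d/2)\sqrt{n}\big)$ for large $n$, and after adjusting the constant to absorb the finitely many small indices we conclude $\sup_{n \geq 0} |g_n| \exp\big((d/2)\sqrt{n}\big) < \infty$, which is \eqref{ESDpropEq} for $g = T_{\conj{h}}f$ with $c = d/2$.

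There is essentially no genuine obstacle here: the corollary is a bookkeeping consequence of \thref{MhstarContainmentFromRSD}, \thref{HToeplitzInvariance}$(ii)$ and \thref{H1starH2starEmbeddingLemma}. The only point deserving a line of care is that the admissible sequence produced by \thref{MhstarContainmentFromRSD} really does decay at least like $\exp(-d\sqrt{n})$, which is guaranteed either by clause $(ii)$ of \thref{AdmissibleSequenceDef} or directly by the explicit moment bound of \thref{MomentEstProp}.
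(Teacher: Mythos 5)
Your proposal is correct and follows the same chain as the paper's own argument: feed \eqref{ESDpropEq} into \thref{MhstarContainmentFromRSD} to land $f$ in $H^*_2(M)$ for an admissible moment sequence $M$, use \thref{HToeplitzInvariance}$(ii)$ to keep $T_{\conj{h}}f$ in $H^*_2(M)$, and then pass through \thref{H1starH2starEmbeddingLemma} to $H^*_1(\sqrt{M})$. The only difference is that you spell out the final unwinding of property $(ii)$ of \thref{AdmissibleSequenceDef} to recover the $\exp(-c\sqrt{n})$ bound explicitly, which the paper leaves implicit.
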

\begin{proof}
    We use \thref{MhstarContainmentFromRSD} and \thref{HToeplitzInvariance} to see that the function $T_{\conj{h}}f$ is contained in a space $H^*_2(M)$, where $M$ is admissible. \thref{H1starH2starEmbeddingLemma} shows that $T_{\conj{h}}f \in H^*_1(\sqrt{M})$, so $T_{\conj{h}}f$ satisfies \eqref{ESDpropEq}.
\end{proof}

\section{Existence in $\hb$ of functions with rapid spectral decay}

\label{ExistenceESDHbSec}

This section deals with proving \thref{MainTheoremHbExistenceESD}. In the proof, we will need a similar result in the context of model spaces, which we establish first. Next, we present some background theory of $\hb$-spaces which will be needed in the proof of \thref{MainTheoremHbExistenceESD}, and also in the proof of \thref{MainTheoremHbDensityESD} given in the next section.

\subsection{Corresponding result in model spaces}
\label{ExtremalDecModelSpaceSec}

The following \thref{AlphaKthetaBreakpointProp} needed in the proof of \thref{MainTheoremHbExistenceESD} is known, and follows for instance from the work of Beurling in \cite{beurling1964critical}, or from a result of El-Fallah, Kellay and Seip in \cite{el2012cyclicity}. The mentioned results are much stronger than \thref{AlphaKthetaBreakpointProp}. Because the result is important for our further purposes, we shall use the estimates from Section \ref{MomentEstSection} to give a simple proof of our version of the result.

\begin{prop} \thlabel{AlphaKthetaBreakpointProp} If $\theta$ is a singular inner function, then the model space $\K_\theta$ contains no non-zero function $f(z) = \sum_{n \geq 0} f_nz^n$ which satisfies \eqref{ESDpropEq}.
\end{prop}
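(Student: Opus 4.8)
The plan is to transfer the question into the Taylor-series model $H_2(M)$ and its predual, and then invoke the irreducibility results from the first part of the paper. Recall that $\K_\theta = H^2 \ominus \theta H^2$, so a non-zero $f \in \K_\theta$ is characterized by $f \perp \theta H^2$ in $H^2$, equivalently $\overline{\theta} f \in \overline{zH^2}$, i.e. $T_{\overline{\theta}} f = 0$. Suppose toward a contradiction that such an $f$ satisfies \eqref{ESDpropEq}. By \thref{MhstarContainmentFromRSD}, there is a constant $c' > 0$ and a weight $G(1-|z|) = \exp\big(-c'/(1-|z|)\big)$ with moment sequence $M = \{M_n(G)\}_{n \ge 0}$ such that $f \in H_2^*(M)$. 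The measure $d\mu(z) = G(1-|z|)\,dA(z)$ (with $w \equiv 0$ on $\T$) is of the form \eqref{T1} with $\beta = 1$, hence the associated space $\Po^2(\mu) = H_2(M)$ is irreducible by \thref{IrrDef}: indeed $w \equiv 0$ has $\co(w) = \emptyset = \{x : w(x) > 0\}$ up to $m$-measure zero, so condition $(iii)$ holds.

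Next I would exploit the duality $\ip{\cdot}{\cdot}$ between $H_2(M)$ and $H_2^*(M)$ from \eqref{CauchyDualityHM}, together with the fact that $M_z: H_2(M) \to H_2(M)$ is a contraction and every $M_h$, $h \in H^\infty$, is bounded on $H_2(M)$ with adjoint $T_{\overline{h}}$ on $H_2^*(M)$ (\thref{HToeplitzInvariance}). The condition $T_{\overline{\theta}} f = 0$ says exactly that $f$ annihilates the range of $M_\theta$ on $H_2(M)$, i.e. $\ip{\theta g}{f} = 0$ for all $g \in H_2(M)$; by density of polynomials this is $\ip{\theta p}{f} = 0$ for all polynomials $p$, so $f$ annihilates the invariant subspace $[\theta]$ generated by $\theta$ in $H_2(M) = \Po^2(\mu)$. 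But \thref{CyclicityMainTheorem}, applied to the irreducible space $\Po^2(\mu)$ with $w \equiv 0$, says $S_\nu$ is cyclic iff $\nu(\co(w)) = 0$; since $\co(w) = \emptyset$ this holds automatically, so \emph{every} singular inner function is cyclic in $\Po^2(\mu)$, i.e. $[\theta] = \Po^2(\mu) = H_2(M)$. (For the general singular inner $\theta = S_\nu$ this is immediate; note a Blaschke factor is excluded since $\theta$ is singular inner, but even if present the argument of \thref{ProdOfCyclicLemma} and cyclicity of Blaschke factors — each vanishing-point case being handled by division — would reduce to this.) Then $f$ annihilates a dense subspace, so $\ip{g}{f} = 0$ for all $g \in H_2(M)$; taking $g = z^n$ gives $M_n \overline{f_n} = 0$ for all $n$, whence $f_n = 0$ for all $n$ and $f \equiv 0$, contradicting the assumption.

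The technical point requiring care is the passage $T_{\overline{\theta}} f = 0 \iff f \perp [\theta]$ in the $H_2(M)$-pairing rather than the $H^2$-pairing: one must check that the Cauchy pairing \eqref{CauchyDualityHM} between $f \in H_2^*(M) \subset H^2$ and $\theta p \in H^\infty \subset H_2(M)$ genuinely reduces to $\ip{\theta p}{f}_{\L^2} = \ip{p}{\overline{\theta}f}_{\L^2} = \ip{p}{T_{\overline{\theta}}f}_{\L^2}$, which is legitimate because both $f$ and $\theta p$ lie in $H^2$ and the sequential and integral forms of \eqref{CauchyDualityHM} then agree. I expect the main obstacle — really the only one — to be presenting this duality bookkeeping cleanly, together with correctly invoking \thref{CyclicityMainTheorem} in the degenerate case $w \equiv 0$ where the core set is empty and the cyclicity criterion becomes vacuous. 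Everything else (the moment estimate \thref{MomentEstProp} giving $\beta = 1$, the embedding \thref{MhstarContainmentFromRSD}, the irreducibility via \thref{IrrDef}) is already available off the shelf from earlier in the paper.
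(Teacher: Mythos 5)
Your proposal is correct and uses the same core ingredients as the paper's proof: \thref{MhstarContainmentFromRSD} to embed $f$ into $H_2^*(M)$, cyclicity of $\theta$ in $\Po^2(\mu_\D) = H_2(M)$ via \thref{CyclicityMainTheorem}, and the Cauchy duality pairing combined with $f \perp \theta H^2$. The paper establishes $f(0)=0$ and then iterates with the backward shift (using \thref{ToeplitzInvRSD}), whereas you pair against arbitrary $z^n$ to annihilate all coefficients at once --- the two presentations are equivalent; note only that under the pairing \eqref{CauchyDualityHM} one obtains $\ip{z^n}{f} = \conj{f_n} = 0$ directly rather than $M_n\conj{f_n}$, though the conclusion is unchanged.
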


\begin{proof}
We will show that any $f \in K_\theta$ which satisfies \eqref{ESDpropEq} satisfies also $f(0) = f_0 = 0$. Since $\K_\theta$ is invariant for the backward shift \[Lf(z) := \frac{f(z) - f(0)}{z}, \quad z \in \D,\] and by \thref{ToeplitzInvRSD} the function $Lf = T_{\conj{z}}f$ satisfies \eqref{ESDpropEq}, the same argument will show that $f_n = L^nf(0) = 0$ for $n \geq 0$. Thus $f \equiv 0$ will follow.

We apply \thref{MhstarContainmentFromRSD} to $f$ and obtain a measure $\mu_\D$ with moment sequence $M$ such that $f \in H^*_2(M)$. The measure $\mu_\D$ is of the form \eqref{T1} (see Section \ref{introsec}) for $\beta = 1$ and $w \equiv 0$. By \thref{CyclicityMainTheorem}, the singular inner function $\theta$ is trivially cyclic in $\Po^2(\mu_\D)$, since $\co ( w ) = \co ( 0 ) = \varnothing$. Thus there exists a sequence of analytic polynomials $\{p_n\}_{n \geq 0}$ such that $\theta p_n \to 1$ in the norm of $\Po^2(\mu) = H_2(M)$. Using the duality pairing \eqref{CauchyDualityHM} and the membership of $f$ in $H^*_2(M) \cap K_\theta$, the following computation is justified: 
\begin{align*}
    \conj{f(0)} &= \ip{1}{f} \\
        & = \lim_{n \to \infty} \ip{\theta p_n}{f} \\
        & = \lim_{n \to \infty} \int_\T \theta p_n\conj{f} \, d\m \\
        & = 0.
\end{align*} The last equality holds due to $f$ being a member of $K_\theta = (\theta H^2)^{\perp}$. Thus $f(0) = 0$, and the proof is complete by the initial remarks.
\end{proof}

\subsection{Some $\hb$-theory}

\label{HbSecAppendix}

The following description of $\hb$-spaces is very convenient in connection with various functional-analytic arguments. It has been introduced in \cite{dbrcont}, and was later used in \cite{limani_malman_2023} and \cite{limani2023problem}, to prove approximation results in classes of $\hb$-spaces. We will employ it in a similar way below. Recall that the symbol $P_+$ denotes the orthogonal projection operator $P_+ : \L^2(\T) \to H^2$, and $\L^2(E)$ denotes the subspace of those $g \in \L^2(\T)$ which live only on the measurable subset $E \subset \T$.

\begin{prop} \thlabel{normformula}
Let $b$ be an extreme point of the unit ball of $H^\infty$, 
\begin{equation}
    \label{DeltabDef}
    \Delta_b(x) = \sqrt{1-|b(x)|^2}, \quad x \in \T,
\end{equation} and $E$ be the carrier set of $\Delta_b$:

\[ E = \{ x \in \T : \Delta_b(x) > 0 \}.\] Then $f \in H^2$ is a member of $\hb$ if and only if the equation 
\begin{equation} \label{hbconteq} P_+ \conj{b}f = -P_+ \Delta_b g \end{equation} has a solution $g\in \L^2(E)$. The solution is unique, and the map $J:\Hb\to H^2\oplus \L^2(E)$  defined by $$Jf=(f,g),$$ 
is an isometry. Moreover,  \begin{equation} \label{Jort} J(\Hb)^\perp = \Big\{ (bh, \Delta_b h) : h \in H^2 \Big\}. \end{equation}
\end{prop}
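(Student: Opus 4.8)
The plan is to build $J$ directly from the de Branges--Rovnyak range-space description of $\hb$, and to match it against the orthogonal complement, inside $H^2 \oplus \L^2(E)$, of the isometric copy of $H^2$ displayed in \eqref{Jort}. First I would record that the map $X \colon H^2 \to H^2 \oplus \L^2(E)$ given by $Xh = (bh, \Delta_b h)$ is an isometry: by \eqref{DeltabDef} one has $|bh|^2 + |\Delta_b h|^2 = |h|^2$ a.e.\ on $\T$, and $\Delta_b$ vanishes off $E$, so $\|Xh\|^2 = \|h\|_{H^2}^2$; hence $XH^2$ is closed, and I put $\mathcal{G} := (XH^2)^\perp$. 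A one-line pairing computation gives $\ip{(f,g)}{(bh,\Delta_b h)} = \ip{\conj{b}f + \Delta_b g}{h}_{\L^2}$ for all $h \in H^2$, so $(f,g) \in \mathcal{G}$ precisely when $\conj{b}f + \Delta_b g \perp H^2$, i.e.\ when $P_+\conj{b}f = -P_+\Delta_b g$ --- exactly the equation \eqref{hbconteq}. Granting that $J(\hb) = \mathcal{G}$, formula \eqref{Jort} then follows from $\mathcal{G}^\perp = XH^2$.

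The place where extremality of $b$ enters, and which I expect to be the main obstacle, is the uniqueness of $g$ --- equivalently, the injectivity on $\mathcal{G}$ of the first-coordinate projection $\pi$. Suppose $(0,g) \in \mathcal{G}$ with $g \in \L^2(E)$; then $\Delta_b g \perp H^2$, so $\Delta_b g = \conj{zk}$ for some $k \in H^2$, whence $|k| = \Delta_b |g|$ a.e. If $\m(\T \setminus E) > 0$, then $g$, and hence $k$, vanishes on a set of positive $\m$-measure, which forces $k \equiv 0$ and so $g = 0$. If instead $E$ has full measure (still possible for extreme $b$), assume towards a contradiction that $k \ne 0$; then $\log|k| \in \L^1(\T)$, and since $|k| \le |g|$ we get $\log^- |g| \le \log^-|k|$, so $\log|g| \in \L^1(\T)$ as well, and then $\log \Delta_b = \log|k| - \log|g|$ would lie in $\L^1(\T)$, contradicting that $b$ being extreme is equivalent to $\int_\T \log \Delta_b \, d\m = -\infty$. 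Hence $g = 0$ in every case, so \eqref{hbconteq} has at most one solution $g$.

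To match norms I would invoke the standard fact (see \cite{sarasonbook}) that $\hb$ is the range of $D := (I - T_b T_{\conj b})^{1/2}$ equipped with the range norm, where $T_b f = bf$ is multiplication by $b$ and $T_b^* = T_{\conj{b}}$. Solving $\ip{(f,0)}{Xh'} = \ip{h_0}{h'}$ over all $h' \in H^2$ shows that the orthogonal projection of $(f,0)$ onto $XH^2$ is $X(T_{\conj{b}}f)$, so the projection $Q$ onto $\mathcal{G}$ satisfies $Q(f,0) = \big((I - T_bT_{\conj b})f,\, -\Delta_b T_{\conj b}f\big) = (D^2 f,\, -\Delta_b T_{\conj b}f)$, and $\|Q(f,0)\|^2 = \ip{D^2 f}{f}_{H^2} = \|Df\|_{H^2}^2 = \|D^2 f\|_{\hb}^2$, the last equality because $\operatorname{ran} D \perp \ker D = \ker D^2$. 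Thus $\iota \colon D^2 f \mapsto Q(f,0)$ is a well-defined isometry of $\operatorname{ran}(I - T_bT_{\conj b})$, which is dense in $\hb$ since $D$ restricts to a unitary from $(\ker D)^\perp$ onto $\hb$, into $\mathcal{G}$; it extends to an isometry $\iota \colon \hb \to \mathcal{G}$ satisfying $\pi \circ \iota = \mathrm{id}_{\hb}$.

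Finally I would check that $\iota$ is onto: if $(f,g) \in \mathcal{G}$ is orthogonal to $\iota(\hb)$, then for every $f' \in H^2$ we have $\ip{f}{f'}_{H^2} = \ip{(f,g)}{Q(f',0)} = \ip{(f,g)}{\iota(D^2 f')} = 0$, so $f = 0$, and then $g = 0$ by the uniqueness step. Therefore $\pi|_{\mathcal{G}} \colon \mathcal{G} \to \hb$ is a bijective isometry, and its inverse is the sought map $J$: for $f \in \hb$, $Jf = (f,g)$ with $g \in \L^2(E)$ the unique solution of \eqref{hbconteq}; an element $f \in H^2$ lies in $\hb$ exactly when \eqref{hbconteq} is solvable in $\L^2(E)$; and $J(\hb)^\perp = \mathcal{G}^\perp = XH^2 = \{(bh, \Delta_b h) : h \in H^2\}$, which is \eqref{Jort}.
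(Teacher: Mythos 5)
The paper does not actually prove \thref{normformula}; it cites the result from \cite{dbrcont}, so there is no in-text argument to compare against. Your proof, read on its own merits, is correct and self-contained, and it follows what is essentially the standard route: regard $Xh = (bh,\Delta_b h)$ as an isometric embedding of $H^2$ into $H^2\oplus\L^2(E)$, identify $\mathcal{G} := (XH^2)^\perp$ with the solution set of \eqref{hbconteq} via the duality pairing, and then identify $\mathcal{G}$ isometrically with $\hb$ by projecting $(f,0)$ onto $\mathcal{G}$ and matching with the range norm of $D=(I-T_bT_{\conj b})^{1/2}$.

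Two small remarks. First, the well-definedness of $\iota\colon D^2f \mapsto Q(f,0)$ is in fact immediate from your own norm identity $\|Q(f,0)\|^2 = \ip{D^2f}{f}_{H^2}$ applied to a difference $f_1-f_2 \in \ker D^2$; you invoke it implicitly, and it is worth a sentence. Second, in the uniqueness step you correctly isolate where extremality enters: if $(0,g)\in\mathcal{G}$ then $\Delta_b g = \conj{zk}$ with $k\in H^2$, and in the case $E$ has full measure the contradiction $\log\Delta_b = \log|k| - \log|g| \in \L^1(\T)$ against $\int_\T\log\Delta_b\,d\m = -\infty$ uses that $g\ne0$ a.e.\ whenever $k\not\equiv0$ (because $|k| = \Delta_b|g|$ and a non-zero $H^2$ function is non-zero a.e.), which you should state to make the factorization of $\log\Delta_b$ legitimate. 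These are cosmetic; the argument is sound and establishes the membership criterion, uniqueness, isometry of $J$, and formula \eqref{Jort}.
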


Next comes a very useful corollary which is well-known and can be proved by other means (see \cite{hbspaces1fricainmashreghi}, \cite{hbspaces2fricainmashreghi} for other derivations). 

\begin{cor} \thlabel{CauchyTransformsinHb} Let $E$ and $\Delta_b$ be as in \thref{normformula}. For any $s \in \L^2(E)$, the function \[f = P_+ \Delta_b s\] is a member of $\hb$ and, in the notation of \thref{normformula}, we have \[ Jf = (f, -\conj{b}s).\]
Moreover, if $b$ is extreme and $s$ is non-zero, then $f$ is non-zero. 
\end{cor}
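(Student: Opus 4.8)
The plan is to verify the defining equation \eqref{hbconteq} of \thref{normformula} directly for the candidate function $f = P_+ \Delta_b s$ and the candidate solution $g = -\conj{b}s$, and then read off the conclusions. First I would observe that since $s \in \L^2(E)$ and $\Delta_b$ is bounded by $1$ on $\T$, the product $\Delta_b s$ lies in $\L^2(\T)$, so $f = P_+\Delta_b s \in H^2$ is well-defined; likewise $\conj{b}s \in \L^2(\T)$ and, since $s$ is supported on $E$, so is $g := -\conj{b}s$, hence $g \in \L^2(E)$. The heart of the matter is the computation of $P_+\conj{b}f$. Writing $f = \Delta_b s - P_-\Delta_b s$, where $P_- = I - P_+$ is the projection onto $\conj{H^2_0} = \L^2(\T) \ominus H^2$, one gets
\[
P_+\conj{b}f = P_+\conj{b}\Delta_b s - P_+\conj{b}P_-\Delta_b s.
\]
For the first term, use the pointwise identity $|b|^2 + \Delta_b^2 = 1$ on $\T$, i.e. $\conj{b}\Delta_b \cdot b = (1-|b|^2)s$ is not quite what is needed — instead note $\Delta_b^2 = 1 - |b|^2 = 1 - \conj{b}b$, so $\conj{b}\Delta_b s = \conj{b}\Delta_b s$; the cleaner route is to compute $\Delta_b f + bg$ and show it is orthogonal to $J(\hb)^\perp$ using \eqref{Jort}. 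Concretely, I would check that $(f,g)$ is orthogonal in $H^2\oplus\L^2(E)$ to every $(bh,\Delta_b h)$ with $h\in H^2$: the inner product is $\ip{f}{bh}_{\L^2} + \ip{g}{\Delta_b h}_{\L^2} = \ip{P_+\Delta_b s}{bh}_{\L^2} + \ip{-\conj{b}s}{\Delta_b h}_{\L^2}$. Since $bh \in H^2$, the first term equals $\ip{\Delta_b s}{bh}_{\L^2} = \ip{s}{\Delta_b b h}_{\L^2}$, and the second term equals $-\ip{s}{b\Delta_b h}_{\L^2}$ (using that $\Delta_b$ is real); these cancel. By \thref{normformula}, orthogonality of $(f,g)$ to $J(\hb)^\perp$ together with $f \in H^2$ and $g \in \L^2(E)$ forces $(f,g) \in J(\hb)$, i.e. $f \in \hb$ and $Jf = (f,g) = (f,-\conj{b}s)$.

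For the final assertion, suppose $b$ is extreme and $s \neq 0$ but $f = P_+\Delta_b s = 0$. Then $\Delta_b s \in \conj{H^2_0}$, so $\conj{\Delta_b s} \in H^2$ vanishes at the origin, i.e. $\Delta_b s = \conj{zk}$ for some $k \in H^2$. I would then argue that this is impossible unless $s = 0$: the function $\Delta_b s$ lives on $E$ and equals the boundary values of a conjugate-analytic function, and combined with extremality of $b$ — which by Sarason's theorem is equivalent to $\int_\T \log\Delta_b\,d\m = -\infty$, forcing $\log\Delta_b \notin \L^1(\T)$ — one derives a contradiction via the F.~and M.~Riesz theorem or a log-integrability obstruction: $\Delta_b s \in \conj{H^2_0}$ nonzero would have $\log|\Delta_b s| \in \L^1(\T)$, whence $\log\Delta_b = \log|\Delta_b s| - \log|s| \in \L^1(\T)$ on the set where $s\neq 0$, and after a covering/localization argument this contradicts non-extremality failing — so $s$ must vanish a.e. on $E$, hence $s = 0$, a contradiction.

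The main obstacle I anticipate is the very last step: making the "$\Delta_b s \in \conj{H^2_0}$ and $s\neq 0$ implies contradiction" argument fully rigorous. The clean statement to lean on is that a nonzero function in $\conj{H^2_0}$ cannot vanish on a set of positive measure (a consequence of the F.~and M.~Riesz / Privalov uniqueness theorem), so $\Delta_b s \neq 0$ a.e. on $\T$; but $\Delta_b s$ is supported on $E$, so $\m(\T\setminus E) = 0$, meaning $\Delta_b > 0$ a.e. Then $\log\Delta_b$ is the real part of (boundary values of) an $H^2$-function's logarithm only if $\log\Delta_b \in \L^1$, and since $\Delta_b s \in \conj{H^2_0}$ with $s \in \L^2$ we get $\log|\Delta_b| + \log|s| = \log|\Delta_b s|$, whose positive part is integrable and whose negative part, were it non-integrable, would have to come entirely from $\log|s|$; a standard argument then shows $\log\Delta_b \in \L^1(\T)$, i.e. $b$ is non-extreme, contradiction. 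I would present this as: "if $f = 0$ then $\Delta_b s \in \conj{H^2_0}$; being a nonzero element of $\conj{H^2_0}$ it is log-integrable and nonvanishing a.e., which forces $\Delta_b$ to be log-integrable, contradicting the extremality of $b$; hence $f \neq 0$."
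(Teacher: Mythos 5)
Your proof is correct, and for the membership and formula $Jf=(f,-\conj{b}s)$ it takes a slightly different route than the paper: you verify that $(f,g)$ with $g=-\conj{b}s$ is orthogonal to $J(\hb)^\perp$ using \eqref{Jort} and then invoke closedness of $J(\hb)$, whereas the paper directly checks \eqref{hbconteq}. You actually had the direct computation in hand before abandoning it: after writing
\[
P_+\conj{b}f = P_+\conj{b}\Delta_b s - P_+\conj{b}P_-\Delta_b s,
\]
the second term vanishes because $\conj{b}$ is conjugate-analytic, so $\conj{b}\big(\ker P_+\big)\subset\ker P_+$, and the first term needs \emph{no} algebraic simplification via $\Delta_b^2=1-|b|^2$ — one simply reads off $P_+\conj{b}f = P_+\Delta_b\conj{b}s = -P_+\Delta_b g$ with $g=-\conj{b}s$, which is \eqref{hbconteq}. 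That is the paper's whole computation. Your orthogonality route is also valid and arguably more conceptual; its only extra ingredient is that $J(\hb)$ is a closed subspace of $H^2\oplus\L^2(E)$, which holds because $J$ is an isometry on the complete space $\hb$, so it's worth stating that explicitly.

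For the non-vanishing assertion, your argument and the paper's are essentially the same: if $s\neq 0$ and $f=0$, then $\Delta_b s\in\ker P_+=\overline{H^2_0}$, and extremality of $b$ plus $s\in\L^2$ (so $\log^+|s|\in\L^1$) forces $\log(\Delta_b|s|)\notin\L^1(\T)$, while any nonzero element of $\overline{H^2_0}$ has integrable logarithm; hence $\Delta_b s=0$, so $s=0$ on $E$, a contradiction. Your write-up meanders through Privalov's uniqueness theorem and a case split on whether $s$ vanishes on a set of positive measure, but none of that is needed: the one inequality $\log^-\Delta_b \le \log^-(\Delta_b|s|)+\log^+|s|$ gives the contradiction directly, which is how the paper treats it. So the content is right; the exposition can be tightened to match the paper's three-sentence version.
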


\begin{proof}
    We compute \[ P_+\conj{b} f = P_+ \conj{b} P_+ \Delta_b s = P_+ \conj{b} \Delta_b s = P_+ \Delta_b \conj{b} s,\] and so \eqref{hbconteq} holds for the pair $(f,g) := (P_+\Delta_b s, -\conj{b} s)$. If $b$ is extreme, then $\log \Delta_b \not\in \L^1(\T)$, and it follows readily that also $\log( \Delta_b |s| ) \not\in \L^1(\T)$. A function $h \in \ker P_+$ is conjugate-analytic, and so $\log |h| \in \L^1(\T)$ if $h \neq 0$. So $\Delta_b s \not\in \ker P_+$ if $s$ is non-zero, and it follows that $f$ is non-zero.
\end{proof}

\begin{cor} \thlabel{TconjbInvariance} The Toeplitz operator $T_{\conj{b}}$ acts boundedly on $\hb$. If $f \in \hb$ and $Jf = (f,g)$, then \[T_{\conj{b}} f = (T_{\conj{b}} f, \conj{b} g).\]
\end{cor}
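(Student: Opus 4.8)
The plan is to verify the claim by a direct computation using the isometric model $J$ from Proposition \ref{normformula}, together with the already-established Corollary \ref{CauchyTransformsinHb} and the orthogonal decomposition \eqref{Jort}. First I would observe that $T_{\conj{b}}$ maps $H^2$ into $H^2$ boundedly (it is a standard co-analytic Toeplitz operator, and $b \in H^\infty$), so the only real content is that $T_{\conj{b}}$ preserves the subspace $\hb \subset H^2$ and that the second coordinate transforms as stated. Fix $f \in \hb$ with $Jf = (f,g)$, so that $g \in \L^2(E)$ solves the defining equation $P_+\conj{b}f = -P_+\Delta_b g$ of \eqref{hbconteq}.

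Next I would show that the candidate pair $(T_{\conj{b}}f, \conj{b}g)$ satisfies \eqref{hbconteq}. Note first that $\conj{b}g \in \L^2(E)$ since $|b| \leq 1$ and $g$ lives on $E$; here one uses that $E = \{\Delta_b > 0\}$ is exactly the set where $|b| < 1$ up to measure zero, but in any case multiplication by the bounded function $\conj{b}$ keeps a function supported on $E$ supported on $E$. Then I compute
\begin{align*}
    P_+\conj{b}\,(T_{\conj{b}}f) &= P_+ \conj{b}\, P_+ \conj{b} f \\
    &= P_+ \conj{b}\,\conj{b} f \\
    &= T_{\conj{b}}\big(P_+ \conj{b} f\big) \\
    &= T_{\conj{b}}\big(-P_+ \Delta_b g\big) \\
    &= -P_+ \conj{b}\, P_+ \Delta_b g \\
    &= -P_+ \conj{b}\,\Delta_b g \\
    &= -P_+ \Delta_b\,(\conj{b}g),
\end{align*}
where I repeatedly use the identity $P_+ \conj{h}\, P_+ u = P_+ \conj{h} u$ valid for $h \in H^\infty$ and $u \in \L^2(\T)$ (both equal $T_{\conj{h}}u$), and in the fourth line I substitute the defining relation for $g$. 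This shows $T_{\conj{b}}f \in \hb$ and, by the uniqueness of the solution asserted in Proposition \ref{normformula}, that the second coordinate of $J(T_{\conj{b}}f)$ is precisely $\conj{b}g$, i.e. $J(T_{\conj{b}}f) = (T_{\conj{b}}f, \conj{b}g)$.

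Finally, boundedness of $T_{\conj{b}}: \hb \to \hb$ follows immediately from this formula and the fact that $J$ is an isometry: $\|T_{\conj{b}}f\|_{\hb}^2 = \|T_{\conj{b}}f\|_{H^2}^2 + \|\conj{b}g\|_{\L^2}^2 \leq \|f\|_{H^2}^2 + \|g\|_{\L^2}^2 = \|f\|_{\hb}^2$, so $T_{\conj{b}}$ is in fact a contraction on $\hb$. I do not expect any genuine obstacle here; the only point requiring a little care is the justification of the Toeplitz identity $P_+\conj{b}P_+ u = P_+ \conj{b} u$ (equivalently, that $\conj{b}(I-P_+)u \in \ker P_+$ when $b \in H^\infty$), which is standard, and the check that $\conj{b}g$ still lies in $\L^2(E)$.
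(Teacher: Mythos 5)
Your proof is correct and takes essentially the same route as the paper: both verify that the pair $(T_{\conj{b}}f, \conj{b}g)$ satisfies the defining equation \eqref{hbconteq} by applying $T_{\conj{b}}$ to that equation and simplifying via the identity $P_+\conj{h}P_+ = P_+\conj{h}$ for $h \in H^\infty$, with boundedness then read off from the isometry $J$. You spell out the computation and the contractivity conclusion in more detail than the paper does, but the argument is the same.
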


\begin{proof}
    Again, we only need to verify that \eqref{hbconteq} holds for the given pairs. This follows easily by applying the operator $T_{\conj{b}}$ to both sides of the equation \eqref{hbconteq} and computing as in the proof of \thref{CauchyTransformsinHb}.
\end{proof}

\subsection{Main tool in the proof of \thref{MainTheoremHbExistenceESD}: residuals} 

We will now need to introduce the notion of residual sets.

\begin{defn}{\textbf{(Residual sets of weights)}} \thlabel{ResDef} Let $w \in \L^1(\T)$ and consider the carrier set \[ E = \{ x \in \T : w(x) > 0 \}.\] We define $\res (w)$ to be the set \[ \res (w) = E \setminus \co (w),\] where $\co (w)$ is the set appearing in \thref{CoreDef}.
\end{defn}

Since $E$ might only be defined up to a set of $m$-measure zero, the same is true for the residual $\res (w)$ of any weight $w$. This will not cause us any problems.

We have introduced the residuals because of their crucial role in the following special case of \cite[Theorem A]{malman2023revisiting}.

\begin{lem} \thlabel{ResSetMainLemma} Assume that $w \in \L^1(\T)$ is a weight for which $\res (w)$ has positive $m$-measure. Let $w_r = w|\res (w)$ be the restriction of the weight $w$ to the set $\res (w)$. Then we have the containment \[\L^2(w_r d\m) \subset \Po^2(\mu) \] whenever $\mu$ is of the form \eqref{MuGeneralStructureEq} with $G$ satisfying \eqref{ExpDecTag}.
\end{lem}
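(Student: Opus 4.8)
The plan is to prove \thref{ResSetMainLemma} by reducing it, through a standard dilation/approximation argument, to showing that the characteristic function $1_{\res(w)}$ (or more precisely the full Lebesgue space it generates) belongs to $\Po^2(\mu)$, and then to invoke \cite[Theorem A]{malman2023revisiting} in the following way. By \thref{IrrDef}, if the space $\Po^2(\mu)$ were irreducible, then $\co(w)$ would be a carrier for $w$, contradicting the positivity of $m(\res(w))$. Hence $\Po^2(\mu)$ is \emph{not} irreducible, and by the structure theory of such spaces the obstruction to irreducibility is precisely the presence of a non-trivial characteristic function $1_A \in \Po^2(\mu)$ of a Borel set $A \subset \cD$. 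The first main step is to identify this set $A$ concretely as (essentially) $\res(w) \subset \T$: indeed, the part of $\mu$ on the open disk decays at least exponentially by \eqref{ExpDecTag}, so the "analytic" and "non-analytic" parts of $\Po^2(\mu)$ split, and the non-analytic part must live on $\T$ and be supported exactly where $\log w$ fails to be locally integrable, which is $\res(w)$ up to $m$-null sets. This identification is the content of \cite[Theorem A]{malman2023revisiting}, and the lemma simply quotes the relevant special case.

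Concretely, I would argue as follows. First, reduce to a single model measure: since $\|\cdot\|_\mu \geq \|\cdot\|_{\mu'}$ whenever $\mu'$ has the same $\T$-part but a larger $G$ on $\D$, and since a space with a larger norm has a smaller (or equal) closure of polynomials, it suffices to treat the case where $d\mu_\D(z) = \exp(-c/(1-|z|))\,dA(z)$ for a suitable $c>0$, i.e. the form \eqref{T1} with $\beta = 1$; any $G$ satisfying \eqref{ExpDecTag} is dominated by such a weight for small enough $c$. Second, invoke the decomposition from \cite{malman2023revisiting}: $\Po^2(\mu) = \Po^2_a(\mu) \oplus \L^2(w_r\,d\m)$, where the first summand is the "analytic part" (functions determined by their restriction to $\D$) and the second is a genuine Lebesgue space living on $\res(w)$, carrying the residual weight $w_r = w|_{\res(w)}$. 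The containment $\L^2(w_r\,d\m) \subset \Po^2(\mu)$ is then immediate. The key non-trivial input, which I would cite rather than reprove, is that the orthogonal complement of the analytic part is genuinely this full Lebesgue space on the residual — this is where the Beurling–Carleson–type analysis of \cite{malman2023revisiting} and the characterization of $\co(w)$ enter.

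If instead a more self-contained argument is desired, one would exhibit, for each bounded $g$ supported on $\res(w)$, a sequence of analytic polynomials $p_n$ with $\|p_n - g\|_\mu \to 0$. The construction mirrors the proof of \thref{CyclicityMainTheorem} and \thref{WeakStarSeqObstacleLemma}: since every point of $\res(w)$ lies in arbitrarily short arcs $I$ with $\int_I \log^+(1/w)\,d\m = +\infty$, one can build outer functions (Herglotz integrals of carefully chosen densities bounded by $\log^+(1/w)$) whose moduli blow up like $1/G(1-|z|)$ inside $\D$ but stay controlled on $\T$ against $w$, so that they are bounded in $\Po^2(\mu)$ yet converge pointwise in $\D$ to something vanishing on a prescribed portion of the disk; passing to weak limits produces the desired characteristic functions, and then bounded functional calculus yields all of $\L^2(w_r\,d\m)$. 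I would not carry this out in detail here, since the lemma is stated as a direct consequence of \cite[Theorem A]{malman2023revisiting}.

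The main obstacle is purely expository: the lemma as stated is a quotation, so the real work is in correctly extracting the precise special case of \cite{malman2023revisiting} — in particular making sure the residual weight $w_r$ and the set $\res(w)$ are the right objects, that the hypothesis \eqref{ExpDecTag} (without \eqref{LogLogIntTag}) is enough for this direction, and that no additional regularity of $w$ is secretly used. If one wanted the self-contained route instead, the hard part would be the same as in Section \ref{CyclicSection}: producing the bounded approximating family respecting the pointwise obstacle \eqref{ObstacleForfn}, i.e. reusing \thref{WeakStarSeqObstacleLemma}-type weak-star constructions to force the pointwise limit inside $\D$ to vanish off the analytic part while keeping $\L^2(\mu)$-norms bounded.
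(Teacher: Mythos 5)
Your proposal correctly recognizes, as the paper itself does, that this lemma is quoted directly as a special case of \cite[Theorem A]{malman2023revisiting}; the paper gives no independent proof beyond the citation, and your description of what that theorem provides (the full Lebesgue space over the residual weight $w_r$ on $\res(w)$ sitting inside $\Po^2(\mu)$) is consistent with the paper's framing. Your preliminary reduction to $G(t)=\exp(-c/t)$ is harmless but unnecessary for a direct citation, and the norm comparison in it is stated backwards: if $\mu'$ has the same $\T$-part but a larger $G$ on $\D$, then $\|\cdot\|_{\mu'}$ is the \emph{larger} norm, so the closure of polynomials in $\mu'$ is contained in the closure in $\mu$, which is the direction you in fact use to conclude.
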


\subsection{Proof of \thref{MainTheoremHbExistenceESD}}

In \thref{MainTheoremHbExistenceESD}, it is obvious that $(ii) \Rightarrow (i)$. We can thus prove the theorem by showing validity of the implications $(i) \Rightarrow (iii)$ and $(iii) \Rightarrow (ii)$.

Let us first show that $(i) \Rightarrow (iii)$, and so we assume that $f(z) = \sum_{n \geq 0} f_nz^n \in \hb$ is non-zero and that it satisfies \eqref{ESDpropEq}. We may assume that $b$ does not vanish at any point in $\D$, else $(iii)$ certainly holds. Similarly to as it was done in the proof of \thref{AlphaKthetaBreakpointProp}, we use \thref{MhstarContainmentFromRSD} to obtain a measure
\begin{align} \label{muEqinproof}
    d\mu & = d\mu_\D + d\mu_\T \\
    & = \exp\Big( -\frac{c'}{(1-|z|)}\Big) dA(z) + \Delta_b \,d\m \nonumber,
\end{align} and a sequence $M$ such that the identity map between $\Po^2(\mu_\D)$ and $H_2(M)$ is an isometry, and $f \in H^*_2(M)$. By \thref{HToeplitzInvariance}, the space $H^*_2(M)$ is invariant under Toeplitz operators with co-analytic symbols, and consequently we also have $T_{\conj{b}}f \in H^*_2(M)$. By \thref{TconjbInvariance} and \eqref{hbconteq} we have $T_{\conj{b}}f = P_+ \conj{b} f \in \hb \cap H^*_2(M)$ and 
\begin{equation} \label{reprTbf} T_{\conj{b}}f = P_+\Delta_b g \end{equation} 
for some $g \in \L^2(E)$. The kernel of the operator $T_{\conj{b}}$ is the model space $\K_{I_b}$, where $I_b$ is the inner factor of $b$. Since $b$ does not vanish in $\D$, it follows that $I_b$ is a purely singular inner function. Every function in $H^*_2(M)$ satisfies \eqref{ESDpropEq}, so \thref{AlphaKthetaBreakpointProp} implies that $\K_{I_b} \cap H^*_2(M) = \{0\}$. Consequently $f \not\in \K_{I_b}$, so $T_{\conj{b}}f \neq 0$, and $\Delta_b g \neq 0$ by \eqref{reprTbf}. If \[ T_{\conj{b}} f(z) = \sum_{n \geq 0} c_nz^n\] is the Taylor expansion of $T_{\conj{b}} f(z)$, then a consequence of the membership $T_{\conj{b}}f \in H^*_2(M)$ is that the function \[ F(z) := \sum_{n \geq 0} \frac{c_n}{M_n} z^n, \quad z \in \D\] is a member of $H_2(M)$. The function $F$ lives on $\D$, the function $g$ lives on $\T$, and hence $F - g$ defines a function on $\cD$. The condition $F \in H_2(M)$ means simply that $F$ is square-integrable with respect to the part $\mu_\D$ of $\mu$ in \eqref{muEqinproof} which lives on $\D$. The containment $g \in \L^2(\Delta_b d\m) = \L^2(\mu_\T)$ is ensured by the boundedness of $\Delta_b$ and the containment $g \in L^2(E)$. Thus $F - g \in \L^2(\mu)$. The representation \eqref{reprTbf} tells us that the positive Fourier coefficients $\{c_n\}_{n \geq 0}$ of $T_{\conj{b}}f$ and of $\Delta_b g$ coincide. Our definitions then imply that the function $F - g$ is orthogonal to the analytic polynomials in $\L^2(\mu)$. Since $\Delta_b g \neq 0$, the function $g$ is a non-zero element of $\L^2(\mu_\T)$. The conclusion is that there exists an element (namely $F - g$) inside $\L^2(\mu)$ which is orthogonal to $\Po^2(\mu)$ and which does not vanish identically on the circle $\T$. If there existed no interval on which $\log \Delta_b$ was integrable, then $\co (\Delta_b) = \varnothing$, and so \thref{ResSetMainLemma} would imply that the entire space $\L^2(\Delta_b d\m)$ is contained in $\Po^2(\mu)$. Clearly that would be a contradiction to $F - g$ being orthogonal to $\Po^2(\mu)$. Thus such an interval exists, and we have proved that $(i) \Rightarrow (iii)$.

The implication $(iii) \Rightarrow (ii)$ is easier. Let $M = \{M_n\}_{n \geq 0}$ be an admissible sequence. We must show that $\hb$ contains a function in $H^*_1(M)$. If $b$ vanishes at some point of $\D$, then the implication is trivial. Assume therefore that $\log \Delta_b$ is integrable on some (say, open) interval $I$ which is not all of $\T$, and let $w = \Delta^2_b|I$ be the restriction of $\Delta^2_b$ to the interval $I$. By \thref{PowerAdmSeq} and \thref{AdmissibleSequenceLemma} there exists a function $G$ which satisfies \eqref{ExpDecTag}, \eqref{LogLogIntTag}, with corresponding moment sequence \[ \widetilde{M} = \{\widetilde{M}_n\}_{n \geq 0} = \{M_n(G)\}_{n \geq 0}\] satisfying \[ \widetilde{M}_n \leq M^2_n\] for large $n$. If \[d\mu(z) = G(1-|z|)dA(z) + w(z)d\m(z), \] then the space $\Po^2(\mu)$ is irreducible by \thref{IrrDef}, since $\co (w)$ coincides with $I$, which is a carrier of $w$. By irreducibility we have that $\L^2(w\, d\m) \not\subset \Po^2(\mu)$. So there must exist a non-zero element $F - g \in \L^2(\mu)$, with $F$ being an analytic function on $\D$ and $g$ living on $I \subset \T$, which is orthogonal to $\Po^2(\mu)$ in $\L^2(\mu)$. We can't have $g \equiv 0$, for then the Taylor coefficients of $F$ would all vanish by the orthogonality to analytic monomials, and consequently $F-g$ would reduce to the zero element. The orthogonality means that \[ F_n \widetilde{M}_{n}  = (w g)_n, \quad n \geq 0\] where $\{F_n\}_{n \geq 0}$ are the Taylor coefficients of $F$ and $(w g)_n$ are the non-negative Fourier coefficients of $w g$. For large $n$, we have the estimate \begin{align*}
|(w g)_n|^2 & = |F_n \widetilde{M}_{n}|^2 \\ &\leq  \widetilde{M}_{n} \sum_{m \geq 0} |F_m|^2 \widetilde{M}_{m} \\ & = \widetilde{M}_{n} \|F\|^2_{\mu_\D} \\
&\leq M_n^2 \|F\|^2_{\mu_\D}.
\end{align*} 
Thus $P_+wg$ is a member of $H^*_1(M)$. Since $g$ lives on $I$ and $g\Delta_b \in \L^2(I)$, we have by \thref{CauchyTransformsinHb} that $P_+w g = P_+ \Delta_b \Delta_b g\in \hb$. This function is non-zero since $\log( w |g| ) \not\in \L^1(\T)$ by the choice of $I \neq \T$. Thus $(iii) \Rightarrow (ii)$, and we have completed our proof of \thref{MainTheoremHbExistenceESD}. 

\section{Density in $\hb$ of functions with rapid spectral decay}

\label{DensityESDHbSec}

The main result of \cite{limani2023problem} characterizes the density of the functions in $\hb$ which have Taylor series $f(z) = \sum_{n \geq 0} f_nz^n$ satisfying $|f_n| = \mathcal{O}(1/n^k)$, for positive $k$. The characterization is in terms of the structure of $M_z$-invariant subspaces of $\Po^2(\mu)$ with $\mu$ of form \eqref{MuGeneralStructureEq} and $G(t) = t^k$, $k \geq 0$. The proofs in \cite{limani2023problem} in fact carry over more-or-less verbatim from the case considered there to many other function classes defined by their spectral size, with the family of functions defined by conditions such as \eqref{RapidDecayEq} being no exception. Thus, in fact, \thref{MainTheoremHbDensityESD} is more or less a direct consequence of \thref{IrrDef}, \thref{CyclicityMainTheorem} and \thref{PermanenceMainTheorem}. For reasons of completeness of the present work, we outline an argument which is in parts new, leads to a proof of \thref{MainTheoremHbDensityESD}, but also gives additional bits of information regarding which functions in $\hb$ lie outside of the closure of functions satisfying spectral decay properties as in \eqref{ESDpropEq}.

As before, $\Delta_b(x) = \sqrt{1-|b(x)|}$ for $x \in \T$, and we let \[ b = BS_\nu b_0\] be the inner-outer factorization of $b$, with $B$ a Blaschke product, $S_\nu$ a singular inner function, and $b_0$ an outer function. We denote by $I_b = BS_\nu$ the inner factor of $b$. 

\begin{lem} \thlabel{ResVanishingLemma} Let $w \in \L^1(\T)$ be non-negative, and assume that for some $g \in \L^2(w \, d\m)$ the function $P_+ w g$ satisfies \eqref{ESDpropEq}. Then $gw$ vanishes on $\res (w)$.    
\end{lem}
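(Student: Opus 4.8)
The plan is to obtain \thref{ResVanishingLemma} by combining \thref{MhstarContainmentFromRSD}, which turns the spectral decay hypothesis \eqref{ESDpropEq} into membership of $P_+wg$ in a space $H^*_2(M)$ with $M$ the moment sequence of an exponentially decaying weight, with \thref{ResSetMainLemma}, which places $\L^2(w_r\,d\m)$ inside $\Po^2(\mu)$ on the residual set. The bridge between the two is an explicit element of $\L^2(\mu)$ which is orthogonal to $\Po^2(\mu)$ and whose boundary trace detects $gw$ on $\res(w)$. First I would dispose of the trivial case: if $\res(w)$ has $\m$-measure zero there is nothing to prove, so assume it has positive length. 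Since $g \in \L^2(w\,d\m)$ and $w \in \L^1(\T)$, the factorization $wg = \sqrt{w}\,(\sqrt{w}\,g)$ together with Cauchy--Schwarz shows $wg \in \L^1(\T)$; thus $h := P_+wg$ is a bona fide analytic function on $\D$ whose Taylor coefficients are $c_n = \ip{wg}{z^n}_{\L^2}$ for $n \geq 0$, and by hypothesis $h$ satisfies \eqref{ESDpropEq}. By \thref{MhstarContainmentFromRSD} there is $c' > 0$ so that, with $G(t) = \exp(-c'/t)$ and $M = \{M_n(G)\}_{n \geq 0}$ its moment sequence \eqref{GMomentDef}, we have $h \in H^*_2(M)$, i.e. $\sum_{n \geq 0} |c_n|^2 / M_n < \infty$. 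This $G$ satisfies \eqref{ExpDecTag}.

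Next I would construct the annihilator. Put $d\mu(z) = G(1-|z|)\,dA(z) + w(z)\,d\m(z)$, which has the form \eqref{MuGeneralStructureEq}, and set $F(z) := \sum_{n \geq 0}(c_n/M_n)z^n$. Since $\sum_{n \geq 0}|c_n/M_n|^2 M_n = \sum_{n \geq 0}|c_n|^2/M_n < \infty$, the series $F$ represents an element of $H_2(M)$, hence, via the isometry \eqref{MuDIsomorphicNorm}, an element of $\Po^2(\mu_\D) \subset \L^2(\mu)$. Consider the function on $\cD$ which equals $F$ on $\D$ and $-g$ on $\T$, call it $F - g$; it lies in $\L^2(\mu)$. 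Pairing it against the monomials $z^n$, $n \geq 0$, the $\D$-part contributes $F_n M_n = c_n$ (using $\int_\D|z|^{2n}G(1-|z|)\,dA = M_n$ and rotational symmetry to kill cross terms), while the $\T$-part contributes $-\int_\T gw\,\overline{z^n}\,d\m = -c_n$. Hence $\ip{F-g}{z^n}_\mu = 0$ for every $n \geq 0$, so $F - g$ is orthogonal to $\Po^2(\mu)$.

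Finally I would read off the conclusion using \thref{ResSetMainLemma}: since $\res(w)$ has positive measure and $G$ satisfies \eqref{ExpDecTag}, we have $\L^2(w_r\,d\m) \subset \Po^2(\mu)$, where $w_r = w|\res(w)$. For any Borel set $A \subset \res(w)$ one has $\int_A w\,d\m \leq \|w\|_{\L^1(\T)} < \infty$, so $1_A \in \L^2(w_r\,d\m) \subset \Po^2(\mu)$; since $1_A$ lives only on $\T$, orthogonality of $F-g$ to $\Po^2(\mu)$ gives $0 = \ip{F-g}{1_A}_\mu = -\int_A gw\,d\m$. As $A \subset \res(w)$ is arbitrary and $gw \in \L^1(\T)$, this forces $gw = 0$ $\m$-almost everywhere on $\res(w)$.

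I do not expect a genuine obstacle here: the lemma is essentially a repackaging of \thref{MhstarContainmentFromRSD} and \thref{ResSetMainLemma}. The steps requiring care are purely bookkeeping --- verifying $wg \in \L^1(\T)$ so that $P_+wg$ is meaningful; matching the Fourier normalization so that the Taylor coefficients $c_n$ of $P_+wg$ agree with the $\T$-side of the pairing $\ip{F-g}{z^n}_\mu$; and confirming that the simple test functions $1_A$, $A \subset \res(w)$, genuinely belong to $\L^2(w_r\,d\m)$.
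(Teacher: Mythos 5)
Your proof is correct and takes essentially the same approach as the paper: both rely on \thref{MhstarContainmentFromRSD} to place $P_+wg$ in $H^*_2(M)$ and on \thref{ResSetMainLemma} to obtain the containment $\L^2(w_r\,d\m) \subset \Po^2(\mu)$. Constructing the explicit annihilator $F - g \in \L^2(\mu)$ and testing it against indicators $1_A$ is simply an unwound version of the paper's duality-pairing computation against polynomial approximations of a bounded function living on $\res(w)$.
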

\begin{proof}
We use \thref{MhstarContainmentFromRSD} to obtain a measure $\mu$ as in \eqref{T1} of Section \ref{introsec}, and with the parameters $\beta = 1$ and $c > 0$ chosen so that if $M = \{M_n\}_{n \geq 0}$ is the sequence of moments corresponding to $\mu_\D$, then $P_+wg \in H^*_2(M)$. Let $h$ be a bounded function living on $\res (w)$, and $\{p_n\}_{n \geq 0}$ be a sequence of analytic polynomials which converges to $h$ in the norm of $\Po^2(\mu)$. This is possible by \thref{ResSetMainLemma}. In particular, this convergence implies that $p_n \to 0$ in $\Po^2(\mu_\D)$, or in other words, $p_n \to 0$ in $H_2(M)$. Simultaneously, we have that $p_n \to h$ in $\L^2(w\, d\m)$. Using the duality pairing \eqref{CauchyDualityHM}, we obtain \begin{align*}
    0  & = \ip{0}{P_+ wg} \\ & = \lim_{n \to \infty} \ip{p_n}{P_+wg} \\ & = \lim_{n \to \infty} \ip{p_n}{P_+ wg}_{\L^2} \\ & = \lim_{n \to \infty} \ip{p_n}{wg}_{\L^2} \\ & = \int_\T h\conj{g}\, w\, d\m.
\end{align*} Since $h$ is an arbitrary bounded function living on $\res( w)$, it follows that $gw \equiv 0$ on $\res (w)$.
\end{proof} 

\begin{prop} \thlabel{Prop1TheoremD}

Assume that the set $\res (\Delta_b)$ has positive $m$-measure, and let $s \in \L^2(\T)$ be a non-zero function which vanishes outside of $\res (\Delta_b)$. 
Then the non-zero function \[f = P_+\Delta_b s \in \hb\] lies outside of the norm-closure in $\hb$ of functions satisfying \eqref{ESDpropEq}.
\end{prop}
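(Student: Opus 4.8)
The plan is to argue by contradiction via a duality pairing, exactly in the spirit of \thref{ResVanishingLemma}, but now promoting the conclusion "$gw$ vanishes on $\res(w)$'' to a full obstruction statement. Suppose, for contradiction, that $f = P_+\Delta_b s$ lies in the closure in $\hb$ of functions satisfying \eqref{ESDpropEq}. Pick a sequence $\{f_k\}_{k \geq 1}$ of functions in $\hb$, each satisfying \eqref{ESDpropEq}, with $\|f - f_k\|_{\hb} \to 0$.

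First I would apply the description of $\hb$ from \thref{normformula}: writing $Jf = (f, g)$ and $Jf_k = (f_k, g_k)$, the isometry $J$ gives $f_k \to f$ in $H^2$ and $g_k \to g$ in $\L^2(E)$, where $E = \{x : \Delta_b(x) > 0\}$. By \thref{CauchyTransformsinHb}, the pair corresponding to $f = P_+\Delta_b s$ is $Jf = (f, -\conj{b}s)$, so $g = -\conj{b}s$; since $s$ vanishes outside $\res(\Delta_b)$ and $|b| = 1$ a.e.\ on $\res(\Delta_b)$ (because $\Delta_b = 0$ there while $\Delta_b^2 = 1 - |b|^2 > 0$ on the core --- wait, more carefully: $\res(\Delta_b) = E \setminus \co(\Delta_b)$ sits inside $E$, so $\Delta_b > 0$ there; the key point is rather that $\log\Delta_b$ fails to be locally integrable near every point of $\res(\Delta_b)$). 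The crucial structural input is now \thref{ResVanishingLemma} applied to $w = \Delta_b^2$: for each $k$, since $f_k \in \hb$ satisfies \eqref{ESDpropEq}, \eqref{hbconteq} lets us write $P_+ \conj{b} f_k = -P_+\Delta_b g_k$, and I would check that $\conj{b}f_k$ — equivalently $T_{\conj{b}}f_k$, which still satisfies \eqref{ESDpropEq} by \thref{ToeplitzInvRSD} — has the form $P_+ w h_k$ for a suitable $h_k \in \L^2(w\,d\m)$. Actually the cleaner route: \thref{CauchyTransformsinHb} shows $P_+\Delta_b g_k \in \hb$ with data $(P_+\Delta_b g_k, -\conj{b}g_k)$, and combining with \eqref{hbconteq} the function $f_k + P_+\Delta_b g_k$ is annihilated by $T_{\conj{b}}$ up to the inner factor, placing it in $\K_{I_b}$; but $\K_{I_b}$ contains no nonzero function satisfying \eqref{ESDpropEq} once $I_b$ has a singular part — here I must be careful, since $I_b = BS_\nu$ may have a Blaschke part. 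The honest statement is: the analytic-projection identities force $\Delta_b g_k$ to have the same nonnegative Fourier coefficients as a function of rapid spectral decay, and then the argument of \thref{ResVanishingLemma} (pairing against bounded functions living on $\res(\Delta_b)$, using \thref{ResSetMainLemma} to approximate them by polynomials that go to $0$ in $H_2(M)$) shows $\Delta_b^2 g_k$ — equivalently $\Delta_b g_k$ weighted appropriately — must vanish on $\res(\Delta_b)$.

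From here the contradiction is quick: passing to the limit, $g$ itself must vanish on $\res(\Delta_b)$ (the vanishing on $\res(\Delta_b)$ is a closed condition in $\L^2(E)$). But $g = -\conj{b}s$ and $s$ is a nonzero function supported on $\res(\Delta_b)$, while $|b| > 0$ a.e.\ on $E \supset \res(\Delta_b)$ (an outer or inner-times-outer function has nonvanishing boundary modulus a.e.), so $g$ does not vanish on $\res(\Delta_b)$ --- contradiction. Hence $f$ lies outside the closure. I would also record at the outset, using \thref{CauchyTransformsinHb} with $b$ extreme (which holds here precisely because $\res(\Delta_b)$ having positive measure forces $\log\Delta_b \notin \L^1(\T)$, so $b$ is extreme by Sarason's theorem), that $f = P_+\Delta_b s$ is genuinely nonzero.

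\textbf{The main obstacle} I anticipate is the bookkeeping around inner factors: \thref{AlphaKthetaBreakpointProp} only kills $\K_\theta$ for $\theta$ \emph{purely singular}, so I cannot naively say "$f_k \in \K_{I_b}$ is impossible''; instead the argument must run entirely through \thref{ResVanishingLemma}-style duality against $\res(\Delta_b)$, which is insensitive to the Blaschke and singular parts of $I_b$ and only feels the outer part $b_0$ through $\Delta_b = \Delta_{b_0}$. A secondary technical point is making precise that $T_{\conj b} f_k$ (or $P_+\conj b f_k$) can legitimately be fed into \thref{ResVanishingLemma} as "$P_+ w h$'' for $w = \Delta_b^2$ and some $h \in \L^2(w\,d\m)$: this requires combining \eqref{hbconteq} with $g_k \in \L^2(E)$ and the boundedness of $\Delta_b$ to see that $\Delta_b g_k = \Delta_b^2 \cdot (g_k/\Delta_b)$ with $g_k/\Delta_b \in \L^2(\Delta_b^2\,d\m)$, and then reading off that its nonnegative Fourier coefficients agree with those of $T_{\conj b}f_k$, which satisfies \eqref{ESDpropEq}. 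Once that identification is in place, \thref{ResVanishingLemma} applies verbatim and the rest is the soft limiting argument above.
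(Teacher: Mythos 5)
Your proposal is correct and follows essentially the same route as the paper: contradiction via the isometry $J$ from \thref{normformula}, with \thref{ResVanishingLemma} forcing each $g_k$ to vanish on $\res(\Delta_b)$, then passing to the limit against $g = -\conj{b}s$ which is nonzero there. The minor variations (applying \eqref{hbconteq} directly to $(f_k,g_k)$ rather than to the $T_{\conj b}$-images, and taking $w=\Delta_b^2$ rather than $w=\Delta_b$) are harmless since $\res(\Delta_b^2)=\res(\Delta_b)$, and the self-corrections and the abandoned $\K_{I_b}$ detour do not damage the final argument.
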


\begin{proof}
    Seeking a contradiction, assume that $\{h_n\}_n$ is a sequence of functions in $\hb$ which satisfy \eqref{ESDpropEq} and which converge in the norm of $\hb$ to the given $f$. In the notation of \thref{normformula}, we consider $Jh_n = (h_n, k_n) \in H^2 \oplus \L^2(E)$ and $Jf = (f,g) \in H^2 \oplus \L^2(E)$, where $g = -\conj{b}s$ according to \thref{CauchyTransformsinHb}. By \thref{TconjbInvariance}, $T_{\conj{b}}h_n$ converges to $T_{\conj{b}}f$ in the norm of $\hb$, and since the embedding $J$ of \thref{normformula} is an isometry, \thref{TconjbInvariance} moreover implies that $\conj{b}k_n$ converges to $\conj{b}g = -\conj{b^2} s$ in $\L^2(\T)$. In particular, this implies that $k_n$ cannot all simultaneously vanish on $\res (\Delta_b)$, since $s$ lives only on that set. But $T_{\conj{b}}h_n$ satisfies \eqref{ESDpropEq} (since $h_n$ does), and $T_{\conj{b}}h_n = P_+ \conj{b} h_n = P_+ \Delta_b \conj{b} k_n$ by \thref{TconjbInvariance}. Thus by \thref{ResVanishingLemma}, the functions $\Delta_b \conj{b} k_n$ must vanish on $\res ( \Delta_b)$, and consequently $k_n$ must vanish on $\res (\Delta_b)$, since $\conj{b}\Delta_b$ is non-zero $m$-almost everywhere on that set. This is the desired contradiction.
\end{proof}

We have now proved that it is necessary for $\co ( \Delta_b )$ to be a carrier for $\Delta_b$ if functions satisfying \eqref{ESDpropEq} are to be dense in $\hb$. In the next proposition, we assume that $\co (\Delta_b)$ is a carrier for $\Delta_b$, and show that if $S_\nu$ is the singular inner factor of $b$ and $\nu$ places some portion of its mass outside of the core of $\Delta_b$, then again functions satisfying \eqref{ESDpropEq} are not dense in $\hb$. And again, we do it by exhibiting explicit functions in $\hb$ which cannot be approximated in this way.

\begin{prop}\thlabel{Prop2TheoremD}  Assume that $\co ( \Delta_b)$ is a carrier for $\Delta_b$ and that \[\nu \big(\T \setminus \co (\Delta_b ) \big) > 0,\] where $S_\nu$ is the singular inner factor of $b$. Decompose the measure $\nu$ as \[\nu = \nu_r + \nu_c,\] where $\nu_r$ is the restriction of $\nu$ to the set $\T \setminus \co (\Delta_b )$, and $\nu_c$ is the restriction of $\nu$ to the set $\co (\Delta_b)$. Then all functions in the subspace \[ (b/S_{\nu_r})\K_{S_{\nu_r}} = B S_{\nu_c}b_0 \K_{S_{\nu_r}} \subset \hb\] are orthogonal in $\hb$ to functions satisfying \eqref{ESDpropEq}, $\K_{S_{\nu_r}}$ being the model space generated by the singular inner function $S_{\nu_r}$.
\end{prop}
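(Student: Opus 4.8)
The plan is to push the orthogonality question into a $\Po^2(\mu)$-space, where it turns into a statement about cyclicity of $S_{\nu_r}$, furnished by \thref{CyclicityMainTheorem}.

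\emph{Identifying the subspace.} Put $c:=b/S_{\nu_r}=BS_{\nu_c}b_0$. On $\T$ we have $|B|=|S_{\nu_c}|=|S_{\nu_r}|=1$ and $|b_0|^2=1-\Delta_b^2$, so $\conj b\,ck=\conj{S_{\nu_r}}(1-\Delta_b^2)k=\conj{S_{\nu_r}}k-\Delta_b\big(\Delta_b\conj{S_{\nu_r}}k\big)$; since $k\in\K_{S_{\nu_r}}$ means $\conj{S_{\nu_r}}k\in\conj{zH^2}=\ker P_+$, applying $P_+$ gives $P_+\conj b(ck)=-P_+\Delta_b\big(\Delta_b\conj{S_{\nu_r}}k\big)$. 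Hence $ck\in\hb$ with $J(ck)=\big(ck,\,\Delta_b\conj{S_{\nu_r}}k\big)$ in the notation of \thref{normformula}. As $\K_{S_{\nu_r}}\cap H^\infty$ is dense in $\K_{S_{\nu_r}}$ and $k\mapsto J(ck)$ is continuous from $H^2$ into $H^2\oplus\L^2(E)$, it suffices to fix $k\in\K_{S_{\nu_r}}\cap H^\infty$ and a non-zero $h\in\hb$ satisfying \eqref{ESDpropEq}, and to prove $\ip{ck}{h}_{\hb}=0$.

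\emph{Transfer to $\Po^2(\mu)$.} Exactly as in the proof of \thref{MainTheoremHbExistenceESD}, use \thref{MhstarContainmentFromRSD} to choose $G(t)=\exp(-c'/t)$ so that, with $d\mu_\D=G(1-|z|)dA$, $d\mu_\T=\Delta_b^2d\m$, $d\mu=d\mu_\D+d\mu_\T$ and $M=\{M_n(G)\}$, one has $h,T_{\conj b}h\in H_2^*(M)$; by the standing hypothesis $\co(\Delta_b^2)=\co(\Delta_b)$ is a carrier for $\Delta_b^2$, so $\Po^2(\mu)$ is irreducible by \thref{IrrDef} and $\Po^2(\mu_\D)\cong H_2(M)$ isometrically. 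With $Jh=(h,k_h)$, set $g':=-\Delta_b^{-1}k_h\,1_E\in\L^2(\mu_\T)$ and $F(z):=\sum_{n\ge0}\widehat{T_{\conj b}h}(n)M_n^{-1}z^n\in\Po^2(\mu_\D)$. Since $T_{\conj b}h=-P_+\Delta_bk_h$, for $n\ge0$ we get $\widehat{\Delta_b^2g'}(n)=-\widehat{\Delta_bk_h}(n)=\widehat{T_{\conj b}h}(n)=\hat F(n)M_n$, so the element $F-g'\in\L^2(\mu)$ (analytic part $F$ on $\D$, circle part $-g'$ on $\T$) is orthogonal to every monomial, hence to $\Po^2(\mu)$.

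\emph{The identity and the conclusion.} I would next establish
\[ \ip{ck}{h}_{\hb}=\ip{\,k/S_{\nu_r}\,}{\,F-g'\,}_{\mu}. \]
Expanding the left side with $J$ and substituting $k_h=-\Delta_bg'$, the $\L^2(E)$-part equals $\ip{\Delta_b\conj{S_{\nu_r}}k}{-\Delta_bg'}_{\L^2}=-\ip{\conj{S_{\nu_r}}k}{g'}_{\mu_\T}$, which is precisely the circle contribution to the right side because $1/S_{\nu_r}=\conj{S_{\nu_r}}$ on $\T$. For the disk part, note that $ck=b\cdot(k/S_{\nu_r})$ as functions analytic in $\D$ ($S_{\nu_r}$ being zero-free), and combine $\hat F(n)M_n=\widehat{T_{\conj b}h}(n)=\sum_{j\ge0}\conj{\hat b(j)}\hat h(n+j)$ with $\widehat{ck}(m)=\sum_{j+n=m}\hat b(j)\widehat{k/S_{\nu_r}}(n)$ to get $\ip{k/S_{\nu_r}}{F}_{\mu_\D}=\sum_n\widehat{k/S_{\nu_r}}(n)\conj{\widehat{T_{\conj b}h}(n)}=\sum_m\widehat{ck}(m)\conj{\hat h(m)}=\ip{ck}{h}_{H^2}$, all sums converging absolutely since $\sum_n|\widehat{k/S_{\nu_r}}(n)|^2M_n<\infty$ (i.e.\ $k/S_{\nu_r}\in\Po^2(\mu_\D)$) and $T_{\conj b}h\in H_2^*(M)$. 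Finally, $\nu_r$ is carried off $\co(\Delta_b)=\co(\Delta_b^2)$, so by \thref{CyclicityMainTheorem} the function $S_{\nu_r}$ is cyclic in $\Po^2(\mu)$; the construction used in the proof of that theorem (a bounded sequence in $\Po^2(\mu)$ converging pointwise in $\D$ to $1/S_{\nu_r}$) places $1/S_{\nu_r}$ in $\Po^2(\mu)$, and as $k\in H^\infty$ multiplies $\Po^2(\mu)$ boundedly, $k/S_{\nu_r}\in\Po^2(\mu)$, with analytic part $k(z)/S_{\nu_r}(z)$ and — because $\Po^2(\mu)$ is irreducible — circle part the non-tangential boundary function $\conj{S_{\nu_r}}k$. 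Since $F-g'\perp\Po^2(\mu)$, the displayed pairing vanishes, giving $\ip{ck}{h}_{\hb}=0$.

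\emph{Main obstacle.} The delicate point is the compatibility of the two demands on $G$: it must decay slowly enough that $h,T_{\conj b}h\in H_2^*(M)$, yet fast enough that $1/S_{\nu_r}\in\Po^2(\mu_\D)$ (its modulus growing like the exponential of the Poisson integral of $\nu_r$ near $\T$). These are compatible when the decay rate in \eqref{ESDpropEq} for $h$ is large relative to $\nu_r(\T)$; the general case should be reached by writing $S_{\nu_r}=\prod_{i=1}^NS_{\nu_r/N}$, invoking \thref{ProdOfCyclicLemma}, and a separate limiting argument making the bookkeeping of the third step uniform. The remaining ingredients — the density reduction, the convolution identity, and the identification of the circle part of $k/S_{\nu_r}$ inside $\Po^2(\mu)$ — are routine.
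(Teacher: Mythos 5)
Your overall strategy — transfer the orthogonality computation into an irreducible $\Po^2(\mu)$-space, and exploit \thref{CyclicityMainTheorem} for $S_{\nu_r}$ — is the same one the paper uses. The identification $J(ck) = (ck,\Delta_b\conj{S_{\nu_r}}k)$, the construction of an element $F - g' \in \L^2(\mu)$ orthogonal to $\Po^2(\mu)$ out of $T_{\conj b}h$, and the splitting of the $\hb$-inner product into disk and circle contributions, are all correct. But there is a genuine gap at the point you yourself flag as ``the delicate point,'' and I do not think your proposed fix repairs it.

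The step that fails is the assertion that cyclicity of $S_{\nu_r}$ in $\Po^2(\mu)$ puts $1/S_{\nu_r}$ (hence $k/S_{\nu_r}$) into $\Po^2(\mu)$. Inspecting the proof of \thref{CyclicityMainTheorem}: the argument there first replaces $\nu_r$ by $\nu_r/N$ with $N$ large enough that $\nu_r(\T)/N < c/10$, shows $1/S_{\nu_r/N}\in\Po^2(\mu)$, concludes that $S_{\nu_r/N}$ is cyclic, and then only via \thref{ProdOfCyclicLemma} (which transfers cyclicity of factors, not membership of reciprocals) deduces that $S_{\nu_r}$ is cyclic. Membership of $1/S_{\nu_r}$ itself in $\Po^2(\mu)$ is \emph{not} a consequence, and indeed it generically fails: with $\nu_r = a\,\delta_1$ one has $|1/S_{\nu_r}(r)|\ge\exp(a/(1-r))$ on the radius, so $\int_0^1|1/S_{\nu_r}(r)|^2 G(1-r)\,dr = \infty$ whenever $2a$ exceeds the exponent $c'$ in $G(t)=\exp(-c'/t)$. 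In your setup $c'$ is forced downward by the requirement that $h, T_{\conj b}h \in H^*_2(M)$ (one needs $M_n\gtrsim\exp(-d\sqrt n)$ with $d<2c_h$, i.e.\ $c'$ comparable to $d^2$ small), while $1/S_{\nu_r}\in\Po^2(\mu_\D)$ needs $c'$ large relative to $\nu_r(\T)$. These pull in opposite directions, and the $\prod_{i=1}^N S_{\nu_r/N}$ idea plus ``a separate limiting argument making the bookkeeping uniform'' does not obviously close the loop: the factors $1/S_{\nu_r/N}$ may each lie in $\Po^2(\mu)$, but their product is exactly $1/S_{\nu_r}$, whose $\L^2(\mu_\D)$-norm is the divergent quantity above, so no bookkeeping saves it.

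The paper's proof sidesteps this entirely. Rather than trying to realize $1/S_{\nu_r}$ as an element of $\Po^2(\mu)$, it uses cyclicity in the weaker but always-available form: there exist analytic polynomials $p_n$ with $S_{\nu_r}p_n \to s$ in $\Po^2(\mu)$. One then multiplies by $BS_{\nu_c}b_0$ to get $bp_n\to f$ in $H_2(M)$ (using \thref{HToeplitzInvariance}) and $p_n \to \conj{S_{\nu_r}}s$ in $\L^2(\Delta_b\,d\m)$, and passes to the limit directly inside $\ip{h}{bp_n}_{\L^2} + \ip{k}{\Delta_b p_n}_{\L^2} = \ip{P_+(\conj b h + \Delta_b k)}{p_n}_{\L^2} = 0$. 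This avoids any membership claim about $1/S_{\nu_r}$, and so no constraint pits $c_h$ against $\nu_r(\T)$. If you replace your last step (pairing $k/S_{\nu_r}$ against $F-g'$) by the paper's polynomial-approximation scheme, your proof goes through without the problematic hypothesis.

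Two small remarks. Your disk-side identity $\ip{k/S_{\nu_r}}{F}_{\mu_\D} = \ip{ck}{h}_{H^2}$ is correct, but the cleanest justification is not the double-sum rearrangement: rather, $\ip{k/S_{\nu_r}}{F}_{\mu_\D}$ is by construction the Cauchy pairing $\ip{k/S_{\nu_r}}{T_{\conj b}h}$ between $H_2(M)$ and $H_2^*(M)$, and by \thref{HToeplitzInvariance} $T_{\conj b}$ is the $H_2(M)$/$H^*_2(M)$-adjoint of $M_b$, so this equals $\ip{b\cdot(k/S_{\nu_r})}{h}$, and since $ck\in H^\infty$, $h\in H^2$, the Cauchy pairing coincides with $\ip{ck}{h}_{H^2}$ — but this of course again presupposes $k/S_{\nu_r}\in H_2(M)$. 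Also, the paper takes $w = \Delta_b$ rather than $\Delta_b^2$; both yield the same core set and the same cyclicity conclusion, so this is only a cosmetic divergence.
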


\begin{proof}
Take a function $f = BS_{\nu_c} b_0  s$, where $s \in \K_{S_{\nu_r}}$, and $h$ satisfying \eqref{ESDpropEq}. In the notation of \thref{normformula}, a computation shows that $Jf = (f, g)$, where \[ g = \Delta_b \conj{S_{\nu_r}} s. \] Let $\Po^2(\mu)$ and $H_2(M) = \Po^2(\mu_\D)$ be as in the proof of \thref{ResVanishingLemma}, with $w = \Delta_b$ and the sequence $M$ being chosen so that $h \in H^*_2(M)$. This time the space $\Po^2(\mu)$ is irreducible, and by \thref{CyclicityMainTheorem} the singular inner function $S_{\nu_r}$ is cyclic in $\Po^2(\mu)$. Hence there exists a sequence of analytic polynomials $\{p_n\}_n$ such that $S_{\nu_r} p_n$ converges to the function $s \in H^2$ in the norm of $\Po^2(\mu)$, and in particular in the norm of $H_2(M)$. Multiplying this sequence by $B S_{\nu_c} b_0$, it follows from \thref{HToeplitzInvariance} that $bp_n$ converges to $f$ in $H_2(M)$. Simultaneously, the $\Po^2(\mu)$-convergence implies that $S_{\nu_r}p_n$ converge to $s$ in $\L^2( \Delta_b d\m)$, and since $S_{\nu_r}$ is unimodular on $\T$, in fact we have that $p_n$ converge to $\conj{S_{\nu_r}}s$ in $\L^2(\Delta_b d\m)$. Let $Jh = (h,k)$ be the corresponding pair for $h$. We can use the above claims to compute  

\begin{align*}
    \ip{h}{f}_{\L^2} + \ip{k}{g}_{\L^2} & = \ip{h}{f} + \ip{k}{\Delta_b \conj{S_{\nu_r}}s}_{\L^2} \\ & = \lim_{n \to \infty} \ip{h}{b p_n} + \ip{k}{\Delta_b p_n}_{\L^2} \\
    & = \lim_{n \to \infty} \ip{h}{b p_n}_{\L^2} + \ip{k}{\Delta_b p_n}_{\L^2}    
    \\ &= \lim_{n \to \infty} \ip{P_+ (\conj{b}h + \Delta_b k)}{p_n}_{\L^2} \\ & = \lim_{n \to \infty} \ip{0}{p_n}_{\L^2} \\
    & = 0.
\end{align*}
In the last step we used condition \eqref{hbconteq} for the pair $(h,k)$. Since the embedding $J$ in \thref{normformula} is an isometry, it follows that $f$ is orthogonal to $h$ in $\hb$.
\end{proof}

\begin{proof}[Proof of \thref{MainTheoremHbDensityESD}]
We see from \thref{Prop1TheoremD} and \thref{Prop2TheoremD} above that condition $(iii)$ in \thref{MainTheoremHbDensityESD} is necessary in order for $(i)$ to hold. Since $(ii)$ implies $(i)$, it suffices thus to show that $(iii)$ implies $(ii)$. The argument is essentially same as the one appearing in \cite{limani2023problem} and \cite{limani_malman_2023}, we include it only for completeness.

Just as in the proof of \thref{MainTheoremHbExistenceESD}, given an admissible sequence $M = \{M_n\}_{n \geq 0}$ we use \thref{PowerAdmSeq} and \thref{AdmissibleSequenceLemma} to obtain $G$ satisfying \eqref{ExpDecTag}, \eqref{LogLogIntTag}, with moment sequence $\widetilde{M} = \{\widetilde{M}_n\}_{n \geq 0}$ satisfying $\widetilde{M}_n \leq M^2_n$ for large $n$. We must show that $\hb \cap H^*_1(M)$ is dense in $\hb$. By \thref{H1starH2starEmbeddingLemma} it will suffice to show that $H^*_2(M^2) \cap \hb$ is dense in $\hb$.

The space $\Po^2(\mu)$ constructed from the measure \[ d\mu(z) = G(1-|z|)dA(z) + \Delta_b^2(z)d\m(z)\] is irreducible by \thref{IrrDef}. Let us assume that $f \in \hb$ is orthogonal to $H^*_2(M^2) \cap \hb$. We will show that $f = 0$, which will prove \thref{MainTheoremHbDensityESD}. Because the mapping $J$ in \thref{normformula} is an isometry, it follows that $Jf$ is orthogonal to $J(H^*_2(M^2) \cap \h(b))$. Note that $J(H^*_2(M^2) \cap \h(b))$ is a subset of $H^*_2(M^2) \oplus \L^2(E)$, and under the duality pairing \eqref{CauchyDualityHM} between $H_2(M^2)$ and $H^*_2(M^2)$, we have \begin{equation}
\label{preanneq} J(H^*_2(M^2) \cap \h(b)) = \cap_{h \in H^2} \ker l_h,
\end{equation} where $l_h$ is the functional on $H^*_2(M^2) \oplus \L^2(E)$ which acts by the formula \[l_h(f,g) := \ip{f}{bh} + \ip{g}{\Delta_b h}_{\L^2}.\] This follows readily from \thref{normformula} (see, for instance, the argument in \cite{limani2023problem}). The fact that $Jf$ annihilates $J(H^*_2(M^2) \cap \h(b))$ and that \eqref{preanneq} holds implies that $Jf$ is contained in the weak-star closure of the linear manifold $\{l_h\}_{H^2} \subseteq H_2(M^2) \oplus \L^2(E)$. Since the pairing between $H_2(M^2)$ and $H^*_2(M^2)$ is reflexive and $\{l_h\}_{h \in H^2}$ is a convex set, basic functional analysis says that, in fact, $Jf$ is contained in the norm-closure of $\{l_h\}_{h \in H^2}$.  Thus there exists a sequence $\{h_k\}_{k \geq 1}$ with $h_k \in H^2$ such that \begin{equation}
\label{seqconv} (bh_k, \Delta_b h_k) \to Jf := (f,g)
\end{equation} in the norm of $H_2(M^2) \oplus \L^2(E)$. Multiply the second coordinate by $b$ to obtain 
\begin{equation}
\label{seqconv2} (bh_k, \Delta_b b h_k) \to (f,bg).
\end{equation}
But the inequalities $\widetilde{M}_n \leq M^2_n$ imply that $bh_k$ converges to $f$ also in the space $\Po^2(\mu_\D)$, and so in fact \eqref{seqconv2} tells us that $\{bh_k\}_n$ is a Cauchy sequence in $\Po^2(\mu)$, to which \thref{PermanenceMainTheorem} applies. The critical conclusion is that $bh_k \to f$ in the irreducible $\Po^2(\mu)$. If $I_b$ is the inner factor of $b$, then \thref{PermanenceMainTheorem} implies that $f/I_b \in H^2$, and by the irreducibility of $\Po^2(\mu)$ the sequence $bh_k$ on $\T$ must converge to the boundary function of $f$ on $\T$. Thus $g = \Delta_b f/b$ by \eqref{seqconv2}, and $Jf = (f, \Delta_b f/b)$. By \thref{normformula} we get that \begin{equation} \label{projzero}
0 = P_+(\conj{b}f + \Delta_b g) = P_+(\conj{b}f + \Delta_b^2 f/b) = P_+(|b|^2f/b + \Delta_b^2 f/b) = P_+(f/b).
\end{equation} From the above computation we infer that, in terms of boundary values, we have $f/b = \conj{b}f + \Delta_b g \in \L^2(\T)$, and consequently $f/b$ has square-integrable boundary values. Since $f/I_b \in H^2$, it follows from the classical Smirnov maximum principle that $f/b \in H^2$. Then $f/b$ is an analytic function which projects to $0$ under $P_+$, which implies that $f/b = 0$, and consequently $f=0$.
\end{proof}

\section{Proof of \thref{UncertThmRSD}}

\label{ThmCProofSec}

A proof of \thref{UncertThmRSD} relies on a judicious application of \thref{ResVanishingLemma}. 

\begin{proof}[Proof of \thref{UncertThmRSD}]
If $C_\nu$ satisfies \eqref{ESDpropEq}, then the function $f(z) = \sum_{n \geq 0} \nu_n z^n$, $z \in \T$, is certainly smooth on $\T$ and it has an analytic extension to $\D$. Since the Cauchy transform of the measure $d\nu - f \, d\m$ vanishes in $\D$, this measure must be absolutely continuous with respect to $m$ by the classical theorem of brothers Riesz. Hence $d\nu$ is also absolutely continuous. Let $g \in \L^1(\T)$ be its Radon-Nikodym derivative, so that $d\nu = g \, d\m$. Set $f = \C_\nu = \C_g$, which by our assumption is a function satisfying \eqref{ESDpropEq}. Unfortunately, we cannot directly apply \thref{ResVanishingLemma} since we do not necessarily have that $g \in \L^2(\T)$. We must take care of this slight inconvenience to prove the theorem. 

\thref{ToeplitzInvRSD} says that $T_{\conj{h}}f$ satisfies \eqref{ESDpropEq}, where $T_{\conj{h}}$ is any co-analytic Toeplitz operator with bounded symbol $h \in H^\infty$. Moreover, $T_{\conj{h}} f$ has the representation \[ T_{\conj{h}}f(z) = \C_{\conj{h}g}(z), \quad z \in \D.\] The above formula can be derived by first showing through simple algebraic manipulations that it holds for $h(z) := z$, then for analytic monomials by iteration, and thus for analytic polynomials by linearity. Finally, fix a uniformly bounded sequence of analytic polynomials $\{p_n\}_{n \geq 1}$ which converges to $h$ pointwise $m$-almost everywhere on $\T$ (the polynomials $p_n$ could be taken to be the Ces\`aro means of the partial sums of the Taylor series of $h$). For such a sequence we readily see from the dominated convergence theorem that \[ T_{\conj{h}}f(z) = \lim_{n \to \infty} T_{\conj{p_n}}f(z) = \lim_{n \to \infty} \C_{\conj{p_n}g}(z) = \C_{\conj{h}g}(z), \quad z \in \D.\] Since $g \in L^1(\T)$, in particular we have that $\log^+|g| \in L^1(\T)$, and this means that an outer function $h \in H^\infty$ exists which satisfies the boundary value equation \[ |h(x)| = \min\Big( 1, 1/|g(x)| \Big)\] for $\m$-almost every $x \in \T$. Set also \[w(x) = \min\Big( 1, |g(x)|\Big).\]
Now, we can write \[\conj{h} g = \conj{h} \frac{g}{w} w = u w\] with \[ u := \conj{h} \frac{g}{w}.\] It is easily checked that $u$ satisfies $|u(x)| = 1$ for $\m$-almost every $x$ for which $|g(x)| > 0$. Then \[ T_{\conj{h}}f(z) = \C_{\conj{h} g}(z) = P_+ u w(z), \quad z \in \D\] and \thref{ResVanishingLemma} can be applied to conclude that $uw$ vanishes on $\res (w)$. Since $u$ is unimodular, it follows that in fact $w$ vanishes on $\res (w)$, and consequently the set \[ \{ x \in \T : w(x) > 0 \} = \{ x \in \T : |g(x)| > 0 \}\] coincides with $\co (w)$, up to a set of $\m$-measure zero. For any interval $I$ contained in $\co (w)$ it follows from the pointwise inequality $|g| \geq w$ and the definition of $\co (w)$ that  \[ \int_I \log |g| \, dm \geq \int_I \log w \,d\m > -\infty.\] Thus $g$ has structure as claimed in the statement of \thref{UncertThmRSD}, and the proof is complete.
\end{proof}

\bibliographystyle{plain}
\bibliography{mybib}

\begin{thebibliography}{10}

\bibitem{mccarthypick}
J.~Agler and J.~E. McCarthy.
\newblock {\em Pick Interpolation and Hilbert Function Spaces}, volume~44 of
  {\em Graduate Studies in Mathematics}.
\newblock American Mathematical Society, 2002.

\bibitem{dbrcont}
A.~Aleman and B.~Malman.
\newblock Density of disk algebra functions in de {B}ranges--{R}ovnyak spaces.
\newblock {\em C. R. Math. Acad. Sci. Paris}, 355(8):871--875, 2017.

\bibitem{beurling1964critical}
A.~Beurling.
\newblock A critical topology in harmonic analysis on semigroups.
\newblock {\em Acta Mathematica}, 112(1):215--228, 1964.

\bibitem{beurling1972analytic}
A.~Beurling.
\newblock Analytic continuation across a linear boundary.
\newblock {\em Acta Mathematica}, 128(1):153--182, 1972.

\bibitem{cauchytransform}
J.~Cima, A.~Matheson, and W.~Ross.
\newblock {\em The {C}auchy transform}, volume 125 of {\em Mathematical Surveys
  and Monographs}.
\newblock American Mathematical Society, Providence, RI, 2006.

\bibitem{conway1991theory}
J.~B. Conway.
\newblock {\em The theory of subnormal operators}.
\newblock American Mathematical Soc., 1991.

\bibitem{davis1991multipliers}
B.~M. Davis and J.~E. McCarthy.
\newblock Multipliers of de {B}ranges spaces.
\newblock {\em Michigan Math. J}, 38(2):225--240, 1991.

\bibitem{de1968hilbert}
L.~De~Branges.
\newblock {\em Hilbert spaces of entire functions}.
\newblock Prentice-Hall, 1968.

\bibitem{duren1970theory}
P.~Duren.
\newblock {\em Theory of Hp Spaces}.
\newblock Academic press, 1970.

\bibitem{el2012cyclicity}
O.~El-Fallah, K.~Kellay, and K.~Seip.
\newblock Cyclicity of singular inner functions from the corona theorem.
\newblock {\em Journal of the Institute of Mathematics of Jussieu},
  11(4):815--824, 2012.

\bibitem{hbspaces1fricainmashreghi}
E.~Fricain and J.~Mashreghi.
\newblock {\em The theory of $\hb$ spaces. {V}ol. 1}, volume~20 of {\em New
  Mathematical Monographs}.
\newblock Cambridge University Press, Cambridge, 2016.

\bibitem{hbspaces2fricainmashreghi}
E.~Fricain and J.~Mashreghi.
\newblock {\em The theory of $\hb$ spaces. {V}ol. 2}, volume~21 of {\em New
  Mathematical Monographs}.
\newblock Cambridge University Press, Cambridge, 2016.

\bibitem{garnett}
J.~Garnett.
\newblock {\em Bounded analytic functions}, volume 236.
\newblock Springer Science \& Business Media, 2007.

\bibitem{garnett2005harmonic}
J.~Garnett and D.~Marshall.
\newblock {\em Harmonic measure}, volume~2.
\newblock Cambridge University Press, 2005.

\bibitem{havinbook}
V.~P. Havin and B.~J\"oricke.
\newblock {\em The uncertainty principle in harmonic analysis}, volume~72 of
  {\em Encyclopaedia Math. Sci.}
\newblock Springer, Berlin, 1995.

\bibitem{khrushchev1978problem}
S.~V. Khrushchev.
\newblock The problem of simultaneous approximation and of removal of the
  singularities of {C}auchy type integrals.
\newblock {\em Trudy Matematicheskogo Instituta imeni VA Steklova},
  130:124--195, 1978.

\bibitem{koosis}
P.~Koosis.
\newblock {\em Introduction to {$H_p$} spaces}, volume 115 of {\em Cambridge
  Tracts in Mathematics}.
\newblock Cambridge University Press, Cambridge, second edition, 1998.

\bibitem{korenblum1975extension}
B.~Korenblum.
\newblock An extension of the {N}evanlinna theory.
\newblock {\em Acta Mathematica}, 135:187--219, 1975.

\bibitem{korenblum1977beurling}
B.~Korenblum.
\newblock A {B}eurling-type theorem.
\newblock {\em Acta Mathematica}, 138(1):265--293, 1977.

\bibitem{kriete1990mean}
T.~L. Kriete and B.~D. MacCluer.
\newblock Mean-square approximation by polynomials on the unit disk.
\newblock {\em Transactions of the American Mathematical Society},
  322(1):1--34, 1990.

\bibitem{limani_malman_2023}
A.~Limani and B.~Malman.
\newblock Constructions of some families of smooth {C}auchy transforms.
\newblock {\em Canadian Journal of Mathematics}, 2023.

\bibitem{smoothdensektheta}
A.~Limani and B.~Malman.
\newblock On model spaces and density of functions smooth on the boundary.
\newblock {\em Revista {M}atem{\'a}tica {I}beroamericana}, 39(3):1059--1071,
  2023.

\bibitem{limani2023problem}
A.~Limani and B.~Malman.
\newblock On the problem of smooth approximations in $\h(b)$ and connections to
  subnormal operators.
\newblock {\em Journal of Functional Analysis}, 284(5):109803, 2023.

\bibitem{malman2023revisiting}
B.~Malman.
\newblock Revisiting mean-square approximation by polynomials in the unit disk.
\newblock {\em arXiv preprint arXiv:2304.01400}, 2023.

\bibitem{malman2022thomson}
B.~Malman.
\newblock Thomson decompositions of measures in the disk.
\newblock {\em Transactions of the American Mathematical Society}, 2023.

\bibitem{ransford1995potential}
T.~Ransford.
\newblock {\em Potential theory in the complex plane}.
\newblock Number~28 in London Mathematical Society Student Texts. Cambridge
  university press, 1995.

\bibitem{roberts1985cyclic}
J.~Roberts.
\newblock {Cyclic inner functions in the Bergman spaces and weak outer
  functions in $H^p$, $0< p< 1$}.
\newblock {\em Illinois Journal of Mathematics}, 29(1):25--38, 1985.

\bibitem{sarasonbook}
D.~Sarason.
\newblock {\em Sub-{H}ardy {H}ilbert spaces in the unit disk}, volume~10 of
  {\em University of Arkansas Lecture Notes in the Mathematical Sciences}.
\newblock John Wiley \& Sons, Inc., New York, 1994.

\bibitem{nagyfoiasharmop}
B.~Sz.-Nagy, C.~Foias, H.~Bercovici, and L~K\'erchy.
\newblock {\em Harmonic analysis of operators on {H}ilbert space}.
\newblock Universitext. Springer, New York, second edition, 2010.

\bibitem{vol1987summability}
A.~Volberg and B.~J{ö}ricke.
\newblock Summability of the logarithm of an almost analytic function and a
  generalization of the {L}evinson-{C}artwright theorem.
\newblock {\em Mathematics of the USSR-Sbornik}, 58(2):337, 1987.

\end{thebibliography}

\end{document}